\def\pun{\mathbf{1}}
\def\ep{\varepsilon}
\newcommand{\ligne}{\vspace{1\baselineskip}}
\newcommand{\ph}{\phantomsection}
\newcommand{\cal}{\mathcal}
\newcommand{\di}{\displaystyle}
\newcommand{\R}{\mathbb  R}
\newcommand{\C}{\mathbb  C}
\newcommand{\N}{\mathbb  N}
\renewcommand{\P}{\mathbb{P}}
\newcommand{\eps}{\varepsilon}
\renewcommand{\epsilon}{\varepsilon}
\newcommand{\e}{  \text{e}   }
\newcommand{\Z}{  \mathbb{Z}   }
\newcommand{\E}{\mathbb{E} }
\renewcommand{\H}{  \mathcal{H}   }
\newcommand{\T}{  \mathbb{T} }
\newcommand{\dis}{\displaystyle}
\newcommand\Sum{ \displaystyle\sum}
\newcommand\Int{ \displaystyle\int}
\newcommand{\om}{  \omega   }
\renewcommand{\a}{  \alpha   }
\newcommand{\lessim}{  \lesssim  }
\renewcommand{\phi}{  \varphi  }
\renewcommand{\L}{  \mathcal{L}   }
\numberwithin{equation}{section}
\theoremstyle{plain}
\def\beq{\begin{equation}}   \def\eeq{\end{equation}}
\def\bea{\begin{eqnarray}}  \def\eea{\end{eqnarray}}
\renewcommand{\theequation}{\thesection.\arabic{equation}}
\newcounter{hran} \renewcommand{\thehran}{\thesection.\arabic{hran}}
\def\bmini{\setcounter{hran}{\value{equation}}
    \refstepcounter{hran}\setcounter{equation}{0}
    \renewcommand{\theequation}{\thehran\alph{equation}}\begin{eqnarray}}
\def\bminiG#1{\setcounter{hran}{\value{equation}}
\refstepcounter{hran}\setcounter{equation}{-1}
\renewcommand{\theequation}{\thehran\alph{equation}}
\refstepcounter{equation}\label{#1}\begin{eqnarray}}
\urladdr{}
\author{Rafik Imekraz}
\address{Institut de Math\'ematiques de Bordeaux, UMR 5251 du CNRS, Universit\'e de Bordeaux 1, 351, cours de la Lib\'eration F33405 Talence Cedex, France     }
\email{rafik.imekraz@math.u-bordeaux1.fr}
\author{ Didier Robert}
\address{Laboratoire de Math\'ematiques J. Leray, UMR  6629 du CNRS, Universit\'e de Nantes, 
2, rue de la Houssini\`ere,
44322 Nantes Cedex 03, France}
\email{didier.robert@univ-nantes.fr}
\author{ Laurent Thomann }
\address{Laboratoire de Math\'ematiques J. Leray, UMR  6629 du CNRS, Universit\'e de Nantes, 
2, rue de la Houssini\`ere,
44322 Nantes Cedex 03, France}
\email{laurent.thomann@univ-nantes.fr}
 \title[On random Hermite series]{On random Hermite series}
\begin{document}
% fig4tex.tex  Version 1.9, November 14, 2011
% Copyright 2001-2011 Yvon Lafranche
%
% Author:  Yvon Lafranche
%          IRMAR, Universite de Rennes 1 - France
%          Yvon.Lafranche@univ-rennes1.fr
%
% Web Site: http://perso.univ-rennes1.fr/yvon.lafranche/fig4tex
%
%%%%%%%%%%%%%%%%%%%%%%%%%%%%%%%%%%%%%%%%%%%%%%%%%%%%%%%%%%%%%%%%%%%%%%%%%%%%%%%
\ifx\figforTeXisloaded\relax \else\global\let\figforTeXisloaded=\relax\fi
\message{version 1.9}
\catcode`\@=11
\ifx\ctr@ln@m\undefined\else%
    \immediate\write16{*** Fig4TeX WARNING : \string\ctr@ln@m\space already defined.}\fi
\def\ctr@ln@m#1{\ifx#1\undefined\else%
    \immediate\write16{*** Fig4TeX WARNING : \string#1 already defined.}\fi}
\ctr@ln@m\ctr@ld@f
\def\ctr@ld@f#1#2{\ctr@ln@m#2#1#2}
\ctr@ld@f\def\ctr@ln@w#1#2{\ctr@ln@m#2\csname#1\endcsname#2}
{\catcode`\/=0 \catcode`/\=12 /ctr@ld@f/gdef/BS@{\}}
\ctr@ld@f\def\ctr@lcsn@m#1{\expandafter\ifx\csname#1\endcsname\relax\else%
    \immediate\write16{*** Fig4TeX WARNING : \BS@\expandafter\string#1\space already defined.}\fi}
\ctr@ld@f\edef\colonc@tcode{\the\catcode`\:}
\ctr@ld@f\edef\semicolonc@tcode{\the\catcode`\;}
\ctr@ld@f\def\t@stc@tcodech@nge{{\let\c@tcodech@nged=\z@%
    \ifnum\colonc@tcode=\the\catcode`\:\else\let\c@tcodech@nged=\@ne\fi%
    \ifnum\semicolonc@tcode=\the\catcode`\;\else\let\c@tcodech@nged=\@ne\fi%
    \ifx\c@tcodech@nged\@ne%
    \immediate\write16{}
    \immediate\write16{!!!=============================================================!!!}
    \immediate\write16{ Fig4TeX WARNING:}
    \immediate\write16{ The category code of some characters has been changed, which will}
    \immediate\write16{ result in an error (message "Runaway argument?").}
    \immediate\write16{ This probably comes from another package that changed the category}
    \immediate\write16{ code after Fig4TeX was loaded. If that proves to be exact, the}
    \immediate\write16{ solution is to exchange the loading commands on top of your file}
    \immediate\write16{ so that Fig4TeX is loaded last. For example, in LaTeX, we should}
    \immediate\write16{ say :}
    \immediate\write16{\BS@ usepackage[french]{babel}}
    \immediate\write16{\BS@ usepackage{fig4tex}}
    \immediate\write16{!!!=============================================================!!!}
    \immediate\write16{}
    \fi}}
\ctr@ld@f\def\FigforTeX{F\kern-.05em i\kern-.05em g\kern-.1em\raise-.14em\hbox{4}\kern-.19em\TeX}
\ctr@ld@f\def\W@rnmesoldA#1{\W@rnmesold}
\ctr@ld@f\def\W@rnmesoldAB#1(#2){\W@rnmesold}
\ctr@ld@f\def\W@rnmesold{%
    \immediate\write16{}
    \immediate\write16{!!!=============================================================!!!}
    \immediate\write16{ Fig4TeX WARNING:}
    \immediate\write16{ The file to be compiled is not compatible with the current version}
    \immediate\write16{ of Fig4TeX. To fix that, upgrade the source file (mainly change \BS@ ps*}
    \immediate\write16{ macros by \BS@ fig* macros), or use fig4tex184.tex instead (\BS@ input fig4tex184}
    \immediate\write16{ or \BS@ usepackage{fig4tex184}).}
    \immediate\write16{!!!=============================================================!!!}
    \immediate\write16{}}
\ctr@ln@m\psbeginfig\let\psbeginfig\W@rnmesoldA
\ctr@ln@m\psset\let\psset\W@rnmesoldAB
\ctr@ln@m\pssetdefault\let\pssetdefault\W@rnmesoldAB
\ctr@ln@m\pssetupdate\let\pssetupdate\W@rnmesoldA
\ctr@ln@w{newdimen}\epsil@n\epsil@n=0.00005pt
\ctr@ln@w{newdimen}\Cepsil@n\Cepsil@n=0.005pt
\ctr@ln@w{newdimen}\dcq@\dcq@=254pt
\ctr@ln@w{newdimen}\PI@\PI@=3.141592pt
\ctr@ln@w{newdimen}\DemiPI@deg\DemiPI@deg=90pt
\ctr@ln@w{newdimen}\PI@deg\PI@deg=180pt
\ctr@ln@w{newdimen}\DePI@deg\DePI@deg=360pt
\ctr@ld@f\chardef\t@n=10
\ctr@ld@f\chardef\c@nt=100
\ctr@ld@f\chardef\@lxxiv=74
\ctr@ld@f\chardef\@xci=91
\ctr@ld@f\mathchardef\@nMnCQn=9949
\ctr@ld@f\chardef\@vi=6
\ctr@ld@f\chardef\@xxx=30
\ctr@ld@f\chardef\@lvi=56
\ctr@ld@f\chardef\@@lxxi=71
\ctr@ld@f\chardef\@lxxxv=85
\ctr@ld@f\mathchardef\@@mmmmlxviii=4068
\ctr@ld@f\mathchardef\@ccclx=360
\ctr@ld@f\mathchardef\@dccxx=720
\ctr@ln@w{newcount}\p@rtent \ctr@ln@w{newcount}\f@ctech \ctr@ln@w{newcount}\result@tent
\ctr@ln@w{newdimen}\v@lmin \ctr@ln@w{newdimen}\v@lmax \ctr@ln@w{newdimen}\v@leur
\ctr@ln@w{newdimen}\result@t\ctr@ln@w{newdimen}\result@@t
\ctr@ln@w{newdimen}\mili@u \ctr@ln@w{newdimen}\c@rre \ctr@ln@w{newdimen}\delt@
\ctr@ld@f\def\degT@rd{0.017453 }  % pi/180
\ctr@ld@f\def\rdT@deg{57.295779 } % 180/pi
\ctr@ln@m\v@leurseule
{\catcode`p=12 \catcode`t=12 \gdef\v@leurseule#1pt{#1}}
\ctr@ld@f\def\repdecn@mb#1{\expandafter\v@leurseule\the#1\space}
\ctr@ld@f\def\arct@n#1(#2,#3){{\v@lmin=#2\v@lmax=#3%
    \maxim@m{\mili@u}{-\v@lmin}{\v@lmin}\maxim@m{\c@rre}{-\v@lmax}{\v@lmax}%
    \delt@=\mili@u\m@ech\mili@u%
    \ifdim\c@rre>\@nMnCQn\mili@u\divide\v@lmax\tw@\c@lATAN\v@leur(\z@,\v@lmax)% DY > 9949 DX
    \else%
    \maxim@m{\mili@u}{-\v@lmin}{\v@lmin}\maxim@m{\c@rre}{-\v@lmax}{\v@lmax}%
    \m@ech\c@rre%
    \ifdim\mili@u>\@nMnCQn\c@rre\divide\v@lmin\tw@% DX > 9949 DY
    \maxim@m{\mili@u}{-\v@lmin}{\v@lmin}\c@lATAN\v@leur(\mili@u,\z@)%
    \else\c@lATAN\v@leur(\delt@,\v@lmax)\fi\fi%
    \ifdim\v@lmin<\z@\v@leur=-\v@leur\ifdim\v@lmax<\z@\advance\v@leur-\PI@%
    \else\advance\v@leur\PI@\fi\fi%
    \global\result@t=\v@leur}#1=\result@t}
\ctr@ld@f\def\m@ech#1{\ifdim#1>1.646pt\divide\mili@u\t@n\divide\c@rre\t@n\m@ech#1\fi}
\ctr@ld@f\def\c@lATAN#1(#2,#3){{\v@lmin=#2\v@lmax=#3\v@leur=\z@\delt@=\tw@ pt%
    \un@iter{0.785398}{\v@lmax<}%
    \un@iter{0.463648}{\v@lmax<}%
    \un@iter{0.244979}{\v@lmax<}%
    \un@iter{0.124355}{\v@lmax<}%
    \un@iter{0.062419}{\v@lmax<}%
    \un@iter{0.031240}{\v@lmax<}%
    \un@iter{0.015624}{\v@lmax<}%
    \un@iter{0.007812}{\v@lmax<}%
    \un@iter{0.003906}{\v@lmax<}%
    \un@iter{0.001953}{\v@lmax<}%
    \un@iter{0.000976}{\v@lmax<}%
    \un@iter{0.000488}{\v@lmax<}%
    \un@iter{0.000244}{\v@lmax<}%
    \un@iter{0.000122}{\v@lmax<}%
    \un@iter{0.000061}{\v@lmax<}%
    \un@iter{0.000030}{\v@lmax<}%
    \un@iter{0.000015}{\v@lmax<}%
    \global\result@t=\v@leur}#1=\result@t}
\ctr@ld@f\def\un@iter#1#2{%
    \divide\delt@\tw@\edef\dpmn@{\repdecn@mb{\delt@}}%
    \mili@u=\v@lmin%
    \ifdim#2\z@%
      \advance\v@lmin-\dpmn@\v@lmax\advance\v@lmax\dpmn@\mili@u%
      \advance\v@leur-#1pt%
    \else%
      \advance\v@lmin\dpmn@\v@lmax\advance\v@lmax-\dpmn@\mili@u%
      \advance\v@leur#1pt%
    \fi}
\ctr@ld@f\def\c@ssin#1#2#3{\expandafter\ifx\csname COS@\number#3\endcsname\relax\c@lCS{#3pt}%
    \expandafter\xdef\csname COS@\number#3\endcsname{\repdecn@mb\result@t}%
    \expandafter\xdef\csname SIN@\number#3\endcsname{\repdecn@mb\result@@t}\fi%
    \edef#1{\csname COS@\number#3\endcsname}\edef#2{\csname SIN@\number#3\endcsname}}
\ctr@ld@f\def\c@lCS#1{{\mili@u=#1\p@rtent=\@ne%
    \relax\ifdim\mili@u<\z@\red@ng<-\else\red@ng>+\fi\f@ctech=\p@rtent%
    \relax\ifdim\mili@u<\z@\mili@u=-\mili@u\f@ctech=-\f@ctech\fi\c@@lCS}}
\ctr@ld@f\def\c@@lCS{\v@lmin=\mili@u\c@rre=-\mili@u\advance\c@rre\DemiPI@deg\v@lmax=\c@rre%
    \mili@u\@@lxxi\mili@u\divide\mili@u\@@mmmmlxviii%
    \edef\v@larg{\repdecn@mb{\mili@u}}\mili@u=-\v@larg\mili@u%
    \edef\v@lmxde{\repdecn@mb{\mili@u}}%
    \c@rre\@@lxxi\c@rre\divide\c@rre\@@mmmmlxviii%
    \edef\v@largC{\repdecn@mb{\c@rre}}\c@rre=-\v@largC\c@rre%
    \edef\v@lmxdeC{\repdecn@mb{\c@rre}}%
    \fctc@s\mili@u\v@lmin\global\result@t\p@rtent\v@leur%
    \let\t@mp=\v@larg\let\v@larg=\v@largC\let\v@largC=\t@mp%
    \let\t@mp=\v@lmxde\let\v@lmxde=\v@lmxdeC\let\v@lmxdeC=\t@mp%
    \fctc@s\c@rre\v@lmax\global\result@@t\f@ctech\v@leur}
\ctr@ld@f\def\fctc@s#1#2{\v@leur=#1\relax\ifdim#2<\@lxxxv\p@\cosser@h\else\sinser@t\fi}
\ctr@ld@f\def\cosser@h{\advance\v@leur\@lvi\p@\divide\v@leur\@lvi%
    \v@leur=\v@lmxde\v@leur\advance\v@leur\@xxx\p@%
    \v@leur=\v@lmxde\v@leur\advance\v@leur\@ccclx\p@%
    \v@leur=\v@lmxde\v@leur\advance\v@leur\@dccxx\p@\divide\v@leur\@dccxx}
\ctr@ld@f\def\sinser@t{\v@leur=\v@lmxdeC\p@\advance\v@leur\@vi\p@%
    \v@leur=\v@largC\v@leur\divide\v@leur\@vi}
\ctr@ld@f\def\red@ng#1#2{\relax\ifdim\mili@u#1#2\DemiPI@deg\advance\mili@u#2-\PI@deg%
    \p@rtent=-\p@rtent\red@ng#1#2\fi}
\ctr@ld@f\def\pr@c@lCS#1#2#3{\ctr@lcsn@m{COS@\number#3 }%
    \expandafter\xdef\csname COS@\number#3\endcsname{#1}%
    \expandafter\xdef\csname SIN@\number#3\endcsname{#2}}
\pr@c@lCS{1}{0}{0}
\pr@c@lCS{0.7071}{0.7071}{45}\pr@c@lCS{0.7071}{-0.7071}{-45}
\pr@c@lCS{0}{1}{90}          \pr@c@lCS{0}{-1}{-90}
\pr@c@lCS{-1}{0}{180}        \pr@c@lCS{-1}{0}{-180}
\pr@c@lCS{0}{-1}{270}        \pr@c@lCS{0}{1}{-270}
\ctr@ld@f\def\invers@#1#2{{\v@leur=#2\maxim@m{\v@lmax}{-\v@leur}{\v@leur}%
    \f@ctech=\@ne\m@inv@rs%
    \multiply\v@leur\f@ctech\edef\v@lv@leur{\repdecn@mb{\v@leur}}%
    \p@rtentiere{\p@rtent}{\v@leur}\v@lmin=\p@\divide\v@lmin\p@rtent%
    \inv@rs@\multiply\v@lmax\f@ctech\global\result@t=\v@lmax}#1=\result@t}
\ctr@ld@f\def\m@inv@rs{\ifdim\v@lmax<\p@\multiply\v@lmax\t@n\multiply\f@ctech\t@n\m@inv@rs\fi}
\ctr@ld@f\def\inv@rs@{\v@lmax=-\v@lmin\v@lmax=\v@lv@leur\v@lmax%
    \advance\v@lmax\tw@ pt\v@lmax=\repdecn@mb{\v@lmin}\v@lmax%
    \delt@=\v@lmax\advance\delt@-\v@lmin\ifdim\delt@<\z@\delt@=-\delt@\fi%
    \ifdim\delt@>\epsil@n\v@lmin=\v@lmax\inv@rs@\fi}
\ctr@ld@f\def\minim@m#1#2#3{\relax\ifdim#2<#3#1=#2\else#1=#3\fi}
\ctr@ld@f\def\maxim@m#1#2#3{\relax\ifdim#2>#3#1=#2\else#1=#3\fi}
\ctr@ld@f\def\p@rtentiere#1#2{#1=#2\divide#1by65536 }
\ctr@ld@f\def\r@undint#1#2{{\v@leur=#2\divide\v@leur\t@n\p@rtentiere{\p@rtent}{\v@leur}%
    \v@leur=\p@rtent pt\global\result@t=\t@n\v@leur}#1=\result@t}
\ctr@ld@f\def\sqrt@#1#2{{\v@leur=#2%
    \minim@m{\v@lmin}{\p@}{\v@leur}\maxim@m{\v@lmax}{\p@}{\v@leur}%
    \f@ctech=\@ne\m@sqrt@\sqrt@@%
    \mili@u=\v@lmin\advance\mili@u\v@lmax\divide\mili@u\tw@\multiply\mili@u\f@ctech%
    \global\result@t=\mili@u}#1=\result@t}
\ctr@ld@f\def\m@sqrt@{\ifdim\v@leur>\dcq@\divide\v@leur\c@nt\v@lmax=\v@leur%
    \multiply\f@ctech\t@n\m@sqrt@\fi}
\ctr@ld@f\def\sqrt@@{\mili@u=\v@lmin\advance\mili@u\v@lmax\divide\mili@u\tw@%
    \c@rre=\repdecn@mb{\mili@u}\mili@u%
    \ifdim\c@rre<\v@leur\v@lmin=\mili@u\else\v@lmax=\mili@u\fi%
    \delt@=\v@lmax\advance\delt@-\v@lmin\ifdim\delt@>\epsil@n\sqrt@@\fi}
\ctr@ld@f\def\extrairelepremi@r#1\de#2{\expandafter\lepremi@r#2@#1#2}
\ctr@ld@f\def\lepremi@r#1,#2@#3#4{\def#3{#1}\def#4{#2}\ignorespaces}
\ctr@ld@f\def\@cfor#1:=#2\do#3{%
  \edef\@fortemp{#2}%
  \ifx\@fortemp\empty\else\@cforloop#2,\@nil,\@nil\@@#1{#3}\fi}
\ctr@ln@m\@nextwhile
\ctr@ld@f\def\@cforloop#1,#2\@@#3#4{%
  \def#3{#1}%
  \ifx#3\Fig@nnil\let\@nextwhile=\Fig@fornoop\else#4\relax\let\@nextwhile=\@cforloop\fi%
  \@nextwhile#2\@@#3{#4}}

\ctr@ld@f\def\@ecfor#1:=#2\do#3{%
  \def\@@cfor{\@cfor#1:=}%
  \edef\@@@cfor{#2}%
  \expandafter\@@cfor\@@@cfor\do{#3}}
\ctr@ld@f\def\Fig@nnil{\@nil}
\ctr@ld@f\def\Fig@fornoop#1\@@#2#3{}
\ctr@ln@m\list@@rg
\ctr@ld@f\def\trtlis@rg#1#2{\def\list@@rg{#1}%
    \@ecfor\p@rv@l:=\list@@rg\do{\expandafter#2\p@rv@l|}}
\ctr@ld@f\def\trtlis@rgtok#1{\let@xte={}\let\n@xt\addt@t@xt\addt@t@xt #1}
\ctr@ln@m\M@cro
\ctr@ln@m\n@xt
\ctr@ld@f\def\addt@t@xt#1{\if#1|\let\n@xt\relax\else%
    \if#1,\expandafter\M@cro\the\let@xte|\let@xte={}%
    \else\let@xte=\expandafter{\the\let@xte #1}\fi\fi\n@xt}
\ctr@ln@w{newbox}\b@xvisu
\ctr@ln@w{newtoks}\let@xte
\ctr@ln@w{newif}\ifitis@K
\ctr@ln@w{newcount}\s@mme
\ctr@ln@w{newcount}\l@mbd@un \ctr@ln@w{newcount}\l@mbd@de
\ctr@ln@w{newcount}\superc@ntr@l\superc@ntr@l=\@ne        % Controle impose
\ctr@ln@w{newcount}\typec@ntr@l\typec@ntr@l=\superc@ntr@l % Controle souhaite
\ctr@ln@w{newdimen}\v@lX  \ctr@ln@w{newdimen}\v@lY  \ctr@ln@w{newdimen}\v@lZ
\ctr@ln@w{newdimen}\v@lXa \ctr@ln@w{newdimen}\v@lYa \ctr@ln@w{newdimen}\v@lZa
\ctr@ln@w{newdimen}\unit@\unit@=\p@ % Initialisation a la valeur par defaut.
\ctr@ld@f\def\unit@util{pt}
\ctr@ld@f\def\ptT@ptps{0.996264 }
\ctr@ld@f\def\ptpsT@pt{1.00375 }
\ctr@ld@f\def\ptT@unit@{1} % Initialisation correspondant a la valeur par defaut de \unit@
\ctr@ld@f\def\setunit@#1{\def\unit@util{#1}\setunit@@#1:\invers@{\result@t}{\unit@}%
    \edef\ptT@unit@{\repdecn@mb\result@t}}
\ctr@ld@f\def\setunit@@#1#2:{\ifcat#1a\unit@=\@ne#1#2\else\unit@=#1#2\fi}
\ctr@ld@f\def\d@fm@cdim#1#2{{\v@leur=#2\v@leur=\ptT@unit@\v@leur\xdef#1{\repdecn@mb\v@leur}}}
\ctr@ln@w{newif}\ifBdingB@x\BdingB@xtrue
\ctr@ln@w{newdimen}\c@@rdXmin \ctr@ln@w{newdimen}\c@@rdYmin  % Dimensions de la BoundingBox
\ctr@ln@w{newdimen}\c@@rdXmax \ctr@ln@w{newdimen}\c@@rdYmax
\ctr@ld@f\def\b@undb@x#1#2{\ifBdingB@x%
    \relax\ifdim#1<\c@@rdXmin\global\c@@rdXmin=#1\fi%
    \relax\ifdim#2<\c@@rdYmin\global\c@@rdYmin=#2\fi%
    \relax\ifdim#1>\c@@rdXmax\global\c@@rdXmax=#1\fi%
    \relax\ifdim#2>\c@@rdYmax\global\c@@rdYmax=#2\fi\fi}
\ctr@ld@f\def\b@undb@xP#1{{\Figg@tXY{#1}\b@undb@x{\v@lX}{\v@lY}}}
\ctr@ld@f\def\ellBB@x#1;#2,#3(#4,#5,#6){{\s@uvc@ntr@l\et@tellBB@x%
    \setc@ntr@l{2}\figptell-2::#1;#2,#3(#4,#6)\b@undb@xP{-2}%
    \figptell-2::#1;#2,#3(#5,#6)\b@undb@xP{-2}%
    \c@ssin{\C@}{\S@}{#6}\v@lmin=\C@ pt\v@lmax=\S@ pt%
    \mili@u=#3\v@lmin\delt@=#2\v@lmax\arct@n\v@leur(\delt@,\mili@u)%
    \mili@u=-#3\v@lmax\delt@=#2\v@lmin\arct@n\c@rre(\delt@,\mili@u)%
    \v@leur=\rdT@deg\v@leur\advance\v@leur-\DePI@deg%
    \c@rre=\rdT@deg\c@rre\advance\c@rre-\DePI@deg%
    \v@lmin=#4pt\v@lmax=#5pt%
    \loop\ifdim\v@leur<\v@lmax\ifdim\v@leur>\v@lmin%
    \edef\@ngle{\repdecn@mb\v@leur}\figptell-2::#1;#2,#3(\@ngle,#6)%
    \b@undb@xP{-2}\fi\advance\v@leur\PI@deg\repeat%
    \loop\ifdim\c@rre<\v@lmax\ifdim\c@rre>\v@lmin%
    \edef\@ngle{\repdecn@mb\c@rre}\figptell-2::#1;#2,#3(\@ngle,#6)%
    \b@undb@xP{-2}\fi\advance\c@rre\PI@deg\repeat%
    \resetc@ntr@l\et@tellBB@x}\ignorespaces}
\ctr@ld@f\def\initb@undb@x{\c@@rdXmin=\maxdimen\c@@rdYmin=\maxdimen%
    \c@@rdXmax=-\maxdimen\c@@rdYmax=-\maxdimen}
\ctr@ld@f\def\c@ntr@lnum#1{%
    \relax\ifnum\typec@ntr@l=\@ne%
    \ifnum#1<\z@%
    \immediate\write16{*** Forbidden point number (#1). Abort.}\end\fi\fi%
    \set@bjc@de{#1}}
\ctr@ln@m\objc@de
\ctr@ld@f\def\set@bjc@de#1{\edef\objc@de{@BJ\ifnum#1<\z@ M\romannumeral-#1\else\romannumeral#1\fi}}
\s@mme=\m@ne\loop\ifnum\s@mme>-19
  \set@bjc@de{\s@mme}\ctr@lcsn@m\objc@de\ctr@lcsn@m{\objc@de T}
\advance\s@mme\m@ne\repeat
\s@mme=\@ne\loop\ifnum\s@mme<6
  \set@bjc@de{\s@mme}\ctr@lcsn@m\objc@de\ctr@lcsn@m{\objc@de T}
\advance\s@mme\@ne\repeat
\ctr@ld@f\def\setc@ntr@l#1{\ifnum\superc@ntr@l>#1\typec@ntr@l=\superc@ntr@l%
    \else\typec@ntr@l=#1\fi}
\ctr@ld@f\def\resetc@ntr@l#1{\global\superc@ntr@l=#1\setc@ntr@l{#1}}
\ctr@ld@f\def\s@uvc@ntr@l#1{\edef#1{\the\superc@ntr@l}}
\ctr@ln@m\c@lproscal
\ctr@ld@f\def\c@lproscalDD#1[#2,#3]{{\Figg@tXY{#2}%
    \edef\Xu@{\repdecn@mb{\v@lX}}\edef\Yu@{\repdecn@mb{\v@lY}}\Figg@tXY{#3}%
    \global\result@t=\Xu@\v@lX\global\advance\result@t\Yu@\v@lY}#1=\result@t}
\ctr@ld@f\def\c@lproscalTD#1[#2,#3]{{\Figg@tXY{#2}\edef\Xu@{\repdecn@mb{\v@lX}}%
    \edef\Yu@{\repdecn@mb{\v@lY}}\edef\Zu@{\repdecn@mb{\v@lZ}}%
    \Figg@tXY{#3}\global\result@t=\Xu@\v@lX\global\advance\result@t\Yu@\v@lY%
    \global\advance\result@t\Zu@\v@lZ}#1=\result@t}
\ctr@ld@f\def\c@lprovec#1{%
    \det@rmC\v@lZa(\v@lX,\v@lY,\v@lmin,\v@lmax)%
    \det@rmC\v@lXa(\v@lY,\v@lZ,\v@lmax,\v@leur)%
    \det@rmC\v@lYa(\v@lZ,\v@lX,\v@leur,\v@lmin)%
    \Figv@ctCreg#1(\v@lXa,\v@lYa,\v@lZa)}
\ctr@ld@f\def\det@rm#1[#2,#3]{{\Figg@tXY{#2}\Figg@tXYa{#3}%
    \delt@=\repdecn@mb{\v@lX}\v@lYa\advance\delt@-\repdecn@mb{\v@lY}\v@lXa%
    \global\result@t=\delt@}#1=\result@t}
\ctr@ld@f\def\det@rmC#1(#2,#3,#4,#5){{\global\result@t=\repdecn@mb{#2}#5%
    \global\advance\result@t-\repdecn@mb{#3}#4}#1=\result@t}
\ctr@ld@f\def\getredf@ctDD#1(#2,#3){{\maxim@m{\v@lXa}{-#2}{#2}\maxim@m{\v@lYa}{-#3}{#3}%
    \maxim@m{\v@lXa}{\v@lXa}{\v@lYa}% \v@lXa = ||X||inf
    \ifdim\v@lXa>\@xci pt\divide\v@lXa\@xci%
    \p@rtentiere{\p@rtent}{\v@lXa}\advance\p@rtent\@ne\else\p@rtent=\@ne\fi%
    \global\result@tent=\p@rtent}#1=\result@tent\ignorespaces}
\ctr@ld@f\def\getredf@ctTD#1(#2,#3,#4){{\maxim@m{\v@lXa}{-#2}{#2}\maxim@m{\v@lYa}{-#3}{#3}%
    \maxim@m{\v@lZa}{-#4}{#4}\maxim@m{\v@lXa}{\v@lXa}{\v@lYa}%
    \maxim@m{\v@lXa}{\v@lXa}{\v@lZa}% \v@lXa = ||X||inf
    \ifdim\v@lXa>\@lxxiv pt\divide\v@lXa\@lxxiv%
    \p@rtentiere{\p@rtent}{\v@lXa}\advance\p@rtent\@ne\else\p@rtent=\@ne\fi%
    \global\result@tent=\p@rtent}#1=\result@tent\ignorespaces}
\ctr@ln@m\getredf@ctB
\ctr@ld@f\def\getredf@ctBDD#1{\getredf@ctDD#1(\v@lX,\v@lY)}
\ctr@ld@f\def\getredf@ctBTD#1{\getredf@ctTD#1(\v@lX,\v@lY,\v@lZ)}
\ctr@ld@f\def\FigptintercircB@zDD#1:#2:#3,#4[#5,#6,#7,#8]{{\s@uvc@ntr@l\et@tfigptintercircB@zDD%
    \setc@ntr@l{2}\figvectPDD-1[#5,#8]\Figg@tXY{-1}\getredf@ctDD\f@ctech(\v@lX,\v@lY)%
    \mili@u=#4\unit@\divide\mili@u\f@ctech\c@rre=\repdecn@mb{\mili@u}\mili@u%
    \figptBezierDD-5::#3[#5,#6,#7,#8]%
    \v@lmin=#3\p@\v@lmax=\v@lmin\advance\v@lmax0.1\p@%
    \loop\edef\T@{\repdecn@mb{\v@lmax}}\figptBezierDD-2::\T@[#5,#6,#7,#8]%
    \figvectPDD-1[-5,-2]\n@rmeucCDD{\delt@}{-1}\ifdim\delt@<\c@rre\v@lmin=\v@lmax%
    \advance\v@lmax0.1\p@\repeat%
    \loop\mili@u=\v@lmin\advance\mili@u\v@lmax%
    \divide\mili@u\tw@\edef\T@{\repdecn@mb{\mili@u}}\figptBezierDD-2::\T@[#5,#6,#7,#8]%
    \figvectPDD-1[-5,-2]\n@rmeucCDD{\delt@}{-1}\ifdim\delt@>\c@rre\v@lmax=\mili@u%
    \else\v@lmin=\mili@u\fi\v@leur=\v@lmax\advance\v@leur-\v@lmin%
    \ifdim\v@leur>\epsil@n\repeat\figptcopyDD#1:#2/-2/%
    \resetc@ntr@l\et@tfigptintercircB@zDD}\ignorespaces}
\ctr@ln@m\figptinterlines
\ctr@ld@f\def\inters@cDD#1:#2[#3,#4;#5,#6]{{\s@uvc@ntr@l\et@tinters@cDD%
    \setc@ntr@l{2}\vecunit@{-1}{#4}\vecunit@{-2}{#6}%
    \Figg@tXY{-1}\setc@ntr@l{1}\Figg@tXYa{#3}%
    \edef\A@{\repdecn@mb{\v@lX}}\edef\B@{\repdecn@mb{\v@lY}}%
    \v@lmin=\B@\v@lXa\advance\v@lmin-\A@\v@lYa%
    \Figg@tXYa{#5}\setc@ntr@l{2}\Figg@tXY{-2}%
    \edef\C@{\repdecn@mb{\v@lX}}\edef\D@{\repdecn@mb{\v@lY}}%
    \v@lmax=\D@\v@lXa\advance\v@lmax-\C@\v@lYa%
    \delt@=\A@\v@lY\advance\delt@-\B@\v@lX%
    \invers@{\v@leur}{\delt@}\edef\v@ldelta{\repdecn@mb{\v@leur}}%
    \v@lXa=\A@\v@lmax\advance\v@lXa-\C@\v@lmin%
    \v@lYa=\B@\v@lmax\advance\v@lYa-\D@\v@lmin%
    \v@lXa=\v@ldelta\v@lXa\v@lYa=\v@ldelta\v@lYa%
    \setc@ntr@l{1}\Figp@intregDD#1:{#2}(\v@lXa,\v@lYa)%
    \resetc@ntr@l\et@tinters@cDD}\ignorespaces}
\ctr@ld@f\def\inters@cTD#1:#2[#3,#4;#5,#6]{{\s@uvc@ntr@l\et@tinters@cTD%
    \setc@ntr@l{2}\figvectNVTD-1[#4,#6]\figvectNVTD-2[#6,-1]\figvectPTD-1[#3,#5]%
    \r@pPSTD\v@leur[-2,-1,#4]\edef\v@lcoef{\repdecn@mb{\v@leur}}%
    \figpttraTD#1:{#2}=#3/\v@lcoef,#4/\resetc@ntr@l\et@tinters@cTD}\ignorespaces}
\ctr@ld@f\def\r@pPSTD#1[#2,#3,#4]{{\Figg@tXY{#2}\edef\Xu@{\repdecn@mb{\v@lX}}%
    \edef\Yu@{\repdecn@mb{\v@lY}}\edef\Zu@{\repdecn@mb{\v@lZ}}%
    \Figg@tXY{#3}\v@lmin=\Xu@\v@lX\advance\v@lmin\Yu@\v@lY\advance\v@lmin\Zu@\v@lZ%
    \Figg@tXY{#4}\v@lmax=\Xu@\v@lX\advance\v@lmax\Yu@\v@lY\advance\v@lmax\Zu@\v@lZ%
    \invers@{\v@leur}{\v@lmax}\global\result@t=\repdecn@mb{\v@leur}\v@lmin}%
    #1=\result@t}
\ctr@ln@m\n@rminf
\ctr@ld@f\def\n@rminfDD#1#2{{\Figg@tXY{#2}\maxim@m{\v@lX}{\v@lX}{-\v@lX}%
    \maxim@m{\v@lY}{\v@lY}{-\v@lY}\maxim@m{\global\result@t}{\v@lX}{\v@lY}}%
    #1=\result@t}
\ctr@ld@f\def\n@rminfTD#1#2{{\Figg@tXY{#2}\maxim@m{\v@lX}{\v@lX}{-\v@lX}%
    \maxim@m{\v@lY}{\v@lY}{-\v@lY}\maxim@m{\v@lZ}{\v@lZ}{-\v@lZ}%
    \maxim@m{\v@lX}{\v@lX}{\v@lY}\maxim@m{\global\result@t}{\v@lX}{\v@lZ}}%
    #1=\result@t}
\ctr@ln@m\n@rmeucC
\ctr@ld@f\def\n@rmeucCDD#1#2{\Figg@tXY{#2}\divide\v@lX\f@ctech\divide\v@lY\f@ctech%
    #1=\repdecn@mb{\v@lX}\v@lX\v@lX=\repdecn@mb{\v@lY}\v@lY\advance#1\v@lX}
\ctr@ld@f\def\n@rmeucCTD#1#2{\Figg@tXY{#2}%
    \divide\v@lX\f@ctech\divide\v@lY\f@ctech\divide\v@lZ\f@ctech%
    #1=\repdecn@mb{\v@lX}\v@lX\v@lX=\repdecn@mb{\v@lY}\v@lY\advance#1\v@lX%
    \v@lX=\repdecn@mb{\v@lZ}\v@lZ\advance#1\v@lX}
\ctr@ln@m\n@rmeucSV
\ctr@ld@f\def\n@rmeucSVDD#1#2{{\Figg@tXY{#2}%
    \v@lXa=\repdecn@mb{\v@lX}\v@lX\v@lYa=\repdecn@mb{\v@lY}\v@lY%
    \advance\v@lXa\v@lYa\sqrt@{\global\result@t}{\v@lXa}}#1=\result@t}
\ctr@ld@f\def\n@rmeucSVTD#1#2{{\Figg@tXY{#2}\v@lXa=\repdecn@mb{\v@lX}\v@lX%
    \v@lYa=\repdecn@mb{\v@lY}\v@lY\v@lZa=\repdecn@mb{\v@lZ}\v@lZ%
    \advance\v@lXa\v@lYa\advance\v@lXa\v@lZa\sqrt@{\global\result@t}{\v@lXa}}#1=\result@t}
\ctr@ln@m\n@rmeuc
\ctr@ld@f\def\n@rmeucDD#1#2{{\Figg@tXY{#2}\getredf@ctDD\f@ctech(\v@lX,\v@lY)%
    \divide\v@lX\f@ctech\divide\v@lY\f@ctech%
    \v@lXa=\repdecn@mb{\v@lX}\v@lX\v@lYa=\repdecn@mb{\v@lY}\v@lY%
    \advance\v@lXa\v@lYa\sqrt@{\global\result@t}{\v@lXa}%
    \global\multiply\result@t\f@ctech}#1=\result@t}
\ctr@ld@f\def\n@rmeucTD#1#2{{\Figg@tXY{#2}\getredf@ctTD\f@ctech(\v@lX,\v@lY,\v@lZ)%
    \divide\v@lX\f@ctech\divide\v@lY\f@ctech\divide\v@lZ\f@ctech%
    \v@lXa=\repdecn@mb{\v@lX}\v@lX%
    \v@lYa=\repdecn@mb{\v@lY}\v@lY\v@lZa=\repdecn@mb{\v@lZ}\v@lZ%
    \advance\v@lXa\v@lYa\advance\v@lXa\v@lZa\sqrt@{\global\result@t}{\v@lXa}%
    \global\multiply\result@t\f@ctech}#1=\result@t}
\ctr@ln@m\vecunit@
\ctr@ld@f\def\vecunit@DD#1#2{{\Figg@tXY{#2}\getredf@ctDD\f@ctech(\v@lX,\v@lY)%
    \divide\v@lX\f@ctech\divide\v@lY\f@ctech%
    \Figv@ctCreg#1(\v@lX,\v@lY)\n@rmeucSV{\v@lYa}{#1}%
    \invers@{\v@lXa}{\v@lYa}\edef\v@lv@lXa{\repdecn@mb{\v@lXa}}%
    \v@lX=\v@lv@lXa\v@lX\v@lY=\v@lv@lXa\v@lY%
    \Figv@ctCreg#1(\v@lX,\v@lY)\multiply\v@lYa\f@ctech\global\result@t=\v@lYa}}
\ctr@ld@f\def\vecunit@TD#1#2{{\Figg@tXY{#2}\getredf@ctTD\f@ctech(\v@lX,\v@lY,\v@lZ)%
    \divide\v@lX\f@ctech\divide\v@lY\f@ctech\divide\v@lZ\f@ctech%
    \Figv@ctCreg#1(\v@lX,\v@lY,\v@lZ)\n@rmeucSV{\v@lYa}{#1}%
    \invers@{\v@lXa}{\v@lYa}\edef\v@lv@lXa{\repdecn@mb{\v@lXa}}%
    \v@lX=\v@lv@lXa\v@lX\v@lY=\v@lv@lXa\v@lY\v@lZ=\v@lv@lXa\v@lZ%
    \Figv@ctCreg#1(\v@lX,\v@lY,\v@lZ)\multiply\v@lYa\f@ctech\global\result@t=\v@lYa}}
\ctr@ld@f\def\vecunitC@TD[#1,#2]{\Figg@tXYa{#1}\Figg@tXY{#2}%
    \advance\v@lX-\v@lXa\advance\v@lY-\v@lYa\advance\v@lZ-\v@lZa\c@lvecunitTD}
\ctr@ld@f\def\vecunitCV@TD#1{\Figg@tXY{#1}\c@lvecunitTD}
\ctr@ld@f\def\c@lvecunitTD{\getredf@ctTD\f@ctech(\v@lX,\v@lY,\v@lZ)%
    \divide\v@lX\f@ctech\divide\v@lY\f@ctech\divide\v@lZ\f@ctech%
    \v@lXa=\repdecn@mb{\v@lX}\v@lX%
    \v@lYa=\repdecn@mb{\v@lY}\v@lY\v@lZa=\repdecn@mb{\v@lZ}\v@lZ%
    \advance\v@lXa\v@lYa\advance\v@lXa\v@lZa\sqrt@{\v@lYa}{\v@lXa}%
    \invers@{\v@lXa}{\v@lYa}\edef\v@lv@lXa{\repdecn@mb{\v@lXa}}%
    \v@lX=\v@lv@lXa\v@lX\v@lY=\v@lv@lXa\v@lY\v@lZ=\v@lv@lXa\v@lZ}
\ctr@ln@m\figgetangle
\ctr@ld@f\def\figgetangleDD#1[#2,#3,#4]{\ifGR@cri{\s@uvc@ntr@l\et@tfiggetangleDD\setc@ntr@l{2}%
    \figvectPDD-1[#2,#3]\figvectPDD-2[#2,#4]\vecunit@{-1}{-1}%
    \c@lproscalDD\delt@[-2,-1]\figvectNVDD-1[-1]\c@lproscalDD\v@leur[-2,-1]%
    \arct@n\v@lmax(\delt@,\v@leur)\v@lmax=\rdT@deg\v@lmax%
    \ifdim\v@lmax<\z@\advance\v@lmax\DePI@deg\fi\xdef#1{\repdecn@mb{\v@lmax}}%
    \resetc@ntr@l\et@tfiggetangleDD}\ignorespaces\fi}
\ctr@ld@f\def\figgetangleTD#1[#2,#3,#4,#5]{\ifGR@cri{\s@uvc@ntr@l\et@tfiggetangleTD\setc@ntr@l{2}%
    \figvectPTD-1[#2,#3]\figvectPTD-2[#2,#5]\figvectNVTD-3[-1,-2]%
    \figvectPTD-2[#2,#4]\figvectNVTD-4[-3,-1]%
    \vecunit@{-1}{-1}\c@lproscalTD\delt@[-2,-1]\c@lproscalTD\v@leur[-2,-4]%
    \arct@n\v@lmax(\delt@,\v@leur)\v@lmax=\rdT@deg\v@lmax%
    \ifdim\v@lmax<\z@\advance\v@lmax\DePI@deg\fi\xdef#1{\repdecn@mb{\v@lmax}}%
    \resetc@ntr@l\et@tfiggetangleTD}\ignorespaces\fi}    
\ctr@ld@f\def\figgetdist#1[#2,#3]{\ifGR@cri{\s@uvc@ntr@l\et@tfiggetdist\setc@ntr@l{2}%
    \figvectP-1[#2,#3]\n@rmeuc{\v@lX}{-1}\v@lX=\ptT@unit@\v@lX\xdef#1{\repdecn@mb{\v@lX}}%
    \resetc@ntr@l\et@tfiggetdist}\ignorespaces\fi}
\ctr@ld@f\def\figget#1=#2[#3]{\keln@mun#1|%
    \def\n@mref{a}\ifx\l@debut\n@mref\figgetangle#2[#3]\else% angle
    \def\n@mref{d}\ifx\l@debut\n@mref\figgetdist#2[#3]\else% distance
    \W@rnmeskwd{figget}{#1}\fi\fi\ignorespaces}
\ctr@ld@f\def\Figg@tT#1{\c@ntr@lnum{#1}%
    {\expandafter\expandafter\expandafter\extr@ctT\csname\objc@de\endcsname:%
     \ifnum\B@@ltxt=\z@\ptn@me{#1}\else\csname\objc@de T\endcsname\fi}}
\ctr@ld@f\def\extr@ctT#1,#2,#3/#4:{\def\B@@ltxt{#3}}
\ctr@ld@f\def\Figg@tXY#1{\c@ntr@lnum{#1}%
    \expandafter\expandafter\expandafter\extr@ctC\csname\objc@de\endcsname:}
\ctr@ln@m\extr@ctC
\ctr@ld@f\def\extr@ctCDD#1/#2,#3,#4:{\v@lX=#2\v@lY=#3}
\ctr@ld@f\def\extr@ctCTD#1/#2,#3,#4:{\v@lX=#2\v@lY=#3\v@lZ=#4}
\ctr@ld@f\def\Figg@tXYa#1{\c@ntr@lnum{#1}%
    \expandafter\expandafter\expandafter\extr@ctCa\csname\objc@de\endcsname:}
\ctr@ln@m\extr@ctCa
\ctr@ld@f\def\extr@ctCaDD#1/#2,#3,#4:{\v@lXa=#2\v@lYa=#3}
\ctr@ld@f\def\extr@ctCaTD#1/#2,#3,#4:{\v@lXa=#2\v@lYa=#3\v@lZa=#4}
\ctr@ln@m\t@xt@
\ctr@ld@f\def\figinit#1{\t@stc@tcodech@nge\initpr@lim\Figinit@#1,:\initpss@ttings\ignorespaces}
\ctr@ld@f\def\Figinit@#1,#2:{\setunit@{#1}\def\t@xt@{#2}\ifx\t@xt@\empty\else\Figinit@@#2:\fi}
\ctr@ld@f\def\Figinit@@#1#2:{\if#12 \else\Figs@tproj{#1}\initTD@\fi}
\ctr@ln@w{newif}\ifTr@isDim
\ctr@ld@f\def\UnD@fined{UNDEFINED}
\ctr@ln@m\@utoFN
\ctr@ln@m\@utoFInDone
\ctr@ln@m\disob@unit
\ctr@ld@f\def\initpr@lim{\initb@undb@x\figsetmark{}\figsetptname{$A_{##1}$}\def\Sc@leFact{1}%
    \initDD@\figsetroundcoord{yes}\GR@critrue\expandafter\setupd@te\D@FTupdate:%
    \edef\disob@unit{\UnD@fined}\edef\t@rgetpt{\UnD@fined}\gdef\@utoFInDone{1}\gdef\@utoFN{0}}
\ctr@ld@f\def\initDD@{\Tr@isDimfalse%
    \ifPDFm@ke%
     \let\Ps@rcerc=\Ps@rcercBz%
     \let\Ps@rell=\Ps@rellBz%
    \fi
    \let\c@lDCUn=\c@lDCUnDD%
    \let\c@lDCDeux=\c@lDCDeuxDD%
    \let\c@ldefproj=\relax%
    \let\c@lproscal=\c@lproscalDD%
    \let\c@lprojSP=\relax%
    \let\extr@ctC=\extr@ctCDD%
    \let\extr@ctCa=\extr@ctCaDD%
    \let\extr@ctCF=\extr@ctCFDD%
    \let\Figp@intreg=\Figp@intregDD%
    \let\Figpts@xes=\Figpts@xesDD%
    \let\getredf@ctB=\getredf@ctBDD%
    \let\n@rmeucSV=\n@rmeucSVDD\let\n@rmeuc=\n@rmeucDD\let\n@rmeucC\n@rmeucCDD\let\n@rminf=\n@rminfDD%
    \let\pr@dMatV=\pr@dMatVDD%
    \let\Q@@xes=\Q@@xesDD%
    \let\vecunit@=\vecunit@DD%
    \let\figcoord=\figcoordDD%
    \let\figgetangle=\figgetangleDD%
    \let\figpt=\figptDD%
    \let\figptBezier=\figptBezierDD%
    \let\figptbary=\figptbaryDD%
    \let\figptcirc=\figptcircDD%
    \let\figptcircumcenter=\figptcircumcenterDD%
    \let\figptcopy=\figptcopyDD%
    \let\figptcurvcenter=\figptcurvcenterDD%
    \let\figptell=\figptellDD%
    \let\figptendnormal=\figptendnormalDD%
    \let\figptinterlineplane=\figptinterlineplaneDD%
    \let\figptinterlines=\inters@cDD%
    \let\figptorthocenter=\figptorthocenterDD%
    \let\figptorthoprojline=\figptorthoprojlineDD%
    \let\figptorthoprojplane=\figptorthoprojplaneDD%
    \let\figptrot=\figptrotDD%
    \let\figptscontrol=\figptscontrolDD%
    \let\figptsintercirc=\figptsintercircDD%
    \let\figptsinterlinell=\figptsinterlinellDD%
    \let\figptsorthoprojline=\figptsorthoprojlineDD%
    \let\figptorthoprojplane=\figptorthoprojplaneDD%
    \let\figptsrot=\figptsrotDD%
    \let\figptssym=\figptssymDD%
    \let\figptstra=\figptstraDD%
    \let\figptsym=\figptsymDD%
    \let\figpttraC=\figpttraCDD%
    \let\figpttra=\figpttraDD%
    \let\figptvisilimSL=\figptvisilimSLDD%
    \let\figsetobdist=\figsetobdistDD%
    \let\figsettarget=\figsettargetDD%
    \let\figsetview=\figsetviewDD%
    \let\figvectDBezier=\figvectDBezierDD%
    \let\figvectN=\figvectNDD%
    \let\figvectNV=\figvectNVDD%
    \let\figvectP=\figvectPDD%
    \let\figvectU=\figvectUDD%
    \let\figdrawarccircP=\Q@arccircPDD%
    \let\figdrawarccirc=\Q@arccircDD%
    \let\figdrawarcell=\Q@arcellDD%
    \let\figdrawarcellPA=\Q@arcellPADD%
    \let\figdrawarrowBezier=\Q@arrowBezierDD%
    \let\figdrawarrowcircP=\Q@arrowcircPDD%
    \let\figdrawarrowcirc=\Q@arrowcircDD%
    \let\figdrawarrowhead=\Q@arrowheadDD%
    \let\figdrawarrow=\Q@arrowDD%
    \let\figdrawBezier=\Q@BezierDD%
    \let\figdrawcirc=\Q@circDD%
    \let\figdrawcurve=\Q@curveDD%
    \let\figdrawnormal=\Q@normalDD%
    }
\ctr@ld@f\def\initTD@{\Tr@isDimtrue\initb@undb@xTD\newt@rgetptfalse\newdis@bfalse%
    \let\c@lDCUn=\c@lDCUnTD%
    \let\c@lDCDeux=\c@lDCDeuxTD%
    \let\c@ldefproj=\c@ldefprojTD%
    \let\c@lproscal=\c@lproscalTD%
    \let\extr@ctC=\extr@ctCTD%
    \let\extr@ctCa=\extr@ctCaTD%
    \let\extr@ctCF=\extr@ctCFTD%
    \let\Figp@intreg=\Figp@intregTD%
    \let\Figpts@xes=\Figpts@xesTD%
    \let\getredf@ctB=\getredf@ctBTD%
    \let\n@rmeucSV=\n@rmeucSVTD\let\n@rmeuc=\n@rmeucTD\let\n@rmeucC\n@rmeucCTD\let\n@rminf=\n@rminfTD%
    \let\pr@dMatV=\pr@dMatVTD%
    \let\Q@@xes=\Q@@xesTD%
    \let\vecunit@=\vecunit@TD%
    \let\figcoord=\figcoordTD%
    \let\figgetangle=\figgetangleTD%
    \let\figpt=\figptTD%
    \let\figptBezier=\figptBezierTD%
    \let\figptbary=\figptbaryTD%
    \let\figptcirc=\figptcircTD%
    \let\figptcircumcenter=\figptcircumcenterTD%
    \let\figptcopy=\figptcopyTD%
    \let\figptcurvcenter=\figptcurvcenterTD%
    \let\figptinterlineplane=\figptinterlineplaneTD%
    \let\figptinterlines=\inters@cTD%
    \let\figptorthocenter=\figptorthocenterTD%
    \let\figptorthoprojline=\figptorthoprojlineTD%
    \let\figptorthoprojplane=\figptorthoprojplaneTD%
    \let\figptrot=\figptrotTD%
    \let\figptscontrol=\figptscontrolTD%
    \let\figptsintercirc=\figptsintercircTD%
    \let\figptsorthoprojline=\figptsorthoprojlineTD%
    \let\figptsorthoprojplane=\figptsorthoprojplaneTD%
    \let\figptsrot=\figptsrotTD%
    \let\figptssym=\figptssymTD%
    \let\figptstra=\figptstraTD%
    \let\figptsym=\figptsymTD%
    \let\figpttraC=\figpttraCTD%
    \let\figpttra=\figpttraTD%
    \let\figptvisilimSL=\figptvisilimSLTD%
    \let\figsetobdist=\figsetobdistTD%
    \let\figsettarget=\figsettargetTD%
    \let\figsetview=\figsetviewTD%
    \let\figvectDBezier=\figvectDBezierTD%
    \let\figvectN=\figvectNTD%
    \let\figvectNV=\figvectNVTD%
    \let\figvectP=\figvectPTD%
    \let\figvectU=\figvectUTD%
    \let\figdrawarccircP=\Q@arccircPTD%
    \let\figdrawarccirc=\Q@arccircTD%
    \let\figdrawarcell=\Q@arcellTD%
    \let\figdrawarcellPA=\Q@arcellPATD%
    \let\figdrawarrowBezier=\Q@arrowBezierTD%
    \let\figdrawarrowcircP=\Q@arrowcircPTD%
    \let\figdrawarrowcirc=\Q@arrowcircTD%
    \let\figdrawarrowhead=\Q@arrowheadTD%
    \let\figdrawarrow=\Q@arrowTD%
    \let\figdrawBezier=\Q@BezierTD%
    \let\figdrawcirc=\Q@circTD%
    \let\figdrawcurve=\Q@curveTD%
    }
\ctr@ld@f\def\un@v@ilable#1{\immediate\write16{*** The macro #1 is not available in the current context.}}
\ctr@ld@f\def\figinsert#1{{\def\t@xt@{#1}\relax%
    \ifx\t@xt@\empty\ifnum\@utoFInDone>\z@\Figinsert@\DefGIfilen@me,:\fi%
    \else\expandafter\FiginsertNu@#1 :\fi}\ignorespaces}
\ctr@ld@f\def\FiginsertNu@#1 #2:{\def\t@xt@{#1}\relax\ifx\t@xt@\empty\def\t@xt@{#2}%
    \ifx\t@xt@\empty\ifnum\@utoFInDone>\z@\Figinsert@\DefGIfilen@me,:\fi%
    \else\FiginsertNu@#2:\fi\else\expandafter\FiginsertNd@#1 #2:\fi}
\ctr@ld@f\def\FiginsertNd@#1#2:{\ifcat#1a\Figinsert@#1#2,:\else%
    \ifnum\@utoFInDone>\z@\Figinsert@\DefGIfilen@me,#1#2,:\fi\fi}
\ctr@ln@m\Sc@leFact
\ctr@ld@f\def\Figinsert@#1,#2:{\def\t@xt@{#2}\ifx\t@xt@\empty\xdef\Sc@leFact{1}\else%
    \X@rgdeux@#2\xdef\Sc@leFact{\@rgdeux}\fi%
    \Figdisc@rdLTS{#1}{\t@xt@}\@psfgetbb{\t@xt@}%
    \v@lX=\@psfllx\p@\v@lX=\ptpsT@pt\v@lX\v@lX=\Sc@leFact\v@lX%
    \v@lY=\@psflly\p@\v@lY=\ptpsT@pt\v@lY\v@lY=\Sc@leFact\v@lY%
    \b@undb@x{\v@lX}{\v@lY}%
    \v@lX=\@psfurx\p@\v@lX=\ptpsT@pt\v@lX\v@lX=\Sc@leFact\v@lX%
    \v@lY=\@psfury\p@\v@lY=\ptpsT@pt\v@lY\v@lY=\Sc@leFact\v@lY%
    \b@undb@x{\v@lX}{\v@lY}%
    \ifPDFm@ke\Figinclud@PDF{\t@xt@}{\Sc@leFact}\else%
    \v@lX=\c@nt pt\v@lX=\Sc@leFact\v@lX\edef\F@ct{\repdecn@mb{\v@lX}}%
    \ifx\TeXturesonMacOSltX\special{postscriptfile #1 vscale=\F@ct\space hscale=\F@ct}%
    \else\includegraphics{#1}\fi\fi%
    \message{[\t@xt@]}\ignorespaces}
\ctr@ld@f\def\Figdisc@rdLTS#1#2{\expandafter\Figdisc@rdLTS@#1 :#2}
\ctr@ld@f\def\Figdisc@rdLTS@#1 #2:#3{\def#3{#1}\relax\ifx#3\empty\expandafter\Figdisc@rdLTS@#2:#3\fi}
\ctr@ld@f\def\figinsertE#1{\FiginsertE@#1,:\ignorespaces}
\ctr@ld@f\def\FiginsertE@#1,#2:{{\def\t@xt@{#2}\ifx\t@xt@\empty\xdef\Sc@leFact{1}\else%
    \X@rgdeux@#2\xdef\Sc@leFact{\@rgdeux}\fi%
    \Figdisc@rdLTS{#1}{\t@xt@}\pdfximage{\t@xt@}%
    \setbox\Gb@x=\hbox{\pdfrefximage\pdflastximage}%
    \v@lX=\z@\v@lY=-\Sc@leFact\dp\Gb@x\b@undb@x{\v@lX}{\v@lY}%
    \advance\v@lX\Sc@leFact\wd\Gb@x\advance\v@lY\Sc@leFact\dp\Gb@x%
    \advance\v@lY\Sc@leFact\ht\Gb@x\b@undb@x{\v@lX}{\v@lY}%
    \v@lX=\Sc@leFact\wd\Gb@x\pdfximage width \v@lX {\t@xt@}%
    \rlap{\pdfrefximage\pdflastximage}\message{[\t@xt@]}}\ignorespaces}
\ctr@ld@f\def\X@rgdeux@#1,{\edef\@rgdeux{#1}}
\ctr@ln@m\figpt
\ctr@ld@f\def\figptDD#1:#2(#3,#4){\ifGR@cri\c@ntr@lnum{#1}%
    {\v@lX=#3\unit@\v@lY=#4\unit@\Fig@dmpt{#2}{\z@}}\ignorespaces\fi}
\ctr@ld@f\def\Fig@dmpt#1#2{\def\t@xt@{#1}\ifx\t@xt@\empty\def\B@@ltxt{\z@}%
    \else\expandafter\gdef\csname\objc@de T\endcsname{#1}\def\B@@ltxt{\@ne}\fi%
    \expandafter\xdef\csname\objc@de\endcsname{\ifitis@vect@r\C@dCl@svect%
    \else\C@dCl@spt\fi,\z@,\B@@ltxt/\the\v@lX,\the\v@lY,#2}}
\ctr@ld@f\def\C@dCl@spt{P}
\ctr@ld@f\def\C@dCl@svect{V}
\ctr@ln@m\c@@rdYZ
\ctr@ln@m\c@@rdY
\ctr@ld@f\def\figptTD#1:#2(#3,#4){\ifGR@cri\c@ntr@lnum{#1}%
    \def\c@@rdYZ{#4,0,0}\extrairelepremi@r\c@@rdY\de\c@@rdYZ%
    \extrairelepremi@r\c@@rdZ\de\c@@rdYZ%
    {\v@lX=#3\unit@\v@lY=\c@@rdY\unit@\v@lZ=\c@@rdZ\unit@\Fig@dmpt{#2}{\the\v@lZ}%
    \b@undb@xTD{\v@lX}{\v@lY}{\v@lZ}}\ignorespaces\fi}
\ctr@ln@m\Figp@intreg
\ctr@ld@f\def\Figp@intregDD#1:#2(#3,#4){\c@ntr@lnum{#1}%
    {\result@t=#4\v@lX=#3\v@lY=\result@t\Fig@dmpt{#2}{\z@}}\ignorespaces}
\ctr@ld@f\def\Figp@intregTD#1:#2(#3,#4){\c@ntr@lnum{#1}%
    \def\c@@rdYZ{#4,\z@,\z@}\extrairelepremi@r\c@@rdY\de\c@@rdYZ%
    \extrairelepremi@r\c@@rdZ\de\c@@rdYZ%
    {\v@lX=#3\v@lY=\c@@rdY\v@lZ=\c@@rdZ\Fig@dmpt{#2}{\the\v@lZ}%
    \b@undb@xTD{\v@lX}{\v@lY}{\v@lZ}}\ignorespaces}
\ctr@ln@m\figptBezier
\ctr@ld@f\def\figptBezierDD#1:#2:#3[#4,#5,#6,#7]{\ifGR@cri{\s@uvc@ntr@l\et@tfigptBezierDD%
    \FigptBezier@#3[#4,#5,#6,#7]\Figp@intregDD#1:{#2}(\v@lX,\v@lY)%
    \resetc@ntr@l\et@tfigptBezierDD}\ignorespaces\fi}
\ctr@ld@f\def\figptBezierTD#1:#2:#3[#4,#5,#6,#7]{\ifGR@cri{\s@uvc@ntr@l\et@tfigptBezierTD%
    \FigptBezier@#3[#4,#5,#6,#7]\Figp@intregTD#1:{#2}(\v@lX,\v@lY,\v@lZ)%
    \resetc@ntr@l\et@tfigptBezierTD}\ignorespaces\fi}
\ctr@ld@f\def\FigptBezier@#1[#2,#3,#4,#5]{\setc@ntr@l{2}%
    \edef\T@{#1}\v@leur=\p@\advance\v@leur-#1pt\edef\UNmT@{\repdecn@mb{\v@leur}}%
    \figptcopy-4:/#2/\figptcopy-3:/#3/\figptcopy-2:/#4/\figptcopy-1:/#5/%
    \l@mbd@un=-4 \l@mbd@de=-\thr@@\p@rtent=\m@ne\c@lDecast%
    \l@mbd@un=-4 \l@mbd@de=-\thr@@\p@rtent=-\tw@\c@lDecast%
    \l@mbd@un=-4 \l@mbd@de=-\thr@@\p@rtent=-\thr@@\c@lDecast\Figg@tXY{-4}}
\ctr@ln@m\c@lDCUn
\ctr@ld@f\def\c@lDCUnDD#1#2{\Figg@tXY{#1}\v@lX=\UNmT@\v@lX\v@lY=\UNmT@\v@lY%
    \Figg@tXYa{#2}\advance\v@lX\T@\v@lXa\advance\v@lY\T@\v@lYa%
    \Figp@intregDD#1:(\v@lX,\v@lY)}
\ctr@ld@f\def\c@lDCUnTD#1#2{\Figg@tXY{#1}\v@lX=\UNmT@\v@lX\v@lY=\UNmT@\v@lY\v@lZ=\UNmT@\v@lZ%
    \Figg@tXYa{#2}\advance\v@lX\T@\v@lXa\advance\v@lY\T@\v@lYa\advance\v@lZ\T@\v@lZa%
    \Figp@intregTD#1:(\v@lX,\v@lY,\v@lZ)}
\ctr@ld@f\def\c@lDecast{\relax\ifnum\l@mbd@un<\p@rtent\c@lDCUn{\l@mbd@un}{\l@mbd@de}%
    \advance\l@mbd@un\@ne\advance\l@mbd@de\@ne\c@lDecast\fi}
\ctr@ld@f\def\figptmap#1:#2=#3/#4/#5/{\ifGR@cri{\s@uvc@ntr@l\et@tfigptmap%
    \setc@ntr@l{2}\figvectP-1[#4,#3]\Figg@tXY{-1}%
    \pr@dMatV/#5/\figpttra#1:{#2}=#4/1,-1/%
    \resetc@ntr@l\et@tfigptmap}\ignorespaces\fi}
\ctr@ln@m\pr@dMatV
\ctr@ld@f\def\pr@dMatVDD/#1,#2;#3,#4/{\v@lXa=#1\v@lX\advance\v@lXa#2\v@lY%
    \v@lYa=#3\v@lX\advance\v@lYa#4\v@lY\Figv@ctCreg-1(\v@lXa,\v@lYa)}
\ctr@ld@f\def\pr@dMatVTD/#1,#2,#3;#4,#5,#6;#7,#8,#9/{%
    \v@lXa=#1\v@lX\advance\v@lXa#2\v@lY\advance\v@lXa#3\v@lZ%
    \v@lYa=#4\v@lX\advance\v@lYa#5\v@lY\advance\v@lYa#6\v@lZ%
    \v@lZa=#7\v@lX\advance\v@lZa#8\v@lY\advance\v@lZa#9\v@lZ%
    \Figv@ctCreg-1(\v@lXa,\v@lYa,\v@lZa)}
\ctr@ln@m\figptbary
\ctr@ld@f\def\figptbaryDD#1:#2[#3;#4]{\ifGR@cri{\edef\list@num{#3}\extrairelepremi@r\p@int\de\list@num%
    \s@mme=\z@\@ecfor\c@ef:=#4\do{\advance\s@mme\c@ef}%
    \edef\listec@ef{#4,0}\extrairelepremi@r\c@ef\de\listec@ef%
    \Figg@tXY{\p@int}\divide\v@lX\s@mme\divide\v@lY\s@mme%
    \multiply\v@lX\c@ef\multiply\v@lY\c@ef%
    \@ecfor\p@int:=\list@num\do{\extrairelepremi@r\c@ef\de\listec@ef%
           \Figg@tXYa{\p@int}\divide\v@lXa\s@mme\divide\v@lYa\s@mme%
           \multiply\v@lXa\c@ef\multiply\v@lYa\c@ef%
           \advance\v@lX\v@lXa\advance\v@lY\v@lYa}%
    \Figp@intregDD#1:{#2}(\v@lX,\v@lY)}\ignorespaces\fi}
\ctr@ld@f\def\figptbaryTD#1:#2[#3;#4]{\ifGR@cri{\edef\list@num{#3}\extrairelepremi@r\p@int\de\list@num%
    \s@mme=\z@\@ecfor\c@ef:=#4\do{\advance\s@mme\c@ef}%
    \edef\listec@ef{#4,0}\extrairelepremi@r\c@ef\de\listec@ef%
    \Figg@tXY{\p@int}\divide\v@lX\s@mme\divide\v@lY\s@mme\divide\v@lZ\s@mme%
    \multiply\v@lX\c@ef\multiply\v@lY\c@ef\multiply\v@lZ\c@ef%
    \@ecfor\p@int:=\list@num\do{\extrairelepremi@r\c@ef\de\listec@ef%
           \Figg@tXYa{\p@int}\divide\v@lXa\s@mme\divide\v@lYa\s@mme\divide\v@lZa\s@mme%
           \multiply\v@lXa\c@ef\multiply\v@lYa\c@ef\multiply\v@lZa\c@ef%
           \advance\v@lX\v@lXa\advance\v@lY\v@lYa\advance\v@lZ\v@lZa}%
    \Figp@intregTD#1:{#2}(\v@lX,\v@lY,\v@lZ)}\ignorespaces\fi}
\ctr@ld@f\def\figptbaryR#1:#2[#3;#4]{\ifGR@cri{%
    \v@leur=\z@\@ecfor\c@ef:=#4\do{\maxim@m{\v@lmax}{\c@ef pt}{-\c@ef pt}%
    \ifdim\v@lmax>\v@leur\v@leur=\v@lmax\fi}%
    \ifdim\v@leur<\p@\f@ctech=\@M\else\ifdim\v@leur<\t@n\p@\f@ctech=\@m\else%
    \ifdim\v@leur<\c@nt\p@\f@ctech=\c@nt\else\ifdim\v@leur<\@m\p@\f@ctech=\t@n\else%
    \f@ctech=\@ne\fi\fi\fi\fi%
    \def\listec@ef{0}%
    \@ecfor\c@ef:=#4\do{\sc@lec@nvRI{\c@ef pt}\edef\listec@ef{\listec@ef,\the\s@mme}}%
    \extrairelepremi@r\c@ef\de\listec@ef\figptbary#1:#2[#3;\listec@ef]}\ignorespaces\fi}
\ctr@ld@f\def\sc@lec@nvRI#1{\v@leur=#1\p@rtentiere{\s@mme}{\v@leur}\advance\v@leur-\s@mme\p@%
    \multiply\v@leur\f@ctech\p@rtentiere{\p@rtent}{\v@leur}%
    \multiply\s@mme\f@ctech\advance\s@mme\p@rtent}
\ctr@ln@m\figptcirc
\ctr@ld@f\def\figptcircDD#1:#2:#3;#4(#5){\ifGR@cri{\s@uvc@ntr@l\et@tfigptcircDD%
    \c@lptellDD#1:{#2}:#3;#4,#4(#5)\resetc@ntr@l\et@tfigptcircDD}\ignorespaces\fi}
\ctr@ld@f\def\figptcircTD#1:#2:#3,#4,#5;#6(#7){\ifGR@cri{\s@uvc@ntr@l\et@tfigptcircTD%
    \setc@ntr@l{2}\c@lExtAxes#3,#4,#5(#6)\figptellP#1:{#2}:#3,-4,-5(#7)%
    \resetc@ntr@l\et@tfigptcircTD}\ignorespaces\fi}
\ctr@ln@m\figptcircumcenter
\ctr@ld@f\def\figptcircumcenterDD#1:#2[#3,#4,#5]{\ifGR@cri{\s@uvc@ntr@l\et@tfigptcircumcenterDD%
    \setc@ntr@l{2}\figvectNDD-5[#3,#4]\figptbaryDD-3:[#3,#4;1,1]%
                  \figvectNDD-6[#4,#5]\figptbaryDD-4:[#4,#5;1,1]%
    \resetc@ntr@l{2}\inters@cDD#1:{#2}[-3,-5;-4,-6]%
    \resetc@ntr@l\et@tfigptcircumcenterDD}\ignorespaces\fi}
\ctr@ld@f\def\figptcircumcenterTD#1:#2[#3,#4,#5]{\ifGR@cri{\s@uvc@ntr@l\et@tfigptcircumcenterTD%
    \setc@ntr@l{2}\figvectNTD-1[#3,#4,#5]%
    \figvectPTD-3[#3,#4]\figvectNVTD-5[-1,-3]\figptbaryTD-3:[#3,#4;1,1]%
    \figvectPTD-4[#4,#5]\figvectNVTD-6[-1,-4]\figptbaryTD-4:[#4,#5;1,1]%
    \resetc@ntr@l{2}\inters@cTD#1:{#2}[-3,-5;-4,-6]%
    \resetc@ntr@l\et@tfigptcircumcenterTD}\ignorespaces\fi}
\ctr@ln@m\figptcopy
\ctr@ld@f\def\figptcopyDD#1:#2/#3/{\ifGR@cri{\Figg@tXY{#3}%
    \Figp@intregDD#1:{#2}(\v@lX,\v@lY)}\ignorespaces\fi}
\ctr@ld@f\def\figptcopyTD#1:#2/#3/{\ifGR@cri{\Figg@tXY{#3}%
    \Figp@intregTD#1:{#2}(\v@lX,\v@lY,\v@lZ)}\ignorespaces\fi}
\ctr@ln@m\figptcurvcenter
\ctr@ld@f\def\figptcurvcenterDD#1:#2:#3[#4,#5,#6,#7]{\ifGR@cri{\s@uvc@ntr@l\et@tfigptcurvcenterDD%
    \setc@ntr@l{2}\c@lcurvradDD#3[#4,#5,#6,#7]\edef\Sprim@{\repdecn@mb{\result@t}}%
    \figptBezierDD-1::#3[#4,#5,#6,#7]\figpttraDD#1:{#2}=-1/\Sprim@,-5/%
    \resetc@ntr@l\et@tfigptcurvcenterDD}\ignorespaces\fi}
\ctr@ld@f\def\figptcurvcenterTD#1:#2:#3[#4,#5,#6,#7]{\ifGR@cri{\s@uvc@ntr@l\et@tfigptcurvcenterTD%
    \setc@ntr@l{2}\figvectDBezierTD -5:1,#3[#4,#5,#6,#7]%
    \figvectDBezierTD -6:2,#3[#4,#5,#6,#7]\vecunit@TD{-5}{-5}%
    \edef\Sprim@{\repdecn@mb{\result@t}}\figvectNVTD-1[-6,-5]%
    \figvectNVTD-5[-5,-1]\c@lproscalTD\v@leur[-6,-5]%
    \invers@{\v@leur}{\v@leur}\v@leur=\Sprim@\v@leur\v@leur=\Sprim@\v@leur%
    \figptBezierTD-1::#3[#4,#5,#6,#7]\edef\Sprim@{\repdecn@mb{\v@leur}}%
    \figpttraTD#1:{#2}=-1/\Sprim@,-5/\resetc@ntr@l\et@tfigptcurvcenterTD}\ignorespaces\fi}
\ctr@ld@f\def\c@lcurvradDD#1[#2,#3,#4,#5]{{\figvectDBezierDD -5:1,#1[#2,#3,#4,#5]%
    \figvectDBezierDD -6:2,#1[#2,#3,#4,#5]\vecunit@DD{-5}{-5}%
    \edef\Sprim@{\repdecn@mb{\result@t}}\figvectNVDD-5[-5]\c@lproscalDD\v@leur[-6,-5]%
    \invers@{\v@leur}{\v@leur}\v@leur=\Sprim@\v@leur\v@leur=\Sprim@\v@leur%
    \global\result@t=\v@leur}}
\ctr@ln@m\figptell
\ctr@ld@f\def\figptellDD#1:#2:#3;#4,#5(#6,#7){\ifGR@cri{\s@uvc@ntr@l\et@tfigptell%
    \c@lptellDD#1::#3;#4,#5(#6)\figptrotDD#1:{#2}=#1/#3,#7/%
    \resetc@ntr@l\et@tfigptell}\ignorespaces\fi}
\ctr@ld@f\def\c@lptellDD#1:#2:#3;#4,#5(#6){\c@ssin{\C@}{\S@}{#6}\v@lmin=\C@ pt\v@lmax=\S@ pt%
    \v@lmin=#4\v@lmin\v@lmax=#5\v@lmax%
    \edef\Xc@mp{\repdecn@mb{\v@lmin}}\edef\Yc@mp{\repdecn@mb{\v@lmax}}%
    \setc@ntr@l{2}\figvectC-1(\Xc@mp,\Yc@mp)\figpttraDD#1:{#2}=#3/1,-1/}
\ctr@ld@f\def\figptellP#1:#2:#3,#4,#5(#6){\ifGR@cri{\s@uvc@ntr@l\et@tfigptellP%
    \setc@ntr@l{2}\figvectP-1[#3,#4]\figvectP-2[#3,#5]%
    \v@leur=#6pt\c@lptellP{#3}{-1}{-2}\figptcopy#1:{#2}/-3/%
    \resetc@ntr@l\et@tfigptellP}\ignorespaces\fi}
\ctr@ln@m\@ngle
\ctr@ld@f\def\c@lptellP#1#2#3{\edef\@ngle{\repdecn@mb\v@leur}\c@ssin{\C@}{\S@}{\@ngle}%
    \figpttra-3:=#1/\C@,#2/\figpttra-3:=-3/\S@,#3/}
\ctr@ln@m\figptendnormal
\ctr@ld@f\def\figptendnormalDD#1:#2:#3,#4[#5,#6]{\ifGR@cri{\s@uvc@ntr@l\et@tfigptendnormal%
    \Figg@tXYa{#5}\Figg@tXY{#6}%
    \advance\v@lX-\v@lXa\advance\v@lY-\v@lYa%
    \setc@ntr@l{2}\Figv@ctCreg-1(\v@lX,\v@lY)\vecunit@{-1}{-1}\Figg@tXY{-1}%
    \delt@=#3\unit@\maxim@m{\delt@}{\delt@}{-\delt@}\edef\l@ngueur{\repdecn@mb{\delt@}}%
    \v@lX=\l@ngueur\v@lX\v@lY=\l@ngueur\v@lY%
    \delt@=\p@\advance\delt@-#4pt\edef\l@ngueur{\repdecn@mb{\delt@}}%
    \figptbaryR-1:[#5,#6;#4,\l@ngueur]\Figg@tXYa{-1}%
    \advance\v@lXa\v@lY\advance\v@lYa-\v@lX%
    \setc@ntr@l{1}\Figp@intregDD#1:{#2}(\v@lXa,\v@lYa)\resetc@ntr@l\et@tfigptendnormal}%
    \ignorespaces\fi}
\ctr@ld@f\def\figptexcenter#1:#2[#3,#4,#5]{\ifGR@cri{\let@xte={-}% for computational macro
    \Figptexinsc@nter#1:#2[#3,#4,#5]}\ignorespaces\fi}
\ctr@ld@f\def\figptincenter#1:#2[#3,#4,#5]{\ifGR@cri{\let@xte={}% for computational macro
    \Figptexinsc@nter#1:#2[#3,#4,#5]}\ignorespaces\fi}
\ctr@ld@f\let\figptinscribedcenter=\figptincenter% for compatibility with older versions
\ctr@ld@f\def\Figptexinsc@nter#1:#2[#3,#4,#5]{%
    \figgetdist\LA@[#4,#5]\figgetdist\LB@[#3,#5]\figgetdist\LC@[#3,#4]%
    \figptbaryR#1:{#2}[#3,#4,#5;\the\let@xte\LA@,\LB@,\LC@]}
\ctr@ln@m\figptinterlineplane
\ctr@ld@f\def\figptinterlineplaneDD{\un@v@ilable{figptinterlineplane}}
\ctr@ld@f\def\figptinterlineplaneTD#1:#2[#3,#4;#5,#6]{\ifGR@cri{\s@uvc@ntr@l\et@tfigptinterlineplane%
    \setc@ntr@l{2}\figvectPTD-1[#3,#5]\vecunit@TD{-2}{#6}%
    \r@pPSTD\v@leur[-2,-1,#4]\edef\v@lcoef{\repdecn@mb{\v@leur}}%
    \figpttraTD#1:{#2}=#3/\v@lcoef,#4/\resetc@ntr@l\et@tfigptinterlineplane}\ignorespaces\fi}
\ctr@ln@m\figptorthocenter
\ctr@ld@f\def\figptorthocenterDD#1:#2[#3,#4,#5]{\ifGR@cri{\s@uvc@ntr@l\et@tfigptorthocenterDD%
    \setc@ntr@l{2}\figvectNDD-3[#3,#4]\figvectNDD-4[#4,#5]%
    \resetc@ntr@l{2}\inters@cDD#1:{#2}[#5,-3;#3,-4]%
    \resetc@ntr@l\et@tfigptorthocenterDD}\ignorespaces\fi}
\ctr@ld@f\def\figptorthocenterTD#1:#2[#3,#4,#5]{\ifGR@cri{\s@uvc@ntr@l\et@tfigptorthocenterTD%
    \setc@ntr@l{2}\figvectNTD-1[#3,#4,#5]%
    \figvectPTD-2[#3,#4]\figvectNVTD-3[-1,-2]%
    \figvectPTD-2[#4,#5]\figvectNVTD-4[-1,-2]%
    \resetc@ntr@l{2}\inters@cTD#1:{#2}[#5,-3;#3,-4]%
    \resetc@ntr@l\et@tfigptorthocenterTD}\ignorespaces\fi}
\ctr@ln@m\figptorthoprojline
\ctr@ld@f\def\figptorthoprojlineDD#1:#2=#3/#4,#5/{\ifGR@cri{\s@uvc@ntr@l\et@tfigptorthoprojlineDD%
    \setc@ntr@l{2}\figvectPDD-3[#4,#5]\figvectNVDD-4[-3]\resetc@ntr@l{2}%
    \inters@cDD#1:{#2}[#3,-4;#4,-3]\resetc@ntr@l\et@tfigptorthoprojlineDD}\ignorespaces\fi}
\ctr@ld@f\def\figptorthoprojlineTD#1:#2=#3/#4,#5/{\ifGR@cri{\s@uvc@ntr@l\et@tfigptorthoprojlineTD%
    \setc@ntr@l{2}\figvectPTD-1[#4,#3]\figvectPTD-2[#4,#5]\vecunit@TD{-2}{-2}%
    \c@lproscalTD\v@leur[-1,-2]\edef\v@lcoef{\repdecn@mb{\v@leur}}%
    \figpttraTD#1:{#2}=#4/\v@lcoef,-2/\resetc@ntr@l\et@tfigptorthoprojlineTD}\ignorespaces\fi}
\ctr@ln@m\figptorthoprojplane
\ctr@ld@f\def\figptorthoprojplaneDD{\un@v@ilable{figptorthoprojplane}}
\ctr@ld@f\def\figptorthoprojplaneTD#1:#2=#3/#4,#5/{\ifGR@cri{\s@uvc@ntr@l\et@tfigptorthoprojplane%
    \setc@ntr@l{2}\figvectPTD-1[#3,#4]\vecunit@TD{-2}{#5}%
    \c@lproscalTD\v@leur[-1,-2]\edef\v@lcoef{\repdecn@mb{\v@leur}}%
    \figpttraTD#1:{#2}=#3/\v@lcoef,-2/\resetc@ntr@l\et@tfigptorthoprojplane}\ignorespaces\fi}
\ctr@ld@f\def\figpthom#1:#2=#3/#4,#5/{\ifGR@cri{\s@uvc@ntr@l\et@tfigpthom%
    \setc@ntr@l{2}\figvectP-1[#4,#3]\figpttra#1:{#2}=#4/#5,-1/%
    \resetc@ntr@l\et@tfigpthom}\ignorespaces\fi}
\ctr@ld@f\def\figptinv#1:#2=#3/#4,#5/{\ifGR@cri{\s@uvc@ntr@l\et@tfigptinv%
    \setc@ntr@l{2}\figvectP-1[#4,#3]\Figg@tXY{-1}%
    \getredf@ctB\f@ctech\n@rmeucC{\delt@}{-1}%
    \delt@=\ptT@unit@\delt@\delt@=\ptT@unit@\delt@%
    \invers@{\delt@}{\delt@}\multiply\f@ctech\f@ctech\divide\delt@\f@ctech%
    \delt@=#5\delt@\edef\v@lcoef{\repdecn@mb{\delt@}}\figpttra#1:{#2}=#4/\v@lcoef,-1/%
    \resetc@ntr@l\et@tfigptinv}\ignorespaces\fi}
\ctr@ln@m\figptrot
\ctr@ld@f\def\figptrotDD#1:#2=#3/#4,#5/{\ifGR@cri{\s@uvc@ntr@l\et@tfigptrotDD%
    \c@ssin{\C@}{\S@}{#5}\setc@ntr@l{2}\figvectPDD-1[#4,#3]\Figg@tXY{-1}%
    \v@lXa=\C@\v@lX\advance\v@lXa-\S@\v@lY%
    \v@lYa=\S@\v@lX\advance\v@lYa\C@\v@lY%
    \Figv@ctCreg-1(\v@lXa,\v@lYa)\figpttraDD#1:{#2}=#4/1,-1/%
    \resetc@ntr@l\et@tfigptrotDD}\ignorespaces\fi}
\ctr@ld@f\def\figptrotTD#1:#2=#3/#4,#5,#6/{\ifGR@cri{\s@uvc@ntr@l\et@tfigptrotTD%
    \c@ssin{\C@}{\S@}{#5}%
    \setc@ntr@l{2}\figptorthoprojplaneTD-3:=#4/#3,#6/\figvectPTD-2[-3,#3]%
    \n@rmeucTD\v@leur{-2}\ifdim\v@leur<\Cepsil@n\Figg@tXYa{#3}\else%
    \edef\v@lcoef{\repdecn@mb{\v@leur}}\figvectNVTD-1[#6,-2]%
    \Figg@tXYa{-1}\v@lXa=\v@lcoef\v@lXa\v@lYa=\v@lcoef\v@lYa\v@lZa=\v@lcoef\v@lZa%
    \v@lXa=\S@\v@lXa\v@lYa=\S@\v@lYa\v@lZa=\S@\v@lZa\Figg@tXY{-2}%
    \advance\v@lXa\C@\v@lX\advance\v@lYa\C@\v@lY\advance\v@lZa\C@\v@lZ%
    \Figg@tXY{-3}\advance\v@lXa\v@lX\advance\v@lYa\v@lY\advance\v@lZa\v@lZ\fi%
    \Figp@intregTD#1:{#2}(\v@lXa,\v@lYa,\v@lZa)\resetc@ntr@l\et@tfigptrotTD}\ignorespaces\fi}
\ctr@ln@m\figptsym
\ctr@ld@f\def\figptsymDD#1:#2=#3/#4,#5/{\ifGR@cri{\s@uvc@ntr@l\et@tfigptsymDD%
    \resetc@ntr@l{2}\figptorthoprojlineDD-5:=#3/#4,#5/\figvectPDD-2[#3,-5]%
    \figpttraDD#1:{#2}=#3/2,-2/\resetc@ntr@l\et@tfigptsymDD}\ignorespaces\fi}
\ctr@ld@f\def\figptsymTD#1:#2=#3/#4,#5/{\ifGR@cri{\s@uvc@ntr@l\et@tfigptsymTD%
    \resetc@ntr@l{2}\figptorthoprojplaneTD-3:=#3/#4,#5/\figvectPTD-2[#3,-3]%
    \figpttraTD#1:{#2}=#3/2,-2/\resetc@ntr@l\et@tfigptsymTD}\ignorespaces\fi}
\ctr@ln@m\figpttra
\ctr@ld@f\def\figpttraDD#1:#2=#3/#4,#5/{\ifGR@cri{\Figg@tXYa{#5}\v@lXa=#4\v@lXa\v@lYa=#4\v@lYa%
    \Figg@tXY{#3}\advance\v@lX\v@lXa\advance\v@lY\v@lYa%
    \Figp@intregDD#1:{#2}(\v@lX,\v@lY)}\ignorespaces\fi}
\ctr@ld@f\def\figpttraTD#1:#2=#3/#4,#5/{\ifGR@cri{\Figg@tXYa{#5}\v@lXa=#4\v@lXa\v@lYa=#4\v@lYa%
    \v@lZa=#4\v@lZa\Figg@tXY{#3}\advance\v@lX\v@lXa\advance\v@lY\v@lYa%
    \advance\v@lZ\v@lZa\Figp@intregTD#1:{#2}(\v@lX,\v@lY,\v@lZ)}\ignorespaces\fi}
\ctr@ln@m\figpttraC
\ctr@ld@f\def\figpttraCDD#1:#2=#3/#4,#5/{\ifGR@cri{\v@lXa=#4\unit@\v@lYa=#5\unit@%
    \Figg@tXY{#3}\advance\v@lX\v@lXa\advance\v@lY\v@lYa%
    \Figp@intregDD#1:{#2}(\v@lX,\v@lY)}\ignorespaces\fi}
\ctr@ld@f\def\figpttraCTD#1:#2=#3/#4,#5,#6/{\ifGR@cri{\v@lXa=#4\unit@\v@lYa=#5\unit@\v@lZa=#6\unit@%
    \Figg@tXY{#3}\advance\v@lX\v@lXa\advance\v@lY\v@lYa\advance\v@lZ\v@lZa%
    \Figp@intregTD#1:{#2}(\v@lX,\v@lY,\v@lZ)}\ignorespaces\fi}
\ctr@ld@f\def\figptsaxes#1:#2(#3){\ifGR@cri{\an@lys@xes#3,:\ifx\t@xt@\empty%
    \ifTr@isDim\Figpts@xes#1:#2(0,#3,0,#3,0,#3)\else\Figpts@xes#1:#2(0,#3,0,#3)\fi%
    \else\Figpts@xes#1:#2(#3)\fi}\ignorespaces\fi}
\ctr@ln@m\Figpts@xes
\ctr@ld@f\def\Figpts@xesDD#1:#2(#3,#4,#5,#6){%
    \s@mme=#1\figpttraC\the\s@mme:$x$=#2/#4,0/%
    \advance\s@mme\@ne\figpttraC\the\s@mme:$y$=#2/0,#6/}
\ctr@ld@f\def\Figpts@xesTD#1:#2(#3,#4,#5,#6,#7,#8){%
    \s@mme=#1\figpttraC\the\s@mme:$x$=#2/#4,0,0/%
    \advance\s@mme\@ne\figpttraC\the\s@mme:$y$=#2/0,#6,0/%
    \advance\s@mme\@ne\figpttraC\the\s@mme:$z$=#2/0,0,#8/}
\ctr@ld@f\def\figptsmap#1=#2/#3/#4/{\ifGR@cri{\s@uvc@ntr@l\et@tfigptsmap%
    \setc@ntr@l{2}\def\list@num{#2}\s@mme=#1%
    \@ecfor\p@int:=\list@num\do{\figvectP-1[#3,\p@int]\Figg@tXY{-1}%
    \pr@dMatV/#4/\figpttra\the\s@mme:=#3/1,-1/\advance\s@mme\@ne}%
    \resetc@ntr@l\et@tfigptsmap}\ignorespaces\fi}
\ctr@ln@m\figptscontrol
\ctr@ld@f\def\figptscontrolDD#1[#2,#3,#4,#5]{\ifGR@cri{\s@uvc@ntr@l\et@tfigptscontrolDD\setc@ntr@l{2}%
    \v@lX=\z@\v@lY=\z@\Figtr@nptDD{-5}{#2}\Figtr@nptDD{2}{#5}%
    \divide\v@lX\@vi\divide\v@lY\@vi%
    \Figtr@nptDD{3}{#3}\Figtr@nptDD{-1.5}{#4}\Figp@intregDD-1:(\v@lX,\v@lY)%
    \v@lX=\z@\v@lY=\z@\Figtr@nptDD{2}{#2}\Figtr@nptDD{-5}{#5}%
    \divide\v@lX\@vi\divide\v@lY\@vi\Figtr@nptDD{-1.5}{#3}\Figtr@nptDD{3}{#4}%
    \s@mme=#1\advance\s@mme\@ne\Figp@intregDD\the\s@mme:(\v@lX,\v@lY)%
    \figptcopyDD#1:/-1/\resetc@ntr@l\et@tfigptscontrolDD}\ignorespaces\fi}
\ctr@ld@f\def\figptscontrolTD#1[#2,#3,#4,#5]{\ifGR@cri{\s@uvc@ntr@l\et@tfigptscontrolTD\setc@ntr@l{2}%
    \v@lX=\z@\v@lY=\z@\v@lZ=\z@\Figtr@nptTD{-5}{#2}\Figtr@nptTD{2}{#5}%
    \divide\v@lX\@vi\divide\v@lY\@vi\divide\v@lZ\@vi%
    \Figtr@nptTD{3}{#3}\Figtr@nptTD{-1.5}{#4}\Figp@intregTD-1:(\v@lX,\v@lY,\v@lZ)%
    \v@lX=\z@\v@lY=\z@\v@lZ=\z@\Figtr@nptTD{2}{#2}\Figtr@nptTD{-5}{#5}%
    \divide\v@lX\@vi\divide\v@lY\@vi\divide\v@lZ\@vi\Figtr@nptTD{-1.5}{#3}\Figtr@nptTD{3}{#4}%
    \s@mme=#1\advance\s@mme\@ne\Figp@intregTD\the\s@mme:(\v@lX,\v@lY,\v@lZ)%
    \figptcopyTD#1:/-1/\resetc@ntr@l\et@tfigptscontrolTD}\ignorespaces\fi}
\ctr@ld@f\def\Figtr@nptDD#1#2{\Figg@tXYa{#2}\v@lXa=#1\v@lXa\v@lYa=#1\v@lYa%
    \advance\v@lX\v@lXa\advance\v@lY\v@lYa}
\ctr@ld@f\def\Figtr@nptTD#1#2{\Figg@tXYa{#2}\v@lXa=#1\v@lXa\v@lYa=#1\v@lYa\v@lZa=#1\v@lZa%
    \advance\v@lX\v@lXa\advance\v@lY\v@lYa\advance\v@lZ\v@lZa}
\ctr@ld@f\def\figptscontrolcurve#1,#2[#3]{\ifGR@cri{\s@uvc@ntr@l\et@tfigptscontrolcurve%
    \def\list@num{#3}\extrairelepremi@r\Ak@\de\list@num%
    \extrairelepremi@r\Ai@\de\list@num\extrairelepremi@r\Aj@\de\list@num%
    \s@mme=#1\figptcopy\the\s@mme:/\Ai@/%
    \setc@ntr@l{2}\figvectP -1[\Ak@,\Aj@]%
    \@ecfor\Ak@:=\list@num\do{\advance\s@mme\@ne\figpttra\the\s@mme:=\Ai@/\curv@roundness,-1/%
       \figvectP -1[\Ai@,\Ak@]\advance\s@mme\@ne\figpttra\the\s@mme:=\Aj@/-\curv@roundness,-1/%
       \advance\s@mme\@ne\figptcopy\the\s@mme:/\Aj@/%
       \edef\Ai@{\Aj@}\edef\Aj@{\Ak@}}\advance\s@mme-#1\divide\s@mme\thr@@%
       \xdef#2{\the\s@mme}%
    \resetc@ntr@l\et@tfigptscontrolcurve}\ignorespaces\fi}
\ctr@ln@m\figptsintercirc
\ctr@ld@f\def\figptsintercircDD#1[#2,#3;#4,#5]{\ifGR@cri{\s@uvc@ntr@l\et@tfigptsintercircDD%
    \setc@ntr@l{2}\let\c@lNVintc=\c@lNVintcDD\Figptsintercirc@#1[#2,#3;#4,#5]%    
    \resetc@ntr@l\et@tfigptsintercircDD}\ignorespaces\fi}
\ctr@ld@f\def\figptsintercircTD#1[#2,#3;#4,#5;#6]{\ifGR@cri{\s@uvc@ntr@l\et@tfigptsintercircTD%
    \setc@ntr@l{2}\let\c@lNVintc=\c@lNVintcTD\vecunitC@TD[#2,#6]%
    \Figv@ctCreg-3(\v@lX,\v@lY,\v@lZ)\Figptsintercirc@#1[#2,#3;#4,#5]%
    \resetc@ntr@l\et@tfigptsintercircTD}\ignorespaces\fi}
\ctr@ld@f\def\Figptsintercirc@#1[#2,#3;#4,#5]{\figvectP-1[#2,#4]%
    \vecunit@{-1}{-1}\delt@=\result@t\f@ctech=\result@tent%
    \s@mme=#1\advance\s@mme\@ne\figptcopy#1:/#2/\figptcopy\the\s@mme:/#4/%
    \ifdim\delt@=\z@\else%
    \v@lmin=#3\unit@\v@lmax=#5\unit@\v@leur=\v@lmin\advance\v@leur\v@lmax%
    \ifdim\v@leur>\delt@%
    \v@leur=\v@lmin\advance\v@leur-\v@lmax\maxim@m{\v@leur}{\v@leur}{-\v@leur}%
    \ifdim\v@leur<\delt@%
    \divide\v@lmin\f@ctech\divide\v@lmax\f@ctech\divide\delt@\f@ctech%
    \v@lmin=\repdecn@mb{\v@lmin}\v@lmin\v@lmax=\repdecn@mb{\v@lmax}\v@lmax%
    \invers@{\v@leur}{\delt@}\advance\v@lmax-\v@lmin%
    \v@lmax=-\repdecn@mb{\v@leur}\v@lmax\advance\delt@\v@lmax\delt@=.5\delt@%
    \v@lmax=\delt@\multiply\v@lmax\f@ctech%
    \edef\t@ille{\repdecn@mb{\v@lmax}}\figpttra-2:=#2/\t@ille,-1/%
    \delt@=\repdecn@mb{\delt@}\delt@\advance\v@lmin-\delt@%
    \sqrt@{\v@leur}{\v@lmin}\multiply\v@leur\f@ctech\edef\t@ille{\repdecn@mb{\v@leur}}%
    \c@lNVintc\figpttra#1:=-2/-\t@ille,-1/\figpttra\the\s@mme:=-2/\t@ille,-1/\fi\fi\fi}
\ctr@ld@f\def\c@lNVintcDD{\Figg@tXY{-1}\Figv@ctCreg-1(-\v@lY,\v@lX)} % <=> \figvectNVDD-1[-1]
\ctr@ld@f\def\c@lNVintcTD{{\Figg@tXY{-3}\v@lmin=\v@lX\v@lmax=\v@lY\v@leur=\v@lZ%
    \Figg@tXY{-1}\c@lprovec{-3}\vecunit@{-3}{-3}% <=> \figvectNVTD-3[-1,-3]\vecunit@{-3}{-3}
    \Figg@tXY{-1}\v@lmin=\v@lX\v@lmax=\v@lY%
    \v@leur=\v@lZ\Figg@tXY{-3}\c@lprovec{-1}}} % <=> \figvectNVTD-1[-3,-1]
\ctr@ln@m\figptsinterlinell
\ctr@ld@f\def\figptsinterlinellDD#1[#2,#3,#4,#5;#6,#7]{\ifGR@cri{\s@uvc@ntr@l\et@tfigptsinterlinellDD%
    \figptcopy#1:/#6/\s@mme=#1\advance\s@mme\@ne\figptcopy\the\s@mme:/#7/%
    \v@lmin=#3\unit@\v@lmax=#4\unit@% a, b
    \setc@ntr@l{2}\figptbaryDD-4:[#6,#7;1,1]\figptsrotDD-3=-4,#7/#2,-#5/% D et rotation
    \Figg@tXY{-3}\Figg@tXYa{#2}\advance\v@lX-\v@lXa\advance\v@lY-\v@lYa% alpha, beta
    \figvectP-1[-3,-2]\Figg@tXYa{-1}\figvectP-3[-4,#7]\Figptsint@rLE{#1}% u1, u2
    \resetc@ntr@l\et@tfigptsinterlinellDD}\ignorespaces\fi}
\ctr@ld@f\def\figptsinterlinellP#1[#2,#3,#4;#5,#6]{\ifGR@cri{\s@uvc@ntr@l\et@tfigptsinterlinellP%
    \figptcopy#1:/#5/\s@mme=#1\advance\s@mme\@ne\figptcopy\the\s@mme:/#6/\setc@ntr@l{2}%
    \figvectP-1[#2,#3]\vecunit@{-1}{-1}\v@lmin=\result@t% a
    \figvectP-2[#2,#4]\vecunit@{-2}{-2}\v@lmax=\result@t% b
    \figptbary-4:[#5,#6;1,1]% D
    \figvectP-3[#2,-4]\c@lproscal\v@lX[-3,-1]\c@lproscal\v@lY[-3,-2]% alpha, beta
    \figvectP-3[-4,#6]\c@lproscal\v@lXa[-3,-1]\c@lproscal\v@lYa[-3,-2]% u1, u2
    \Figptsint@rLE{#1}\resetc@ntr@l\et@tfigptsinterlinellP}\ignorespaces\fi}
\ctr@ld@f\def\Figptsint@rLE#1{%
    \getredf@ctDD\f@ctech(\v@lmin,\v@lmax)%
    \getredf@ctDD\p@rtent(\v@lX,\v@lY)\ifnum\p@rtent>\f@ctech\f@ctech=\p@rtent\fi%
    \getredf@ctDD\p@rtent(\v@lXa,\v@lYa)\ifnum\p@rtent>\f@ctech\f@ctech=\p@rtent\fi%
    \divide\v@lmin\f@ctech\divide\v@lmax\f@ctech\divide\v@lX\f@ctech\divide\v@lY\f@ctech%
    \divide\v@lXa\f@ctech\divide\v@lYa\f@ctech%
    \c@rre=\repdecn@mb\v@lXa\v@lmax\mili@u=\repdecn@mb\v@lYa\v@lmin%
    \getredf@ctDD\f@ctech(\c@rre,\mili@u)%
    \c@rre=\repdecn@mb\v@lX\v@lmax\mili@u=\repdecn@mb\v@lY\v@lmin%
    \getredf@ctDD\p@rtent(\c@rre,\mili@u)\ifnum\p@rtent>\f@ctech\f@ctech=\p@rtent\fi%
    \divide\v@lmin\f@ctech\divide\v@lmax\f@ctech\divide\v@lX\f@ctech\divide\v@lY\f@ctech%
    \divide\v@lXa\f@ctech\divide\v@lYa\f@ctech%
    \v@lmin=\repdecn@mb{\v@lmin}\v@lmin\v@lmax=\repdecn@mb{\v@lmax}\v@lmax%
    \edef\G@xde{\repdecn@mb\v@lmin}\edef\P@xde{\repdecn@mb\v@lmax}%
    \c@rre=-\v@lmax\v@leur=\repdecn@mb\v@lY\v@lY\advance\c@rre\v@leur\c@rre=\G@xde\c@rre%
    \v@leur=\repdecn@mb\v@lX\v@lX\v@leur=\P@xde\v@leur\advance\c@rre\v@leur% C
    \v@lmin=\repdecn@mb\v@lYa\v@lmin\v@lmax=\repdecn@mb\v@lXa\v@lmax%
    \mili@u=\repdecn@mb\v@lX\v@lmax\advance\mili@u\repdecn@mb\v@lY\v@lmin% B
    \v@lmax=\repdecn@mb\v@lXa\v@lmax\advance\v@lmax\repdecn@mb\v@lYa\v@lmin% A
    \ifdim\v@lmax>\epsil@n%
    \maxim@m{\v@leur}{\c@rre}{-\c@rre}\maxim@m{\v@lmin}{\mili@u}{-\mili@u}%
    \maxim@m{\v@leur}{\v@leur}{\v@lmin}\maxim@m{\v@lmin}{\v@lmax}{-\v@lmax}%
    \maxim@m{\v@leur}{\v@leur}{\v@lmin}\p@rtentiere{\p@rtent}{\v@leur}\advance\p@rtent\@ne%
    \divide\c@rre\p@rtent\divide\mili@u\p@rtent\divide\v@lmax\p@rtent%
    \delt@=\repdecn@mb{\mili@u}\mili@u\v@leur=\repdecn@mb{\v@lmax}\c@rre%
    \advance\delt@-\v@leur\ifdim\delt@<\z@\else\sqrt@\delt@\delt@%
    \invers@\v@lmax\v@lmax\edef\Uns@rAp{\repdecn@mb\v@lmax}%
    \v@leur=-\mili@u\advance\v@leur-\delt@\v@leur=\Uns@rAp\v@leur%
    \edef\t@ille{\repdecn@mb\v@leur}\figpttra#1:=-4/\t@ille,-3/\s@mme=#1\advance\s@mme\@ne%
    \v@leur=-\mili@u\advance\v@leur\delt@\v@leur=\Uns@rAp\v@leur%
    \edef\t@ille{\repdecn@mb\v@leur}\figpttra\the\s@mme:=-4/\t@ille,-3/\fi\fi}
\ctr@ln@m\figptsorthoprojline
\ctr@ld@f\def\figptsorthoprojlineDD#1=#2/#3,#4/{\ifGR@cri{\s@uvc@ntr@l\et@tfigptsorthoprojlineDD%
    \setc@ntr@l{2}\figvectPDD-3[#3,#4]\figvectNVDD-4[-3]\resetc@ntr@l{2}%
    \def\list@num{#2}\s@mme=#1\@ecfor\p@int:=\list@num\do{%
    \inters@cDD\the\s@mme:[\p@int,-4;#3,-3]\advance\s@mme\@ne}%
    \resetc@ntr@l\et@tfigptsorthoprojlineDD}\ignorespaces\fi}
\ctr@ld@f\def\figptsorthoprojlineTD#1=#2/#3,#4/{\ifGR@cri{\s@uvc@ntr@l\et@tfigptsorthoprojlineTD%
    \setc@ntr@l{2}\figvectPTD-2[#3,#4]\vecunit@TD{-2}{-2}%
    \def\list@num{#2}\s@mme=#1\@ecfor\p@int:=\list@num\do{%
    \figvectPTD-1[#3,\p@int]\c@lproscalTD\v@leur[-1,-2]%
    \edef\v@lcoef{\repdecn@mb{\v@leur}}\figpttraTD\the\s@mme:=#3/\v@lcoef,-2/%
    \advance\s@mme\@ne}\resetc@ntr@l\et@tfigptsorthoprojlineTD}\ignorespaces\fi}
\ctr@ln@m\figptsorthoprojplane
\ctr@ld@f\def\figptsorthoprojplaneDD{\un@v@ilable{figptsorthoprojplane}}
\ctr@ld@f\def\figptsorthoprojplaneTD#1=#2/#3,#4/{\ifGR@cri{\s@uvc@ntr@l\et@tfigptsorthoprojplane%
    \setc@ntr@l{2}\vecunit@TD{-2}{#4}%
    \def\list@num{#2}\s@mme=#1\@ecfor\p@int:=\list@num\do{\figvectPTD-1[\p@int,#3]%
    \c@lproscalTD\v@leur[-1,-2]\edef\v@lcoef{\repdecn@mb{\v@leur}}%
    \figpttraTD\the\s@mme:=\p@int/\v@lcoef,-2/\advance\s@mme\@ne}%
    \resetc@ntr@l\et@tfigptsorthoprojplane}\ignorespaces\fi}
\ctr@ld@f\def\figptshom#1=#2/#3,#4/{\ifGR@cri{\s@uvc@ntr@l\et@tfigptshom%
    \setc@ntr@l{2}\def\list@num{#2}\s@mme=#1%
    \@ecfor\p@int:=\list@num\do{\figvectP-1[#3,\p@int]%
    \figpttra\the\s@mme:=#3/#4,-1/\advance\s@mme\@ne}%
    \resetc@ntr@l\et@tfigptshom}\ignorespaces\fi}
\ctr@ld@f\def\figptsinv#1=#2/#3,#4/{\ifGR@cri{\s@uvc@ntr@l\et@tfigptsinv%
    \setc@ntr@l{2}\def\list@num{#2}\s@mme=#1%
    \@ecfor\p@int:=\list@num\do{\figvectP-1[#3,\p@int]\Figg@tXY{-1}%
    \getredf@ctB\f@ctech\n@rmeucC{\delt@}{-1}%
    \delt@=\ptT@unit@\delt@\delt@=\ptT@unit@\delt@%
    \invers@{\delt@}{\delt@}\multiply\f@ctech\f@ctech\divide\delt@\f@ctech%
    \delt@=#4\delt@\edef\v@lcoef{\repdecn@mb{\delt@}}\figpttra\the\s@mme:=#3/\v@lcoef,-1/%
    \advance\s@mme\@ne}\resetc@ntr@l\et@tfigptsinv}\ignorespaces\fi}
\ctr@ln@m\figptsrot
\ctr@ld@f\def\figptsrotDD#1=#2/#3,#4/{\ifGR@cri{\s@uvc@ntr@l\et@tfigptsrotDD%
    \c@ssin{\C@}{\S@}{#4}\setc@ntr@l{2}\def\list@num{#2}\s@mme=#1%
    \@ecfor\p@int:=\list@num\do{\figvectPDD-1[#3,\p@int]\Figg@tXY{-1}%
    \v@lXa=\C@\v@lX\advance\v@lXa-\S@\v@lY%
    \v@lYa=\S@\v@lX\advance\v@lYa\C@\v@lY%
    \Figv@ctCreg-1(\v@lXa,\v@lYa)\figpttraDD\the\s@mme:=#3/1,-1/\advance\s@mme\@ne}%
    \resetc@ntr@l\et@tfigptsrotDD}\ignorespaces\fi}
\ctr@ld@f\def\figptsrotTD#1=#2/#3,#4,#5/{\ifGR@cri{\s@uvc@ntr@l\et@tfigptsrotTD%
    \c@ssin{\C@}{\S@}{#4}%
    \setc@ntr@l{2}\def\list@num{#2}\s@mme=#1%
    \@ecfor\p@int:=\list@num\do{\figptorthoprojplaneTD-3:=#3/\p@int,#5/%
    \figvectPTD-2[-3,\p@int]%
    \figvectNVTD-1[#5,-2]\n@rmeucTD\v@leur{-2}\edef\v@lcoef{\repdecn@mb{\v@leur}}%
    \Figg@tXYa{-1}\v@lXa=\v@lcoef\v@lXa\v@lYa=\v@lcoef\v@lYa\v@lZa=\v@lcoef\v@lZa%
    \v@lXa=\S@\v@lXa\v@lYa=\S@\v@lYa\v@lZa=\S@\v@lZa\Figg@tXY{-2}%
    \advance\v@lXa\C@\v@lX\advance\v@lYa\C@\v@lY\advance\v@lZa\C@\v@lZ%
    \Figg@tXY{-3}\advance\v@lXa\v@lX\advance\v@lYa\v@lY\advance\v@lZa\v@lZ%
    \Figp@intregTD\the\s@mme:(\v@lXa,\v@lYa,\v@lZa)\advance\s@mme\@ne}%
    \resetc@ntr@l\et@tfigptsrotTD}\ignorespaces\fi}
\ctr@ln@m\figptssym
\ctr@ld@f\def\figptssymDD#1=#2/#3,#4/{\ifGR@cri{\s@uvc@ntr@l\et@tfigptssymDD%
    \setc@ntr@l{2}\figvectPDD-3[#3,#4]\Figg@tXY{-3}\Figv@ctCreg-4(-\v@lY,\v@lX)%
    \resetc@ntr@l{2}\def\list@num{#2}\s@mme=#1%
    \@ecfor\p@int:=\list@num\do{\inters@cDD-5:[#3,-3;\p@int,-4]\figvectPDD-2[\p@int,-5]%
    \figpttraDD\the\s@mme:=\p@int/2,-2/\advance\s@mme\@ne}%
    \resetc@ntr@l\et@tfigptssymDD}\ignorespaces\fi}
\ctr@ld@f\def\figptssymTD#1=#2/#3,#4/{\ifGR@cri{\s@uvc@ntr@l\et@tfigptssymTD%
    \setc@ntr@l{2}\vecunit@TD{-2}{#4}\def\list@num{#2}\s@mme=#1%
    \@ecfor\p@int:=\list@num\do{\figvectPTD-1[\p@int,#3]%
    \c@lproscalTD\v@leur[-1,-2]\v@leur=2\v@leur\edef\v@lcoef{\repdecn@mb{\v@leur}}%
    \figpttraTD\the\s@mme:=\p@int/\v@lcoef,-2/\advance\s@mme\@ne}%
    \resetc@ntr@l\et@tfigptssymTD}\ignorespaces\fi}
\ctr@ln@m\figptstra
\ctr@ld@f\def\figptstraDD#1=#2/#3,#4/{\ifGR@cri{\Figg@tXYa{#4}\v@lXa=#3\v@lXa\v@lYa=#3\v@lYa%
    \def\list@num{#2}\s@mme=#1\@ecfor\p@int:=\list@num\do{\Figg@tXY{\p@int}%
    \advance\v@lX\v@lXa\advance\v@lY\v@lYa%
    \Figp@intregDD\the\s@mme:(\v@lX,\v@lY)\advance\s@mme\@ne}}\ignorespaces\fi}
\ctr@ld@f\def\figptstraTD#1=#2/#3,#4/{\ifGR@cri{\Figg@tXYa{#4}\v@lXa=#3\v@lXa\v@lYa=#3\v@lYa%
    \v@lZa=#3\v@lZa\def\list@num{#2}\s@mme=#1\@ecfor\p@int:=\list@num\do{\Figg@tXY{\p@int}%
    \advance\v@lX\v@lXa\advance\v@lY\v@lYa\advance\v@lZ\v@lZa%
    \Figp@intregTD\the\s@mme:(\v@lX,\v@lY,\v@lZ)\advance\s@mme\@ne}}\ignorespaces\fi}
\ctr@ln@m\figptvisilimSL
\ctr@ld@f\def\figptvisilimSLDD{\un@v@ilable{figptvisilimSL}}
\ctr@ld@f\def\figptvisilimSLTD#1:#2[#3,#4;#5,#6]{\ifGR@cri{\s@uvc@ntr@l\et@tfigptvisilimSLTD%
    \setc@ntr@l{2}\figvectP-1[#3,#4]\n@rminf{\delt@}{-1}%
    \ifcase\CUR@proj\v@lX=\cxa@\p@\v@lY=-\p@\v@lZ=\cxb@\p@% Proj cav
    \Figv@ctCreg-2(\v@lX,\v@lY,\v@lZ)\figvectP-3[#5,#6]\figvectNV-1[-2,-3]%
    \or\figvectP-1[#5,#6]\vecunitCV@TD{-1}\v@lmin=\v@lX\v@lmax=\v@lY% Proj ortho
    \v@leur=\v@lZ\v@lX=\cza@\p@\v@lY=\czb@\p@\v@lZ=\czc@\p@\c@lprovec{-1}%
    \or\c@ley@pt{-2}\figvectN-1[#5,#6,-2]\fi% Proj rea
    \edef\Ai@{#3}\edef\Aj@{#4}\figvectP-2[#5,\Ai@]\c@lproscal\v@leur[-1,-2]%
    \ifdim\v@leur>\z@\p@rtent=\@ne\else\p@rtent=\m@ne\fi%
    \figvectP-2[#5,\Aj@]\c@lproscal\v@leur[-1,-2]%
    \ifdim\p@rtent\v@leur>\z@\figptcopy#1:#2/#3/%
    \message{*** \BS@ figptvisilimSL: points are on the same side.}\else%
    \figptcopy-3:/#3/\figptcopy-4:/#4/%
    \loop\figptbary-5:[-3,-4;1,1]\figvectP-2[#5,-5]\c@lproscal\v@leur[-1,-2]%
    \ifdim\p@rtent\v@leur>\z@\figptcopy-3:/-5/\else\figptcopy-4:/-5/\fi%
    \divide\delt@\tw@\ifdim\delt@>\epsil@n\repeat%
    \figptbary#1:#2[-3,-4;1,1]\fi\resetc@ntr@l\et@tfigptvisilimSLTD}\ignorespaces\fi}
\ctr@ld@f\def\c@ley@pt#1{\t@stp@r\ifitis@K\v@lX=\cza@\p@\v@lY=\czb@\p@\v@lZ=\czc@\p@%
    \Figv@ctCreg-1(\v@lX,\v@lY,\v@lZ)\Figp@intreg-2:(\wd\Bt@rget,\ht\Bt@rget,\dp\Bt@rget)%
    \figpttra#1:=-2/-\disob@intern,-1/\else\end\fi}
\ctr@ld@f\def\t@stp@r{\itis@Ktrue\ifnewt@rgetpt\else\itis@Kfalse%
    \message{*** \BS@ figptvisilimXX: target point undefined.}\fi\ifnewdis@b\else%
    \itis@Kfalse\message{*** \BS@ figptvisilimXX: observation distance undefined.}\fi%
    \ifitis@K\else\message{*** This macro must be called after \BS@ figdrawbegin or after
    having set the missing parameter(s) with \BS@ figset proj()}\fi}
\ctr@ld@f\def\figscan#1(#2,#3){{\s@uvc@ntr@l\et@tfigscan\@psfgetbb{#1}\if@psfbbfound\else%
    \def\@psfllx{0}\def\@psflly{20}\def\@psfurx{540}\def\@psfury{640}\fi\figscan@{#2}{#3}%
    \resetc@ntr@l\et@tfigscan}\ignorespaces}
\ctr@ld@f\def\figscan@#1#2{%
    \unit@=\@ne bp\setc@ntr@l{2}\figsetmark{}%
    \def\minst@p{20pt}%
    \v@lX=\@psfllx\p@\v@lX=\Sc@leFact\v@lX\r@undint\v@lX\v@lX%
    \v@lY=\@psflly\p@\v@lY=\Sc@leFact\v@lY\ifdim\v@lY>\z@\r@undint\v@lY\v@lY\fi%
    \delt@=\@psfury\p@\delt@=\Sc@leFact\delt@%
    \advance\delt@-\v@lY\v@lXa=\@psfurx\p@\v@lXa=\Sc@leFact\v@lXa\v@leur=\minst@p%
    \edef\valv@lY{\repdecn@mb{\v@lY}}\edef\LgTr@it{\the\delt@}%
    \loop\ifdim\v@lX<\v@lXa\edef\valv@lX{\repdecn@mb{\v@lX}}%
    \figptDD -1:(\valv@lX,\valv@lY)\figwriten -1:\hbox{\vrule height\LgTr@it}(0)%
    \ifdim\v@leur<\minst@p\else\figsetmark{\raise-8bp\hbox{$\scriptscriptstyle\triangle$}}%
    \figwrites -1:\@ffichnb{0}{\valv@lX}(6)\v@leur=\z@\figsetmark{}\fi%
    \advance\v@leur#1pt\advance\v@lX#1pt\repeat%
    \def\minst@p{10pt}%
    \v@lX=\@psfllx\p@\v@lX=\Sc@leFact\v@lX\ifdim\v@lX>\z@\r@undint\v@lX\v@lX\fi%
    \v@lY=\@psflly\p@\v@lY=\Sc@leFact\v@lY\r@undint\v@lY\v@lY%
    \delt@=\@psfurx\p@\delt@=\Sc@leFact\delt@%
    \advance\delt@-\v@lX\v@lYa=\@psfury\p@\v@lYa=\Sc@leFact\v@lYa\v@leur=\minst@p%
    \edef\valv@lX{\repdecn@mb{\v@lX}}\edef\LgTr@it{\the\delt@}%
    \loop\ifdim\v@lY<\v@lYa\edef\valv@lY{\repdecn@mb{\v@lY}}%
    \figptDD -1:(\valv@lX,\valv@lY)\figwritee -1:\vbox{\hrule width\LgTr@it}(0)%
    \ifdim\v@leur<\minst@p\else\figsetmark{$\triangleright$\kern4bp}%
    \figwritew -1:\@ffichnb{0}{\valv@lY}(6)\v@leur=\z@\figsetmark{}\fi%
    \advance\v@leur#2pt\advance\v@lY#2pt\repeat}
\ctr@ld@f\let\figscanI=\figscan
\ctr@ld@f\def\figscan@E#1(#2,#3){{\s@uvc@ntr@l\et@tfigscan@E%
    \Figdisc@rdLTS{#1}{\t@xt@}\pdfximage{\t@xt@}%
    \setbox\Gb@x=\hbox{\pdfrefximage\pdflastximage}%
    \edef\@psfllx{0}\v@lY=-\dp\Gb@x\edef\@psflly{\repdecn@mb{\v@lY}}%
    \edef\@psfurx{\repdecn@mb{\wd\Gb@x}}%
    \v@lY=\dp\Gb@x\advance\v@lY\ht\Gb@x\edef\@psfury{\repdecn@mb{\v@lY}}%
    \figscan@{#2}{#3}\resetc@ntr@l\et@tfigscan@E}\ignorespaces}
\ctr@ld@f\def\figshowpts[#1,#2]{{\figsetmark{$\bullet$}\figsetptname{\bf ##1}%
    \p@rtent=#2\relax\ifnum\p@rtent<\z@\p@rtent=\z@\fi%
    \s@mme=#1\relax\ifnum\s@mme<\z@\s@mme=\z@\fi%
    \loop\ifnum\s@mme<\p@rtent\pt@rvect{\s@mme}%
    \ifitis@K\figwriten{\the\s@mme}:(4pt)\fi\advance\s@mme\@ne\repeat%
    \pt@rvect{\s@mme}\ifitis@K\figwriten{\the\s@mme}:(4pt)\fi}\ignorespaces}
\ctr@ld@f\def\pt@rvect#1{\set@bjc@de{#1}%
    \expandafter\expandafter\expandafter\inqpt@rvec\csname\objc@de\endcsname:}
\ctr@ld@f\def\inqpt@rvec#1#2:{\if#1\C@dCl@spt\itis@Ktrue\else\itis@Kfalse\fi}
\ctr@ld@f\def\figshowsettings{{%
    \immediate\write16{====================================================================}%
    \immediate\write16{ Current settings are (DDV means "with dynamic default value"):}%
    \immediate\write16{ --- GENERAL ---}%
    \immediate\write16{Scale factor and Unit = \unit@util\space (\the\unit@)
     \space -> \BS@ figinit{ScaleFactorUnit}}%
    \immediate\write16{Update mode = \ifGRupdatem@de yes\else no\fi
     \space-> \BS@ figset(update=yes/no) or \BS@ figsetdefault(update=yes/no)}%
    \immediate\write16{ --- WRITING ---}%
    \immediate\write16{Implicit point name = \ptn@me{i} \space-> \BS@ figset write(ptname={Name})}%
    \immediate\write16{Point marker = \the\c@nsymb \space -> \BS@ figset write(mark=Mark)}%
    \immediate\write16{Print rounded coordinates = \ifr@undcoord yes\else no\fi
     \space-> \BS@ figset write(roundcoord=yes/no)}%
    \immediate\write16{ --- GRAPHICAL (general) ---}%
    \immediate\write16{Color = \CUR@color \space-> \BS@ figset(color=ColorDefinition)}%
    \immediate\write16{Filling mode = \iffillm@de yes\else no\fi
     \space-> \BS@ figset(fillmode=yes/no)}%
    \immediate\write16{Line join = \CUR@join \space-> \BS@ figset(join=miter/round/bevel)}%
    \immediate\write16{Line style = \CUR@dash \space-> \BS@ figset(dash=Index/Pattern)}%
    \immediate\write16{Line width = \CUR@width
     \space-> \BS@ figset(width=real in PostScript units)}%
    \immediate\write16{ --- GRAPHICAL (specific) ---}%
    \immediate\write16{Altitude (all the following attributes are DDV):}%
    \immediate\write16{ Base line color =
     \ifx\DDV@blcolor\D@FTref general color\else\DDV@blcolor\fi
     \space-> \BS@ figset altitude(blcolor=ColorDefinition)}%
    \immediate\write16{ Base line style =
     \ifx\DDV@bldash\D@FTref general style\else\DDV@bldash\fi
     \space-> \BS@ figset altitude(bldash=Index/Pattern)}%
    \immediate\write16{ Base line width =
     \ifx\DDV@blwidth\D@FTref general width\else\DDV@blwidth\fi
     \space-> \BS@ figset altitude(blwidth=real in PostScript units)}%
    \immediate\write16{ Square line color =
     \ifx\DDV@sqcolor\D@FTref general color\else\DDV@sqcolor\fi
     \space-> \BS@ figset altitude(sqcolor=ColorDefinition)}%
    \immediate\write16{ Square line style =
     \ifx\DDV@sqdash\D@FTref general style\else\DDV@sqdash\fi
     \space-> \BS@ figset altitude(sqdash=Index/Pattern)}%
    \immediate\write16{ Square line width =
     \ifx\DDV@sqwidth\D@FTref general width\else\DDV@sqwidth\fi
     \space-> \BS@ figset altitude(sqwidth=real in PostScript units)}%
    \immediate\write16{Arrowhead:}%
    \immediate\write16{ (half-)Angle = \@rrowheadangle
     \space-> \BS@ figset arrowhead(angle=real in degrees)}%
    \immediate\write16{ Filling mode = \if@rrowhfill yes\else no\fi
     \space-> \BS@ figset arrowhead(fillmode=yes/no)}%
    \immediate\write16{ "Outside" = \if@rrowhout yes\else no\fi
     \space-> \BS@ figset arrowhead(out=yes/no)}%
    \immediate\write16{ Length = \@rrowheadlength
     \if@rrowratio\space(not active)\else\space(active)\fi
     \space-> \BS@ figset arrowhead(length=real in user coord.)}%
    \immediate\write16{ Ratio = \@rrowheadratio
     \if@rrowratio\space(active)\else\space(not active)\fi
     \space-> \BS@ figset arrowhead(ratio=real in [0,1])}%
    \immediate\write16{Curve:}%
    \immediate\write16{ Roundness = \curv@roundness
     \space-> \BS@ figset curve(roundness=real in [0,0.5])}%
    \immediate\write16{Flow chart:}%
    \immediate\write16{ Arrow position = \@rrowp@s
     \space-> \BS@ figset flowchart(arrowposition=real in [0,1])}%
    \immediate\write16{ Arrow reference point = \ifcase\@rrowr@fpt start\else end\fi
     \space-> \BS@ figset flowchart(arrowrefpt = start/end)}%     
    \immediate\write16{ Background color = \fcbgc@lor
     \space-> \BS@ figset flowchart(bgcolor=ColorDefinition)}%
    \immediate\write16{ Line type = \ifcase\fclin@typ@ curve\else polygon\fi
     \space-> \BS@ figset flowchart(line=polygon/curve)}%
    \immediate\write16{ Padding = (\Xp@dd, \Yp@dd)
     \space-> \BS@ figset flowchart(padding = real in user coord.)}%
    \immediate\write16{\space\space\space\space(or
     \BS@ figset flowchart(xpadding=real, ypadding=real) )}%
    \immediate\write16{ Radius = \fclin@r@d
     \space-> \BS@ figset flowchart(radius=positive real in user coord.)}%
    \immediate\write16{ Shape = \fcsh@pe
     \space-> \BS@ figset flowchart(shape = rectangle, ellipse or lozenge)}%
    \immediate\write16{ Thickness color (DDV) = 
     \ifx\DDV@thickcolor\D@FTref general color\else\DDV@thickcolor\fi
     \space-> \BS@ figset flowchart(thickcolor=ColorDefinition)}%
    \immediate\write16{ Thickness = \thickn@ss
     \space-> \BS@ figset flowchart(thickness = real in user coord.)}%
    \immediate\write16{Mesh:}%
    \immediate\write16{ Diagonal = \c@ntrolmesh
     \space-> \BS@ figset mesh(diag=integer in {-1,0,1})}%
    \immediate\write16{ Lines color (DDV) =
     \ifx\DDV@meshcolor\D@FTref general color\else\DDV@meshcolor\fi
     \space-> \BS@ figset mesh(color=ColorDefinition)}%
    \immediate\write16{ Lines style (DDV) =
     \ifx\DDV@meshdash\D@FTref general style\else\DDV@meshdash\fi
     \space-> \BS@ figset mesh(dash=Index/Pattern)}%
    \immediate\write16{ Lines width (DDV) =
     \ifx\DDV@meshwidth\D@FTref general width\else\DDV@meshwidth\fi
     \space-> \BS@ figset mesh(width=real in PostScript units)}%
    \immediate\write16{Trimesh:}%
    \immediate\write16{ Lines color (DDV) =
     \ifx\DDV@tmeshcolor\D@FTref general color\else\DDV@tmeshcolor\fi
     \space-> \BS@ figset trimesh(color=ColorDefinition)}%
    \immediate\write16{ Lines style (DDV) =
     \ifx\DDV@tmeshdash\D@FTref general style\else\DDV@tmeshdash\fi
     \space-> \BS@ figset trimesh(dash=Index/Pattern)}%
    \immediate\write16{ Lines width (DDV) =
     \ifx\DDV@tmeshwidth\D@FTref general width\else\DDV@tmeshwidth\fi
     \space-> \BS@ figset trimesh(width=real in PostScript units)}%
    \ifTr@isDim%
    \immediate\write16{ --- 3D to 2D PROJECTION ---}%
    \immediate\write16{Projection : \typ@proj \space-> \BS@ figinit{ScaleFactorUnit, ProjType}}%
    \immediate\write16{Longitude (psi) = \v@lPsi \space-> \BS@ figset proj(psi=real in degrees)}%
    \ifcase\CUR@proj\immediate\write16{Depth coeff. (Lambda)
     \space = \v@lTheta \space-> \BS@ figset proj(lambda=real in [0,1])}%
    \else\immediate\write16{Latitude (theta)
     \space = \v@lTheta \space-> \BS@ figset proj(theta=real in degrees)}%
    \fi%
    \ifnum\CUR@proj=\tw@%
    \immediate\write16{Observation distance = \disob@unit
     \space-> \BS@ figset proj(dist=real in user coord.)}%
    \immediate\write16{Target point = \t@rgetpt \space-> \BS@ figset proj(targetpt=pt number)}%
     \v@lX=\ptT@unit@\wd\Bt@rget\v@lY=\ptT@unit@\ht\Bt@rget\v@lZ=\ptT@unit@\dp\Bt@rget%
    \immediate\write16{ Its coordinates are
     (\repdecn@mb{\v@lX}, \repdecn@mb{\v@lY}, \repdecn@mb{\v@lZ})}%
    \fi%
    \fi%
    \immediate\write16{====================================================================}%
    \ignorespaces}}
\ctr@ln@w{newif}\ifitis@vect@r
\ctr@ld@f\def\figvectC#1(#2,#3){{\itis@vect@rtrue\figpt#1:(#2,#3)}\ignorespaces}
\ctr@ld@f\def\Figv@ctCreg#1(#2,#3){{\itis@vect@rtrue\Figp@intreg#1:(#2,#3)}\ignorespaces}
\ctr@ln@m\figvectDBezier
\ctr@ld@f\def\figvectDBezierDD#1:#2,#3[#4,#5,#6,#7]{\ifGR@cri{\s@uvc@ntr@l\et@tfigvectDBezierDD%
    \FigvectDBezier@#2,#3[#4,#5,#6,#7]\v@lX=\c@ef\v@lX\v@lY=\c@ef\v@lY%
    \Figv@ctCreg#1(\v@lX,\v@lY)\resetc@ntr@l\et@tfigvectDBezierDD}\ignorespaces\fi}
\ctr@ld@f\def\figvectDBezierTD#1:#2,#3[#4,#5,#6,#7]{\ifGR@cri{\s@uvc@ntr@l\et@tfigvectDBezierTD%
    \FigvectDBezier@#2,#3[#4,#5,#6,#7]\v@lX=\c@ef\v@lX\v@lY=\c@ef\v@lY\v@lZ=\c@ef\v@lZ%
    \Figv@ctCreg#1(\v@lX,\v@lY,\v@lZ)\resetc@ntr@l\et@tfigvectDBezierTD}\ignorespaces\fi}
\ctr@ld@f\def\FigvectDBezier@#1,#2[#3,#4,#5,#6]{\setc@ntr@l{2}%
    \edef\T@{#2}\v@leur=\p@\advance\v@leur-#2pt\edef\UNmT@{\repdecn@mb{\v@leur}}%
    \ifnum#1=\tw@\def\c@ef{6}\else\def\c@ef{3}\fi%
    \figptcopy-4:/#3/\figptcopy-3:/#4/\figptcopy-2:/#5/\figptcopy-1:/#6/%
    \l@mbd@un=-4 \l@mbd@de=-\thr@@\p@rtent=\m@ne\c@lDecast%
    \ifnum#1=\tw@\c@lDCDeux{-4}{-3}\c@lDCDeux{-3}{-2}\c@lDCDeux{-4}{-3}\else%
    \l@mbd@un=-4 \l@mbd@de=-\thr@@\p@rtent=-\tw@\c@lDecast%
    \c@lDCDeux{-4}{-3}\fi\Figg@tXY{-4}}
\ctr@ln@m\c@lDCDeux
\ctr@ld@f\def\c@lDCDeuxDD#1#2{\Figg@tXY{#2}\Figg@tXYa{#1}%
    \advance\v@lX-\v@lXa\advance\v@lY-\v@lYa\Figp@intregDD#1:(\v@lX,\v@lY)}
\ctr@ld@f\def\c@lDCDeuxTD#1#2{\Figg@tXY{#2}\Figg@tXYa{#1}\advance\v@lX-\v@lXa%
    \advance\v@lY-\v@lYa\advance\v@lZ-\v@lZa\Figp@intregTD#1:(\v@lX,\v@lY,\v@lZ)}
\ctr@ln@m\figvectN
\ctr@ld@f\def\figvectNDD#1[#2,#3]{\ifGR@cri{\Figg@tXYa{#2}\Figg@tXY{#3}%
    \advance\v@lX-\v@lXa\advance\v@lY-\v@lYa%
    \Figv@ctCreg#1(-\v@lY,\v@lX)}\ignorespaces\fi}
\ctr@ld@f\def\figvectNTD#1[#2,#3,#4]{\ifGR@cri{\vecunitC@TD[#2,#4]\v@lmin=\v@lX\v@lmax=\v@lY%
    \v@leur=\v@lZ\vecunitC@TD[#2,#3]\c@lprovec{#1}}\ignorespaces\fi}
\ctr@ln@m\figvectNV
\ctr@ld@f\def\figvectNVDD#1[#2]{\ifGR@cri{\Figg@tXY{#2}\Figv@ctCreg#1(-\v@lY,\v@lX)}\ignorespaces\fi}
\ctr@ld@f\def\figvectNVTD#1[#2,#3]{\ifGR@cri{\vecunitCV@TD{#3}\v@lmin=\v@lX\v@lmax=\v@lY%
    \v@leur=\v@lZ\vecunitCV@TD{#2}\c@lprovec{#1}}\ignorespaces\fi}
\ctr@ln@m\figvectP
\ctr@ld@f\def\figvectPDD#1[#2,#3]{\ifGR@cri{\Figg@tXYa{#2}\Figg@tXY{#3}%
    \advance\v@lX-\v@lXa\advance\v@lY-\v@lYa%
    \Figv@ctCreg#1(\v@lX,\v@lY)}\ignorespaces\fi}
\ctr@ld@f\def\figvectPTD#1[#2,#3]{\ifGR@cri{\Figg@tXYa{#2}\Figg@tXY{#3}%
    \advance\v@lX-\v@lXa\advance\v@lY-\v@lYa\advance\v@lZ-\v@lZa%
    \Figv@ctCreg#1(\v@lX,\v@lY,\v@lZ)}\ignorespaces\fi}
\ctr@ln@m\figvectU
\ctr@ld@f\def\figvectUDD#1[#2]{\ifGR@cri{\n@rmeuc\v@leur{#2}\invers@\v@leur\v@leur%
    \delt@=\repdecn@mb{\v@leur}\unit@\edef\v@ldelt@{\repdecn@mb{\delt@}}%
    \Figg@tXY{#2}\v@lX=\v@ldelt@\v@lX\v@lY=\v@ldelt@\v@lY%
    \Figv@ctCreg#1(\v@lX,\v@lY)}\ignorespaces\fi}
\ctr@ld@f\def\figvectUTD#1[#2]{\ifGR@cri{\n@rmeuc\v@leur{#2}\invers@\v@leur\v@leur%
    \delt@=\repdecn@mb{\v@leur}\unit@\edef\v@ldelt@{\repdecn@mb{\delt@}}%
    \Figg@tXY{#2}\v@lX=\v@ldelt@\v@lX\v@lY=\v@ldelt@\v@lY\v@lZ=\v@ldelt@\v@lZ%
    \Figv@ctCreg#1(\v@lX,\v@lY,\v@lZ)}\ignorespaces\fi}
\ctr@ld@f\def\figvisu#1#2#3{\c@ldefproj\initb@undb@x\xdef\figforTeXFigno{\figforTeXnextFigno}%
    \s@mme=\figforTeXnextFigno\advance\s@mme\@ne\xdef\figforTeXnextFigno{\number\s@mme}%
    \setbox\b@xvisu=\hbox{\ifnum\@utoFN>\z@\figinsert{}\gdef\@utoFInDone{0}\fi\ignorespaces#3}%
    \gdef\@utoFInDone{1}\gdef\@utoFN{0}%
    \v@lXa=-\c@@rdYmin\v@lYa=\c@@rdYmax\advance\v@lYa-\c@@rdYmin%
    \v@lX=\c@@rdXmax\advance\v@lX-\c@@rdXmin%
    \setbox#1=\hbox{#2}\v@lY=-\v@lX\maxim@m{\v@lX}{\v@lX}{\wd#1}%
    \advance\v@lY\v@lX\divide\v@lY\tw@\advance\v@lY-\c@@rdXmin%
    \setbox#1=\vbox{\parindent\z@\hsize=\v@lX\vskip\v@lYa%
    \rlap{\hskip\v@lY\smash{\raise\v@lXa\box\b@xvisu}}%
    \def\t@xt@{#2}\ifx\t@xt@\empty\else\medskip\centerline{#2}\fi}\wd#1=\v@lX}
\ctr@ld@f\def\figDecrementFigno{{\xdef\figforTeXnextFigno{\figforTeXFigno}%
    \s@mme=\figforTeXFigno\advance\s@mme\m@ne\xdef\figforTeXFigno{\number\s@mme}}}
\ctr@ln@w{newbox}\Bt@rget\setbox\Bt@rget=\null
\ctr@ln@w{newbox}\BminTD@\setbox\BminTD@=\null
\ctr@ln@w{newbox}\BmaxTD@\setbox\BmaxTD@=\null
\ctr@ln@w{newif}\ifnewt@rgetpt\ctr@ln@w{newif}\ifnewdis@b
\ctr@ld@f\def\b@undb@xTD#1#2#3{%
    \relax\ifdim#1<\wd\BminTD@\global\wd\BminTD@=#1\fi%
    \relax\ifdim#2<\ht\BminTD@\global\ht\BminTD@=#2\fi%
    \relax\ifdim#3<\dp\BminTD@\global\dp\BminTD@=#3\fi%
    \relax\ifdim#1>\wd\BmaxTD@\global\wd\BmaxTD@=#1\fi%
    \relax\ifdim#2>\ht\BmaxTD@\global\ht\BmaxTD@=#2\fi%
    \relax\ifdim#3>\dp\BmaxTD@\global\dp\BmaxTD@=#3\fi}
\ctr@ld@f\def\c@ldefdisob{{\ifdim\wd\BminTD@<\maxdimen\v@leur=\wd\BmaxTD@\advance\v@leur-\wd\BminTD@%
    \delt@=\ht\BmaxTD@\advance\delt@-\ht\BminTD@\maxim@m{\v@leur}{\v@leur}{\delt@}%
    \delt@=\dp\BmaxTD@\advance\delt@-\dp\BminTD@\maxim@m{\v@leur}{\v@leur}{\delt@}%
    \v@leur=5\v@leur\else\v@leur=800pt\fi\c@ldefdisob@{\v@leur}}}
\ctr@ln@m\disob@intern
\ctr@ln@m\disob@
\ctr@ln@m\divf@ctproj
\ctr@ld@f\def\c@ldefdisob@#1{{\v@leur=#1\ifdim\v@leur<\p@\v@leur=800pt\fi%
    \xdef\disob@intern{\repdecn@mb{\v@leur}}%
    \delt@=\ptT@unit@\v@leur\xdef\disob@unit{\repdecn@mb{\delt@}}%
    \f@ctech=\@ne\loop\ifdim\v@leur>\t@n pt\divide\v@leur\t@n\multiply\f@ctech\t@n\repeat%
    \xdef\disob@{\repdecn@mb{\v@leur}}\xdef\divf@ctproj{\the\f@ctech}}%
    \global\newdis@btrue}
\ctr@ln@m\t@rgetpt
\ctr@ld@f\def\c@ldeft@rgetpt{\newt@rgetpttrue\def\t@rgetpt{CenterBoundBox}{%
    \delt@=\wd\BmaxTD@\advance\delt@-\wd\BminTD@\divide\delt@\tw@%
    \v@leur=\wd\BminTD@\advance\v@leur\delt@\global\wd\Bt@rget=\v@leur%
    \delt@=\ht\BmaxTD@\advance\delt@-\ht\BminTD@\divide\delt@\tw@%
    \v@leur=\ht\BminTD@\advance\v@leur\delt@\global\ht\Bt@rget=\v@leur%
    \delt@=\dp\BmaxTD@\advance\delt@-\dp\BminTD@\divide\delt@\tw@%
    \v@leur=\dp\BminTD@\advance\v@leur\delt@\global\dp\Bt@rget=\v@leur}}
\ctr@ln@m\c@ldefproj
\ctr@ld@f\def\c@ldefprojTD{\ifnewt@rgetpt\else\c@ldeft@rgetpt\fi\ifnewdis@b\else\c@ldefdisob\fi}
\ctr@ld@f\def\c@lprojcav{% Projection cavaliere : X = x + y L cos t, Y = z + y L sin t
    \v@lZa=\cxa@\v@lY\advance\v@lX\v@lZa%
    \v@lZa=\cxb@\v@lY\v@lY=\v@lZ\advance\v@lY\v@lZa\ignorespaces}
\ctr@ln@m\v@lcoef
\ctr@ld@f\def\c@lprojrea{% Projection realiste
    \advance\v@lX-\wd\Bt@rget\advance\v@lY-\ht\Bt@rget\advance\v@lZ-\dp\Bt@rget%
    \v@lZa=\cza@\v@lX\advance\v@lZa\czb@\v@lY\advance\v@lZa\czc@\v@lZ%
    \divide\v@lZa\divf@ctproj\advance\v@lZa\disob@ pt\invers@{\v@lZa}{\v@lZa}%
    \v@lZa=\disob@\v@lZa\edef\v@lcoef{\repdecn@mb{\v@lZa}}%
    \v@lXa=\cxa@\v@lX\advance\v@lXa\cxb@\v@lY\v@lXa=\v@lcoef\v@lXa%
    \v@lY=\cyb@\v@lY\advance\v@lY\cya@\v@lX\advance\v@lY\cyc@\v@lZ%
    \v@lY=\v@lcoef\v@lY\v@lX=\v@lXa\ignorespaces}
\ctr@ld@f\def\c@lprojort{% Projection orthogonale
    \v@lXa=\cxa@\v@lX\advance\v@lXa\cxb@\v@lY%
    \v@lY=\cyb@\v@lY\advance\v@lY\cya@\v@lX\advance\v@lY\cyc@\v@lZ%
    \v@lX=\v@lXa\ignorespaces}
\ctr@ld@f\def\Figptpr@j#1:#2/#3/{{\Figg@tXY{#3}\superc@lprojSP%
    \Figp@intregDD#1:{#2}(\v@lX,\v@lY)}\ignorespaces}
\ctr@ln@m\figsetobdist
\ctr@ld@f\def\figsetobdistDD{\un@v@ilable{figsetobdist}}
\ctr@ld@f\def\figsetobdistTD(#1){{\ifCUR@PS\W@rnmesIgn{figset proj(dist=...)}%
    \else\v@leur=#1\unit@\c@ldefdisob@{\v@leur}\fi}\ignorespaces}
\ctr@ln@m\c@lprojSP
\ctr@ln@m\CUR@proj
\ctr@ln@m\typ@proj
\ctr@ln@m\superc@lprojSP
\ctr@ld@f\def\Figs@tproj#1{%
    \if#13 \def@ultproj\else\if#1c\def@ultproj%
    \else\if#1o\xdef\CUR@proj{1}\xdef\typ@proj{orthogonal}%
         \figsetviewTD(\def@ultpsi,\def@ulttheta)%
         \global\let\c@lprojSP=\c@lprojort\global\let\superc@lprojSP=\c@lprojort%
    \else\if#1r\xdef\CUR@proj{2}\xdef\typ@proj{realistic}%
         \figsetviewTD(\def@ultpsi,\def@ulttheta)%
         \global\let\c@lprojSP=\c@lprojrea\global\let\superc@lprojSP=\c@lprojrea%
    \else\def@ultproj\message{*** Unknown projection. Cavalier projection assumed.}%
    \fi\fi\fi\fi}
\ctr@ld@f\def\def@ultproj{\xdef\CUR@proj{0}\xdef\typ@proj{cavalier}\figsetviewTD(\def@ultpsi,0.5)%
         \global\let\c@lprojSP=\c@lprojcav\global\let\superc@lprojSP=\c@lprojcav}
\ctr@ln@m\figsettarget
\ctr@ld@f\def\figsettargetDD{\un@v@ilable{figsettarget}}
\ctr@ld@f\def\figsettargetTD[#1]{{\ifCUR@PS\W@rnmesIgn{figset proj(targetpt=...)}%
    \else\global\newt@rgetpttrue\xdef\t@rgetpt{#1}\Figg@tXY{#1}\global\wd\Bt@rget=\v@lX%
    \global\ht\Bt@rget=\v@lY\global\dp\Bt@rget=\v@lZ\fi}\ignorespaces}
\ctr@ln@m\figsetview
\ctr@ld@f\def\figsetviewDD{\un@v@ilable{figsetview}}
\ctr@ld@f\def\figsetviewTD(#1){\ifCUR@PS\W@rnmesIgn{figset proj(Psi|Theta|Lambda=...)}%
     \else\Figsetview@#1,:\fi\ignorespaces}
\ctr@ld@f\def\Figsetview@#1,#2:{{\xdef\v@lPsi{#1}\def\t@xt@{#2}%
    \ifx\t@xt@\empty\def\@rgdeux{\v@lTheta}\else\X@rgdeux@#2\fi%
    \c@ssin{\costhet@}{\sinthet@}{#1}\v@lmin=\costhet@ pt\v@lmax=\sinthet@ pt%
    \ifcase\CUR@proj%
    \v@leur=\@rgdeux\v@lmin\xdef\cxa@{\repdecn@mb{\v@leur}}%
    \v@leur=\@rgdeux\v@lmax\xdef\cxb@{\repdecn@mb{\v@leur}}\v@leur=\@rgdeux pt%
    \relax\ifdim\v@leur>\p@\message{*** Lambda too large ! See \BS@ figset proj() !}\fi%
    \else%
    \v@lmax=-\v@lmax\xdef\cxa@{\repdecn@mb{\v@lmax}}\xdef\cxb@{\costhet@}%
    \ifx\t@xt@\empty\edef\@rgdeux{\def@ulttheta}\fi\c@ssin{\C@}{\S@}{\@rgdeux}%
    \v@lmax=-\S@ pt%
    \v@leur=\v@lmax\v@leur=\costhet@\v@leur\xdef\cya@{\repdecn@mb{\v@leur}}%
    \v@leur=\v@lmax\v@leur=\sinthet@\v@leur\xdef\cyb@{\repdecn@mb{\v@leur}}%
    \xdef\cyc@{\C@}\v@lmin=-\C@ pt%
    \v@leur=\v@lmin\v@leur=\costhet@\v@leur\xdef\cza@{\repdecn@mb{\v@leur}}%
    \v@leur=\v@lmin\v@leur=\sinthet@\v@leur\xdef\czb@{\repdecn@mb{\v@leur}}%
    \xdef\czc@{\repdecn@mb{\v@lmax}}\fi%
    \xdef\v@lTheta{\@rgdeux}}}
\ctr@ld@f\def\def@ultpsi{40}
\ctr@ld@f\def\def@ulttheta{25}
\ctr@ln@m\l@debut
\ctr@ln@m\n@mref
\ctr@ld@f\def\Figsetpr@j#1=#2|{\keln@mtr#1|%
    \def\n@mref{dep}\ifx\l@debut\n@mref\Figsetd@p{#2}\else% depth (lambda)
    \def\n@mref{dis}\ifx\l@debut\n@mref%
     \ifnum\CUR@proj=\tw@\figsetobdist(#2)\else\Figset@rr\fi\else% dist
    \def\n@mref{lam}\ifx\l@debut\n@mref\Figsetd@p{#2}\else% depth (lambda)
    \def\n@mref{lat}\ifx\l@debut\n@mref\Figsetth@{#2}\else% latitude (theta)
    \def\n@mref{lon}\ifx\l@debut\n@mref\figsetview(#2)\else% longitude (psi)
    \def\n@mref{psi}\ifx\l@debut\n@mref\figsetview(#2)\else% longitude (psi)
    \def\n@mref{tar}\ifx\l@debut\n@mref%
     \ifnum\CUR@proj=\tw@\figsettarget[#2]\else\Figset@rr\fi\else% target point
    \def\n@mref{the}\ifx\l@debut\n@mref\Figsetth@{#2}\else% latitude (theta)
    \W@rnmesAttr{figset proj}{#1}\fi\fi\fi\fi\fi\fi\fi\fi}
\ctr@ld@f\def\Figsetd@p#1{\ifnum\CUR@proj=\z@\figsetview(\v@lPsi,#1)\else\Figset@rr\fi}
\ctr@ld@f\def\Figsetth@#1{\ifnum\CUR@proj=\z@\Figset@rr\else\figsetview(\v@lPsi,#1)\fi}
\ctr@ld@f\def\Figset@rr{\message{*** \BS@ figset proj(): Attribute "\n@mref" ignored, incompatible
    with current projection}}
\ctr@ld@f\def\initb@undb@xTD{\wd\BminTD@=\maxdimen\ht\BminTD@=\maxdimen\dp\BminTD@=\maxdimen%
    \wd\BmaxTD@=-\maxdimen\ht\BmaxTD@=-\maxdimen\dp\BmaxTD@=-\maxdimen}
\ctr@ln@w{newbox}\Gb@x      % boite a tout faire
\ctr@ln@w{newbox}\Gb@xSC    % boite qui contient le point marker
\ctr@ln@w{newtoks}\c@nsymb  % the point marker
\ctr@ln@w{newif}\ifr@undcoord\ctr@ln@w{newif}\ifunitpr@sent
\ctr@ld@f\def\unssqrttw@{0.707106 }
\ctr@ld@f\def\figAst{\raise-1.15ex\hbox{$\ast$}}
\ctr@ld@f\def\figBullet{\raise-1.15ex\hbox{$\bullet$}}
\ctr@ld@f\def\figCirc{\raise-1.15ex\hbox{$\circ$}}
\ctr@ld@f\def\figDiamond{\raise-1.15ex\hbox{$\diamond$}}%
\ctr@ld@f\def\boxit#1#2{\leavevmode\hbox{\vrule\vbox{\hrule\vglue#1%
    \vtop{\hbox{\kern#1{#2}\kern#1}\vglue#1\hrule}}\vrule}}
\ctr@ld@f\def\centertext#1#2{\vbox{\hsize#1\parindent0cm%
    \leftskip=0pt plus 1fil\rightskip=0pt plus 1fil\parfillskip=0pt{#2}}}
\ctr@ld@f\def\lefttext#1#2{\vbox{\hsize#1\parindent0cm\rightskip=0pt plus 1fil#2}}
\ctr@ld@f\def\c@nterpt{\ignorespaces%
    \kern-.5\wd\Gb@xSC%
    \raise-.5\ht\Gb@xSC\rlap{\hbox{\raise.5\dp\Gb@xSC\hbox{\copy\Gb@xSC}}}%
    \kern .5\wd\Gb@xSC\ignorespaces}
\ctr@ld@f\def\b@undb@xSC#1#2{{\v@lXa=#1\v@lYa=#2%
    \v@leur=\ht\Gb@xSC\advance\v@leur\dp\Gb@xSC%
    \advance\v@lXa-.5\wd\Gb@xSC\advance\v@lYa-.5\v@leur\b@undb@x{\v@lXa}{\v@lYa}%
    \advance\v@lXa\wd\Gb@xSC\advance\v@lYa\v@leur\b@undb@x{\v@lXa}{\v@lYa}}}
\ctr@ln@m\Dist@n
\ctr@ln@m\l@suite
\ctr@ld@f\def\@keldist#1#2{\edef\Dist@n{#2}\y@tiunit{\Dist@n}%
    \ifunitpr@sent#1=\Dist@n\else#1=\Dist@n\unit@\fi}
\ctr@ld@f\def\y@tiunit#1{\unitpr@sentfalse\expandafter\y@tiunit@#1:}
\ctr@ld@f\def\y@tiunit@#1#2:{\ifcat#1a\unitpr@senttrue\else\def\l@suite{#2}%
    \ifx\l@suite\empty\else\y@tiunit@#2:\fi\fi}
\ctr@ln@m\figcoord
\ctr@ld@f\def\figcoordDD#1{{\v@lX=\ptT@unit@\v@lX\v@lY=\ptT@unit@\v@lY%
    \ifr@undcoord\ifcase#1\v@leur=0.5pt\or\v@leur=0.05pt\or\v@leur=0.005pt%
    \or\v@leur=0.0005pt\else\v@leur=\z@\fi%
    \ifdim\v@lX<\z@\advance\v@lX-\v@leur\else\advance\v@lX\v@leur\fi%
    \ifdim\v@lY<\z@\advance\v@lY-\v@leur\else\advance\v@lY\v@leur\fi\fi%
    (\@ffichnb{#1}{\repdecn@mb{\v@lX}},\ifmmode\else\thinspace\fi%
    \@ffichnb{#1}{\repdecn@mb{\v@lY}})}}
\ctr@ld@f\def\@ffichnb#1#2{{\def\@@ffich{\@ffich#1(}\edef\n@mbre{#2}%
    \expandafter\@@ffich\n@mbre)}}
\ctr@ld@f\def\@ffich#1(#2.#3){{#2\ifnum#1>\z@.\fi\def\dig@ts{#3}\s@mme=\z@%
    \loop\ifnum\s@mme<#1\expandafter\@ffichdec\dig@ts:\advance\s@mme\@ne\repeat}}
\ctr@ld@f\def\@ffichdec#1#2:{\relax#1\def\dig@ts{#20}}
\ctr@ld@f\def\figcoordTD#1{{\v@lX=\ptT@unit@\v@lX\v@lY=\ptT@unit@\v@lY\v@lZ=\ptT@unit@\v@lZ%
    \ifr@undcoord\ifcase#1\v@leur=0.5pt\or\v@leur=0.05pt\or\v@leur=0.005pt%
    \or\v@leur=0.0005pt\else\v@leur=\z@\fi%
    \ifdim\v@lX<\z@\advance\v@lX-\v@leur\else\advance\v@lX\v@leur\fi%
    \ifdim\v@lY<\z@\advance\v@lY-\v@leur\else\advance\v@lY\v@leur\fi%
    \ifdim\v@lZ<\z@\advance\v@lZ-\v@leur\else\advance\v@lZ\v@leur\fi\fi%
    (\@ffichnb{#1}{\repdecn@mb{\v@lX}},\ifmmode\else\thinspace\fi%
     \@ffichnb{#1}{\repdecn@mb{\v@lY}},\ifmmode\else\thinspace\fi%
     \@ffichnb{#1}{\repdecn@mb{\v@lZ}})}}
\ctr@ld@f\def\figsetroundcoord#1{\expandafter\Figsetr@undcoord#1:\ignorespaces}
\ctr@ld@f\def\Figsetr@undcoord#1#2:{\if#1n\r@undcoordfalse\else\r@undcoordtrue\fi}
\ctr@ld@f\def\Figsetwr@te#1=#2|{\keln@mun#1|%
    \def\n@mref{m}\ifx\l@debut\n@mref\figsetmark{#2}\else% mark
    \def\n@mref{p}\ifx\l@debut\n@mref\figsetptname{#2}\else% ptname
    \def\n@mref{r}\ifx\l@debut\n@mref\figsetroundcoord{#2}\else% roundcoord
    \W@rnmesAttr{figset write}{#1}\fi\fi\fi}
\ctr@ld@f\def\figsetmark#1{\c@nsymb={#1}\setbox\Gb@xSC=\hbox{\the\c@nsymb}\ignorespaces}
\ctr@ln@m\ptn@me
\ctr@ld@f\def\figsetptname#1{\def\ptn@me##1{#1}\ignorespaces}
\ctr@ld@f\def\FigWrit@L#1:#2(#3,#4){\ignorespaces\@keldist\v@leur{#3}\@keldist\delt@{#4}%
    \C@rp@r@m\def\list@num{#1}\@ecfor\p@int:=\list@num\do{\FigWrit@pt{\p@int}{#2}}}
\ctr@ld@f\def\FigWrit@pt#1#2{\FigWp@r@m{#1}{#2}\Vc@rrect\figWp@si%
    \ifdim\wd\Gb@xSC>\z@\b@undb@xSC{\v@lX}{\v@lY}\fi\figWBB@x}
\ctr@ld@f\def\FigWp@r@m#1#2{\Figg@tXY{#1}%
    \setbox\Gb@x=\hbox{\def\t@xt@{#2}\ifx\t@xt@\empty\Figg@tT{#1}\else#2\fi}\c@lprojSP}
\ctr@ld@f\let\Vc@rrect=\relax
\ctr@ld@f\let\C@rp@r@m=\relax
\ctr@ld@f\def\figwrite[#1]#2{{\ignorespaces\def\list@num{#1}\@ecfor\p@int:=\list@num\do{%
    \setbox\Gb@x=\hbox{\def\t@xt@{#2}\ifx\t@xt@\empty\Figg@tT{\p@int}\else#2\fi}%
    \Figwrit@{\p@int}}}\ignorespaces}
\ctr@ld@f\def\Figwrit@#1{\Figg@tXY{#1}\c@lprojSP%
    \rlap{\kern\v@lX\raise\v@lY\hbox{\unhcopy\Gb@x}}\v@leur=\v@lY%
    \advance\v@lY\ht\Gb@x\b@undb@x{\v@lX}{\v@lY}\advance\v@lX\wd\Gb@x%
    \v@lY=\v@leur\advance\v@lY-\dp\Gb@x\b@undb@x{\v@lX}{\v@lY}}
\ctr@ld@f\def\figwritec[#1]#2{{\ignorespaces\def\list@num{#1}%
    \@ecfor\p@int:=\list@num\do{\Figwrit@c{\p@int}{#2}}}\ignorespaces}
\ctr@ld@f\def\Figwrit@c#1#2{\FigWp@r@m{#1}{#2}%
    \rlap{\kern\v@lX\raise\v@lY\hbox{\rlap{\kern-.5\wd\Gb@x%
    \raise-.5\ht\Gb@x\hbox{\raise.5\dp\Gb@x\hbox{\unhcopy\Gb@x}}}}}%
    \v@leur=\ht\Gb@x\advance\v@leur\dp\Gb@x%
    \advance\v@lX-.5\wd\Gb@x\advance\v@lY-.5\v@leur\b@undb@x{\v@lX}{\v@lY}%
    \advance\v@lX\wd\Gb@x\advance\v@lY\v@leur\b@undb@x{\v@lX}{\v@lY}}
\ctr@ld@f\def\figwritep[#1]{{\ignorespaces\def\list@num{#1}\setbox\Gb@x=\hbox{\c@nterpt}%
    \@ecfor\p@int:=\list@num\do{\Figwrit@{\p@int}}}\ignorespaces}
\ctr@ld@f\def\figwritew#1:#2(#3){\figwritegcw#1:{#2}(#3,0pt)}
\ctr@ld@f\def\figwritee#1:#2(#3){\figwritegce#1:{#2}(#3,0pt)}
\ctr@ld@f\def\figwriten#1:#2(#3){{\def\Vc@rrect{\v@lZ=\v@leur\advance\v@lZ\dp\Gb@x}%
    \Figwrit@NS#1:{#2}(#3)}\ignorespaces}
\ctr@ld@f\def\figwrites#1:#2(#3){{\def\Vc@rrect{\v@lZ=-\v@leur\advance\v@lZ-\ht\Gb@x}%
    \Figwrit@NS#1:{#2}(#3)}\ignorespaces}
\ctr@ld@f\def\Figwrit@NS#1:#2(#3){\let\figWp@si=\FigWp@siNS\let\figWBB@x=\FigWBB@xNS%
    \FigWrit@L#1:{#2}(#3,0pt)}
\ctr@ld@f\def\FigWp@siNS{\rlap{\kern\v@lX\raise\v@lY\hbox{\rlap{\kern-.5\wd\Gb@x%
    \raise\v@lZ\hbox{\unhcopy\Gb@x}}\c@nterpt}}}
\ctr@ld@f\def\FigWBB@xNS{\advance\v@lY\v@lZ%
    \advance\v@lY-\dp\Gb@x\advance\v@lX-.5\wd\Gb@x\b@undb@x{\v@lX}{\v@lY}%
    \advance\v@lY\ht\Gb@x\advance\v@lY\dp\Gb@x%
    \advance\v@lX\wd\Gb@x\b@undb@x{\v@lX}{\v@lY}}
\ctr@ld@f\def\figwritenw#1:#2(#3){{\let\figWp@si=\FigWp@sigW\let\figWBB@x=\FigWBB@xgWE%
    \def\C@rp@r@m{\v@leur=\unssqrttw@\v@leur\delt@=\v@leur%
    \ifdim\delt@=\z@\delt@=\epsil@n\fi}\let@xte={-}\FigWrit@L#1:{#2}(#3,0pt)}\ignorespaces}
\ctr@ld@f\def\figwritesw#1:#2(#3){{\let\figWp@si=\FigWp@sigW\let\figWBB@x=\FigWBB@xgWE%
    \def\C@rp@r@m{\v@leur=\unssqrttw@\v@leur\delt@=-\v@leur%
    \ifdim\delt@=\z@\delt@=-\epsil@n\fi}\let@xte={-}\FigWrit@L#1:{#2}(#3,0pt)}\ignorespaces}
\ctr@ld@f\def\figwritene#1:#2(#3){{\let\figWp@si=\FigWp@sigE\let\figWBB@x=\FigWBB@xgWE%
    \def\C@rp@r@m{\v@leur=\unssqrttw@\v@leur\delt@=\v@leur%
    \ifdim\delt@=\z@\delt@=\epsil@n\fi}\let@xte={}\FigWrit@L#1:{#2}(#3,0pt)}\ignorespaces}
\ctr@ld@f\def\figwritese#1:#2(#3){{\let\figWp@si=\FigWp@sigE\let\figWBB@x=\FigWBB@xgWE%
    \def\C@rp@r@m{\v@leur=\unssqrttw@\v@leur\delt@=-\v@leur%
    \ifdim\delt@=\z@\delt@=-\epsil@n\fi}\let@xte={}\FigWrit@L#1:{#2}(#3,0pt)}\ignorespaces}
\ctr@ld@f\def\figwritegw#1:#2(#3,#4){{\let\figWp@si=\FigWp@sigW\let\figWBB@x=\FigWBB@xgWE%
    \let@xte={-}\FigWrit@L#1:{#2}(#3,#4)}\ignorespaces}
\ctr@ld@f\def\figwritege#1:#2(#3,#4){{\let\figWp@si=\FigWp@sigE\let\figWBB@x=\FigWBB@xgWE%
    \let@xte={}\FigWrit@L#1:{#2}(#3,#4)}\ignorespaces}
\ctr@ld@f\def\FigWp@sigW{\v@lXa=\z@\v@lYa=\ht\Gb@x\advance\v@lYa\dp\Gb@x%
    \ifdim\delt@>\z@\relax%
    \rlap{\kern\v@lX\raise\v@lY\hbox{\rlap{\kern-\wd\Gb@x\kern-\v@leur%
          \raise\delt@\hbox{\raise\dp\Gb@x\hbox{\unhcopy\Gb@x}}}\c@nterpt}}%
    \else\ifdim\delt@<\z@\relax\v@lYa=-\v@lYa%
    \rlap{\kern\v@lX\raise\v@lY\hbox{\rlap{\kern-\wd\Gb@x\kern-\v@leur%
          \raise\delt@\hbox{\raise-\ht\Gb@x\hbox{\unhcopy\Gb@x}}}\c@nterpt}}%
    \else\v@lXa=-.5\v@lYa%
    \rlap{\kern\v@lX\raise\v@lY\hbox{\rlap{\kern-\wd\Gb@x\kern-\v@leur%
          \raise-.5\ht\Gb@x\hbox{\raise.5\dp\Gb@x\hbox{\unhcopy\Gb@x}}}\c@nterpt}}%
    \fi\fi}
\ctr@ld@f\def\FigWp@sigE{\v@lXa=\z@\v@lYa=\ht\Gb@x\advance\v@lYa\dp\Gb@x%
    \ifdim\delt@>\z@\relax%
    \rlap{\kern\v@lX\raise\v@lY\hbox{\c@nterpt\kern\v@leur%
          \raise\delt@\hbox{\raise\dp\Gb@x\hbox{\unhcopy\Gb@x}}}}%
    \else\ifdim\delt@<\z@\relax\v@lYa=-\v@lYa%
    \rlap{\kern\v@lX\raise\v@lY\hbox{\c@nterpt\kern\v@leur%
          \raise\delt@\hbox{\raise-\ht\Gb@x\hbox{\unhcopy\Gb@x}}}}%
    \else\v@lXa=-.5\v@lYa%
    \rlap{\kern\v@lX\raise\v@lY\hbox{\c@nterpt\kern\v@leur%
          \raise-.5\ht\Gb@x\hbox{\raise.5\dp\Gb@x\hbox{\unhcopy\Gb@x}}}}%
    \fi\fi}
\ctr@ld@f\def\FigWBB@xgWE{\advance\v@lY\delt@%
    \advance\v@lX\the\let@xte\v@leur\advance\v@lY\v@lXa\b@undb@x{\v@lX}{\v@lY}%
    \advance\v@lX\the\let@xte\wd\Gb@x\advance\v@lY\v@lYa\b@undb@x{\v@lX}{\v@lY}}
\ctr@ld@f\def\figwritegcw#1:#2(#3,#4){{\let\figWp@si=\FigWp@sigcW\let\figWBB@x=\FigWBB@xgcWE%
    \let@xte={-}\FigWrit@L#1:{#2}(#3,#4)}\ignorespaces}
\ctr@ld@f\def\figwritegce#1:#2(#3,#4){{\let\figWp@si=\FigWp@sigcE\let\figWBB@x=\FigWBB@xgcWE%
    \let@xte={}\FigWrit@L#1:{#2}(#3,#4)}\ignorespaces}
\ctr@ld@f\def\FigWp@sigcW{\rlap{\kern\v@lX\raise\v@lY\hbox{\rlap{\kern-\wd\Gb@x\kern-\v@leur%
     \raise-.5\ht\Gb@x\hbox{\raise\delt@\hbox{\raise.5\dp\Gb@x\hbox{\unhcopy\Gb@x}}}}%
     \c@nterpt}}}
\ctr@ld@f\def\FigWp@sigcE{\rlap{\kern\v@lX\raise\v@lY\hbox{\c@nterpt\kern\v@leur%
    \raise-.5\ht\Gb@x\hbox{\raise\delt@\hbox{\raise.5\dp\Gb@x\hbox{\unhcopy\Gb@x}}}}}}
\ctr@ld@f\def\FigWBB@xgcWE{\v@lZ=\ht\Gb@x\advance\v@lZ\dp\Gb@x%
    \advance\v@lX\the\let@xte\v@leur\advance\v@lY\delt@\advance\v@lY.5\v@lZ%
    \b@undb@x{\v@lX}{\v@lY}%
    \advance\v@lX\the\let@xte\wd\Gb@x\advance\v@lY-\v@lZ\b@undb@x{\v@lX}{\v@lY}}
\ctr@ld@f\def\figwritebn#1:#2(#3){{\def\Vc@rrect{\v@lZ=\v@leur}\Figwrit@NS#1:{#2}(#3)}\ignorespaces}
\ctr@ld@f\def\figwritebs#1:#2(#3){{\def\Vc@rrect{\v@lZ=-\v@leur}\Figwrit@NS#1:{#2}(#3)}\ignorespaces}
\ctr@ld@f\def\figwritebw#1:#2(#3){{\let\figWp@si=\FigWp@sibW\let\figWBB@x=\FigWBB@xbWE%
    \let@xte={-}\FigWrit@L#1:{#2}(#3,0pt)}\ignorespaces}
\ctr@ld@f\def\figwritebe#1:#2(#3){{\let\figWp@si=\FigWp@sibE\let\figWBB@x=\FigWBB@xbWE%
    \let@xte={}\FigWrit@L#1:{#2}(#3,0pt)}\ignorespaces}
\ctr@ld@f\def\FigWp@sibW{\rlap{\kern\v@lX\raise\v@lY\hbox{\rlap{\kern-\wd\Gb@x\kern-\v@leur%
          \hbox{\unhcopy\Gb@x}}\c@nterpt}}}
\ctr@ld@f\def\FigWp@sibE{\rlap{\kern\v@lX\raise\v@lY\hbox{\c@nterpt\kern\v@leur%
          \hbox{\unhcopy\Gb@x}}}}
\ctr@ld@f\def\FigWBB@xbWE{\v@lZ=\ht\Gb@x\advance\v@lZ\dp\Gb@x%
    \advance\v@lX\the\let@xte\v@leur\advance\v@lY\ht\Gb@x\b@undb@x{\v@lX}{\v@lY}%
    \advance\v@lX\the\let@xte\wd\Gb@x\advance\v@lY-\v@lZ\b@undb@x{\v@lX}{\v@lY}}
\ctr@ln@w{newread}\frf@g  \ctr@ln@w{newwrite}\fwf@g
\ctr@ln@w{newif}\ifCUR@PS
\ctr@ln@w{newif}\ifGR@cri
\ctr@ln@w{newif}\ifUse@llipse
\ctr@ln@w{newif}\ifGRdebugm@de \GRdebugm@defalse 
\ctr@ln@w{newif}\ifPDFm@ke
\ifx\pdfliteral\undefined\else\ifnum\pdfoutput>\z@\PDFm@ketrue\fi\fi
\ctr@ld@f\def\initPDF@rDVI{%
\ifPDFm@ke
 \let\figscan=\figscan@E
 \let\newGr@FN=\newGr@FNPDF
 \ctr@ld@f\def\c@mcurveto{c}
 \ctr@ld@f\def\c@mfill{f}
 \ctr@ld@f\def\c@mgsave{q}
 \ctr@ld@f\def\c@mgrestore{Q}
 \ctr@ld@f\def\c@mlineto{l}
 \ctr@ld@f\def\c@mmoveto{m}
 \ctr@ld@f\def\c@msetgray{g}     \ctr@ld@f\def\c@msetgrayStroke{G}
 \ctr@ld@f\def\c@msetcmykcolor{k}\ctr@ld@f\def\c@msetcmykcolorStroke{K}
 \ctr@ld@f\def\c@msetrgbcolor{rg}\ctr@ld@f\def\c@msetrgbcolorStroke{RG}
 \ctr@ld@f\def\d@fprimarC@lor{\CUR@color\space\CUR@colorc@md%
               \space\CUR@color\space\CUR@colorc@mdStroke}
 \ctr@ld@f\def\c@msetdash{d}
 \ctr@ld@f\def\c@msetlinejoin{j}
 \ctr@ld@f\def\c@msetlinewidth{w}
 \ctr@ld@f\def\f@gclosestroke{\immediate\write\fwf@g{s}}
 \ctr@ld@f\def\f@gfill{\immediate\write\fwf@g{\fillc@md}}% Voir la def de \fillc@md ****
 \ctr@ld@f\def\f@gnewpath{}
 \ctr@ld@f\def\f@gstroke{\immediate\write\fwf@g{S}}
\else
 \let\figinsertE=\figinsert
 \let\newGr@FN=\newGr@FNDVI
 \ctr@ld@f\def\c@mcurveto{curveto}
 \ctr@ld@f\def\c@mfill{fill}
 \ctr@ld@f\def\c@mgsave{gsave}
 \ctr@ld@f\def\c@mgrestore{grestore}
 \ctr@ld@f\def\c@mlineto{lineto}
 \ctr@ld@f\def\c@mmoveto{moveto}
 \ctr@ld@f\def\c@msetgray{setgray}          \ctr@ld@f\def\c@msetgrayStroke{}
 \ctr@ld@f\def\c@msetcmykcolor{setcmykcolor}\ctr@ld@f\def\c@msetcmykcolorStroke{}
 \ctr@ld@f\def\c@msetrgbcolor{setrgbcolor}  \ctr@ld@f\def\c@msetrgbcolorStroke{}
 \ctr@ld@f\def\d@fprimarC@lor{\CUR@color\space\CUR@colorc@md}
 \ctr@ld@f\def\c@msetdash{setdash}
 \ctr@ld@f\def\c@msetlinejoin{setlinejoin}
 \ctr@ld@f\def\c@msetlinewidth{setlinewidth}
 \ctr@ld@f\def\f@gclosestroke{\immediate\write\fwf@g{closepath\space stroke}}
 \ctr@ld@f\def\f@gfill{\immediate\write\fwf@g{\fillc@md}}
 \ctr@ld@f\def\f@gnewpath{\immediate\write\fwf@g{newpath}}
 \ctr@ld@f\def\f@gstroke{\immediate\write\fwf@g{stroke}}
\fi}
\ctr@ld@f\def\c@pypsfile#1#2{\c@pyfil@{\immediate\write#1}{#2}}
\ctr@ld@f\def\Figinclud@PDF#1#2{\openin\frf@g=#1\pdfliteral{q #2 0 0 #2 0 0 cm}%
    \c@pyfil@{\pdfliteral}{\frf@g}\pdfliteral{Q}\closein\frf@g}
\ctr@ln@w{newif}\ifmored@ta
\ctr@ln@m\bl@nkline
\ctr@ld@f\def\c@pyfil@#1#2{\def\bl@nkline{\par}{\catcode`\%=12
    \loop\ifeof#2\mored@tafalse\else\mored@tatrue\immediate\read#2 to\tr@c
    \ifx\tr@c\bl@nkline\else#1{\tr@c}\fi\fi\ifmored@ta\repeat}}
\ctr@ld@f\def\keln@mun#1#2|{\def\l@debut{#1}\def\l@suite{#2}}
\ctr@ld@f\def\keln@mde#1#2#3|{\def\l@debut{#1#2}\def\l@suite{#3}}
\ctr@ld@f\def\keln@mtr#1#2#3#4|{\def\l@debut{#1#2#3}\def\l@suite{#4}}
\ctr@ld@f\def\keln@mqu#1#2#3#4#5|{\def\l@debut{#1#2#3#4}\def\l@suite{#5}}
\ctr@ld@f\let\@psffilein=\frf@g % file to \read
\ctr@ln@w{newif}\if@psffileok    % continue looking for the bounding box?
\ctr@ln@w{newif}\if@psfbbfound   % success?
\ctr@ln@w{newif}\if@psfverbose   % report what you're making?
\@psfverbosetrue
\ctr@ln@m\@psfllx \ctr@ln@m\@psflly
\ctr@ln@m\@psfurx \ctr@ln@m\@psfury
\ctr@ln@m\resetcolonc@tcode
\ctr@ld@f\def\@psfgetbb#1{\global\@psfbbfoundfalse%
\global\def\@psfllx{0}\global\def\@psflly{0}%
\global\def\@psfurx{30}\global\def\@psfury{30}%
\openin\@psffilein=#1\relax
\ifeof\@psffilein\errmessage{I couldn't open #1, will ignore it}\else
   \edef\resetcolonc@tcode{\catcode`\noexpand\:\the\catcode`\:\relax}%
   {\@psffileoktrue \chardef\other=12
    \def\do##1{\catcode`##1=\other}\dospecials \catcode`\ =10 \resetcolonc@tcode
    \loop
       \read\@psffilein to \@psffileline
       \ifeof\@psffilein\@psffileokfalse\else
          \expandafter\@psfaux\@psffileline:. \\%
       \fi
   \if@psffileok\repeat
   \if@psfbbfound\else
    \if@psfverbose\message{No bounding box comment in #1; using defaults}\fi\fi
   }\closein\@psffilein\fi}%
\ctr@ln@m\@psfbblit
\ctr@ln@m\@psfpercent
{\catcode`\%=12 \global\let\@psfpercent=%\global\def\@psfbblit{%BoundingBox}}%
\ctr@ln@m\@psfaux
\long\def\@psfaux#1#2:#3\\{\ifx#1\@psfpercent
   \def\testit{#2}\ifx\testit\@psfbblit
      \@psfgrab #3 . . . \\%
      \@psffileokfalse
      \global\@psfbbfoundtrue
   \fi\else\ifx#1\par\else\@psffileokfalse\fi\fi}%
\ctr@ld@f\def\@psfempty{}%
\ctr@ld@f\def\@psfgrab #1 #2 #3 #4 #5\\{%
\global\def\@psfllx{#1}\ifx\@psfllx\@psfempty
      \@psfgrab #2 #3 #4 #5 .\\\else
   \global\def\@psflly{#2}%
   \global\def\@psfurx{#3}\global\def\@psfury{#4}\fi}%
\ctr@ld@f\def\PSwrit@cmd#1#2#3{{\Figg@tXY{#1}\c@lprojSP\b@undb@x{\v@lX}{\v@lY}%
    \v@lX=\ptT@ptps\v@lX\v@lY=\ptT@ptps\v@lY%
    \immediate\write#3{\repdecn@mb{\v@lX}\space\repdecn@mb{\v@lY}\space#2}}}
\ctr@ld@f\def\PSwrit@cmdS#1#2#3#4#5{{\Figg@tXY{#1}\c@lprojSP\b@undb@x{\v@lX}{\v@lY}%
    \global\result@t=\v@lX\global\result@@t=\v@lY%
    \v@lX=\ptT@ptps\v@lX\v@lY=\ptT@ptps\v@lY%
    \immediate\write#3{\repdecn@mb{\v@lX}\space\repdecn@mb{\v@lY}\space#2}}%
    \edef#4{\the\result@t}\edef#5{\the\result@@t}}
\ctr@ld@f\def\update@ttr#1#2#3{\Figdisc@rdLTS{#3}{\n@mref}%
    \ifx\n@mref\D@FTref#2{#1}\else#2{#3}\fi}
\ctr@ld@f\def\D@FTref{default}
\ctr@ld@f\def\W@rnmesAttr#1#2{%
    \immediate\write16{*** Unknown attribute: \BS@ #1(..., #2=...)}}
\ctr@ld@f\def\W@rnmeskwd#1#2{%
    \immediate\write16{*** Unknown keyword #2 in \BS@ #1}}
\ctr@ld@f\def\W@rnmesIgn#1{\immediate\write16{*** \BS@ #1 is ignored inside a
     \BS@ figdrawbegin-\BS@ figdrawend block.}}
\ctr@ld@f\def\Psset@lti#1=#2|{\keln@mtr#1|%
    \def\n@mref{blc}\ifx\l@debut\n@mref\update@ttr\D@FTref\P@setblcolor{#2}\else% base line color
    \def\n@mref{bld}\ifx\l@debut\n@mref\update@ttr\D@FTref\P@setbldash{#2}\else% base line dash
    \def\n@mref{blw}\ifx\l@debut\n@mref\update@ttr\D@FTref\P@setblwidth{#2}\else% base line width
    \def\n@mref{sqc}\ifx\l@debut\n@mref\update@ttr\D@FTref\P@setsqcolor{#2}\else% square color
    \def\n@mref{sqd}\ifx\l@debut\n@mref\update@ttr\D@FTref\P@setsqdash{#2}\else% square dash
    \def\n@mref{sqw}\ifx\l@debut\n@mref\update@ttr\D@FTref\P@setsqwidth{#2}\else% square width
    \W@rnmesAttr{figset altitude}{#1}\fi\fi\fi\fi\fi\fi}
\ctr@ln@m\DDV@blcolor
\ctr@ld@f\def\P@setblcolor#1{\edef\DDV@blcolor{#1}}
\ctr@ln@m\DDV@bldash
\ctr@ld@f\def\P@setbldash#1{\edef\DDV@bldash{#1}}
\ctr@ln@m\DDV@blwidth
\ctr@ld@f\def\P@setblwidth#1{\edef\DDV@blwidth{#1}}
\ctr@ln@m\DDV@sqcolor
\ctr@ld@f\def\P@setsqcolor#1{\edef\DDV@sqcolor{#1}}
\ctr@ln@m\DDV@sqdash
\ctr@ld@f\def\P@setsqdash#1{\edef\DDV@sqdash{#1}}
\ctr@ln@m\DDV@sqwidth
\ctr@ld@f\def\P@setsqwidth#1{\edef\DDV@sqwidth{#1}}
\ctr@ld@f\def\figdrawaltitude#1[#2,#3,#4]{{\ifCUR@PS\ifGR@cri%
    \PSc@mment{altitude Square Dim=#1, Triangle=[#2 / #3,#4]}%
    \s@uvc@ntr@l\et@tpsaltitude\resetc@ntr@l{2}\figptorthoprojline-5:=#2/#3,#4/%
    \figvectP -1[#3,#4]\n@rminf{\v@leur}{-1}\vecunit@{-3}{-1}%
    \figvectP -1[-5,#3]\n@rminf{\v@lmin}{-1}\figvectP -2[-5,#4]\n@rminf{\v@lmax}{-2}%
    \ifdim\v@lmin<\v@lmax\s@mme=#3\else\v@lmax=\v@lmin\s@mme=#4\fi%
    \figvectP -4[-5,#2]\vecunit@{-4}{-4}\delt@=#1\unit@%
    \edef\t@ille{\repdecn@mb{\delt@}}\figpttra-1:=-5/\t@ille,-3/%
    \figptstra-3=-5,-1/\t@ille,-4/\figdrawline[#2,-5]%
    \Pss@tspecifSt{color=\DDV@sqcolor,dash=\DDV@sqdash,width=\DDV@sqwidth}%
    \figdrawline[-1,-2,-3]%
    \Psrest@reSt{color=\DDV@sqcolor,dash=\DDV@sqdash,width=\DDV@sqwidth}%
    \ifdim\v@leur<\v@lmax%
    \Pss@tspecifSt{color=\DDV@blcolor,dash=\DDV@bldash,width=\DDV@blwidth}%
    \figdrawline[-5,\the\s@mme]%
    \Psrest@reSt{color=\DDV@blcolor,dash=\DDV@bldash,width=\DDV@blwidth}%
    \fi\PSc@mment{End altitude}\resetc@ntr@l\et@tpsaltitude\fi\fi}}
\ctr@ld@f\def\Ps@rcerc#1;#2(#3,#4){\ellBB@x#1;#2,#2(#3,#4,0)%
    \f@gnewpath{\delt@=#2\unit@\delt@=\ptT@ptps\delt@%
    \BdingB@xfalse%
    \PSwrit@cmd{#1}{\repdecn@mb{\delt@}\space #3\space #4\space arc}{\fwf@g}}}
\ctr@ln@m\figdrawarccirc
\ctr@ld@f\def\Q@arccircDD#1;#2(#3,#4){\ifCUR@PS\ifGR@cri%
    \PSc@mment{arccircDD Center=#1 ; Radius=#2 (Ang1=#3, Ang2=#4)}%
    \iffillm@de\Ps@rcerc#1;#2(#3,#4)%
    \f@gfill%
    \else\Ps@rcerc#1;#2(#3,#4)\f@gstroke\fi%
    \PSc@mment{End arccircDD}\fi\fi}
\ctr@ld@f\def\Q@arccircTD#1,#2,#3;#4(#5,#6){{\ifCUR@PS\ifGR@cri\s@uvc@ntr@l\et@tpsarccircTD%
    \PSc@mment{arccircTD Center=#1,P1=#2,P2=#3 ; Radius=#4 (Ang1=#5, Ang2=#6)}%
    \setc@ntr@l{2}\c@lExtAxes#1,#2,#3(#4)\Q@arcellPATD#1,-4,-5(#5,#6)%
    \PSc@mment{End arccircTD}\resetc@ntr@l\et@tpsarccircTD\fi\fi}}
\ctr@ld@f\def\c@lExtAxes#1,#2,#3(#4){%
    \figvectPTD-5[#1,#2]\vecunit@{-5}{-5}\figvectNTD-4[#1,#2,#3]\vecunit@{-4}{-4}%
    \figvectNVTD-3[-4,-5]\delt@=#4\unit@\edef\r@yon{\repdecn@mb{\delt@}}%
    \figpttra-4:=#1/\r@yon,-5/\figpttra-5:=#1/\r@yon,-3/}
\ctr@ln@m\figdrawarccircP
\ctr@ld@f\def\Q@arccircPDD#1;#2[#3,#4]{{\ifCUR@PS\ifGR@cri\s@uvc@ntr@l\et@tpsarccircPDD%
    \PSc@mment{arccircPDD Center=#1; Radius=#2, [P1=#3, P2=#4]}%
    \Ps@ngleparam#1;#2[#3,#4]\ifdim\v@lmin>\v@lmax\advance\v@lmax\DePI@deg\fi%
    \edef\@ngdeb{\repdecn@mb{\v@lmin}}\edef\@ngfin{\repdecn@mb{\v@lmax}}%
    \figdrawarccirc#1;\r@dius(\@ngdeb,\@ngfin)%
    \PSc@mment{End arccircPDD}\resetc@ntr@l\et@tpsarccircPDD\fi\fi}}
\ctr@ld@f\def\Q@arccircPTD#1;#2[#3,#4,#5]{{\ifCUR@PS\ifGR@cri\s@uvc@ntr@l\et@tpsarccircPTD%
    \PSc@mment{arccircPTD Center=#1; Radius=#2, [P1=#3, P2=#4, P3=#5]}%
    \setc@ntr@l{2}\c@lExtAxes#1,#3,#5(#2)\figdrawarcellPP#1,-4,-5[#3,#4]%
    \PSc@mment{End arccircPTD}\resetc@ntr@l\et@tpsarccircPTD\fi\fi}}
\ctr@ld@f\def\Ps@ngleparam#1;#2[#3,#4]{\setc@ntr@l{2}%
    \figvectPDD-1[#1,#3]\vecunit@{-1}{-1}\Figg@tXY{-1}\arct@n\v@lmin(\v@lX,\v@lY)%
    \figvectPDD-2[#1,#4]\vecunit@{-2}{-2}\Figg@tXY{-2}\arct@n\v@lmax(\v@lX,\v@lY)%
    \v@lmin=\rdT@deg\v@lmin\v@lmax=\rdT@deg\v@lmax%
    \v@leur=#2pt\maxim@m{\mili@u}{-\v@leur}{\v@leur}%
    \edef\r@dius{\repdecn@mb{\mili@u}}}
\ctr@ld@f\def\Ps@rcercBz#1;#2(#3,#4){\Ps@rellBz#1;#2,#2(#3,#4,0)}
\ctr@ld@f\def\Ps@rellBz#1;#2,#3(#4,#5,#6){%
    \ellBB@x#1;#2,#3(#4,#5,#6)\BdingB@xfalse%
    \c@lNbarcs{#4}{#5}\v@leur=#4pt\setc@ntr@l{2}\figptell-13::#1;#2,#3(#4,#6)%
    \f@gnewpath\PSwrit@cmd{-13}{\c@mmoveto}{\fwf@g}%
    \s@mme=\z@\bcl@rellBz#1;#2,#3(#6)\BdingB@xtrue}
\ctr@ld@f\def\bcl@rellBz#1;#2,#3(#4){\relax%
    \ifnum\s@mme<\p@rtent\advance\s@mme\@ne%
    \advance\v@leur\delt@\edef\@ngle{\repdecn@mb\v@leur}\figptell-14::#1;#2,#3(\@ngle,#4)%
    \advance\v@leur\delt@\edef\@ngle{\repdecn@mb\v@leur}\figptell-15::#1;#2,#3(\@ngle,#4)%
    \advance\v@leur\delt@\edef\@ngle{\repdecn@mb\v@leur}\figptell-16::#1;#2,#3(\@ngle,#4)%
    \figptscontrolDD-18[-13,-14,-15,-16]%
    \PSwrit@cmd{-18}{}{\fwf@g}\PSwrit@cmd{-17}{}{\fwf@g}%
    \PSwrit@cmd{-16}{\c@mcurveto}{\fwf@g}%
    \figptcopyDD-13:/-16/\bcl@rellBz#1;#2,#3(#4)\fi}
\ctr@ld@f\def\Ps@rell#1;#2,#3(#4,#5,#6){\ellBB@x#1;#2,#3(#4,#5,#6)%
    \f@gnewpath{\v@lmin=#2\unit@\v@lmin=\ptT@ptps\v@lmin%
    \v@lmax=#3\unit@\v@lmax=\ptT@ptps\v@lmax\BdingB@xfalse%
    \PSwrit@cmd{#1}%
    {#6\space\repdecn@mb{\v@lmin}\space\repdecn@mb{\v@lmax}\space #4\space #5\space ellipse}{\fwf@g}}%
    \global\Use@llipsetrue}
\ctr@ln@m\figdrawarcell
\ctr@ld@f\def\Q@arcellDD#1;#2,#3(#4,#5,#6){{\ifCUR@PS\ifGR@cri%
    \PSc@mment{arcellDD Center=#1 ; XRad=#2, YRad=#3 (Ang1=#4, Ang2=#5, Inclination=#6)}%
    \iffillm@de\Ps@rell#1;#2,#3(#4,#5,#6)%
    \f@gfill%
    \else\Ps@rell#1;#2,#3(#4,#5,#6)\f@gstroke\fi%
    \PSc@mment{End arcellDD}\fi\fi}}
\ctr@ld@f\def\Q@arcellTD#1;#2,#3(#4,#5,#6){{\ifCUR@PS\ifGR@cri\s@uvc@ntr@l\et@tpsarcellTD%
    \PSc@mment{arcellTD Center=#1 ; XRad=#2, YRad=#3 (Ang1=#4, Ang2=#5, Inclination=#6)}%
    \setc@ntr@l{2}\figpttraC -8:=#1/#2,0,0/\figpttraC -7:=#1/0,#3,0/%
    \figvectC -4(0,0,1)\figptsrot -8=-8,-7/#1,#6,-4/\Q@arcellPATD#1,-8,-7(#4,#5)%
    \PSc@mment{End arcellTD}\resetc@ntr@l\et@tpsarcellTD\fi\fi}}
\ctr@ln@m\figdrawarcellPA
\ctr@ld@f\def\Q@arcellPADD#1,#2,#3(#4,#5){{\ifCUR@PS\ifGR@cri\s@uvc@ntr@l\et@tpsarcellPADD%
    \PSc@mment{arcellPADD Center=#1,PtAxis1=#2,PtAxis2=#3 (Ang1=#4, Ang2=#5)}%
    \setc@ntr@l{2}\figvectPDD-1[#1,#2]\vecunit@DD{-1}{-1}\v@lX=\ptT@unit@\result@t%
    \edef\XR@d{\repdecn@mb{\v@lX}}\Figg@tXY{-1}\arct@n\v@lmin(\v@lX,\v@lY)%
    \v@lmin=\rdT@deg\v@lmin\edef\Inclin@{\repdecn@mb{\v@lmin}}%
    \figgetdist\YR@d[#1,#3]\Q@arcellDD#1;\XR@d,\YR@d(#4,#5,\Inclin@)%
    \PSc@mment{End arcellPADD}\resetc@ntr@l\et@tpsarcellPADD\fi\fi}}
\ctr@ld@f\def\Q@arcellPATD#1,#2,#3(#4,#5){{\ifCUR@PS\ifGR@cri\s@uvc@ntr@l\et@tpsarcellPATD%
    \PSc@mment{arcellPATD Center=#1,PtAxis1=#2,PtAxis2=#3 (Ang1=#4, Ang2=#5)}%
    \iffillm@de\Ps@rellPATD#1,#2,#3(#4,#5)%
    \f@gfill%
    \else\Ps@rellPATD#1,#2,#3(#4,#5)\f@gstroke\fi%
    \PSc@mment{End arcellPATD}\resetc@ntr@l\et@tpsarcellPATD\fi\fi}}
\ctr@ld@f\def\Ps@rellPATD#1,#2,#3(#4,#5){\let\c@lprojSP=\relax%
    \setc@ntr@l{2}\figvectPTD-1[#1,#2]\figvectPTD-2[#1,#3]\c@lNbarcs{#4}{#5}%
    \v@leur=#4pt\c@lptellP{#1}{-1}{-2}\Figptpr@j-5:/-3/%
    \f@gnewpath\PSwrit@cmdS{-5}{\c@mmoveto}{\fwf@g}{\X@un}{\Y@un}%
    \edef\C@nt@r{#1}\s@mme=\z@\bcl@rellPATD}
\ctr@ld@f\def\bcl@rellPATD{\relax%
    \ifnum\s@mme<\p@rtent\advance\s@mme\@ne%
    \advance\v@leur\delt@\c@lptellP{\C@nt@r}{-1}{-2}\Figptpr@j-4:/-3/%
    \advance\v@leur\delt@\c@lptellP{\C@nt@r}{-1}{-2}\Figptpr@j-6:/-3/%
    \advance\v@leur\delt@\c@lptellP{\C@nt@r}{-1}{-2}\Figptpr@j-3:/-3/%
    \v@lX=\z@\v@lY=\z@\Figtr@nptDD{-5}{-5}\Figtr@nptDD{2}{-3}%
    \divide\v@lX\@vi\divide\v@lY\@vi%
    \Figtr@nptDD{3}{-4}\Figtr@nptDD{-1.5}{-6}\v@lmin=\v@lX\v@lmax=\v@lY%
    \v@lX=\z@\v@lY=\z@\Figtr@nptDD{2}{-5}\Figtr@nptDD{-5}{-3}%
    \divide\v@lX\@vi\divide\v@lY\@vi\Figtr@nptDD{-1.5}{-4}\Figtr@nptDD{3}{-6}%
    \BdingB@xfalse%
    \Figp@intregDD-4:(\v@lmin,\v@lmax)\PSwrit@cmdS{-4}{}{\fwf@g}{\X@de}{\Y@de}%
    \Figp@intregDD-4:(\v@lX,\v@lY)\PSwrit@cmdS{-4}{}{\fwf@g}{\X@tr}{\Y@tr}%
    \BdingB@xtrue\PSwrit@cmdS{-3}{\c@mcurveto}{\fwf@g}{\X@qu}{\Y@qu}%
    \B@zierBB@x{1}{\Y@un}(\X@un,\X@de,\X@tr,\X@qu)%
    \B@zierBB@x{2}{\X@un}(\Y@un,\Y@de,\Y@tr,\Y@qu)%
    \edef\X@un{\X@qu}\edef\Y@un{\Y@qu}\figptcopyDD-5:/-3/\bcl@rellPATD\fi}
\ctr@ld@f\def\c@lNbarcs#1#2{%
    \delt@=#2pt\advance\delt@-#1pt\maxim@m{\v@lmax}{\delt@}{-\delt@}%
    \v@leur=\v@lmax\divide\v@leur45 \p@rtentiere{\p@rtent}{\v@leur}\advance\p@rtent\@ne%
    \s@mme=\p@rtent\multiply\s@mme\thr@@\divide\delt@\s@mme}
\ctr@ld@f\def\figdrawarcellPP#1,#2,#3[#4,#5]{{\ifCUR@PS\ifGR@cri\s@uvc@ntr@l\et@tpsarcellPP%
    \PSc@mment{arcellPP Center=#1,PtAxis1=#2,PtAxis2=#3 [Point1=#4, Point2=#5]}%
    \setc@ntr@l{2}\figvectP-2[#1,#3]\vecunit@{-2}{-2}\v@lmin=\result@t%
    \invers@{\v@lmax}{\v@lmin}%
    \figvectP-1[#1,#2]\vecunit@{-1}{-1}\v@leur=\result@t%
    \v@leur=\repdecn@mb{\v@lmax}\v@leur\edef\AsB@{\repdecn@mb{\v@leur}}% a/b
    \c@lAngle{#1}{#4}{\v@lmin}\edef\@ngdeb{\repdecn@mb{\v@lmin}}%
    \c@lAngle{#1}{#5}{\v@lmax}\ifdim\v@lmin>\v@lmax\advance\v@lmax\DePI@deg\fi%
    \edef\@ngfin{\repdecn@mb{\v@lmax}}\figdrawarcellPA#1,#2,#3(\@ngdeb,\@ngfin)%
    \PSc@mment{End arcellPP}\resetc@ntr@l\et@tpsarcellPP\fi\fi}}
\ctr@ld@f\def\c@lAngle#1#2#3{\figvectP-3[#1,#2]%
    \c@lproscal\delt@[-3,-1]\c@lproscal\v@leur[-3,-2]%
    \v@leur=\AsB@\v@leur\arct@n#3(\delt@,\v@leur)#3=\rdT@deg#3}
\ctr@ln@w{newif}\if@rrowratio\@rrowratiotrue
\ctr@ln@w{newif}\if@rrowhfill
\ctr@ln@w{newif}\if@rrowhout
\ctr@ld@f\def\Psset@rrowhe@d#1=#2|{\keln@mun#1|%
    \def\n@mref{a}\ifx\l@debut\n@mref\update@ttr\D@FTarrowheadangle\Q@s@tarrowheadangle{#2}\else% angle
    \def\n@mref{f}\ifx\l@debut\n@mref\update@ttr\D@FTarrowheadfill\Q@s@tarrowheadfill{#2}\else% fillmode
    \def\n@mref{l}\ifx\l@debut\n@mref\update@ttr\D@FTarrowheadlength\Q@s@tarrowheadlength{#2}\else% length
    \def\n@mref{o}\ifx\l@debut\n@mref\update@ttr\D@FTarrowheadout\Q@s@tarrowheadout{#2}\else% out
    \def\n@mref{r}\ifx\l@debut\n@mref\update@ttr\D@FTarrowheadratio\Q@s@tarrowheadratio{#2}\else% ratio
    \W@rnmesAttr{figset arrowhead}{#1}\fi\fi\fi\fi\fi}
\ctr@ln@m\@rrowheadangle
\ctr@ln@m\C@AHANG \ctr@ln@m\S@AHANG \ctr@ln@m\UNSS@N
\ctr@ld@f\def\Q@s@tarrowheadangle#1{\edef\@rrowheadangle{#1}{\c@ssin{\C@}{\S@}{#1}%
    \xdef\C@AHANG{\C@}\xdef\S@AHANG{\S@}\v@lmax=\S@ pt%
    \invers@{\v@leur}{\v@lmax}\maxim@m{\v@leur}{\v@leur}{-\v@leur}%
    \xdef\UNSS@N{\the\v@leur}}}
\ctr@ld@f\def\Q@s@tarrowheadfill#1{\expandafter\set@rrowhfill#1:}
\ctr@ld@f\def\set@rrowhfill#1#2:{\if#1n\@rrowhfillfalse\else\@rrowhfilltrue\fi}
\ctr@ld@f\def\Q@s@tarrowheadout#1{\expandafter\set@rrowhout#1:}
\ctr@ld@f\def\set@rrowhout#1#2:{\if#1n\@rrowhoutfalse\else\@rrowhouttrue\fi}
\ctr@ln@m\@rrowheadlength
\ctr@ld@f\def\Q@s@tarrowheadlength#1{\edef\@rrowheadlength{#1}\@rrowratiofalse}
\ctr@ln@m\@rrowheadratio
\ctr@ld@f\def\Q@s@tarrowheadratio#1{\edef\@rrowheadratio{#1}\@rrowratiotrue}
\ctr@ln@m\D@FTarrowheadlength
\ctr@ld@f\def\figresetarrowhead{%
    \Q@s@tarrowheadangle{\D@FTarrowheadangle}%
    \Q@s@tarrowheadfill{\D@FTarrowheadfill}%
    \Q@s@tarrowheadout{\D@FTarrowheadout}%
    \Q@s@tarrowheadratio{\D@FTarrowheadratio}%
    \d@fm@cdim\D@FTarrowheadlength{\D@FTh@rdahlength}% Valeur par defaut...
    \Q@s@tarrowheadlength{\D@FTarrowheadlength}}
\ctr@ld@f\def\D@FTarrowheadratio{0.1}
\ctr@ld@f\def\D@FTarrowheadangle{20}
\ctr@ld@f\def\D@FTarrowheadfill{no}
\ctr@ld@f\def\D@FTarrowheadout{no}
\ctr@ld@f\def\D@FTh@rdahlength{8pt}
\ctr@ln@m\figdrawarrow
\ctr@ld@f\def\Q@arrowDD[#1,#2]{{\ifCUR@PS\ifGR@cri\s@uvc@ntr@l\et@tpsarrow%
    \PSc@mment{arrowDD [Pt1,Pt2]=[#1,#2]}\Q@s@tfillmode{no}%
    \Q@arrowheadDD[#1,#2]\setc@ntr@l{2}\figdrawline[#1,-3]%
    \PSc@mment{End arrowDD}\resetc@ntr@l\et@tpsarrow\fi\fi}}
\ctr@ld@f\def\Q@arrowTD[#1,#2]{{\ifCUR@PS\ifGR@cri\s@uvc@ntr@l\et@tpsarrowTD%
    \PSc@mment{arrowTD [Pt1,Pt2]=[#1,#2]}\resetc@ntr@l{2}%
    \Figptpr@j-5:/#1/\Figptpr@j-6:/#2/\let\c@lprojSP=\relax\Q@arrowDD[-5,-6]%
    \PSc@mment{End arrowTD}\resetc@ntr@l\et@tpsarrowTD\fi\fi}}
\ctr@ln@m\figdrawarrowhead
\ctr@ld@f\def\Q@arrowheadDD[#1,#2]{{\ifCUR@PS\ifGR@cri\s@uvc@ntr@l\et@tpsarrowheadDD%
    \if@rrowhfill\def\@hangle{-\@rrowheadangle}\else\def\@hangle{\@rrowheadangle}\fi%
    \if@rrowratio%
    \if@rrowhout\def\@hratio{-\@rrowheadratio}\else\def\@hratio{\@rrowheadratio}\fi%
    \PSc@mment{arrowheadDD Ratio=\@hratio, Angle=\@hangle, [Pt1,Pt2]=[#1,#2]}%
    \Ps@rrowhead\@hratio,\@hangle[#1,#2]%
    \else%
    \if@rrowhout\def\@hlength{-\@rrowheadlength}\else\def\@hlength{\@rrowheadlength}\fi%
    \PSc@mment{arrowheadDD Length=\@hlength, Angle=\@hangle, [Pt1,Pt2]=[#1,#2]}%
    \Ps@rrowheadfd\@hlength,\@hangle[#1,#2]%
    \fi%
    \PSc@mment{End arrowheadDD}\resetc@ntr@l\et@tpsarrowheadDD\fi\fi}}
\ctr@ld@f\def\Q@arrowheadTD[#1,#2]{{\ifCUR@PS\ifGR@cri\s@uvc@ntr@l\et@tpsarrowheadTD%
    \PSc@mment{arrowheadTD [Pt1,Pt2]=[#1,#2]}\resetc@ntr@l{2}%
    \Figptpr@j-5:/#1/\Figptpr@j-6:/#2/\let\c@lprojSP=\relax\Q@arrowheadDD[-5,-6]%
    \PSc@mment{End arrowheadTD}\resetc@ntr@l\et@tpsarrowheadTD\fi\fi}}
\ctr@ld@f\def\Ps@rrowhead#1,#2[#3,#4]{\v@leur=#1\p@\maxim@m{\v@leur}{\v@leur}{-\v@leur}%
    \ifdim\v@leur>\Cepsil@n{% Arrow is not degenerated
    \PSc@mment{@rrowhead Ratio=#1, Angle=#2, [Pt1,Pt2]=[#3,#4]}\v@leur=\UNSS@N%
    \v@leur=\CUR@width\v@leur\v@leur=\ptpsT@pt\v@leur\delt@=.5\v@leur% = width / (2 sin(Angle))
    \setc@ntr@l{2}\figvectPDD-3[#4,#3]%
    \Figg@tXY{-3}\v@lX=#1\v@lX\v@lY=#1\v@lY\Figv@ctCreg-3(\v@lX,\v@lY)%
    \vecunit@{-4}{-3}\mili@u=\result@t%
    \ifdim#2pt>\z@\v@lXa=-\C@AHANG\delt@%
     \edef\c@ef{\repdecn@mb{\v@lXa}}\figpttraDD-3:=-3/\c@ef,-4/\fi%
    \edef\c@ef{\repdecn@mb{\delt@}}%
    \v@lXa=\mili@u\v@lXa=\C@AHANG\v@lXa%
    \v@lYa=\ptpsT@pt\p@\v@lYa=\CUR@width\v@lYa\v@lYa=\sDcc@ngle\v@lYa%
    \advance\v@lXa-\v@lYa\gdef\sDcc@ngle{0}%
    \ifdim\v@lXa>\v@leur\edef\c@efendpt{\repdecn@mb{\v@leur}}%
    \else\edef\c@efendpt{\repdecn@mb{\v@lXa}}\fi%
    \Figg@tXY{-3}\v@lmin=\v@lX\v@lmax=\v@lY%
    \v@lXa=\C@AHANG\v@lmin\v@lYa=\S@AHANG\v@lmax\advance\v@lXa\v@lYa%
    \v@lYa=-\S@AHANG\v@lmin\v@lX=\C@AHANG\v@lmax\advance\v@lYa\v@lX%
    \setc@ntr@l{1}\Figg@tXY{#4}\advance\v@lX\v@lXa\advance\v@lY\v@lYa%
    \setc@ntr@l{2}\Figp@intregDD-2:(\v@lX,\v@lY)%
    \v@lXa=\C@AHANG\v@lmin\v@lYa=-\S@AHANG\v@lmax\advance\v@lXa\v@lYa%
    \v@lYa=\S@AHANG\v@lmin\v@lX=\C@AHANG\v@lmax\advance\v@lYa\v@lX%
    \setc@ntr@l{1}\Figg@tXY{#4}\advance\v@lX\v@lXa\advance\v@lY\v@lYa%
    \setc@ntr@l{2}\Figp@intregDD-1:(\v@lX,\v@lY)%
    \ifdim#2pt<\z@\fillm@detrue\figdrawline[-2,#4,-1]% fill
    \else\figptstraDD-3=#4,-2,-1/\c@ef,-4/\s@uvdash{\typ@dash}\Q@s@tdash{\D@FTdash}%
    \figdrawline[-2,-3,-1]\Q@s@tdash{\typ@dash}\fi% no fill
    \ifdim#1pt>\z@\figpttraDD-3:=#4/\c@efendpt,-4/\else\figptcopyDD-3:/#4/\fi%
    \PSc@mment{End @rrowhead}}\fi}
\ctr@ld@f\def\sDcc@ngle{0}% Initialisation
\ctr@ld@f\def\Ps@rrowheadfd#1,#2[#3,#4]{{%
    \PSc@mment{@rrowheadfd Length=#1, Angle=#2, [Pt1,Pt2]=[#3,#4]}%
    \setc@ntr@l{2}\figvectPDD-1[#3,#4]\n@rmeucDD{\v@leur}{-1}\v@leur=\ptT@unit@\v@leur%
    \invers@{\v@leur}{\v@leur}\v@leur=#1\v@leur\edef\R@tio{\repdecn@mb{\v@leur}}%
    \Ps@rrowhead\R@tio,#2[#3,#4]\PSc@mment{End @rrowheadfd}}}
\ctr@ln@m\figdrawarrowBezier
\ctr@ld@f\def\Q@arrowBezierDD[#1,#2,#3,#4]{{\ifCUR@PS\ifGR@cri\s@uvc@ntr@l\et@tpsarrowBezierDD%
    \PSc@mment{arrowBezierDD Control points=#1,#2,#3,#4}\setc@ntr@l{2}%
    \if@rrowratio\c@larclengthDD\v@leur,10[#1,#2,#3,#4]\else\v@leur=\z@\fi%
    \Ps@rrowB@zDD\v@leur[#1,#2,#3,#4]%
    \PSc@mment{End arrowBezierDD}\resetc@ntr@l\et@tpsarrowBezierDD\fi\fi}}
\ctr@ld@f\def\Q@arrowBezierTD[#1,#2,#3,#4]{{\ifCUR@PS\ifGR@cri\s@uvc@ntr@l\et@tpsarrowBezierTD%
    \PSc@mment{arrowBezierTD Control points=#1,#2,#3,#4}\resetc@ntr@l{2}%
    \Figptpr@j-7:/#1/\Figptpr@j-8:/#2/\Figptpr@j-9:/#3/\Figptpr@j-10:/#4/%
    \let\c@lprojSP=\relax\ifnum\CUR@proj<\tw@\Q@arrowBezierDD[-7,-8,-9,-10]%
    \else\f@gnewpath\PSwrit@cmd{-7}{\c@mmoveto}{\fwf@g}%
    \if@rrowratio\c@larclengthDD\mili@u,10[-7,-8,-9,-10]\else\mili@u=\z@\fi%
    \p@rtent=\NBz@rcs\advance\p@rtent\m@ne\subB@zierTD\p@rtent[#1,#2,#3,#4]%
    \f@gstroke%
    \advance\v@lmin\p@rtent\delt@% Initialized in \subB@zierTD
    \v@leur=\v@lmin\advance\v@leur0.33333 \delt@\edef\unti@rs{\repdecn@mb{\v@leur}}%
    \v@leur=\v@lmin\advance\v@leur0.66666 \delt@\edef\deti@rs{\repdecn@mb{\v@leur}}%
    \figptcopyDD-8:/-10/\c@lsubBzarc\unti@rs,\deti@rs[#1,#2,#3,#4]%
    \figptcopyDD-8:/-4/\figptcopyDD-9:/-3/\Ps@rrowB@zDD\mili@u[-7,-8,-9,-10]\fi%
    \PSc@mment{End arrowBezierTD}\resetc@ntr@l\et@tpsarrowBezierTD\fi\fi}}
\ctr@ld@f\def\c@larclengthDD#1,#2[#3,#4,#5,#6]{{\p@rtent=#2\figptcopyDD-5:/#3/%
    \delt@=\p@\divide\delt@\p@rtent\c@rre=\z@\v@leur=\z@\s@mme=\z@%
    \loop\ifnum\s@mme<\p@rtent\advance\s@mme\@ne\advance\v@leur\delt@%
    \edef\T@{\repdecn@mb{\v@leur}}\figptBezierDD-6::\T@[#3,#4,#5,#6]%
    \figvectPDD-1[-5,-6]\n@rmeucDD{\mili@u}{-1}\advance\c@rre\mili@u%
    \figptcopyDD-5:/-6/\repeat\global\result@t=\ptT@unit@\c@rre}#1=\result@t}
\ctr@ld@f\def\Ps@rrowB@zDD#1[#2,#3,#4,#5]{{\Q@s@tfillmode{no}%
    \if@rrowratio\delt@=\@rrowheadratio#1\else\delt@=\@rrowheadlength pt\fi%
    \v@leur=\C@AHANG\delt@\edef\R@dius{\repdecn@mb{\v@leur}}%
    \FigptintercircB@zDD-5::0,\R@dius[#5,#4,#3,#2]%
    \Q@s@tarrowheadlength{\repdecn@mb{\delt@}}\Q@arrowheadDD[-5,#5]%
    \let\n@rmeuc=\n@rmeucDD\figgetdist\R@dius[#5,-3]%
    \FigptintercircB@zDD-6::0,\R@dius[#5,#4,#3,#2]%
    \figptBezierDD-5::0.33333[#5,#4,#3,#2]\figptBezierDD-3::0.66666[#5,#4,#3,#2]%
    \figptscontrolDD-5[-6,-5,-3,#2]\Q@BezierDD1[-6,-5,-4,#2]}}
\ctr@ln@m\figdrawarrowcirc
\ctr@ld@f\def\Q@arrowcircDD#1;#2(#3,#4){{\ifCUR@PS\ifGR@cri\s@uvc@ntr@l\et@tpsarrowcircDD%
    \PSc@mment{arrowcircDD Center=#1 ; Radius=#2 (Ang1=#3,Ang2=#4)}%
    \Q@s@tfillmode{no}\Pscirc@rrowhead#1;#2(#3,#4)%
    \setc@ntr@l{2}\figvectPDD -4[#1,-3]\vecunit@{-4}{-4}%
    \Figg@tXY{-4}\arct@n\v@lmin(\v@lX,\v@lY)%
    \v@lmin=\rdT@deg\v@lmin\v@leur=#4pt\advance\v@leur-\v@lmin%
    \maxim@m{\v@leur}{\v@leur}{-\v@leur}%
    \ifdim\v@leur>\DemiPI@deg\relax\ifdim\v@lmin<#4pt\advance\v@lmin\DePI@deg%
    \else\advance\v@lmin-\DePI@deg\fi\fi\edef\ar@ngle{\repdecn@mb{\v@lmin}}%
    \ifdim#3pt<#4pt\figdrawarccirc#1;#2(#3,\ar@ngle)\else\figdrawarccirc#1;#2(\ar@ngle,#3)\fi%
    \PSc@mment{End arrowcircDD}\resetc@ntr@l\et@tpsarrowcircDD\fi\fi}}
\ctr@ld@f\def\Q@arrowcircTD#1,#2,#3;#4(#5,#6){{\ifCUR@PS\ifGR@cri\s@uvc@ntr@l\et@tpsarrowcircTD%
    \PSc@mment{arrowcircTD Center=#1,P1=#2,P2=#3 ; Radius=#4 (Ang1=#5, Ang2=#6)}%
    \resetc@ntr@l{2}\c@lExtAxes#1,#2,#3(#4)\let\c@lprojSP=\relax%
    \figvectPTD-11[#1,-4]\figvectPTD-12[#1,-5]\c@lNbarcs{#5}{#6}%
    \if@rrowratio\v@lmax=\degT@rd\v@lmax\edef\D@lpha{\repdecn@mb{\v@lmax}}\fi%
    \advance\p@rtent\m@ne\mili@u=\z@%
    \v@leur=#5pt\c@lptellP{#1}{-11}{-12}\Figptpr@j-9:/-3/%
    \f@gnewpath\PSwrit@cmdS{-9}{\c@mmoveto}{\fwf@g}{\X@un}{\Y@un}%
    \edef\C@nt@r{#1}\s@mme=\z@\bcl@rcircTD\f@gstroke%
    \advance\v@leur\delt@\c@lptellP{#1}{-11}{-12}\Figptpr@j-5:/-3/%
    \advance\v@leur\delt@\c@lptellP{#1}{-11}{-12}\Figptpr@j-6:/-3/%
    \advance\v@leur\delt@\c@lptellP{#1}{-11}{-12}\Figptpr@j-10:/-3/%
    \figptscontrolDD-8[-9,-5,-6,-10]%
    \if@rrowratio\c@lcurvradDD0.5[-9,-8,-7,-10]\advance\mili@u\result@t%
    \maxim@m{\mili@u}{\mili@u}{-\mili@u}\mili@u=\ptT@unit@\mili@u%
    \mili@u=\D@lpha\mili@u\advance\p@rtent\@ne\divide\mili@u\p@rtent\fi%
    \Ps@rrowB@zDD\mili@u[-9,-8,-7,-10]%
    \PSc@mment{End arrowcircTD}\resetc@ntr@l\et@tpsarrowcircTD\fi\fi}}
\ctr@ld@f\def\bcl@rcircTD{\relax%
    \ifnum\s@mme<\p@rtent\advance\s@mme\@ne%
    \advance\v@leur\delt@\c@lptellP{\C@nt@r}{-11}{-12}\Figptpr@j-5:/-3/%
    \advance\v@leur\delt@\c@lptellP{\C@nt@r}{-11}{-12}\Figptpr@j-6:/-3/%
    \advance\v@leur\delt@\c@lptellP{\C@nt@r}{-11}{-12}\Figptpr@j-10:/-3/%
    \figptscontrolDD-8[-9,-5,-6,-10]\BdingB@xfalse%
    \PSwrit@cmdS{-8}{}{\fwf@g}{\X@de}{\Y@de}\PSwrit@cmdS{-7}{}{\fwf@g}{\X@tr}{\Y@tr}%
    \BdingB@xtrue\PSwrit@cmdS{-10}{\c@mcurveto}{\fwf@g}{\X@qu}{\Y@qu}%
    \if@rrowratio\c@lcurvradDD0.5[-9,-8,-7,-10]\advance\mili@u\result@t\fi%
    \B@zierBB@x{1}{\Y@un}(\X@un,\X@de,\X@tr,\X@qu)%
    \B@zierBB@x{2}{\X@un}(\Y@un,\Y@de,\Y@tr,\Y@qu)%
    \edef\X@un{\X@qu}\edef\Y@un{\Y@qu}\figptcopyDD-9:/-10/\bcl@rcircTD\fi}
\ctr@ld@f\def\Pscirc@rrowhead#1;#2(#3,#4){{%
    \PSc@mment{circ@rrowhead Center=#1 ; Radius=#2 (Ang1=#3,Ang2=#4)}%
    \v@leur=#2\unit@\edef\s@glen{\repdecn@mb{\v@leur}}\v@lY=\z@\v@lX=\v@leur%
    \resetc@ntr@l{2}\Figv@ctCreg-3(\v@lX,\v@lY)\figpttraDD-5:=#1/1,-3/%
    \figptrotDD-5:=-5/#1,#4/%
    \figvectPDD-3[#1,-5]\Figg@tXY{-3}\v@leur=\v@lX%
    \ifdim#3pt<#4pt\v@lX=\v@lY\v@lY=-\v@leur\else\v@lX=-\v@lY\v@lY=\v@leur\fi%
    \Figv@ctCreg-3(\v@lX,\v@lY)\vecunit@{-3}{-3}%
    \if@rrowratio\v@leur=#4pt\advance\v@leur-#3pt\maxim@m{\mili@u}{-\v@leur}{\v@leur}%
    \mili@u=\degT@rd\mili@u\v@leur=\s@glen\mili@u\edef\s@glen{\repdecn@mb{\v@leur}}%
    \mili@u=#2\mili@u\mili@u=\@rrowheadratio\mili@u\else\mili@u=\@rrowheadlength pt\fi%
    \figpttraDD-6:=-5/\s@glen,-3/\v@leur=#2pt\v@leur=2\v@leur%
    \invers@{\v@leur}{\v@leur}\c@rre=\repdecn@mb{\v@leur}\mili@u% = sin = L/(2R)
    \mili@u=\c@rre\mili@u=\repdecn@mb{\c@rre}\mili@u%
    \v@leur=\p@\advance\v@leur-\mili@u% \v@leur = cos*cos
    \invers@{\mili@u}{2\v@leur}\delt@=\c@rre\delt@=\repdecn@mb{\mili@u}\delt@%
    \xdef\sDcc@ngle{\repdecn@mb{\delt@}}% sin/(2*cos*cos) used in \Ps@rrowhead
    \sqrt@{\mili@u}{\v@leur}\arct@n\v@leur(\mili@u,\c@rre)%
    \v@leur=\rdT@deg\v@leur% \cor@ngle = atan(L/sqrt(4R*R-L*L))
    \ifdim#3pt<#4pt\v@leur=-\v@leur\fi%
    \if@rrowhout\v@leur=-\v@leur\fi\edef\cor@ngle{\repdecn@mb{\v@leur}}%
    \figptrotDD-6:=-6/-5,\cor@ngle/\Q@arrowheadDD[-6,-5]%
    \PSc@mment{End circ@rrowhead}}}
\ctr@ln@m\figdrawarrowcircP
\ctr@ld@f\def\Q@arrowcircPDD#1;#2[#3,#4]{{\ifCUR@PS\ifGR@cri%
    \PSc@mment{arrowcircPDD Center=#1; Radius=#2, [P1=#3,P2=#4]}%
    \s@uvc@ntr@l\et@tpsarrowcircPDD\Ps@ngleparam#1;#2[#3,#4]%
    \ifdim\v@leur>\z@\ifdim\v@lmin>\v@lmax\advance\v@lmax\DePI@deg\fi%
    \else\ifdim\v@lmin<\v@lmax\advance\v@lmin\DePI@deg\fi\fi%
    \edef\@ngdeb{\repdecn@mb{\v@lmin}}\edef\@ngfin{\repdecn@mb{\v@lmax}}%
    \figdrawarrowcirc#1;\r@dius(\@ngdeb,\@ngfin)%
    \PSc@mment{End arrowcircPDD}\resetc@ntr@l\et@tpsarrowcircPDD\fi\fi}}
\ctr@ld@f\def\Q@arrowcircPTD#1;#2[#3,#4,#5]{{\ifCUR@PS\ifGR@cri\s@uvc@ntr@l\et@tpsarrowcircPTD%
    \PSc@mment{arrowcircPTD Center=#1; Radius=#2, [P1=#3,P2=#4,P3=#5]}%
    \figgetangleTD\@ngfin[#1,#3,#4,#5]\v@leur=#2pt%
    \maxim@m{\mili@u}{-\v@leur}{\v@leur}\edef\r@dius{\repdecn@mb{\mili@u}}%
    \ifdim\v@leur<\z@\v@lmax=\@ngfin pt\advance\v@lmax-\DePI@deg%
    \edef\@ngfin{\repdecn@mb{\v@lmax}}\fi\Q@arrowcircTD#1,#3,#5;\r@dius(0,\@ngfin)%
    \PSc@mment{End arrowcircPTD}\resetc@ntr@l\et@tpsarrowcircPTD\fi\fi}}
\ctr@ld@f\def\figdrawaxes#1(#2){{\ifCUR@PS\ifGR@cri\s@uvc@ntr@l\et@tpsaxes%
    \PSc@mment{axes Origin=#1 Range=(#2)}\an@lys@xes#2,:\resetc@ntr@l{2}%
    \ifx\t@xt@\empty\ifTr@isDim\Q@@xes#1(0,#2,0,#2,0,#2)\else\Q@@xes#1(0,#2,0,#2)\fi%
    \else\Q@@xes#1(#2)\fi\PSc@mment{End axes}\resetc@ntr@l\et@tpsaxes\fi\fi}}
\ctr@ld@f\def\an@lys@xes#1,#2:{\def\t@xt@{#2}}
\ctr@ln@m\Q@@xes
\ctr@ld@f\def\Q@@xesDD#1(#2,#3,#4,#5){%
    \figpttraC-5:=#1/#2,0/\figpttraC-6:=#1/#3,0/\Q@arrowDD[-5,-6]%
    \figpttraC-5:=#1/0,#4/\figpttraC-6:=#1/0,#5/\Q@arrowDD[-5,-6]}
\ctr@ld@f\def\Q@@xesTD#1(#2,#3,#4,#5,#6,#7){%
    \figpttraC-7:=#1/#2,0,0/\figpttraC-8:=#1/#3,0,0/\Q@arrowTD[-7,-8]%
    \figpttraC-7:=#1/0,#4,0/\figpttraC-8:=#1/0,#5,0/\Q@arrowTD[-7,-8]%
    \figpttraC-7:=#1/0,0,#6/\figpttraC-8:=#1/0,0,#7/\Q@arrowTD[-7,-8]}
\ctr@ln@m\newGr@FN
\ctr@ld@f\def\newGr@FNPDF#1{\s@mme=\Gr@FNb\advance\s@mme\@ne\xdef\Gr@FNb{\number\s@mme}}
\ctr@ld@f\def\newGr@FNDVI#1{\newGr@FNPDF{}\xdef#1{\jobname GI\Gr@FNb.anx}}
\ctr@ld@f\def\figdrawbegin#1{\newGr@FN\DefGIfilen@me\gdef\@utoFN{0}%
    \def\t@xt@{#1}\relax\ifx\t@xt@\empty\GRupdatem@detrue%
    \gdef\@utoFN{1}\Psb@ginfig\DefGIfilen@me\else\expandafter\Psb@ginfigNu@#1 :\fi}
\ctr@ld@f\def\Psb@ginfigNu@#1 #2:{\def\t@xt@{#1}\relax\ifx\t@xt@\empty\def\t@xt@{#2}%
    \ifx\t@xt@\empty\GRupdatem@detrue\gdef\@utoFN{1}\Psb@ginfig\DefGIfilen@me%
    \else\Psb@ginfigNu@#2:\fi\else\Psb@ginfig{#1}\fi}
\ctr@ln@m\PSfilen@me \ctr@ln@m\auxfilen@me
\ctr@ld@f\def\Psb@ginfig#1{\ifCUR@PS\else%
    \edef\PSfilen@me{#1}\edef\auxfilen@me{\jobname.anx}%
    \ifGRupdatem@de\GR@critrue\else\openin\frf@g=\PSfilen@me\relax%
    \ifeof\frf@g\GR@critrue\else\GR@crifalse\fi\closein\frf@g\fi%
    \CUR@PStrue\c@ldefproj\expandafter\setupd@te\D@FTupdate:%
    \ifGR@cri\initb@undb@x%
    \immediate\openout\fwf@g=\auxfilen@me\initpss@ttings\fi%
    \fi}
\ctr@ld@f\def\Gr@FNb{0}
\ctr@ld@f\def\figforTeXFileno{\Gr@FNb}
\ctr@ld@f\def\figforTeXFigno{0 }
\ctr@ld@f\def\figforTeXnextFigno{1 }
\ctr@ld@f\edef\DefGIfilen@me{\jobname GI.anx}
\ctr@ld@f\def\initpss@ttings{\figreset{altitude,arrowhead,curve,general,flowchart,mesh,trimesh}%
    \Use@llipsefalse}
\ctr@ld@f\def\B@zierBB@x#1#2(#3,#4,#5,#6){{\c@rre=\t@n\epsil@n% Do not reduce this value
    \v@lmax=#4\advance\v@lmax-#5\v@lmax=\thr@@\v@lmax\advance\v@lmax#6\advance\v@lmax-#3%
    \mili@u=#4\mili@u=-\tw@\mili@u\advance\mili@u#3\advance\mili@u#5%
    \v@lmin=#4\advance\v@lmin-#3\maxim@m{\v@leur}{-\v@lmax}{\v@lmax}%
    \maxim@m{\delt@}{-\mili@u}{\mili@u}\maxim@m{\v@leur}{\v@leur}{\delt@}%
    \maxim@m{\delt@}{-\v@lmin}{\v@lmin}\maxim@m{\v@leur}{\v@leur}{\delt@}%
    \ifdim\v@leur>\c@rre\invers@{\v@leur}{\v@leur}\edef\Uns@rM@x{\repdecn@mb{\v@leur}}%
    \v@lmax=\Uns@rM@x\v@lmax\mili@u=\Uns@rM@x\mili@u\v@lmin=\Uns@rM@x\v@lmin%
    \maxim@m{\v@leur}{-\v@lmax}{\v@lmax}\ifdim\v@leur<\c@rre%
    \maxim@m{\v@leur}{-\mili@u}{\mili@u}\ifdim\v@leur<\c@rre\else%
    \invers@{\mili@u}{\mili@u}\v@leur=-0.5\v@lmin%
    \v@leur=\repdecn@mb{\mili@u}\v@leur\m@jBBB@x{\v@leur}{#1}{#2}(#3,#4,#5,#6)\fi%
    \else\delt@=\repdecn@mb{\mili@u}\mili@u\v@leur=\repdecn@mb{\v@lmax}\v@lmin%
    \advance\delt@-\v@leur\ifdim\delt@<\z@\else\invers@{\v@lmax}{\v@lmax}%
    \edef\Uns@rAp{\repdecn@mb{\v@lmax}}\sqrt@{\delt@}{\delt@}%
    \v@leur=-\mili@u\advance\v@leur\delt@\v@leur=\Uns@rAp\v@leur%
    \m@jBBB@x{\v@leur}{#1}{#2}(#3,#4,#5,#6)%
    \v@leur=-\mili@u\advance\v@leur-\delt@\v@leur=\Uns@rAp\v@leur%
    \m@jBBB@x{\v@leur}{#1}{#2}(#3,#4,#5,#6)\fi\fi\fi}}
\ctr@ld@f\def\m@jBBB@x#1#2#3(#4,#5,#6,#7){{\relax\ifdim#1>\z@\ifdim#1<\p@%
    \edef\T@{\repdecn@mb{#1}}\v@lX=\p@\advance\v@lX-#1\edef\UNmT@{\repdecn@mb{\v@lX}}%
    \v@lX=#4\v@lY=#5\v@lZ=#6\v@lXa=#7\v@lX=\UNmT@\v@lX\advance\v@lX\T@\v@lY%
    \v@lY=\UNmT@\v@lY\advance\v@lY\T@\v@lZ\v@lZ=\UNmT@\v@lZ\advance\v@lZ\T@\v@lXa%
    \v@lX=\UNmT@\v@lX\advance\v@lX\T@\v@lY\v@lY=\UNmT@\v@lY\advance\v@lY\T@\v@lZ%
    \v@lX=\UNmT@\v@lX\advance\v@lX\T@\v@lY%
    \ifcase#2\or\v@lY=#3\or\v@lY=\v@lX\v@lX=#3\fi\b@undb@x{\v@lX}{\v@lY}\fi\fi}}
\ctr@ld@f\def\PsB@zier#1[#2]{{\f@gnewpath%
    \s@mme=\z@\def\list@num{#2,0}\extrairelepremi@r\p@int\de\list@num%
    \PSwrit@cmdS{\p@int}{\c@mmoveto}{\fwf@g}{\X@un}{\Y@un}\p@rtent=#1\bclB@zier}}
\ctr@ld@f\def\bclB@zier{\relax%
    \ifnum\s@mme<\p@rtent\advance\s@mme\@ne\BdingB@xfalse%
    \extrairelepremi@r\p@int\de\list@num\PSwrit@cmdS{\p@int}{}{\fwf@g}{\X@de}{\Y@de}%
    \extrairelepremi@r\p@int\de\list@num\PSwrit@cmdS{\p@int}{}{\fwf@g}{\X@tr}{\Y@tr}%
    \BdingB@xtrue%
    \extrairelepremi@r\p@int\de\list@num\PSwrit@cmdS{\p@int}{\c@mcurveto}{\fwf@g}{\X@qu}{\Y@qu}%
    \B@zierBB@x{1}{\Y@un}(\X@un,\X@de,\X@tr,\X@qu)%
    \B@zierBB@x{2}{\X@un}(\Y@un,\Y@de,\Y@tr,\Y@qu)%
    \edef\X@un{\X@qu}\edef\Y@un{\Y@qu}\bclB@zier\fi}
\ctr@ln@m\figdrawBezier
\ctr@ld@f\def\Q@BezierDD#1[#2]{\ifCUR@PS\ifGR@cri%
    \PSc@mment{BezierDD N arcs=#1, Control points=#2}%
    \iffillm@de\PsB@zier#1[#2]%
    \f@gfill%
    \else\PsB@zier#1[#2]\f@gstroke\fi%
    \PSc@mment{End BezierDD}\fi\fi}
\ctr@ln@m\et@tpsBezierTD% ou doubler les {}
\ctr@ld@f\def\Q@BezierTD#1[#2]{\ifCUR@PS\ifGR@cri\s@uvc@ntr@l\et@tpsBezierTD%
    \PSc@mment{BezierTD N arcs=#1, Control points=#2}%
    \iffillm@de\PsB@zierTD#1[#2]%
    \f@gfill%
    \else\PsB@zierTD#1[#2]\f@gstroke\fi%
    \PSc@mment{End BezierTD}\resetc@ntr@l\et@tpsBezierTD\fi\fi}
\ctr@ld@f\def\PsB@zierTD#1[#2]{\ifnum\CUR@proj<\tw@\PsB@zier#1[#2]\else\PsB@zier@TD#1[#2]\fi}
\ctr@ld@f\def\PsB@zier@TD#1[#2]{{\f@gnewpath%
    \s@mme=\z@\def\list@num{#2,0}\extrairelepremi@r\p@int\de\list@num%
    \let\c@lprojSP=\relax\setc@ntr@l{2}\Figptpr@j-7:/\p@int/%
    \PSwrit@cmd{-7}{\c@mmoveto}{\fwf@g}%
    \loop\ifnum\s@mme<#1\advance\s@mme\@ne\extrairelepremi@r\p@intun\de\list@num%
    \extrairelepremi@r\p@intde\de\list@num\extrairelepremi@r\p@inttr\de\list@num%
    \subB@zierTD\NBz@rcs[\p@int,\p@intun,\p@intde,\p@inttr]\edef\p@int{\p@inttr}\repeat}}
\ctr@ld@f\def\subB@zierTD#1[#2,#3,#4,#5]{\delt@=\p@\divide\delt@\NBz@rcs\v@lmin=\z@%
    {\Figg@tXY{-7}\edef\X@un{\the\v@lX}\edef\Y@un{\the\v@lY}%
    \s@mme=\z@\loop\ifnum\s@mme<#1\advance\s@mme\@ne%
    \v@leur=\v@lmin\advance\v@leur0.33333 \delt@\edef\unti@rs{\repdecn@mb{\v@leur}}%
    \v@leur=\v@lmin\advance\v@leur0.66666 \delt@\edef\deti@rs{\repdecn@mb{\v@leur}}%
    \advance\v@lmin\delt@\edef\trti@rs{\repdecn@mb{\v@lmin}}%
    \figptBezierTD-8::\trti@rs[#2,#3,#4,#5]\Figptpr@j-8:/-8/%
    \c@lsubBzarc\unti@rs,\deti@rs[#2,#3,#4,#5]\BdingB@xfalse%
    \PSwrit@cmdS{-4}{}{\fwf@g}{\X@de}{\Y@de}\PSwrit@cmdS{-3}{}{\fwf@g}{\X@tr}{\Y@tr}%
    \BdingB@xtrue\PSwrit@cmdS{-8}{\c@mcurveto}{\fwf@g}{\X@qu}{\Y@qu}%
    \B@zierBB@x{1}{\Y@un}(\X@un,\X@de,\X@tr,\X@qu)%
    \B@zierBB@x{2}{\X@un}(\Y@un,\Y@de,\Y@tr,\Y@qu)%
    \edef\X@un{\X@qu}\edef\Y@un{\Y@qu}\figptcopyDD-7:/-8/\repeat}}
\ctr@ld@f\def\NBz@rcs{2}
\ctr@ld@f\def\c@lsubBzarc#1,#2[#3,#4,#5,#6]{\figptBezierTD-5::#1[#3,#4,#5,#6]%
    \figptBezierTD-6::#2[#3,#4,#5,#6]\Figptpr@j-4:/-5/\Figptpr@j-5:/-6/%
    \figptscontrolDD-4[-7,-4,-5,-8]}
\ctr@ln@m\figdrawcirc
\ctr@ld@f\def\Q@circDD#1(#2){\ifCUR@PS\ifGR@cri\PSc@mment{circDD Center=#1 (Radius=#2)}%
    \Q@arccircDD#1;#2(0,360)\PSc@mment{End circDD}\fi\fi}
\ctr@ld@f\def\Q@circTD#1,#2,#3(#4){\ifCUR@PS\ifGR@cri%
    \PSc@mment{circTD Center=#1,P1=#2,P2=#3 (Radius=#4)}%
    \Q@arccircTD#1,#2,#3;#4(0,360)\PSc@mment{End circTD}\fi\fi}
\ctr@ln@m\p@urcent
{\catcode`\%=12\gdef\p@urcent{%}}
\ctr@ld@f\def\PSc@mment#1{\ifGRdebugm@de\immediate\write\fwf@g{\p@urcent\space#1}\fi}
\ctr@ln@m\acc@louv \ctr@ln@m\acc@lfer
{\catcode`\[=1\catcode`\{=12\gdef\acc@louv[{}}
{\catcode`\]=2\catcode`\}=12\gdef\acc@lfer{}]]
\ctr@ld@f\def\PSdict@{\ifUse@llipse%
    \immediate\write\fwf@g{/ellipsedict 9 dict def ellipsedict /mtrx matrix put}%
    \immediate\write\fwf@g{/ellipse \acc@louv ellipsedict begin}%
    \immediate\write\fwf@g{ /endangle exch def /startangle exch def}%
    \immediate\write\fwf@g{ /yrad exch def /xrad exch def}%
    \immediate\write\fwf@g{ /rotangle exch def /y exch def /x exch def}%
    \immediate\write\fwf@g{ /savematrix mtrx currentmatrix def}%
    \immediate\write\fwf@g{ x y translate rotangle rotate xrad yrad scale}%
    \immediate\write\fwf@g{ 0 0 1 startangle endangle arc}%
    \immediate\write\fwf@g{ savematrix setmatrix end\acc@lfer def}%
    \fi\PShe@der{EndProlog}}
\ctr@ld@f\def\Pssetc@rve#1=#2|{\keln@mun#1|%
    \def\n@mref{r}\ifx\l@debut\n@mref\update@ttr\D@FTroundness\Q@s@troundness{#2}\else% roundness
    \W@rnmesAttr{figset curve}{#1}\fi}
\ctr@ln@m\curv@roundness
\ctr@ld@f\def\Q@s@troundness#1{\edef\curv@roundness{#1}}
\ctr@ld@f\def\D@FTroundness{0.2} % Valeur par defaut
\ctr@ln@m\figdrawcurve
\ctr@ld@f\def\Q@curveDD[#1]{{\ifCUR@PS\ifGR@cri\PSc@mment{curveDD Points=#1}%
    \s@uvc@ntr@l\et@tpscurveDD%
    \iffillm@de\Psc@rveDD\curv@roundness[#1]%
    \f@gfill%
    \else\Psc@rveDD\curv@roundness[#1]\f@gstroke\fi%
    \PSc@mment{End curveDD}\resetc@ntr@l\et@tpscurveDD\fi\fi}}
\ctr@ld@f\def\Q@curveTD[#1]{{\ifCUR@PS\ifGR@cri%
    \PSc@mment{curveTD Points=#1}\s@uvc@ntr@l\et@tpscurveTD\let\c@lprojSP=\relax%
    \iffillm@de\Psc@rveTD\curv@roundness[#1]%
    \f@gfill%
    \else\Psc@rveTD\curv@roundness[#1]\f@gstroke\fi%
    \PSc@mment{End curveTD}\resetc@ntr@l\et@tpscurveTD\fi\fi}}
\ctr@ld@f\def\Psc@rveDD#1[#2]{%
    \def\list@num{#2}\extrairelepremi@r\Ak@\de\list@num%
    \extrairelepremi@r\Ai@\de\list@num\extrairelepremi@r\Aj@\de\list@num%
    \f@gnewpath\PSwrit@cmdS{\Ai@}{\c@mmoveto}{\fwf@g}{\X@un}{\Y@un}%
    \setc@ntr@l{2}\figvectPDD -1[\Ak@,\Aj@]%
    \@ecfor\Ak@:=\list@num\do{\figpttraDD-2:=\Ai@/#1,-1/\BdingB@xfalse%
       \PSwrit@cmdS{-2}{}{\fwf@g}{\X@de}{\Y@de}%
       \figvectPDD -1[\Ai@,\Ak@]\figpttraDD-2:=\Aj@/-#1,-1/%
       \PSwrit@cmdS{-2}{}{\fwf@g}{\X@tr}{\Y@tr}\BdingB@xtrue%
       \PSwrit@cmdS{\Aj@}{\c@mcurveto}{\fwf@g}{\X@qu}{\Y@qu}%
       \B@zierBB@x{1}{\Y@un}(\X@un,\X@de,\X@tr,\X@qu)%
       \B@zierBB@x{2}{\X@un}(\Y@un,\Y@de,\Y@tr,\Y@qu)%
       \edef\X@un{\X@qu}\edef\Y@un{\Y@qu}\edef\Ai@{\Aj@}\edef\Aj@{\Ak@}}}
\ctr@ld@f\def\Psc@rveTD#1[#2]{\ifnum\CUR@proj<\tw@\Psc@rvePPTD#1[#2]\else\Psc@rveCPTD#1[#2]\fi}
\ctr@ld@f\def\Psc@rvePPTD#1[#2]{\setc@ntr@l{2}%
    \def\list@num{#2}\extrairelepremi@r\Ak@\de\list@num\Figptpr@j-5:/\Ak@/%
    \extrairelepremi@r\Ai@\de\list@num\Figptpr@j-3:/\Ai@/%
    \extrairelepremi@r\Aj@\de\list@num\Figptpr@j-4:/\Aj@/%
    \f@gnewpath\PSwrit@cmdS{-3}{\c@mmoveto}{\fwf@g}{\X@un}{\Y@un}%
    \figvectPDD -1[-5,-4]%
    \@ecfor\Ak@:=\list@num\do{\Figptpr@j-5:/\Ak@/\figpttraDD-2:=-3/#1,-1/%
       \BdingB@xfalse\PSwrit@cmdS{-2}{}{\fwf@g}{\X@de}{\Y@de}%
       \figvectPDD -1[-3,-5]\figpttraDD-2:=-4/-#1,-1/%
       \PSwrit@cmdS{-2}{}{\fwf@g}{\X@tr}{\Y@tr}\BdingB@xtrue%
       \PSwrit@cmdS{-4}{\c@mcurveto}{\fwf@g}{\X@qu}{\Y@qu}%
       \B@zierBB@x{1}{\Y@un}(\X@un,\X@de,\X@tr,\X@qu)%
       \B@zierBB@x{2}{\X@un}(\Y@un,\Y@de,\Y@tr,\Y@qu)%
       \edef\X@un{\X@qu}\edef\Y@un{\Y@qu}\figptcopyDD-3:/-4/\figptcopyDD-4:/-5/}}
\ctr@ld@f\def\Psc@rveCPTD#1[#2]{\setc@ntr@l{2}%
    \def\list@num{#2}\extrairelepremi@r\Ak@\de\list@num%
    \extrairelepremi@r\Ai@\de\list@num\extrairelepremi@r\Aj@\de\list@num%
    \Figptpr@j-7:/\Ai@/%
    \f@gnewpath\PSwrit@cmd{-7}{\c@mmoveto}{\fwf@g}%
    \figvectPTD -9[\Ak@,\Aj@]%
    \@ecfor\Ak@:=\list@num\do{\figpttraTD-10:=\Ai@/#1,-9/%
       \figvectPTD -9[\Ai@,\Ak@]\figpttraTD-11:=\Aj@/-#1,-9/%
       \subB@zierTD\NBz@rcs[\Ai@,-10,-11,\Aj@]\edef\Ai@{\Aj@}\edef\Aj@{\Ak@}}}
\ctr@ld@f\def\figdrawend{\ifCUR@PS\ifGR@cri\immediate\closeout\fwf@g%
    \immediate\openout\fwf@g=\PSfilen@me\relax%
    \ifPDFm@ke\PSBdingB@x\else%
    \immediate\write\fwf@g{\p@urcent\string!PS-Adobe-2.0 EPSF-2.0}%
    \PShe@der{Creator\string: TeX (fig4tex.tex)}%
    \PShe@der{Title\string: \PSfilen@me}%
    \PShe@der{CreationDate\string: \the\day/\the\month/\the\year}%
    \PSBdingB@x%
    \PShe@der{EndComments}\PSdict@\fi%
    \immediate\write\fwf@g{\c@mgsave}%
    \openin\frf@g=\auxfilen@me\c@pypsfile\fwf@g\frf@g\closein\frf@g%
    \immediate\write\fwf@g{\c@mgrestore}%
    \PSc@mment{End of file.}\immediate\closeout\fwf@g%
    \immediate\openout\fwf@g=\auxfilen@me\immediate\closeout\fwf@g%
    \immediate\write16{File \PSfilen@me\space created.}\fi\fi\CUR@PSfalse\GR@critrue}
\ctr@ld@f\def\PShe@der#1{\immediate\write\fwf@g{\p@urcent\p@urcent#1}}
\ctr@ld@f\def\PSBdingB@x{{\v@lX=\ptT@ptps\c@@rdXmin\v@lY=\ptT@ptps\c@@rdYmin%
     \v@lXa=\ptT@ptps\c@@rdXmax\v@lYa=\ptT@ptps\c@@rdYmax%
     \PShe@der{BoundingBox\string: \repdecn@mb{\v@lX}\space\repdecn@mb{\v@lY}%
     \space\repdecn@mb{\v@lXa}\space\repdecn@mb{\v@lYa}}}}
\ctr@ld@f\def\figdrawfcconnect[#1]{{\ifCUR@PS\ifGR@cri\PSc@mment{fcconnect Points=#1}%
    \Q@s@tfillmode{no}\s@uvc@ntr@l\et@tpsfcconnect\resetc@ntr@l{2}%
    \fcc@nnect@[#1]\resetc@ntr@l\et@tpsfcconnect\PSc@mment{End fcconnect}\fi\fi}}
\ctr@ld@f\def\fcc@nnect@[#1]{\let\N@rm=\n@rmeucDD\def\list@num{#1}%
    \extrairelepremi@r\Ai@\de\list@num\edef\pr@m{\Ai@}\v@leur=\z@\p@rtent=\@ne\c@llgtot%
    \ifcase\fclin@typ@\edef\list@num{[\pr@m,#1,\Ai@}\expandafter\figdrawcurve\list@num]%
    \else\ifdim\fclin@r@d\p@>\z@\Pslin@conge[#1]\else\figdrawline[#1]\fi\fi%
    \v@leur=\@rrowp@s\v@leur\edef\list@num{#1,\Ai@,0}%
    \extrairelepremi@r\Ai@\de\list@num\mili@u=\epsil@n\c@llgpart%
    \advance\mili@u-\epsil@n\advance\mili@u-\delt@\advance\v@leur-\mili@u%
    \ifcase\fclin@typ@\invers@\mili@u\delt@%
    \ifnum\@rrowr@fpt>\z@\advance\delt@-\v@leur\v@leur=\delt@\fi%
    \v@leur=\repdecn@mb\v@leur\mili@u\edef\v@lt{\repdecn@mb\v@leur}%
    \extrairelepremi@r\Ak@\de\list@num%
    \figvectPDD-1[\pr@m,\Aj@]\figpttraDD-6:=\Ai@/\curv@roundness,-1/%
    \figvectPDD-1[\Ak@,\Ai@]\figpttraDD-7:=\Aj@/\curv@roundness,-1/%
    \delt@=\@rrowheadlength\p@\delt@=\C@AHANG\delt@\edef\R@dius{\repdecn@mb{\delt@}}%
    \ifcase\@rrowr@fpt%
    \FigptintercircB@zDD-8::\v@lt,\R@dius[\Ai@,-6,-7,\Aj@]\Q@arrowheadDD[-5,-8]\else%
    \FigptintercircB@zDD-8::\v@lt,\R@dius[\Aj@,-7,-6,\Ai@]\Q@arrowheadDD[-8,-5]\fi%
    \else\advance\delt@-\v@leur%
    \p@rtentiere{\p@rtent}{\delt@}\edef\C@efun{\the\p@rtent}%
    \p@rtentiere{\p@rtent}{\v@leur}\edef\C@efde{\the\p@rtent}%
    \figptbaryDD-5:[\Ai@,\Aj@;\C@efun,\C@efde]\ifcase\@rrowr@fpt%
    \delt@=\@rrowheadlength\unit@\delt@=\C@AHANG\delt@\edef\t@ille{\repdecn@mb{\delt@}}%
    \figvectPDD-2[\Ai@,\Aj@]\vecunit@{-2}{-2}\figpttraDD-5:=-5/\t@ille,-2/\fi%
    \Q@arrowheadDD[\Ai@,-5]\fi}
\ctr@ld@f\def\c@llgtot{\@ecfor\Aj@:=\list@num\do{\figvectP-1[\Ai@,\Aj@]\N@rm\delt@{-1}%
    \advance\v@leur\delt@\advance\p@rtent\@ne\edef\Ai@{\Aj@}}}
\ctr@ld@f\def\c@llgpart{\extrairelepremi@r\Aj@\de\list@num\figvectP-1[\Ai@,\Aj@]\N@rm\delt@{-1}%
    \advance\mili@u\delt@\ifdim\mili@u<\v@leur\edef\pr@m{\Ai@}\edef\Ai@{\Aj@}\c@llgpart\fi}
\ctr@ld@f\def\Pslin@conge[#1]{\ifnum\p@rtent>\tw@{\def\list@num{#1}%
    \extrairelepremi@r\Ai@\de\list@num\extrairelepremi@r\Aj@\de\list@num%
    \figptcopy-6:/\Ai@/\figvectP-3[\Ai@,\Aj@]\vecunit@{-3}{-3}\v@lmax=\result@t%
    \@ecfor\Ak@:=\list@num\do{\figvectP-4[\Aj@,\Ak@]\vecunit@{-4}{-4}%
    \minim@m\v@lmin\v@lmax\result@t\v@lmax=\result@t%
    \det@rm\delt@[-3,-4]\maxim@m\mili@u{\delt@}{-\delt@}\ifdim\mili@u>\Cepsil@n%
    \ifdim\delt@>\z@\figgetangleDD\Angl@[\Aj@,\Ak@,\Ai@]\else%
    \figgetangleDD\Angl@[\Aj@,\Ai@,\Ak@]\fi%
    \v@leur=\PI@deg\advance\v@leur-\Angl@\p@\divide\v@leur\tw@%
    \edef\Angl@{\repdecn@mb\v@leur}\c@ssin{\C@}{\S@}{\Angl@}\v@leur=\fclin@r@d\unit@%
    \v@leur=\S@\v@leur\mili@u=\C@\p@\invers@\mili@u\mili@u%
    \v@leur=\repdecn@mb{\mili@u}\v@leur%
    \minim@m\v@leur\v@leur\v@lmin\edef\t@ille{\repdecn@mb{\v@leur}}%
    \figpttra-5:=\Aj@/-\t@ille,-3/\figdrawline[-6,-5]\figpttra-6:=\Aj@/\t@ille,-4/%
    \figvectNVDD-3[-3]\figvectNVDD-8[-4]\inters@cDD-7:[-5,-3;-6,-8]%
    \ifdim\delt@>\z@\figdrawarccircP-7;\fclin@r@d[-5,-6]\else\figdrawarccircP-7;\fclin@r@d[-6,-5]\fi%
    \else\figdrawline[-6,\Aj@]\figptcopy-6:/\Aj@/\fi% Points alignes
    \edef\Ai@{\Aj@}\edef\Aj@{\Ak@}\figptcopy-3:/-4/}\figdrawline[-6,\Aj@]}\else\figdrawline[#1]\fi}
\ctr@ld@f\def\figdrawfcnode[#1]#2{{\ifCUR@PS\ifGR@cri\PSc@mment{fcnode Points=#1}%
    \s@uvc@ntr@l\et@tpsfcnode\resetc@ntr@l{2}%
    \def\t@xt@{#2}\ifx\t@xt@\empty\def\g@tt@xt{\setbox\Gb@x=\hbox{\Figg@tT{\p@int}}}%
    \else\def\g@tt@xt{\setbox\Gb@x=\hbox{#2}}\fi%
    \v@lmin=\h@rdfcXp@dd\advance\v@lmin\Xp@dd\unit@\multiply\v@lmin\tw@%
    \v@lmax=\h@rdfcYp@dd\advance\v@lmax\Yp@dd\unit@\multiply\v@lmax\tw@%
    \Figv@ctCreg-8(\unit@,-\unit@)\def\list@num{#1}%
    \delt@=\CUR@width bp\divide\delt@\tw@%
    \fcn@de\PSc@mment{End fcnode}\resetc@ntr@l\et@tpsfcnode\fi\fi}}
\ctr@ld@f\def\d@butn@de{\g@tt@xt\v@lX=\wd\Gb@x%
    \v@lY=\ht\Gb@x\advance\v@lY\dp\Gb@x\advance\v@lX\v@lmin\advance\v@lY\v@lmax}
\ctr@ld@f\def\fcn@deE{%
    \@ecfor\p@int:=\list@num\do{\d@butn@de\v@lX=\unssqrttw@\v@lX\v@lY=\unssqrttw@\v@lY%
    \ifdim\thickn@ss\p@>\z@% Begin thickness
    \v@lXa=\v@lX\advance\v@lXa\delt@\v@lXa=\ptT@unit@\v@lXa\edef\XR@d{\repdecn@mb\v@lXa}%
    \v@lYa=\v@lY\advance\v@lYa\delt@\v@lYa=\ptT@unit@\v@lYa\edef\YR@d{\repdecn@mb\v@lYa}%
    \arct@n\v@leur(\v@lXa,\v@lYa)\v@leur=\rdT@deg\v@leur\edef\@nglde{\repdecn@mb\v@leur}%
    {\c@lptellDD-2::\p@int;\XR@d,\YR@d(\@nglde)}% \v@lmin & \v@lmax modified in \c@lptellDD
    \advance\v@leur-\PI@deg\edef\@nglun{\repdecn@mb\v@leur}%
    {\c@lptellDD-3::\p@int;\XR@d,\YR@d(\@nglun)}%
    \figptstra-6=-3,-2,\p@int/\thickn@ss,-8/\Q@s@tfillmode{yes}%
    \Pss@tspecifSt{color=\DDV@thickcolor}%
    \figdrawline[-2,-3,-6,-5]\figdrawarcell-4;\XR@d,\YR@d(\@nglun,\@nglde,0)%
    \Psrest@reSt{color=\DDV@thickcolor}\fi% End thickness
    \v@lX=\ptT@unit@\v@lX\v@lY=\ptT@unit@\v@lY%
    \edef\XR@d{\repdecn@mb\v@lX}\edef\YR@d{\repdecn@mb\v@lY}%
    \Q@s@tfillmode{yes}\Pss@tspecifSt{color=\fcbgc@lor}%
    \figdrawarcell\p@int;\XR@d,\YR@d(0,360,0)%
    \Q@s@tfillmode{no}\Psrest@reSt{color=\fcbgc@lor}\figdrawarcell\p@int;\XR@d,\YR@d(0,360,0)}}
\ctr@ld@f\def\fcn@deL{\delt@=\ptT@unit@\delt@\edef\t@ille{\repdecn@mb\delt@}%
    \@ecfor\p@int:=\list@num\do{\Figg@tXYa{\p@int}\d@butn@de%
    \ifdim\v@lX>\v@lY\itis@Ktrue\else\itis@Kfalse\fi%
    \advance\v@lXa-\v@lX\Figp@intreg-1:(\v@lXa,\v@lYa)%
    \advance\v@lXa\v@lX\advance\v@lYa-\v@lY\Figp@intreg-2:(\v@lXa,\v@lYa)%
    \advance\v@lXa\v@lX\advance\v@lYa\v@lY\Figp@intreg-3:(\v@lXa,\v@lYa)%
    \advance\v@lXa-\v@lX\advance\v@lYa\v@lY\Figp@intreg-4:(\v@lXa,\v@lYa)%
    \ifdim\thickn@ss\p@>\z@% Begin thickness
    \Figg@tXYa{\p@int}\Q@s@tfillmode{yes}\Pss@tspecifSt{color=\DDV@thickcolor}%
    \c@lpt@xt{-1}{-4}\c@lpt@xt@\v@lXa\v@lYa\v@lX\v@lY\c@rre\delt@%
    \Figp@intregDD-9:(\v@lZ,\v@lYa)\Figp@intregDD-11:(\v@lZa,\v@lYa)%
    \c@lpt@xt{-4}{-3}\c@lpt@xt@\v@lYa\v@lXa\v@lY\v@lX\delt@\c@rre%
    \Figp@intregDD-12:(\v@lXa,\v@lZ)\Figp@intregDD-10:(\v@lXa,\v@lZa)%
    \ifitis@K\figptstra-7=-9,-10,-11/\thickn@ss,-8/\figdrawline[-9,-11,-5,-6,-7]\else%
    \figptstra-7=-10,-11,-12/\thickn@ss,-8/\figdrawline[-10,-12,-5,-6,-7]\fi%
    \Psrest@reSt{color=\DDV@thickcolor}\fi% End thickness
    \Q@s@tfillmode{yes}\Pss@tspecifSt{color=\fcbgc@lor}\figdrawline[-1,-2,-3,-4]%
    \Q@s@tfillmode{no}\Psrest@reSt{color=\fcbgc@lor}\figdrawline[-1,-2,-3,-4,-1]}}
\ctr@ld@f\def\c@lpt@xt#1#2{\figvectN-7[#1,#2]\vecunit@{-7}{-7}\figpttra-5:=#1/\t@ille,-7/%
    \figvectP-7[#1,#2]\Figg@tXY{-7}\c@rre=\v@lX\delt@=\v@lY\Figg@tXY{-5}}
\ctr@ld@f\def\c@lpt@xt@#1#2#3#4#5#6{\v@lZ=#6\invers@{\v@lZ}{\v@lZ}\v@leur=\repdecn@mb{#5}\v@lZ%
    \v@lZ=#2\advance\v@lZ-#4\mili@u=\repdecn@mb{\v@leur}\v@lZ%
    \v@lZ=#3\advance\v@lZ\mili@u\v@lZa=-\v@lZ\advance\v@lZa\tw@#1}
\ctr@ld@f\def\fcn@deR{\@ecfor\p@int:=\list@num\do{\Figg@tXYa{\p@int}\d@butn@de%
    \advance\v@lXa-0.5\v@lX\advance\v@lYa-0.5\v@lY\Figp@intreg-1:(\v@lXa,\v@lYa)%
    \advance\v@lXa\v@lX\Figp@intreg-2:(\v@lXa,\v@lYa)%
    \advance\v@lYa\v@lY\Figp@intreg-3:(\v@lXa,\v@lYa)%
    \advance\v@lXa-\v@lX\Figp@intreg-4:(\v@lXa,\v@lYa)%
    \ifdim\thickn@ss\p@>\z@% Begin thickness
    \Q@s@tfillmode{yes}\Pss@tspecifSt{color=\DDV@thickcolor}%
    \Figv@ctCreg-5(-\delt@,-\delt@)\figpttra-9:=-1/1,-5/%
    \Figv@ctCreg-5(\delt@,-\delt@)\figpttra-10:=-2/1,-5/%
    \Figv@ctCreg-5(\delt@,\delt@)\figpttra-11:=-3/1,-5/%
    \figptstra-7=-9,-10,-11/\thickn@ss,-8/\figdrawline[-9,-11,-5,-6,-7]%
    \Psrest@reSt{color=\DDV@thickcolor}\fi% End thickness
    \Q@s@tfillmode{yes}\Pss@tspecifSt{color=\fcbgc@lor}\figdrawline[-1,-2,-3,-4]%
    \Q@s@tfillmode{no}\Psrest@reSt{color=\fcbgc@lor}\figdrawline[-1,-2,-3,-4,-1]}}
\ctr@ld@f\def\Pssetfl@wchart#1=#2|{\keln@mtr#1|%
    \def\n@mref{arr}\ifx\l@debut\n@mref\expandafter\keln@mtr\l@suite|%
     \def\n@mref{owp}\ifx\l@debut\n@mref\update@ttr\D@FTfcarrowposition\P@setfcarrowposition{#2}\else% arrowposition
     \def\n@mref{owr}\ifx\l@debut\n@mref\update@ttr\D@FTfcarrowrefpt\P@setfcarrowrefpt{#2}\else% arrowrefpt
     \W@rnmesAttr{figset flowchart}{#1}\fi\fi\else%
    \def\n@mref{bgc}\ifx\l@debut\n@mref\update@ttr\D@FTfcbgcolor\P@setfcbgcolor{#2}\else% background color
    \def\n@mref{lin}\ifx\l@debut\n@mref\update@ttr\D@FTfcline\P@setfcline{#2}\else% line
    \def\n@mref{pad}\ifx\l@debut\n@mref\update@ttr\D@FTfcxpadding\P@setfcxpadding{#2}%
                                       \update@ttr\D@FTfcypadding\P@setfcypadding{#2}\else% padding
    \def\n@mref{rad}\ifx\l@debut\n@mref\update@ttr\D@FTfcradius\P@setfcradius{#2}\else% connection radius
    \def\n@mref{sha}\ifx\l@debut\n@mref\update@ttr\D@FTfcshape\P@setfcshape{#2}\else% shape
    \def\n@mref{thi}\ifx\l@debut\n@mref\expandafter\keln@mtr\l@suite|%
     \def\n@mref{ckc}\ifx\l@debut\n@mref\update@ttr\D@FTref\P@setfcthickcolor{#2}\else% thickness color
     \def\n@mref{ckn}\ifx\l@debut\n@mref\update@ttr\D@FTfcthickness\P@setfcthickness{#2}\else% thickness
     \W@rnmesAttr{figset flowchart}{#1}\fi\fi\else%
    \def\n@mref{xpa}\ifx\l@debut\n@mref\update@ttr\D@FTfcxpadding\P@setfcxpadding{#2}\else% xpadding
    \def\n@mref{ypa}\ifx\l@debut\n@mref\update@ttr\D@FTfcypadding\P@setfcypadding{#2}\else% ypadding
    \W@rnmesAttr{figset flowchart}{#1}\fi\fi\fi\fi\fi\fi\fi\fi\fi}
\ctr@ln@m\@rrowp@s
\ctr@ld@f\def\P@setfcarrowposition#1{\edef\@rrowp@s{#1}}
\ctr@ln@m\@rrowr@fpt
\ctr@ld@f\def\P@setfcarrowrefpt#1{\setfcr@fpt#1|}
\ctr@ld@f\def\setfcr@fpt#1#2|{\if#1e\def\@rrowr@fpt{1}\else\def\@rrowr@fpt{0}\fi}
\ctr@ln@m\fcbgc@lor
\ctr@ld@f\def\P@setfcbgcolor#1{\edef\fcbgc@lor{#1}}
\ctr@ln@m\fclin@typ@
\ctr@ld@f\def\P@setfcline#1{\setfccurv@#1|}
\ctr@ld@f\def\setfccurv@#1#2|{\if#1c\def\fclin@typ@{0}\else\def\fclin@typ@{1}\fi}
\ctr@ln@m\fclin@r@d
\ctr@ld@f\def\P@setfcradius#1{\edef\fclin@r@d{#1}}
\ctr@ln@m\fcn@de \ctr@ln@m\fcsh@pe
\ctr@ln@m\h@rdfcXp@dd \ctr@ln@m\h@rdfcYp@dd
\ctr@ld@f\def\P@setfcshape#1{\setfcshap@#1|}
\ctr@ld@f\def\setfcshap@#1#2|{%
    \if#1e\let\fcn@de=\fcn@deE\def\h@rdfcXp@dd{4pt}\def\h@rdfcYp@dd{4pt}%
     \edef\fcsh@pe{ellipse}\else%
    \if#1l\let\fcn@de=\fcn@deL\def\h@rdfcXp@dd{4pt}\def\h@rdfcYp@dd{4pt}%
     \edef\fcsh@pe{lozenge}\else%
          \let\fcn@de=\fcn@deR\def\h@rdfcXp@dd{6pt}\def\h@rdfcYp@dd{6pt}%
     \edef\fcsh@pe{rectangle}\fi\fi}
\ctr@ln@m\DDV@thickcolor
\ctr@ld@f\def\P@setfcthickcolor#1{\edef\DDV@thickcolor{#1}}
\ctr@ln@m\thickn@ss
\ctr@ld@f\def\P@setfcthickness#1{\edef\thickn@ss{#1}}
\ctr@ln@m\Xp@dd
\ctr@ld@f\def\P@setfcxpadding#1{\edef\Xp@dd{#1}}
\ctr@ln@m\Yp@dd
\ctr@ld@f\def\P@setfcypadding#1{\edef\Yp@dd{#1}}
\ctr@ld@f\def\figdrawline[#1]{{\ifCUR@PS\ifGR@cri\PSc@mment{line Points=#1}%
    \let\figdrawlign@=\Pslign@P\Pslin@{#1}\PSc@mment{End line}\fi\fi}}
\ctr@ld@f\def\figdrawlineF#1{{\ifCUR@PS\ifGR@cri\PSc@mment{lineF Filename=#1}%
    \let\figdrawlign@=\Pslign@F\Pslin@{#1}\PSc@mment{End lineF}\fi\fi}}
\ctr@ld@f\def\figdrawlineC(#1){{\ifCUR@PS\ifGR@cri\PSc@mment{lineC}%
    \let\figdrawlign@=\Pslign@C\Pslin@{#1}\PSc@mment{End lineC}\fi\fi}}
\ctr@ld@f\def\Pslin@#1{\iffillm@de\figdrawlign@{#1}%
    \f@gfill%
    \else\figdrawlign@{#1}\ifx\derp@int\premp@int%
    \f@gclosestroke%
    \else\f@gstroke\fi\fi}
\ctr@ld@f\def\Pslign@P#1{\def\list@num{#1}\extrairelepremi@r\p@int\de\list@num%
    \edef\premp@int{\p@int}\f@gnewpath%
    \PSwrit@cmd{\p@int}{\c@mmoveto}{\fwf@g}%
    \@ecfor\p@int:=\list@num\do{\PSwrit@cmd{\p@int}{\c@mlineto}{\fwf@g}%
    \edef\derp@int{\p@int}}}
\ctr@ld@f\def\Pslign@F#1{\s@uvc@ntr@l\et@tPslign@F\setc@ntr@l{2}\openin\frf@g=#1\relax%
    \ifeof\frf@g\message{*** File #1 not found !}\end\else%
    \read\frf@g to\tr@c\edef\premp@int{\tr@c}\expandafter\extr@ctCF\tr@c:%
    \f@gnewpath\PSwrit@cmd{-1}{\c@mmoveto}{\fwf@g}%
    \loop\read\frf@g to\tr@c\ifeof\frf@g\mored@tafalse\else\mored@tatrue\fi%
    \ifmored@ta\expandafter\extr@ctCF\tr@c:\PSwrit@cmd{-1}{\c@mlineto}{\fwf@g}%
    \edef\derp@int{\tr@c}\repeat\fi\closein\frf@g\resetc@ntr@l\et@tPslign@F}
\ctr@ln@m\extr@ctCF
\ctr@ld@f\def\extr@ctCFDD#1 #2:{\v@lX=#1\unit@\v@lY=#2\unit@\Figp@intregDD-1:(\v@lX,\v@lY)}
\ctr@ld@f\def\extr@ctCFTD#1 #2 #3:{\v@lX=#1\unit@\v@lY=#2\unit@\v@lZ=#3\unit@%
    \Figp@intregTD-1:(\v@lX,\v@lY,\v@lZ)}
\ctr@ld@f\def\Pslign@C#1{\s@uvc@ntr@l\et@tPslign@C\setc@ntr@l{2}%
    \def\list@num{#1}\extrairelepremi@r\p@int\de\list@num%
    \edef\premp@int{\p@int}\f@gnewpath%
    \expandafter\Pslign@C@\p@int:\PSwrit@cmd{-1}{\c@mmoveto}{\fwf@g}%
    \@ecfor\p@int:=\list@num\do{\expandafter\Pslign@C@\p@int:%
    \PSwrit@cmd{-1}{\c@mlineto}{\fwf@g}\edef\derp@int{\p@int}}%
    \resetc@ntr@l\et@tPslign@C}
\ctr@ld@f\def\Pslign@C@#1 #2:{{\def\t@xt@{#1}\ifx\t@xt@\empty\Pslign@C@#2:% Discard leading spaces
    \else\extr@ctCF#1 #2:\fi}}
\ctr@ld@f\def\Pssetm@sh#1=#2|{\keln@mde#1|%
    \def\n@mref{co}\ifx\l@debut\n@mref\update@ttr\D@FTref\P@setmeshcolor{#2}\else% color
    \def\n@mref{da}\ifx\l@debut\n@mref\update@ttr\D@FTref\P@setmeshdash{#2}\else% dash
    \def\n@mref{di}\ifx\l@debut\n@mref\update@ttr\D@FTmeshdiag\Q@s@tmeshdiag{#2}\else% diag
    \def\n@mref{wi}\ifx\l@debut\n@mref\update@ttr\D@FTref\P@setmeshwidth{#2}\else% width
    \W@rnmesAttr{figset mesh}{#1}\fi\fi\fi\fi}
\ctr@ln@m\c@ntrolmesh
\ctr@ld@f\def\Q@s@tmeshdiag#1{\edef\c@ntrolmesh{#1}}
\ctr@ld@f\def\D@FTmeshdiag{0}    % Valeur par defaut
\ctr@ln@m\DDV@meshcolor
\ctr@ld@f\def\P@setmeshcolor#1{\edef\DDV@meshcolor{#1}}
\ctr@ln@m\DDV@meshdash
\ctr@ld@f\def\P@setmeshdash#1{\edef\DDV@meshdash{#1}}
\ctr@ln@m\DDV@meshwidth
\ctr@ld@f\def\P@setmeshwidth#1{\edef\DDV@meshwidth{#1}}
\ctr@ld@f\def\figdrawmesh#1,#2[#3,#4,#5,#6]{{\ifCUR@PS\ifGR@cri%
    \PSc@mment{mesh N1=#1, N2=#2, Quadrangle=[#3,#4,#5,#6]}\s@uvc@ntr@l\et@tpsmesh%
    \Pss@tspecifSt{color=\DDV@meshcolor,dash=\DDV@meshdash,width=\DDV@meshwidth}%
    \setc@ntr@l{2}%
    \ifnum#1>\@ne\Psmeshp@rt#1[#3,#4,#5,#6]\fi%
    \ifnum#2>\@ne\Psmeshp@rt#2[#4,#5,#6,#3]\fi%
    \ifnum\c@ntrolmesh>\z@\Psmeshdi@g#1,#2[#3,#4,#5,#6]\fi%
    \ifnum\c@ntrolmesh<\z@\Psmeshdi@g#2,#1[#4,#5,#6,#3]\fi%
    \Psrest@reSt{color=\DDV@meshcolor,dash=\DDV@meshdash,width=\DDV@meshwidth}%
    \figdrawline[#3,#4,#5,#6,#3]\PSc@mment{End mesh}\resetc@ntr@l\et@tpsmesh\fi\fi}}
\ctr@ld@f\def\Psmeshp@rt#1[#2,#3,#4,#5]{{\l@mbd@un=\@ne\l@mbd@de=#1\loop%
    \ifnum\l@mbd@un<#1\advance\l@mbd@de\m@ne\figptbary-1:[#2,#3;\l@mbd@de,\l@mbd@un]%
    \figptbary-2:[#5,#4;\l@mbd@de,\l@mbd@un]\figdrawline[-1,-2]\advance\l@mbd@un\@ne\repeat}}
\ctr@ld@f\def\Psmeshdi@g#1,#2[#3,#4,#5,#6]{\figptcopy-2:/#3/\figptcopy-3:/#6/%
    \l@mbd@un=\z@\l@mbd@de=#1\loop\ifnum\l@mbd@un<#1%
    \advance\l@mbd@un\@ne\advance\l@mbd@de\m@ne\figptcopy-1:/-2/\figptcopy-4:/-3/%
    \figptbary-2:[#3,#4;\l@mbd@de,\l@mbd@un]%
    \figptbary-3:[#6,#5;\l@mbd@de,\l@mbd@un]\Psmeshdi@gp@rt#2[-1,-2,-3,-4]\repeat}
\ctr@ld@f\def\Psmeshdi@gp@rt#1[#2,#3,#4,#5]{{\l@mbd@un=\z@\l@mbd@de=#1\loop%
    \ifnum\l@mbd@un<#1\figptbary-5:[#2,#5;\l@mbd@de,\l@mbd@un]%
    \advance\l@mbd@de\m@ne\advance\l@mbd@un\@ne%
    \figptbary-6:[#3,#4;\l@mbd@de,\l@mbd@un]\figdrawline[-5,-6]\repeat}}
\ctr@ln@m\figdrawnormal
\ctr@ld@f\def\Q@normalDD#1,#2[#3,#4]{{\ifCUR@PS\ifGR@cri%
    \PSc@mment{normal Length=#1, Lambda=#2 [Pt1,Pt2]=[#3,#4]}%
    \s@uvc@ntr@l\et@tpsnormal\resetc@ntr@l{2}\figptendnormal-6::#1,#2[#3,#4]%
    \figptcopyDD-5:/-1/\figdrawarrow[-5,-6]%
    \PSc@mment{End normal}\resetc@ntr@l\et@tpsnormal\fi\fi}}
\ctr@ld@f\def\figreset#1{\trtlis@rg{#1}{\Psreset@}}
\ctr@ld@f\def\Psreset@#1|{\def\t@xt@{#1}\ifx\t@xt@\empty\P@resetg@n% general settings
    \else\keln@mde#1|%
    \def\n@mref{al}\ifx\l@debut\n@mref%
        \figset altitude(blcolor=default,bldash=default,blwidth=default,%
        sqcolor=default,sqdash=default,sqwidth=default)\else% altitude
    \def\n@mref{ar}\ifx\l@debut\n@mref\figresetarrowhead\else% arrowhead
    \def\n@mref{cu}\ifx\l@debut\n@mref\figset curve(roundness=\D@FTroundness)\else% curve
    \def\n@mref{ge}\ifx\l@debut\n@mref\P@resetg@n\else% general settings
    \def\n@mref{fl}\ifx\l@debut\n@mref%
        \figset flowchart(arrowp=\D@FTfcarrowposition,arrowr=\D@FTfcarrowrefpt,%
	bgcolor=\D@FTfcbgcolor,line=\D@FTfcline,radius=\D@FTfcradius,%
	shape=\D@FTfcshape,thickcolor=default,thickness=\D@FTfcthickness,%
	xpadd=\D@FTfcxpadding,ypadd=\D@FTfcypadding)\else% flow chart
    \def\n@mref{me}\ifx\l@debut\n@mref\figset mesh(diag=\D@FTmeshdiag,%
        color=default,dash=default,width=default)\else% mesh
    \def\n@mref{tr}\ifx\l@debut\n@mref%
        \figset trimesh(color=default,dash=default,width=default)\else% trimesh
    \W@rnmeskwd{figreset}{#1}\fi\fi\fi\fi\fi\fi\fi\fi}
\ctr@ld@f\def\P@resetg@n{\figset (color=\D@FTcolor,dash=\D@FTdash,fill=\D@FTfill,%
    join=\D@FTjoin,width=\D@FTwidth)}
\ctr@ld@f\def\figset#1(#2){\def\t@xt@{#1}\ifx\t@xt@\empty\trtlis@rg{#2}{\Pssetg@n}% general settings
    \else\keln@mde#1|%
    \def\n@mref{al}\ifx\l@debut\n@mref\trtlis@rg{#2}{\Psset@lti}\else% altitude
    \def\n@mref{ar}\ifx\l@debut\n@mref\trtlis@rg{#2}{\Psset@rrowhe@d}\else% arrowhead
    \def\n@mref{cu}\ifx\l@debut\n@mref\trtlis@rg{#2}{\Pssetc@rve}\else% curve
    \def\n@mref{fl}\ifx\l@debut\n@mref\trtlis@rg{#2}{\Pssetfl@wchart}\else% flow chart
    \def\n@mref{ge}\ifx\l@debut\n@mref\trtlis@rg{#2}{\Pssetg@n}\else% general settings
    \def\n@mref{me}\ifx\l@debut\n@mref\trtlis@rg{#2}{\Pssetm@sh}\else% mesh
    \def\n@mref{pr}\ifx\l@debut\n@mref\ifCUR@PS\W@rnmesIgn{figset proj(...)}%
     \else\trtlis@rg{#2}{\Figsetpr@j}\fi\else% projection
    \def\n@mref{tr}\ifx\l@debut\n@mref\trtlis@rg{#2}{\Pssettrim@sh}\else% trimesh
    \def\n@mref{wr}\ifx\l@debut\n@mref\let\M@cro=\Figsetwr@te\trtlis@rgtok{#2,|}\else% write
    \W@rnmeskwd{figset}{#1}\fi\fi\fi\fi\fi\fi\fi\fi\fi\fi\ignorespaces}
\ctr@ld@f\def\figsetdefault#1(#2){\ifCUR@PS\W@rnmesIgn{figsetdefault}\else%
    \def\t@xt@{#1}\ifx\t@xt@\empty\trtlis@rg{#2}{\Pssd@g@n}\else\keln@mun#1|% general settings
    \def\n@mref{a}\ifx\l@debut\n@mref\trtlis@rg{#2}{\Pssd@@rrowhe@d}\else% arrowhead
    \def\n@mref{c}\ifx\l@debut\n@mref\trtlis@rg{#2}{\Pssd@c@rve}\else% curve
    \def\n@mref{g}\ifx\l@debut\n@mref\trtlis@rg{#2}{\Pssd@g@n}\else% general settings
    \def\n@mref{f}\ifx\l@debut\n@mref\trtlis@rg{#2}{\Pssd@fl@wchart}\else% flow chart
    \def\n@mref{m}\ifx\l@debut\n@mref\trtlis@rg{#2}{\Pssd@m@sh}\else% mesh
    \W@rnmeskwd{figsetdefault}{#1}\fi\fi\fi\fi\fi\fi\initpss@ttings\fi}
\ctr@ld@f\def\Pssd@g@n#1=#2|{\keln@mun#1|%
    \def\n@mref{c}\ifx\l@debut\n@mref\edef\D@FTcolor{#2}\else% color
    \def\n@mref{d}\ifx\l@debut\n@mref\edef\D@FTdash{#2}\else% line dash
    \def\n@mref{f}\ifx\l@debut\n@mref\edef\D@FTfill{#2}\else% fillmode
    \def\n@mref{j}\ifx\l@debut\n@mref\edef\D@FTjoin{#2}\else% line join
    \def\n@mref{u}\ifx\l@debut\n@mref\edef\D@FTupdate{#2}\Q@s@tupdate{#2}\else% update
    \def\n@mref{w}\ifx\l@debut\n@mref\edef\D@FTwidth{#2}\else% line width
    \W@rnmesAttr{figsetdefault}{#1}\fi\fi\fi\fi\fi\fi}
\ctr@ld@f\def\Pssd@@rrowhe@d#1=#2|{\keln@mun#1|%
    \def\n@mref{a}\ifx\l@debut\n@mref\edef\D@FTarrowheadangle{#2}\else% angle
    \def\n@mref{f}\ifx\l@debut\n@mref\edef\D@FTarrowheadfill{#2}\else% fillmode
    \def\n@mref{l}\ifx\l@debut\n@mref\y@tiunit{#2}\ifunitpr@sent%
     \edef\D@FTh@rdahlength{#2}\else\edef\D@FTh@rdahlength{#2pt}%
     \message{*** \BS@ figsetdefault (..., #1=#2, ...) : unit is missing, pt is assumed.}%
     \fi\else% length
    \def\n@mref{o}\ifx\l@debut\n@mref\edef\D@FTarrowheadout{#2}\else% out
    \def\n@mref{r}\ifx\l@debut\n@mref\edef\D@FTarrowheadratio{#2}\else% ratio
    \W@rnmesAttr{figsetdefault arrowhead}{#1}\fi\fi\fi\fi\fi}
\ctr@ld@f\def\Pssd@c@rve#1=#2|{\keln@mun#1|%
    \def\n@mref{r}\ifx\l@debut\n@mref\edef\D@FTroundness{#2}\else%
    \W@rnmesAttr{figsetdefault curve}{#1}\fi}
\ctr@ld@f\def\Pssd@fl@wchart#1=#2|{\keln@mtr#1|%
    \def\n@mref{arr}\ifx\l@debut\n@mref\expandafter\keln@mtr\l@suite|%
     \def\n@mref{owp}\ifx\l@debut\n@mref\edef\D@FTfcarrowposition{#2}\else% arrowposition
     \def\n@mref{owr}\ifx\l@debut\n@mref\edef\D@FTfcarrowrefpt{#2}\else% arrowrefpt
                     \W@rnmesAttr{figsetdefault flowchart}{#1}\fi\fi\else%
    \def\n@mref{bgc}\ifx\l@debut\n@mref\edef\D@FTfcbgcolor{#2}\else% background color
    \def\n@mref{lin}\ifx\l@debut\n@mref\edef\D@FTfcline{#2}\else% line
    \def\n@mref{pad}\ifx\l@debut\n@mref\edef\D@FTfcxpadding{#2}%
                    \edef\D@FTfcypadding{#2}\else% padding
    \def\n@mref{rad}\ifx\l@debut\n@mref\edef\D@FTfcradius{#2}\else% connection radius
    \def\n@mref{sha}\ifx\l@debut\n@mref\edef\D@FTfcshape{#2}\else% shape
    \def\n@mref{thi}\ifx\l@debut\n@mref\expandafter\keln@mtr\l@suite|%
     \def\n@mref{ckn}\ifx\l@debut\n@mref\edef\D@FTfcthickness{#2}\else% thickness
                     \W@rnmesAttr{figsetdefault flowchart}{#1}\fi\else%
    \def\n@mref{xpa}\ifx\l@debut\n@mref\edef\D@FTfcxpadding{#2}\else% xpadding
    \def\n@mref{ypa}\ifx\l@debut\n@mref\edef\D@FTfcypadding{#2}\else% ypadding
    \W@rnmesAttr{figsetdefault flowchart}{#1}\fi\fi\fi\fi\fi\fi\fi\fi\fi}
\ctr@ld@f\def\D@FTfcarrowposition{0.5}
\ctr@ld@f\def\D@FTfcarrowrefpt{start}
\ctr@ld@f\def\D@FTfcbgcolor{1}
\ctr@ld@f\def\D@FTfcline{polygon}
\ctr@ld@f\def\D@FTfcradius{0}
\ctr@ld@f\def\D@FTfcshape{rectangle}
\ctr@ld@f\def\D@FTfcthickness{0}
\ctr@ld@f\def\D@FTfcxpadding{0}
\ctr@ld@f\def\D@FTfcypadding{0}
\ctr@ld@f\def\Pssd@m@sh#1=#2|{\keln@mun#1|%
    \def\n@mref{d}\ifx\l@debut\n@mref\edef\D@FTmeshdiag{#2}\else%
    \W@rnmesAttr{figsetdefault mesh}{#1}\fi}
\ctr@ln@w{newif}\iffillm@de
\ctr@ld@f\def\Q@s@tfillmode#1{\expandafter\setfillm@de#1:}
\ctr@ld@f\def\setfillm@de#1#2:{\if#1n\fillm@defalse\else\fillm@detrue\fi}
\ctr@ld@f\def\D@FTfill{no}     % Valeur par defaut
\ctr@ln@w{newif}\ifGRupdatem@de
\ctr@ld@f\def\Q@s@tupdate#1{\ifCUR@PS\W@rnmesIgn{figset (update=...)}%
    \else\expandafter\setupd@te#1:\fi}
\ctr@ld@f\def\setupd@te#1#2:{\if#1n\GRupdatem@defalse\else\GRupdatem@detrue\fi}
\ctr@ld@f\def\D@FTupdate{no}     % Valeur par defaut
\ctr@ln@m\CUR@color \ctr@ln@m\CUR@colorc@md
\ctr@ld@f\def\s@uvcolor#1{\edef#1{\CUR@color}}
\ctr@ld@f\def\D@FTcolor{0}       % Valeur par defaut
\ctr@ld@f\def\Pssetc@lor#1{\ifGR@cri\result@tent=\@ne\expandafter\c@lnbV@l#1 :%
    \def\CUR@color{}\def\CUR@colorc@md{}%
    \ifcase\result@tent\or\Q@s@tgray{#1}\or\or\Q@s@trgb{#1}\or\Q@s@tcmyk{#1}\fi\fi}
\ctr@ln@m\CUR@colorc@mdStroke
\ctr@ld@f\def\Q@s@tcmyk#1{\ifGR@cri\def\CUR@color{#1}\def\CUR@colorc@md{\c@msetcmykcolor}%
    \def\CUR@colorc@mdStroke{\c@msetcmykcolorStroke}%
    \ifCUR@PS\PSc@mment{setcmyk Color=#1}\us@primarC@lor\fi\fi}
\ctr@ld@f\def\Q@s@trgb#1{\ifGR@cri\def\CUR@color{#1}\def\CUR@colorc@md{\c@msetrgbcolor}%
    \def\CUR@colorc@mdStroke{\c@msetrgbcolorStroke}%
    \ifCUR@PS\PSc@mment{setrgb Color=#1}\us@primarC@lor\fi\fi}
\ctr@ld@f\def\Q@s@tgray#1{\ifGR@cri\def\CUR@color{#1}\def\CUR@colorc@md{\c@msetgray}%
    \def\CUR@colorc@mdStroke{\c@msetgrayStroke}%
    \ifCUR@PS\PSc@mment{setgray Gray level=#1}\us@primarC@lor\fi\fi}
\ctr@ln@m\fillc@md
\ctr@ld@f\def\us@primarC@lor{\immediate\write\fwf@g{\d@fprimarC@lor}%
    \let\fillc@md=\prfillc@md}
\ctr@ld@f\def\prfillc@md{\d@fprimarC@lor\space\c@mfill}
\ctr@ld@f\def\c@lnbV@l#1 #2:{\def\t@xt@{#1}\relax\ifx\t@xt@\empty\c@lnbV@l#2:% Discard leading spaces
    \else\c@lnbV@l@#1 #2:\fi}
\ctr@ld@f\def\c@lnbV@l@#1 #2:{\def\t@xt@{#2}\ifx\t@xt@\empty%
    \def\t@xt@{#1}\ifx\t@xt@\empty\advance\result@tent\m@ne\fi% Discard trailing spaces
    \else\advance\result@tent\@ne\c@lnbV@l@#2:\fi}
\ctr@ld@f\def\Blackcmyk{0 0 0 1}
\ctr@ld@f\def\Whitecmyk{0 0 0 0}
\ctr@ld@f\def\Cyancmyk{1 0 0 0}
\ctr@ld@f\def\Magentacmyk{0 1 0 0}
\ctr@ld@f\def\Yellowcmyk{0 0 1 0}
\ctr@ld@f\def\Redcmyk{0 1 1 0}
\ctr@ld@f\def\Greencmyk{1 0 1 0}
\ctr@ld@f\def\Bluecmyk{1 1 0 0}
\ctr@ld@f\def\Graycmyk{0 0 0 0.50}
\ctr@ld@f\def\BrickRedcmyk{0 0.89 0.94 0.28} % PANTONE 1805
\ctr@ld@f\def\Browncmyk{0 0.81 1 0.60} % PANTONE 1615
\ctr@ld@f\def\ForestGreencmyk{0.91 0 0.88 0.12} % PANTONE 349
\ctr@ld@f\def\Goldenrodcmyk{ 0 0.10 0.84 0} % PANTONE 109
\ctr@ld@f\def\Marooncmyk{0 0.87 0.68 0.32} % PANTONE 201
\ctr@ld@f\def\Orangecmyk{0 0.61 0.87 0} % PANTONE ORANGE-021
\ctr@ld@f\def\Purplecmyk{0.45 0.86 0 0} % PANTONE PURPLE
\ctr@ld@f\def\RoyalBluecmyk{1. 0.50 0 0} % No PANTONE match
\ctr@ld@f\def\Violetcmyk{0.79 0.88 0 0} % PANTONE VIOLET
\ctr@ld@f\def\Blackrgb{0 0 0}
\ctr@ld@f\def\Whitergb{1 1 1}
\ctr@ld@f\def\Redrgb{1 0 0}
\ctr@ld@f\def\Greenrgb{0 1 0}
\ctr@ld@f\def\Bluergb{0 0 1}
\ctr@ld@f\def\Cyanrgb{0 1 1}
\ctr@ld@f\def\Magentargb{1 0 1}
\ctr@ld@f\def\Yellowrgb{1 1 0}
\ctr@ld@f\def\Grayrgb{0.5 0.5 0.5}
\ctr@ld@f\def\Chocolatergb{0.824 0.412 0.118}
\ctr@ld@f\def\DarkGoldenrodrgb{0.722 0.525 0.043}
\ctr@ld@f\def\DarkOrangergb{1 0.549 0}
\ctr@ld@f\def\Firebrickrgb{0.698 0.133 0.133}
\ctr@ld@f\def\ForestGreenrgb{0.133 0.545 0.133}
\ctr@ld@f\def\Goldrgb{1 0.843 0}
\ctr@ld@f\def\HotPinkrgb{1 0.412 0.706}
\ctr@ld@f\def\Maroonrgb{0.690 0.188 0.376}
\ctr@ld@f\def\Pinkrgb{1 0.753 0.796}
\ctr@ld@f\def\RoyalBluergb{0.255 0.412 0.882}
\ctr@ld@f\def\Pssetg@n#1=#2|{\keln@mun#1|%
    \def\n@mref{c}\ifx\l@debut\n@mref\update@ttr\D@FTcolor\Pssetc@lor{#2}\else% color
    \def\n@mref{d}\ifx\l@debut\n@mref\update@ttr\D@FTdash\Q@s@tdash{#2}\else% line dash
    \def\n@mref{f}\ifx\l@debut\n@mref\update@ttr\D@FTfill\Q@s@tfillmode{#2}\else% fillmode
    \def\n@mref{j}\ifx\l@debut\n@mref\update@ttr\D@FTjoin\Q@s@tjoin{#2}\else% line join
    \def\n@mref{u}\ifx\l@debut\n@mref\update@ttr\D@FTupdate\Q@s@tupdate{#2}\else% update
    \def\n@mref{w}\ifx\l@debut\n@mref\update@ttr\D@FTwidth\Q@s@twidth{#2}\else% line width
    \W@rnmesAttr{figset}{#1}\fi\fi\fi\fi\fi\fi}
\ctr@ln@m\CUR@dash
\ctr@ld@f\def\s@uvdash#1{\edef#1{\CUR@dash}}
\ctr@ld@f\def\D@FTdash{1}        % Valeur par defaut (numero sans espace)
\ctr@ld@f\def\Q@s@tdash#1{\ifGR@cri\edef\CUR@dash{#1}\ifCUR@PS\expandafter\Pssetd@sh#1 :\fi\fi}
\ctr@ld@f\def\Pssetd@shI#1{\PSc@mment{setdash Index=#1}\ifcase#1%
    \or\immediate\write\fwf@g{[] 0 \c@msetdash}%         Index=1
    \or\immediate\write\fwf@g{[6 2] 0 \c@msetdash}%      Index=2
    \or\immediate\write\fwf@g{[4 2] 0 \c@msetdash}%      Index=3
    \or\immediate\write\fwf@g{[2 2] 0 \c@msetdash}%      Index=4
    \or\immediate\write\fwf@g{[1 2] 0 \c@msetdash}%      Index=5
    \or\immediate\write\fwf@g{[2 4] 0 \c@msetdash}%      Index=6
    \or\immediate\write\fwf@g{[3 5] 0 \c@msetdash}%      Index=7
    \or\immediate\write\fwf@g{[3 3] 0 \c@msetdash}%      Index=8
    \or\immediate\write\fwf@g{[3 5 1 5] 0 \c@msetdash}%  Index=9
    \or\immediate\write\fwf@g{[6 4 2 4] 0 \c@msetdash}%  Index=10
    \fi}
\ctr@ld@f\def\Pssetd@sh#1 #2:{{\def\t@xt@{#1}\ifx\t@xt@\empty\Pssetd@sh#2:% Discard leading spaces
    \else\def\t@xt@{#2}\ifx\t@xt@\empty\Pssetd@shI{#1}\else\s@mme=\@ne\def\debutp@t{#1}%
    \an@lysd@sh#2:\ifodd\s@mme\edef\debutp@t{\debutp@t\space\finp@t}\def\finp@t{0}\fi%
    \PSc@mment{setdash Pattern=#1 #2}%
    \immediate\write\fwf@g{[\debutp@t] \finp@t\space\c@msetdash}\fi\fi}}
\ctr@ld@f\def\an@lysd@sh#1 #2:{\def\t@xt@{#2}\ifx\t@xt@\empty\def\finp@t{#1}\else%
    \edef\debutp@t{\debutp@t\space#1}\advance\s@mme\@ne\an@lysd@sh#2:\fi}
\ctr@ln@m\CUR@width
\ctr@ld@f\def\s@uvwidth#1{\edef#1{\CUR@width}}
\ctr@ld@f\def\D@FTwidth{0.4}     % Valeur par defaut
\ctr@ld@f\def\Q@s@twidth#1{\ifGR@cri\edef\CUR@width{#1}\ifCUR@PS%
    \PSc@mment{setwidth Width=#1}\immediate\write\fwf@g{#1 \c@msetlinewidth}\fi\fi}
\ctr@ln@m\CUR@join
\ctr@ld@f\def\s@uvjoin#1{\edef#1{\CUR@join}}
\ctr@ld@f\def\D@FTjoin{miter}   % Valeur par defaut
\ctr@ld@f\def\Q@s@tjoin#1{\ifGR@cri\edef\CUR@join{#1}\ifCUR@PS\expandafter\Pssetj@in#1:\fi\fi}
\ctr@ld@f\def\Pssetj@in#1#2:{\PSc@mment{setjoin join=#1}%
    \if#1r\def\t@xt@{1}\else\if#1b\def\t@xt@{2}\else\def\t@xt@{0}\fi\fi%
    \immediate\write\fwf@g{\t@xt@\space\c@msetlinejoin}}
\ctr@ld@f\def\Pss@tspecifSt#1{\trtlis@rg{#1}{\Pss@tspecifSt@}}
\ctr@ld@f\def\Pss@tspecifSt@#1=#2|{\keln@mun#1|%
    \def\n@mref{c}\ifx\l@debut\n@mref\def\n@mref{#2}\ifx\n@mref\D@FTref\else%
     \s@uvcolor{\typ@color}\Pssetc@lor{#2}\fi\else% color
    \def\n@mref{d}\ifx\l@debut\n@mref\def\n@mref{#2}\ifx\n@mref\D@FTref\else%
     \s@uvdash{\typ@dash}\Q@s@tdash{#2}\fi\else% line dash
    \def\n@mref{j}\ifx\l@debut\n@mref\def\n@mref{#2}\ifx\n@mref\D@FTref\else%
     \s@uvjoin{\typ@join}\Q@s@tjoin{#2}\fi\else% line join
    \def\n@mref{w}\ifx\l@debut\n@mref\def\n@mref{#2}\ifx\n@mref\D@FTref\else%
     \s@uvwidth{\typ@width}\Q@s@twidth{#2}\fi\else% line width
    \W@rnmeskwd{Pss@tspecifSt}{#1}\fi\fi\fi\fi}
\ctr@ld@f\def\Psrest@reSt#1{\trtlis@rg{#1}{\Psrest@reSt@}}
\ctr@ld@f\def\Psrest@reSt@#1=#2|{\keln@mun#1|%
    \def\n@mref{c}\ifx\l@debut\n@mref\def\n@mref{#2}\ifx\n@mref\D@FTref\else%
     \Pssetc@lor{\typ@color}\fi\else% color
    \def\n@mref{d}\ifx\l@debut\n@mref\def\n@mref{#2}\ifx\n@mref\D@FTref\else%
     \Q@s@tdash{\typ@dash}\fi\else% line dash
    \def\n@mref{j}\ifx\l@debut\n@mref\def\n@mref{#2}\ifx\n@mref\D@FTref\else%
     \Q@s@tjoin{\typ@join}\fi\else% line join
    \def\n@mref{w}\ifx\l@debut\n@mref\def\n@mref{#2}\ifx\n@mref\D@FTref\else%
     \Q@s@twidth{\typ@width}\fi\else% line width
    \W@rnmeskwd{Psrest@reSt}{#1}\fi\fi\fi\fi}
\ctr@ld@f\def\Pssettrim@sh#1=#2|{\keln@mde#1|%
    \def\n@mref{co}\ifx\l@debut\n@mref\update@ttr\D@FTref\P@settmeshcolor{#2}\else% color
    \def\n@mref{da}\ifx\l@debut\n@mref\update@ttr\D@FTref\P@settmeshdash{#2}\else% dash
    \def\n@mref{wi}\ifx\l@debut\n@mref\update@ttr\D@FTref\P@settmeshwidth{#2}\else% width
    \W@rnmesAttr{figset trimesh}{#1}\fi\fi\fi}
\ctr@ln@m\DDV@tmeshcolor
\ctr@ld@f\def\P@settmeshcolor#1{\edef\DDV@tmeshcolor{#1}}
\ctr@ln@m\DDV@tmeshdash
\ctr@ld@f\def\P@settmeshdash#1{\edef\DDV@tmeshdash{#1}}
\ctr@ln@m\DDV@tmeshwidth
\ctr@ld@f\def\P@settmeshwidth#1{\edef\DDV@tmeshwidth{#1}}
\ctr@ld@f\def\figdrawtrimesh#1[#2,#3,#4]{{\ifCUR@PS\ifGR@cri%
    \PSc@mment{trimesh Type=#1, Triangle=[#2,#3,#4]}%
    \s@uvc@ntr@l\et@tpstrimesh\ifnum#1>\@ne%
    \Pss@tspecifSt{color=\DDV@tmeshcolor,dash=\DDV@tmeshdash,width=\DDV@tmeshwidth}%
    \setc@ntr@l{2}%
    \Pstrimeshp@rt#1[#2,#3,#4]\Pstrimeshp@rt#1[#3,#4,#2]\Pstrimeshp@rt#1[#4,#2,#3]%
    \Psrest@reSt{color=\DDV@tmeshcolor,dash=\DDV@tmeshdash,width=\DDV@tmeshwidth}%
    \fi\figdrawline[#2,#3,#4,#2]%
    \PSc@mment{End trimesh}\resetc@ntr@l\et@tpstrimesh\fi\fi}}
\ctr@ld@f\def\Pstrimeshp@rt#1[#2,#3,#4]{{\l@mbd@un=\@ne\l@mbd@de=#1\loop\ifnum\l@mbd@de>\@ne%
    \advance\l@mbd@de\m@ne\figptbary-1:[#2,#3;\l@mbd@de,\l@mbd@un]%
    \figptbary-2:[#2,#4;\l@mbd@de,\l@mbd@un]\figdrawline[-1,-2]%
    \advance\l@mbd@un\@ne\repeat}}
\initpr@lim\initpss@ttings\initPDF@rDVI% Initialisation preliminaire
\ctr@ln@w{newbox}\figBoxA
\ctr@ln@w{newbox}\figBoxB
\ctr@ln@w{newbox}\figBoxC
\catcode`\@=12

\frontmatter

 \begin{abstract}
We  study integrability and  continuity properties of random series of Hermite functions. We get optimal results which are   analogues to classical  results  concerning Fourier series, like the Paley-Zygmund or the Salem-Zygmund theorems.   We also consider the case of series of radial Hermite functions, which are not so well-behaved. In this context, we prove some\;$L^p$ bounds of radial Hermite functions, which are optimal when $p$ is large.
  \end{abstract}
\subjclass{60G50 ;  }
\keywords{Harmonic oscillator, Hermite functions, random series, harmonic analysis.}
\thanks{D. R.  was partly supported  by the  grant  ``NOSEVOL''   ANR-2011-BS01019 01.\\L.T. was partly supported  by the  grant  ``HANDDY'' ANR-10-JCJC 0109 \\
 \indent \quad and by the  grant  ``ANA\'E'' ANR-13-BS01-0010-03.}
\maketitle
\mainmatter

%%%%%%%%%%%%%%%%%
%%%%%%%%%%%%%%%%%

 \section{Introduction}
 
 In this paper we prove  some optimal integrability and regularity results on the convergence of random Hermite expansions, i.e. on series of eigenfunctions of the harmonic oscillator with random coefficients. \ligne

 Before we enter in the details, let us recall an old result on the 1-D  torus  $\T=\R/2\pi\Z$. Let $\di{u(x) = \sum_{n\in\Z}c_n{\rm e}^{inx}}$  and define the Sobolev  space $H^{s}(\T)$ by the norm  
 $\di{\Vert u\Vert^2_{H^s(\T)} = \sum_{n\in\Z}(1+\vert n\vert)^{2s}\vert c_n\vert^2}$.
 By the usual Sobolev embeddings, if $u\in H^{1/2-1/p}(\T)$ with $p\geq 2$ then
 $u\in L^p(\T)$, but in general $u\notin \mathcal{C}(\T)$. Paley and Zygmund (1930) have improved this result allowing  random  coefficients.
 \begin{theo}[Paley-Zygmund]\ph \label{pazy}
 Let  $u^\omega(x) = \di{\sum_{n\in\Z}\ep_n(\omega)c_n{\rm e}^{inx}}$  where $(\ep_n)_{n\in \Z}$ is a sequence of independent Rademacher random variables.
 If $u\in L^2(\T)$ then for all $2\leq p<+\infty$, a.s  $u^\omega\in L^p(\T)$. 
 
 Moreover if for some $\alpha >1$, $\di{\sum_{n\in\Z}\ln^\alpha(1+\vert n\vert)\vert c_n\vert^2 <+\infty}$ then a.s $u^\omega\in{\cal C}(\T)$.
\end{theo}
Many  other results concerning random   trigonometric  series  were  obtained   by Paley and Zygmund, as it is   detailed
in the   book of J-P. Kahane  \cite{JPK}. The study has been extended to  random Fourier series on  Lie groups  (see  Marcus-Pisier \cite{MP}) and 
to Riemannian compact manifolds  for orthonormal basis of eigenfunctions of the Laplace-Beltrami operator (see Tzvetkov \cite{Tzvetkov4} and references therein). 

On the torus $\T^d=\R^d/(2\pi\Z)^d$,  there is a natural choice of the basis for the expansion, namely the $(\e^{in\cdot x})_{n\in \Z}$. In our context (or more generally, if one study expansions on eigenfunctions of the Laplacian on a compact manifold) it is not clear which basis to choose, and the convergence properties of the random series $u^\om(x)=\sum c_n X_n(\om)\phi_n(x)$ might depend on the choice of the basis $(\phi_n)_{n\geq 0}$. For instance, an analogous result to Theorem~\ref{pazy} has been obtained by   Tzvetkov \cite[Theorem 5]{Tzvetkov4} in compact manifolds with a condition depending on the $L^{\infty}$ bound of the $\phi_n$.\ligne

Here we show  that by adding a squeezing condition (see condition~\eqref{condi3} below), we can use the intrinsic estimates of the spectral function, and obtain a convergence  condition on the $(c_n)$ which does not depend on the choice of the basis of Hermite functions. The idea to take profit of the bounds of the spectral function and of the Weyl law comes from   \cite{ShZe, bule} and has been fruitful in different contexts (see \cite{PRT1,PRT2,PRT3}), where results have been obtained for a large class of probability laws.  Here we extend this approach by working  in a space\;$\mathcal{Z}^s_{\phi}$ (instead  of using  condition~\eqref{condi3})   and  which also enables to exploit the estimates of the spectral function and which is compatible with the L\'evy contraction principle of random series. We refer to the next paragraph for more details.\ligne

Let us now briefly describe our main contribution in this paper: \vspace{4pt}

We first study integrability properties of the random series $u^{\om}$. We then detail the case of series of radial Hermite functions, for which the situation is different than in the general case.

In a second time, we prove regularity results of the random series. We prove a Salem-Zygmund theorem which describes the behaviour of partial sums. We are then able to obtain an analogous result to  Theorem~\ref{pazy} in our context, and we show that the $\ln$ factor is optimal. Finally, we state in Theorem~\ref{hold} some more precise regularity results. Notice that due to dispersive effects of the harmonic oscillator on $\R^d$, the randomisation yields  better estimates than on the torus.\ligne

In Proposition~\ref{estrad} we state some $L^p$ bounds of radial Hermite functions and which are optimal at least for $p\geq2$ large enough. Even if the proof is elementary, using the well-known asymptotic estimates of Laguerre functions, we did not find the result in the literature. Therefore, we have written down the details, since the estimates we obtain are better than the bounds of general Hermite functions.\ligne

Finally, we point out that the previous results have analogues for random series of eigenfunctions of the Laplacian on a Riemannian compact manifold or for the Laplacian on $\R^{d}$ with a confining potential. These results can be obtained with the same strategy by using the corresponding bounds of the spectral function.

\subsection{Functional analysis}\label{sect11}
\subsubsection{Some elements on the harmonic oscillator}
We consider the multidimensional harmonic oscillator $H:=-\Delta+ |x|^2$ on $L^2(\R^d)$ with $d\geq 1$.
% Some of our results are also available for $d=1$ with minor modifications.
The spectrum of $H$ is $d+2\N$ and we consider the sequence of eigenvalues $(\lambda_n)_{n\geq 0}$ by counting multiplicities: 
$$      d=\lambda_0 <\lambda_1\leq \lambda_2 \leq \lambda_3\leq \dots                $$
Fix any  orthonormal basis  $(\varphi_n)_{n\geq 0}$ of  normalized eigenfunctions for the harmonic  oscillator $H$ such that $H\varphi_n=\lambda_n \varphi_n$.
For $j\geq 1$ denote by 
     \begin{equation*} 
     I(j)=\big\{n\in \N,\;2j\leq \lambda_{n}< 2(j+1)\big\}.
     \end{equation*}
     Observe that for all $j \geq d/2$, $I(j)\neq \emptyset$ and that $\#I(j)\sim C_{d}j^{d-1}$ when $j \longrightarrow +\infty$, and therefore $\lambda_{n}\sim c_d n^{1/d}$. 
Though $(\varphi_n)_{n\geq 0}$ is arbitrary, the vector space spanned by $\{\varphi_n, n\in I(j)\}$ is independent of the choice of the Hilbert basis.

Now, we recall what are the natural Sobolev spaces for $H$:
\begin{equation*}
\forall s\geq 0 \quad \forall p\in [1,+\infty)\cup \{+\infty\} \quad \mathcal{W}^{s,p}(\R^d):=\{u\in \mathcal{S}' (\R^d), \quad H^{s/2} u \in L^p(\R^d)\}.
\end{equation*} 
Therefore, we define 
\begin{equation}\label{wdp}
\|u\|_{\mathcal{W}^{s,p}(\R^d)}:= \|H^{\frac{s}{2}}u\|_{L^p(\R^d)}.
\end{equation}
It turns out (see \cite[Lemma~2.4]{YajimaZhang2}) that a functional characterisation of $\mathcal{W}^{s,p}(\R^d)$ for $1\leq p<+\infty$ and $s\geq 0$ is given by 
\begin{equation*}
u\in \mathcal{W}^{s,p}(\R^d) \quad \Leftrightarrow \quad \|(I-\Delta)^{\frac{s}{2}} u\|_{L^p(\R^d)} + \|\langle x\rangle^s u\|_{L^p(\R^d)}<+\infty.
\end{equation*}
In the Hilbertian framework, we have 
\begin{equation*}    \mathcal{H}^s(\R^d):=
\mathcal{W}^{s,2}(\R^d)=\{u\in H^s(\R^d), \quad  \langle x \rangle^s u \in L^2(\R^d) \}             \end{equation*}
where $H^s(\R^d)=\mbox{Dom}((I-\Delta)^{s/2})$ is the classical Sobolev space.
Thus, up to an equivalence of norm, one can define 
\begin{equation}\label{defi-norm}   \|u\|_{\mathcal{H}^s(\R^d)} = \|H^{s/2} u \|_{L^2(\R^d)} = \|u\|_{H^s(\R^d)}+\|\langle x \rangle^s u \|_{L^2(\R^d)} .     \end{equation}
Consequently, one can check that $\H^s(\R^d)$ is an algebra if $s>\frac{d}{2}$ and is included in $L^\infty(\R^d)$.\ligne 

We will need   the $L^{\infty}$ estimate of the spectral function given by Thangavelu/Karadzhov (see \cite[Lemma 3.5]{PRT1}) which reads 
\begin{equation}\label{kt}
\|\Pi_j\|_{L^2(\R^d)\rightarrow L^\infty(\R^d)}^2=  \sup_{x\in \R^d} \sum_{n\in I(j)} | \phi_{n}\big(x\big)\big|^{2}\leq C j^{\gamma{(d)}},
\end{equation}
with $\gamma(1)=-1/6$ and $\gamma(d)=d/2-1$ for $d\geq 2$ and 
where $\Pi_j$ is the spectral projector of $H$ on the eigenspace associated to the unique eigenvalue which belongs to $I(j)$.
It is classical that the function defined in \eqref{kt} does not depend on the choice of the $(\phi_n)_{n\geq 0}$. For $d=1$, \eqref{kt} comes from the simplicity of the spectrum of $H$ and the classical estimate of the normalized Hermite functions: $\|\phi_j\|_{L^\infty(\R)}\lesssim j^{-\frac{1}{12}}$.

In the sequel we will also need the notation $\beta(d)=d-1-\gamma(d)$  as follows 
\begin{equation}\label{kt2}
\begin{array}{|c|c|c|c|} \hline
 & \gamma(d) & \beta(d)  \\ \hline 
 d=1 & -\frac{1}{6} & \frac{1}{6}  \\[2mm] 
 d\geq 2 & \frac{d}{2}-1 & \frac{d}{2} \\ \hline 
\end{array}
\end{equation}
~

\subsubsection{The space $\mathcal{Z}_{\phi}^{s}(\R^{d})$}
   Given a Hilbertian basis of Hermite functions $(\phi_{n})_{n\geq 0}$ and  $s\in \R$,  any $u\in \H^{s}(\R^{d})$ can be written in a unique fashion 
     \begin{equation*} 
     u= \sum_{n\geq 0}c_{n}\phi_{n}, \quad  \sum_{n \geq 0} \lambda_n^{s} |c_n|^2 <+\infty. 
     \end{equation*}
     We define the space $\mathcal{Z}_{\phi}^{s}(\R^{d})$ by the norm 
     \begin{equation*}
     \|u\|^{2}_{\mathcal{Z}_\phi^{s}}=\sum_{j\geq 1} j^{s+d-1}   \max\limits_{n\in I(j)} |c_n|^2,
     \end{equation*}
     and we stress that this space depends on the choice of the basis $(\phi_{n})$.
     It is clear that we have the strict embeddings
     \begin{equation*}
   \H^{s+d-1}(\R^{d})\subset   \mathcal{Z}_{\phi}^{s}(\R^{d})\subset \H^{s}(\R^{d}).
     \end{equation*}
     
  In the works \cite{PRT1,PRT2,PRT3}, the following assumption on the coefficients of  $u\in \H^{s}(\R^{d})$ was made   
    \begin{equation}\label{condi3} 
  |c_{k}|^{2}\leq \frac{C}{\# I(j)}\sum_{n\in I(j)}|c_{n}|^{2},\quad \forall k\in I(j),\quad \forall  j\geq 1.
  \end{equation}
Let us explain why the condition $u\in \mathcal{Z}_{\phi}^{s}(\R^{d})$ is more natural.
Firstly, observe that if the coefficients of  $u\in \H^{s}(\R^{d})$  satisfy \eqref{condi3}   then $u\in \mathcal{Z}_{\phi}^{s}(\R^{d})$.
Secondly, consider two functions $u,v\in \H^{s}(\R^{d})$
$$  u = \sum_{n=0}^{+\infty} c_{n}\phi_{n}, \quad v = \sum_{n=0}^{+\infty} \gamma_n c_{n}\phi_{n},$$ 
where $(\gamma_n)$ is a real bounded sequence.
The contraction principle for the random series (see Theorem~\ref{theocontrac}) states roughly that if one can prove an almost sure convergence for the random series coming from $u$ (see below \eqref{defi-rand}), then the same is true for $v$.
But it is easy to see that condition \eqref{condi3} is not stable by multiplication by bounded sequences whereas $u\in \mathcal{Z}_\phi^s(\R^d)$ is the most general condition which is implied by \eqref{condi3} and stable by multiplication by bounded sequences.

Sometimes, we also need  the stronger condition  
    \begin{equation}\label{condi4} 
 \frac{C_1}{\# I(j)}\sum_{n\in I(j)}|c_{n}|^{2}\leq   |c_{k}|^{2}\leq \frac{C_2}{\# I(j)}\sum_{n\in I(j)}|c:_{n}|^{2},\quad \forall k\in I(j),\quad \forall  j\geq 1.
  \end{equation}  
% We refer to \cite{PRT1,PRT2} where the conditions \eqref{condi3}-\eqref{condi4} have already been used.  
  
  \subsection{Probabilistic setting} Consider a   probability space
$(\Omega, {\cal F}, {\P})$  and  let $(X_n)_{n\geq 0}$ be  independent and identically distributed random variables which are not constant almost surely.
All random variables are real valued.
In all the paper (except in the annex \ref{annex-proba}), we will make two different assumptions depending on whether we study integrability or regularity results: 
\begin{equation}\label{mom-bor}
\E[X_1]=0 \quad \mbox{and} \quad \forall k\geq 1 \quad \E[|X_1|^k]<+\infty.
\end{equation}
\begin{equation}\label{defi-subno}
\exists \sigma >0, \quad \forall r>0, \quad \E[e^{rX_1}] \leq e^{\frac{1}{2} \sigma^2 r^2 }.
\end{equation}
One checks that \eqref{defi-subno} implies \eqref{mom-bor}.
The usual laws we have in mind fulfill \eqref{defi-subno}: 
the real Gaussian law $\mathcal{N}_{\R}(0,1)$ or the Rademacher law (in that case, we will write $X_n=\ep_n$). More generally, any centered and bounded r.v. satisfies \eqref{defi-subno}. 

%Under the previous assumptions on the r.v. $(X_n)_{n\geq 0}$,   the  Khintchine inequality holds true (see {\it e.g.} \cite[Lemma 4.2]{BT2} for a  proof): 
%There exists $C>0$ such that for all real $k\geq 2$ and $(a_{n})\in \ell^{2}(\N)$
%\begin{equation}\label{khin} 
%\big\|\sum_{n\geq 0}X_{n}(\om)\,a_{n}\big\|_{L_{\P}^{k}}\leq C\sqrt{k}\Big(\sum _{n\geq 0}|a_{n}|^{2}\Big)^{\frac12}.
%\end{equation}

%Notice that  \eqref{defi-subno} implies that the random variables $X_n$ are centered and have finite moments of any order.
  
We explain now the way we introduce randomness in Sobolev spaces.
Let $(c_{n})_{n\geq 0}$ be such that $\sum_{n\geq 0} \lambda_{n}^{s}|c_{n}|^{2}<+\infty$. Then we can  define a random variable $u^{\om}$ by 
 \begin{equation}\label{defi-rand}
  u^\omega = \sum_{n=0}^{+\infty}  X_n(\om)c_{n}\phi_{n}.
 \end{equation}
It is clear that we have  
 \begin{equation*}
\E\left[ \| u^{\om}\|^{2}_{\H^{s}(\R^{d})} \right]= \E \left[ \sum_{n=0}^{+\infty} \lambda_{n}^{s}|c_{n}|^{2}|X_{n}|^{2} \right] 
 \leq \E[X_1^2]  \sum_{n= 0}^{+\infty}\lambda_n^{s} |c_{n}|^{2}<+\infty,
 \end{equation*}
In other words $\omega \mapsto u^\omega$ belongs to $L^2(\Omega, \mathcal{H}^s(\R^d))$ and almost surely $u^\omega$ belongs to $\mathcal{H}^s(\R^d)$.

 %%%%%%%%%%%%%%%%%%%%%%%%%%%%%%
%%%%%%%%%%%%%%%%

  \section{Main  results of the paper}

\subsection{\bf Integrability results for random Hermite series }

We state here convergence results in the $L^p(\R^d)$ scale with $p\in [2,\infty)$.
The following result (used in a slightly weaker form in \cite{Grivaux}) will play a key role. It is a combination of results of Hoffman-Jorgensen, Maurey-Pisier \cite{maurey-pisier76} and the fact that\;$L^{p}(\R^d)$ has finite cotype.

\begin{prop}\ph\label{equi-cv}
Let $p\in [2,+\infty)$ and $(f_n)_{n\geq 0}$ be a sequence of $L^p(\R^d)$.
Assume that the sequence $(X_n)_{n\geq 0}$ fulfills \eqref{mom-bor}, the following statements are equivalent: 
\begin{enumerate}[(i) ]
\item the series $\sum \ep_n f_n$ converges almost surely in $L^p(\R^d)$,
\item the series $\sum X_n f_n$ converges almost surely in $L^p(\R^d)$,
\item the function $\displaystyle\sum_{n\geq 0} |f_n|^2$ belongs to $L^{\frac{p}{2}}(\R^d)$.
 \end{enumerate}
\end{prop}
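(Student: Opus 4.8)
The plan is to isolate the purely analytic condition $(iii)$ by treating the Rademacher series $\sum\epsilon_n f_n$ by hand, and then to transfer almost sure convergence between $\sum\epsilon_n f_n$ and the general series $\sum X_n f_n$ through a comparison principle whose validity hinges on the finite cotype of $L^p(\R^d)$ for $2\le p<+\infty$. Throughout I would use that, by \eqref{mom-bor}, the variables $X_n$ are centred with finite moments of every order.

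For $(i)\Leftrightarrow(iii)$ I would argue concretely. Set $S_N=\sum_{n\le N}\epsilon_n f_n$. The partial sums are sums of independent symmetric $L^p(\R^d)$-valued random variables, so the It\^o--Nisio theorem makes their almost sure convergence equivalent to convergence in probability; combined with Kahane's inequality, which renders all moments of $\|S_N\|_{L^p(\R^d)}$ equivalent, this is in turn equivalent to convergence of $(S_N)_N$ in $L^p\big(\Omega;L^p(\R^d)\big)=L^p(\Omega\times\R^d)$. Fubini's theorem and the scalar Khintchine inequality then give, for $M<N$,
\begin{equation*}
\E\,\big\|S_N-S_M\big\|_{L^p(\R^d)}^p=\int_{\R^d}\E\,\Big|\sum_{M<n\le N}\epsilon_n f_n(x)\Big|^p\,\dd x\,\simeq\,\int_{\R^d}\Big(\sum_{M<n\le N}|f_n(x)|^2\Big)^{p/2}\dd x .
\end{equation*}
Thus $(S_N)_N$ is Cauchy in $L^p(\Omega\times\R^d)$ if and only if $\big(\sum_n|f_n|^2\big)^{1/2}\in L^p(\R^d)$, i.e. $\sum_n|f_n|^2\in L^{p/2}(\R^d)$, which is exactly $(iii)$.

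For $(i)\Leftrightarrow(ii)$ I would pass through a symmetrisation. Introducing an independent copy $(X_n')$ and setting $Y_n=X_n-X_n'$ (a symmetric, non-degenerate, i.i.d. sequence with $\mu:=\E|Y_1|>0$), the centredness of the $X_n$ gives, by Jensen's inequality in both directions, that $\sum X_n f_n$ and $\sum Y_n f_n$ share the same convergence behaviour, with comparable expected norms of partial sums. Writing $Y_n\stackrel{d}{=}\epsilon_n|Y_n|$ and applying Jensen's inequality in the $|Y_n|$ variables yields the cost-free lower bound
\begin{equation*}
\mu\,\E\,\Big\|\sum_{n\le N}\epsilon_n f_n\Big\|_{L^p(\R^d)}\le \E\,\Big\|\sum_{n\le N}Y_n f_n\Big\|_{L^p(\R^d)},
\end{equation*}
valid in any Banach space, whereas the reverse comparison is precisely the content of the Maurey--Pisier theorem \cite{maurey-pisier76} in a space of finite cotype. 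Since $L^p(\R^d)$ has cotype $\max(2,p)=p<+\infty$, both comparisons are available, and together with the contraction principle they yield $(i)\Leftrightarrow(ii)$.

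The delicate point, which I would treat most carefully, is converting the \emph{almost sure} convergence of $\sum X_n f_n$ in $(ii)$ into the boundedness of $\E\,\|\sum_{n\le N}X_n f_n\|_{L^p(\R^d)}$ that the comparison inequalities demand: this integrability upgrade is exactly Hoffmann--J{\o}rgensen's maximal inequality, which applies thanks to \eqref{mom-bor}. Once the expected norms of the partial sums are uniformly bounded, the lower bound above forces $\sup_N\E\,\|\sum_{n\le N}\epsilon_n f_n\|_{L^p(\R^d)}<+\infty$, and by the Khintchine computation of the first step together with monotone convergence this is equivalent to $(iii)$, closing the loop. The use of finite cotype is essential here and reflects the restriction $p<+\infty$: the statement genuinely fails on $L^\infty$, which has infinite cotype.
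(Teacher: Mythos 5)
Your treatment of $(i)\Leftrightarrow(iii)$ (reduction of almost sure convergence to convergence in $L^p(\Omega\times\R^d)$ via L\'evy--It\^o--Nisio and Kahane, then Fubini and the scalar Khintchine inequality) and of $(i)\Rightarrow(ii)$ (Maurey--Pisier plus the finite cotype of $L^p(\R^d)$) is exactly the paper's route, through its Theorem~\ref{radem-seri} and Theorem~\ref{MP}. The divergence, and the problem, is in $(ii)\Rightarrow(i)$. You correctly single out the delicate point --- upgrading the almost sure convergence of $\sum X_nf_n$ to $\sup_N\E\big\|\sum_{n\le N}X_nf_n\big\|_{L^p(\R^d)}<+\infty$ --- but the justification ``Hoffmann--J{\o}rgensen's maximal inequality applies thanks to \eqref{mom-bor}'' does not hold up. The hypothesis of that inequality is not a moment condition on the scalar law of $X_1$: to pass from almost sure convergence of a sum of independent symmetric summands $Z_n=Y_nf_n$ to boundedness in $L^1(\Omega,L^p(\R^d))$ one needs $\E\sup_n\|Z_n\|_{L^p(\R^d)}=\E\sup_n\big(|Y_n|\,\|f_n\|_{L^p(\R^d)}\big)<+\infty$. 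This quantity involves the unknown sizes $\|f_n\|_{L^p(\R^d)}$, about which nothing is known at this stage of the argument (since $(iii)$ is precisely what you are trying to reach), and the supremum of infinitely many i.i.d.\ copies of $|Y_1|$ is almost surely infinite as soon as the law is unbounded --- which \eqref{mom-bor} permits (e.g.\ Gaussian). So, as written, this step would fail.

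The gap is repairable, at the cost of one more idea. From $X_nf_n\to 0$ a.s.\ and the non-degeneracy of $X_1$ one first deduces $\|f_n\|_{L^p(\R^d)}\to 0$, hence $\sup_n\|f_n\|_{L^p(\R^d)}<+\infty$; one then truncates the symmetrised variables, $\widetilde Y_n:=Y_n\pun_{|Y_n|\le M}$ with $M$ large enough that $\E|\widetilde Y_1|>0$. The contraction principle (Theorem~\ref{theocontrac}, applied conditionally on the sequence $(|Y_n|)_{n\ge 0}$) preserves the almost sure convergence of $\sum\widetilde Y_nf_n$, and now $\sup_n\|\widetilde Y_nf_n\|_{L^p(\R^d)}\le M\sup_n\|f_n\|_{L^p(\R^d)}$ is deterministically finite, so Hoffmann--J{\o}rgensen legitimately applies and your Jensen lower bound closes the loop. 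Note that the paper sidesteps the issue entirely: its Theorem~\ref{quef} asserts that almost sure convergence of $\sum X_nb_n$ implies that of $\sum\ep_nb_n$ with \emph{no} integrability assumption on $X_1$, at the price of a genuinely different proof (reduction to Bernoulli selectors and a Steinhaus-type argument on the group $(\Z/2\Z)^{\N}$). Either fix is acceptable; your version, once the truncation is inserted, is the more elementary one but uses the moment hypothesis in an essential way.
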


This proposition is a synthesis of known results on the convergence of random series in Banach spaces. For the reader's convenience, we have gathered the elements of the proof in Section~\ref{annex-proba}.\ligne

Here is our first result involving random Hermite series. Recall the definition \eqref{kt2} of $\gamma$ and $\beta$.

\begin{theo}\ph \label{proplp}
Let $d\geq 1$ and $2\leq p<+\infty$.  We  assume that the r.v.~$(X_n)_{n\geq 0}$ fulfill \eqref{mom-bor} and that $u=\displaystyle\sum_{n\geq 0} c_n \phi_n$ belongs to $\mathcal{Z}_\phi^{-2\beta(d)\left( \frac{1}{2}-\frac{1}{p}\right)}(\R^d)$, i.e. the sequence $(c_n)_{n\geq 0}$ is such that 
\begin{equation} \label{condi5}
  \sum_{j= 1}^{+\infty}j^{\gamma(d)+\frac{2\beta(d)}{p}} \max_{n\in I(j)}|c_{n}|^{2}<+\infty.
\end{equation} 
Then $u^\omega=\displaystyle\sum_{n\geq 0} X_n c_n \phi_n$ converges almost surely in ${L}^{p}(\R^d)$.
\end{theo}

We will see in the proof that the exponent $\gamma{(d)}+\frac{2\beta(d)}{p}$ which appears in \eqref{condi5} is such that
\begin{equation*}
\Big\Vert  \sum_{n\in I(j)} | \phi_{n}(x)|^{2}\Big\Vert_{L^{p/2}(\R^d)}\leq C j^{\gamma{(d)}+\frac{2\beta(d)}{p}},
\end{equation*}

We refer to \cite[Proposition 2.1]{PRT2} where a result similar to Theorem \ref{proplp} was given using the condition\;\eqref{condi3}. \ligne
 
By considering radial functions as in Ayache-Tzvetkov \cite{AT} and in Grivaux \cite{Grivaux}, we introduce now a natural example for which the gain of integrability may not hold in all the spaces $L^p(\R^d)$, in this case condition \eqref{condi3} does not hold true and we may have $u\in \mathcal{Z}^{s}(\R^{d})$ and $u\in \H^{s+d-1}(\R^{d})\backslash \H^{s+d-1+\eps}(\R^{d})$.
   
Let  $d\geq 2$ and $L_{rad}^{2}(\R^{d})$   be the subspace of $L^2(\R^d)$ invariant  by  the action of the rotation group~$SO(d)$.
One can prove that there exists  a Hilbertian basis  $(\psi_{n})_{n\geq 0 }$  of  $L_{rad}^{2}(\R^{d})$ of eigenfunctions of~$H$. 
Indeed, we have $H\psi_{n}=(4n+d) \psi_{n}$, each eigenspace has dimension 1 and $\psi_n$ may be expressed with Laguerre polynomials (see Section \ref{Sect3} for more details). 

\begin{theo}\ph\label{theo24} Let $d\geq 2$, assume that $(X_n)_{n\geq 0}$ verifies \eqref{mom-bor} and that $u_{rad}:=\Sum_{n\geq 0} c_n \psi_n$ belongs to $\bigcap\limits_{\ep>0} \mathcal{H}^{-\ep}(\R^d)$. The random series 
\begin{equation*} 
u_{rad}^{\om}(x)=\sum_{n=0}^{+\infty}  X_{n}(\om)c_{n}\psi_{n}(x).
\end{equation*}
converges almost surely in $L^p(\R^d)$ for any $p\in ]2,\frac{d}{\alpha_\star(c)}[$ and diverges almost surely for any $p>\frac{d}{\alpha_\star(c)}$ where 
\begin{equation*}
 \a_{\star}(c):=\inf\big\{ \a>0\,: \sum_{n=0}^{N}n^{\frac{d}{2}-1}|c_{n}|^{2} =\mathcal{O}(N^\a)\big\}. \end{equation*}
 \end{theo}

Let us give some examples:
    
   \begin{itemize}
   \item[$\bullet$] If $d=1$, then by Theorem \ref{proplp}, the series $u_{rad}^{\om}$ (defined in the obvious way) converges a.s. in\;$L^{p}(\R^{d})$ for all $p<\infty$. 
   \item[$\bullet$] If $(c_{n})$ is such that $\sum_{n\geq 0} n^{\frac{d}{2}-1}|c_{n}|^{2}<+\infty$, then $\frac{d}{\alpha_\star(c)}=+\infty$. Therefore\;$u_{rad}^{\om}$ converges a.s. in\;$L^{p}(\R^{d})$ for all $p<\infty$.   
      \item[$\bullet$]  Assume that $c_{n}\sim n^{-\kappa}$ with $\kappa\geq 1/2$, then $ \a_{\star}(c)=\max \left(\frac{d}{2}-2\kappa,0 \right)$ and then 
       \begin{equation*} 
\frac{d}{\alpha_\star(c)}=\left\{\begin{array}{ll} 
\dis \min\Big(\frac{2d}{d-4\kappa},+\infty\Big) ,\quad &\text{if} \quad \kappa<\frac{d}{4}, \\[6pt]  
\dis +\infty,  &\text{if} \quad \kappa\geq \frac{d}{4}.
\end{array} \right.
\end{equation*}

   \end{itemize} 
     
     An analogous result to Theorem~\ref{theo24}, but with a different numerology, was first obtained in \cite{AT,Grivaux} for the family of the radial eigenfunctions of the Laplacian on the unit disc in $\R^{d}$
where the analogue value of $\frac{d}{\alpha_\star(c)}$ is called the critical convergence exponent of $c$. 
We will follow the main lines of~\cite{Grivaux}, the difference in the proof involves 
the study of $L^p(\R^d)$ bounds of the radial Hermite functions.
     
     \begin{prop}\ph\label{estrad}
    Let $d\geq 2$. Consider the family $(\psi_{n})_{n\geq 0}$ of the $L^{2}$-normalized radial Hermite functions which satisfies $H\psi_n=(4n+d)\psi_n.$ Then 
    \begin{enumerate}[(i)]
    \item Assume that   $\frac{2d}{d-1}<p\leq +\infty$. Then 
      \begin{equation*}
 c_p n^{ \frac{d}{2}(\frac{1}{2}-\frac{1}{p})-\frac{1}{2} }\leq    \|\psi_n\|_{L^p(\R^d)}\leq C_p n^{ \frac{d}{2}(\frac{1}{2}-\frac{1}{p})-\frac{1}{2} }.
    \end{equation*}
    \item Assume that   $p=\frac{2d}{d-1}$. Then 
      \begin{equation*}
     \|\psi_n\|_{L^p(\R^d)}\leq C_p n^{-\frac{1}{4}} \ln^{\frac{1}{p}}(n).
    \end{equation*}
    \item Assume that   $2\leq p<\frac{2d}{d-1}$. Then 
      \begin{equation*}
     \|\psi_n\|_{L^p(\R^d)}\leq C_p n^{- \frac{d}{2}(\frac{1}{2}-\frac{1}{p})}.
    \end{equation*}
    
   %   $L^{\infty}$ bounds: There exists $C>0$ such that for all $n\geq 1$, $\|\psi_{n}\|_{L^{\infty}(\R^{d})}\leq C n^{\frac{d}{4}-\frac{1}{2}}$.
       %  Moreover there exist $\eps,c>0$ such that for all $n\geq 1$ and all $|x|\leq \frac{\eps}{\sqrt{n}}$, $|\psi_{n}(x)|\geq c n^{\frac{d}{4}-\frac{1}{2}}$.
   %\item $L^{p}$ bounds:  There exists $C_p>0$ such that
      % \begin{equation*}
     %  \|\psi_n\|_{L^p(\R^d)}\leq C_p n^{\theta(p,d)},
    %   \end{equation*}
    %   where $\theta$ is defined for all $n\geq 0$ by
    %   \begin{equation*}
    %   \theta(p,d)=\begin{cases}   - \frac{d}{2}(\frac{1}{2}-\frac{1}{p})   &\text{if\;\;} \;2\leq p<\frac{2d}{d-1}\\[5pt]
    %           \frac{d}{2}\left(\frac{1}{2}-\frac{1}{p}\right)-\frac{1}{2}    &\text{if\;\;}\; \frac{2d}{d-1}<p\leq +\infty .
    %        \end{cases}
    %   \end{equation*}
    %   \item[ ] The limit case $p=\frac{2d}{d-1}$ obeys to a logarithmic behavior $\Vert \psi_n\Vert_{L^p(\R^d)}\lesssim    %n^{-\frac{1}{4}} \ln^{\frac{1}{p}}(n)$.  
      % Furthermore, the estimate is optimal for $\frac{2d}{d-1}<p\leq +\infty$: There exists $C_p>0$ such that
    %   \begin{equation*}
    %   \|\psi_n\|_{L^p(\R^d)}\geq c_p n^{\theta(p,d)},
    %   \end{equation*}
    \end{enumerate}  
     \end{prop}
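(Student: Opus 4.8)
The plan is to reduce everything to one‑dimensional integrals of Laguerre functions and then feed in their classical uniform asymptotics. First I would use the explicit description of the radial eigenfunctions: up to a normalising constant $b_n$,
$$\psi_n(x)=b_n\, L_n^{(\alpha)}(|x|^2)\,\e^{-|x|^2/2},\qquad \alpha:=\tfrac d2-1,$$
where $L_n^{(\alpha)}$ is the Laguerre polynomial, so that passing to polar coordinates and setting $t=|x|^2$ turns $\|\psi_n\|_{L^p(\R^d)}^p$ into a constant times $\int_0^{+\infty}|b_n|^p |L_n^{(\alpha)}(t)|^p\e^{-pt/2}t^{\alpha}\,\dd t$. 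Writing $\varphi_n^{(\alpha)}(t)=\big(\tfrac{\Gamma(n+1)}{\Gamma(n+\alpha+1)}\big)^{1/2}\e^{-t/2}t^{\alpha/2}L_n^{(\alpha)}(t)$ for the $L^2(0,\infty)$‑normalised Laguerre function and imposing $\|\psi_n\|_{L^2(\R^d)}=1$ (which forces $|b_n|^2\sim n^{-\alpha}$), I get, after all the powers of $n$ cancel,
$$\|\psi_n\|_{L^p(\R^d)}^p\;\sim\;\int_0^{+\infty}\big|\varphi_n^{(\alpha)}(t)\big|^p\, t^{\alpha(1-p/2)}\,\dd t.$$
Thus the whole problem is reduced to a weighted $L^p$ estimate for $\varphi_n^{(\alpha)}$.

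Next I would invoke the well‑known uniform pointwise bounds for $\varphi_n^{(\alpha)}$ (Askey--Wainger / Muckenhoupt), which, with $\nu=4n+d$, split $(0,\infty)$ into four regions: $|\varphi_n^{(\alpha)}(t)|\lesssim (t\nu)^{\alpha/2}$ for $0\le t\le 1/\nu$, $\lesssim (t\nu)^{-1/4}$ for $1/\nu\le t\le \nu/2$, $\lesssim \nu^{-1/4}(\nu^{1/3}+|\nu-t|)^{-1/4}$ for $\nu/2\le t\le 3\nu/2$, and exponential decay for $t\ge 3\nu/2$. I would then integrate the $p$‑th power against the weight $t^{\alpha(1-p/2)}$ over each region. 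The exponential region is negligible; the region near the origin contributes $\sim \nu^{\,p(d-2)/4-d/2}$; and the oscillatory ``bulk'' region $[1/\nu,\nu/2]$ contributes $\int_{1/\nu}^{\nu/2}t^{E}\,\dd t$ with $E=(d/2-1)-p(d-1)/4$, whose behaviour switches exactly at $E=-1$, i.e. at $p=\tfrac{2d}{d-1}$.

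Comparing the region contributions then yields the three cases. For $p>\tfrac{2d}{d-1}$ the origin region dominates and gives the exponent $\tfrac d2(\tfrac12-\tfrac1p)-\tfrac12$; for $2\le p<\tfrac{2d}{d-1}$ the bulk integral is controlled by its upper endpoint and gives $-\tfrac d2(\tfrac12-\tfrac1p)$; and at the critical value $p=\tfrac{2d}{d-1}$ the bulk integral produces the logarithm, yielding the factor $\ln^{1/p}(n)$. To close case (i) with a matching lower bound I would only need a lower bound near the origin: since $\psi_n(0)=b_nL_n^{(\alpha)}(0)\sim n^{\alpha/2}$ and $\psi_n$ stays comparable to this value on a ball of radius $\sim\nu^{-1/2}$ (the first zero of $L_n^{(\alpha)}$ occurs at $t\sim 1/\nu$), one gets $\|\psi_n\|_{L^p}^p\gtrsim \nu^{p\alpha/2}\cdot\nu^{-d/2}$, which is exactly the claimed rate.

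The routine part is the region‑by‑region integration; the delicate points I expect to be (a) verifying that the origin region really dominates the turning‑point region $[\nu/2,3\nu/2]$ in the supercritical range — this reduces to the elementary inequality $p(d-1)/2>d$, equivalent to $p>\tfrac{2d}{d-1}$ — and (b) keeping precise track of the endpoint contributions so that the logarithmic borderline at $p=\tfrac{2d}{d-1}$ comes out with the correct power $\ln^{1/p}(n)$ and no spurious loss. The $L^\infty$ endpoint I would treat directly: $\sup_x|\psi_n(x)|\sim \nu^{\alpha/2}\sim n^{d/4-1/2}$, which is consistent with letting $p\to\infty$ in case (i).
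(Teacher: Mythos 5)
Your proposal is correct and follows essentially the same route as the paper: reduction to a weighted one-dimensional integral of normalized Laguerre functions, region-by-region integration of the Erd\'elyi/Askey--Wainger/Muckenhoupt uniform bounds (the paper's Proposition~\ref{erdelyi}) with the critical exponent $p=\frac{2d}{d-1}$ emerging from the bulk region $[1/\nu,\nu/2]$, and the lower bound in $(i)$ from the behaviour of $L_n^{(\alpha)}$ near the origin, which the paper's Lemma~\ref{szeg} makes quantitative. The only place the paper works harder than your sketch indicates is the turning-point region $[\nu/2,3\nu/2]$, where the integral $\int(1+r)^{-p/4}\,dr$ forces a further split into $p>4$, $p=4$, $p<4$ and, for $p>4$, a comparison inequality slightly different from $p(d-1)/2>d$ --- but these all verify, so your outline is sound.
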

     
The proof uses asymptotic estimates of Laguerre functions proved by Erdelyi (such a method has been used in \cite[Lemma 3.1]{deng2012} for $d=2$ and is indicated in \cite[Chapter 1]{Thanga}).
%Furthermore, Point i) shows easily that $L^p$ bounds are sharp for $p>\frac{2d}{d-1}$.

We do not know if the estimates stated in $(ii)$ and $(iii)$ are optimal or not. To get the lower bound in $(i)$  we show that there exist $\eps,c>0$ such that for all $n\geq 1$ and all $|x|\leq \frac{\eps}{\sqrt{n}}$, $|\psi_{n}(x)|\geq c n^{\frac{d}{4}-\frac{1}{2}}$, and the result follows by integrating this estimate.  
%\textcolor{red}{Rafik: j'ai ins\'er\'e un newpage, car la proposition passait mal}
 
%\newpage 
    
  In the figures below, we represent the estimates of Proposition~\ref{estrad}. The dashed lines represent the bounds of  Koch-Tataru \cite[Corollary 3.2]{KOTA} obtained for general Hermite functions as defined in Section\;\ref{sect11}.   We see that   in the range  $2<p<\frac{d-2}{2d}$ the radial functions enjoy   better bounds than in the general case, but not  in the regime $\frac{d-2}{2d}<p\leq +\infty$.

% Points $(i)$ and $(iv)$ are actually the general bounds obtained by    Koch-Tataru \cite{KOTA} for  a general family of Hermite functions $(\phi_{n})_{n\geq 0}$ 
 % Recall that in general  $\|\phi_{n}\|_{L^{\infty}(\R^{d})}\leq C \lambda^{d/4-1/2}_{n}$, and here $\lambda_{n} \approx n$. 
 
% The point $(ii)$ shows that the radial Hermite functions saturate the general bound.
 \ligne
 
  \figinit{cm}
% pt de controle de Bezier
\figpt 1:(0,0)\figpt 2:(0,1)\figpt 3:(10,0)\figpt 4:(0,4)
\figpt 5:(3,0) \figpt 6:(6,0) \figpt 7:(4.5,0) \figpt 8:(4.5,-2.66)  
 \figpt 10:(9,0)   \figpt 11:(9,0)   \figpt 12:(0,-3,5) \figpt 13:(0,-2,66) \figpt 14:(6,-.8) \figpt 15:(0,-.8) 
%% pt sur la courbe
%%% vecteur tangent a la courbe en A
\figvectDBezier 9:1,0.25[1,2,3,4]
%%%%%%%%%%%%%
\figdrawbegin{}
  \figdrawarrow[12,2]\figdrawarrow[1,3]
\figset (width=1.5)
\figdrawline[1,8,10]
\figset (width=1)
\figset (dash=5, color=\Grayrgb)\figdrawline[7,8,13] \figdrawline[6,14,15] \figset (dash=default,  color=default)
\figset  (dash=2)\figdrawline[1,14,10]\figset (dash=default)
\figset (width=1)\figset (width=default)
%\figset arrowhead(fillmode=yes)
%\figdrawarrowcircP 5;-3.5[7,10]
\figdrawend
%%%
% writing
\figvisu{\figBoxA}{$L^p$ estimates of radial Hermite functions: the case $d=2$}{
\figwritene 3:$1/p$(0.1)
\figwritesw 2:$\theta$\;\;(0.1)
\figset write(mark=$\bullet$)
\figwritesw 1:$0$(0.1)
\figwriten 6:$3/10$(0.1)
\figwriten 7:$1/4$(0.1)
\figwriten 10:$1/2$(0.1)
\figwritesw 13:$-1/4$(0.1)
\figwritesw 15:$-1/10$(0.1)
\figset write(mark=$\circ$)
\figwritesw 14:$$(0.1)
\figwritesw  8:$$(0.1)
%\figwrites 10:$1/2$(0.1)
}
%%%%
\centerline{\box\figBoxA}
\ligne
\ligne

\figinit{cm}
% pt de controle de Bezier
\figpt 1:(0,0)\figpt 2:(0,5)\figpt 3:(10,0)\figpt 4:(0,4)
\figpt 5:(3,0) \figpt 6:(4,0) \figpt 7:(5,0) \figpt 8:(5,-2.66)  
 \figpt 10:(9,0)   \figpt 11:(9,0)   \figpt 12:(0,-3,5) \figpt 13:(0,-2,66) \figpt 14:(4,-.8) \figpt 15:(0,-.8) 
%% pt sur la courbe
%%% vecteur tangent a la courbe en A
\figvectDBezier 9:1,0.25[1,2,3,4]
%%%%%%%%%%%%%
\figdrawbegin{}
  \figdrawarrow[12,2]\figdrawarrow[1,3]
\figset (width=1.5)
\figdrawline[4,5,8,10]
\figset (width=1)
\figset (dash=5, color=\Grayrgb)\figdrawline[7,8,13] \figdrawline[6,14,15] \figset (dash=default, color=default)
\figset  (dash=2)\figdrawline[5,14,10]\figset (dash=default)
\figset (width=1)\figset (width=default)
%\figset arrowhead(fillmode=yes)
%\figdrawarrowcircP 5;-3.5[7,10]
\figdrawend
%%%
% writing
\figvisu{\figBoxA}{$L^p$ estimates of radial Hermite functions: the case $d\geq 3$}{
\figwritene 3:$1/p$(0.1)
\figwritesw 2:$\theta$\;\;(0.1)
\figset write(mark=$\bullet$)
\figwriten 5:$1/p_{3}$(0.1)
\figwriten 6:$1/p_{2}$(0.1)
\figwriten 7:$1/p_{1}$(0.1)
\figwriten 10:$1/2$(0.1)
\figwritesw 4:$d/4-1/2$(0.1)
\figwritesw 13:$-1/4$(0.1)
\figwritesw 15:$-1/{(}2d+6{)} $(0.1)
\figset write(mark=$\circ$)
\figwritesw  8:$$(0.1)
\figwritesw 14:$$(0.1)
%\figwrites 10:$1/2$(0.1)
}
%%%%
\centerline{\box\figBoxA}
\ligne
%%%%%%%%%%%%%%%%%%%%%%%%%%%%%%%%%%%%%%%%%%%%%%%%%%%%%%%%%%%%%%%%%%%%%%%%%%%%%%%%%%%%%%%%%%%%%%%%%%%%%%%%%%%%%%%%%%%%%%%%%%%%%%%%%%%%%%%%%%%%%%%%%%%%%%%%%%%%%%%%%%%%%%%%%%%%%%%%%%%%%%%%%%%%%%%%%%%%%%%%%%%%%%%%%%%%%%%%%%%%%%%%%%%%%%%%%%%%%%%%%%%%%%%%%%%%%%%%%%%%%%%%%%%%%%%%%%%%%%%%%%%%%%%%%%%%%%%%%%%%%%%%%%%%%%%%%%%%%%%%%%%%%%%%%%%%%%%%%%%
 %%%%%%%%%%%%%%%%%%%%%%%%%%%%%%%%%%%%%%%%%%%%%%%%%
 
 In the second figure we have set
\begin{equation*}
2<p_1:=\frac{2d}{d-1}\leq p_2:=\frac{2(d+3)}{d+1}\leq p_3:=\frac{2d}{d-2}.
\end{equation*} 

    \subsection{\bf Continuity results for random Hermite series}

We are concerned with the random behavior of the partial sums of \eqref{defi-rand} in the space $L^\infty(\R^d)$.    
 Let us define for any $\lambda \geq d$ 
\begin{equation}\label{rand-part}
 u_\lambda^\omega(x)=\sum_{\lambda_n\leq \lambda} c_n X_n(\omega) \varphi_n(x) .
\end{equation}

There is not an equivalent of Proposition~\ref{equi-cv} for the space $L^\infty(\R^d)$
 (the reason is that $L^\infty(\R^d)$ is not a Banach space with finite cotype, see Annex \ref{annex-proba}).
Hence, we will use other methods to get probabilistic results, like the following one which is in the spirit of the Salem-Zygmund inequality (see \cite[Theorem, page 55]{JPK}).

\begin{theo}\ph\label{salem-zy}
Assume  that    $(X_n)_{n\geq 0}$  is an i.i.d. family of   r.v. which  satisfies the subnormality condition\;\eqref{defi-subno} with a real number $\sigma>0$.
For any positive integer $N>0$, there is $C:=C(N,d,\sigma)>0$ such that for any $\lambda \gg 1$ one has for any sequence $(c_n)_{n\geq 0}$
\begin{equation}\label{pz} \P\left[ \Big\Vert \sum_{\lambda_n\leq \lambda } c_n X_n \phi_n \Big\Vert_{L^\infty(\R^d)}^2 
\leq C \ln (\lambda)   \sum_{j\leq [\lambda/2]} j^{\gamma(d)} \max_{n\in I(j)}|c_{n}|^{2} \right] \geq  1 -\frac{1}{\lambda^N},        \end{equation}
where $\gamma(d)$ is defined in \eqref{kt2}.
Furthermore, if $d\geq 2$ holds and if the   $(X_n)_{n\geq 0}$ are independant Gaussians $\mathcal{N}_{\R}(0,1)$, then one can find a sequence $(c_n)_{n\geq 0}$ 
%such that $u$ belongs to $\mathcal{H}^{-\frac{d}{2}}(\R^d)$ and 
such that we cannot replace the function $\lambda \mapsto \ln(\lambda)$ with a slower function of order\;$o(\ln(\lambda))$. 
%%%%%%%%%%
%%%%%%%%
 \end{theo}
 
In particular the previous result shows that   there  exists  $C>0$   such that  almost  surely   we have 
\begin{equation*}
\limsup_{\lambda\rightarrow +\infty}
\frac{\Big\Vert \sum_{\lambda_n\leq \lambda } c_n X_n \phi_n \Big\Vert_{L^\infty(\R^d)}}{\sqrt{\ln(\lambda)}}
\leq C\big(\sum_{j\geq 0} j^{\gamma(d)} \max_{n\in I(j)}|c_{n}|^{2}\big)^{1/2}. 
\end{equation*}
Furthermore  there   exists  a sequence  $\{c_n\}$  and  $c>0$ such that 
$\sum_{j\geq 0} j^{\gamma(d)} \max_{n\in I(j)}|c_{n}|^{2} =1$ and 
\begin{equation*}
\liminf_{\lambda\rightarrow +\infty}
\frac{\Big\Vert \sum_{\lambda_n\leq \lambda } c_n X_n \phi_n \Big\Vert_{L^\infty(\R^d)}}{\sqrt{\ln(\lambda)}}
\geq c.
\end{equation*}

%%%%%%
%%%%%%

%%%%%%%%

%By using the packets $I(j)$, \eqref{pz} is equivalent to 
%\begin{equation*}\forall \ell\gg 1, \quad \P\left[ \Big\Vert \sum_{j=1}^\ell  \sum_{n\in I(j)} c_n X_n \phi_n \Big\Vert_{L^\infty(\R^d)}^2 
%\leq C \ln  (\ell)  \sum_{j=1}^\ell  j^{\gamma(d)}  \max_{n\in I(j)}|c_{n}|^{2}  \right] \geq  1 -\frac{1}{(\ell+2)^N}   .          \end{equation*}

It is straightforward that  if the coefficients $(c_n)_{n\geq 0}$ satisfy \eqref{condi3}, then \eqref{pz} implies
\begin{equation} \label{pz2}
 \P\left[ \Big\Vert \sum_{\lambda_n\leq \lambda } c_n X_n \phi_n \Big\Vert_{L^\infty(\R^d)}^2 
\leq C \ln (\lambda)   \sum_{\lambda_n\leq \lambda}   \lambda_n^{-\beta(d)} |c_n|^2 \right] \geq  1 -\frac{1}{\lambda^N},       
 \end{equation}
 with $\beta(1)=1/6$ and $\beta(d)=d/2$ for $d\geq 2.$
 The Salem-Zygmund inequality in the classical case of random trigonometric polynomials 
is similar to \eqref{pz2} but holds for $\beta(d)=0$.
Thus, \eqref{pz2} shows that randomness for Hermite series has a much more smoothing effect than for Fourier series.
Indeed, this is a consequence of a better behavior of the spectral function \eqref{kt} of $H$ in the space $L^\infty(\R^d)$.

Let us add that the proof of the classical Salem-Zygmund inequality \cite[Theorem 1, page 55]{JPK} uses in an essential way that the torus $\mathbb{T}$ is compact.
In our setting, the non-compactness of $\R^d$ is counterbalanced by the localization of \eqref{rand-part} in any subset or $\R^d$ which contains strictly the ball $B(0,\sqrt{\lambda})$ (here we will choose the closed ball $\overline{B}(0,\lambda)$ which is much bigger than 
$B(0,\sqrt{\lambda})$). \ligne

Our next result gives a sufficient condition to get almost surely continuity as in Theorem \ref{pazy}.

 \begin{theo}\ph\label{propPZ}
Let $\gamma(d)$ be defined by \eqref{kt2}, and  let  $(c_n)_{n\in \N}$ be such that 
\begin{equation}\label{cond}
\exists\, \alpha>1, \quad \sum_{j= 1}^{+\infty}j^{\gamma(d)}(\ln j)^{\alpha}\max_{n\in I(j)}|c_{n}|^{2}<+\infty.
\end{equation} 
Assume that $(X_n)_{n\geq 0}$ is an i.i.d. family of symmetric r.v. such that \eqref{defi-subno} holds.  
Denote by $$u^\omega_{\lambda} = \sum_{\lambda_n\leq \lambda}c_nX_{n}(\omega)\varphi_n.$$
Then  $\di u^\omega_{\lambda}\longrightarrow  u^\omega$  in $L^\infty(\R^d)$ almost surely when $\lambda\longrightarrow +\infty$.

%\begin{enumerate}[$i)$]
%\item For almost all $\om \in \Omega$, $u^{\om}\in  \H^{-\frac{d}{2}}(\R^d)$
%\item 
In particular  for almost all $\om \in \Omega$, $u^{\om}$ is a bounded continuous  function on $\R^d$.
%\end{enumerate}

\end{theo}

%Condition \eqref{cond} means that $u$ belongs to a slightly smaller space than $\mathcal{Z}^{s}_{\phi}(\R^{d})$ with $s=\gamma(d)-d+1$.

In the particular case where  $(c_n)_{n\geq 0}$ are such that \eqref{condi3} holds, then the assumption \eqref{cond} becomes
\begin{equation*}
\exists\, \alpha>1, \quad \sum_{n= 0}^{+\infty}\lambda_n^{-\beta(d)}(\ln \lambda_n)^{\alpha}|c_{n}|^{2}<+\infty,
\end{equation*} 
with $\beta(1)=1/6$ and $ \beta(d)=d/2$ for $d\geq 2.$ This shows that for $d\geq 2$,  $u$ is in a slightly smaller space, denoted by $\H^{-d/2+}(\R^d)$ (with a log correction), than $\H^{-d/2}(\R^d)$. In other words, under condition \eqref{condi3} almost all series\;$u^{\om}$ in the very irregular distribution space $\H^{-d/2+}(\R^d)$  is actually a continuous function on $\R^{d}$.  %Moreover, we stress that condition \eqref{cond} only depends on the coefficients in the decomposition, but not on the estimates of the elements of the basis $(\phi_{n})_{n\geq 1}$.

It is interesting to notice that if we forget the logarithmic term in 
in the assumption \eqref{cond}, we find exactly the assumption \eqref{condi5} of Theorem \ref{proplp} as $p$ tends to infinity although methods of proofs are different.

The symmetry assumption of the r.v. is only needed for the convergence of the partial sums, but the continuity result holds without this assumption.

We shall  give two   different proofs of Theorem~\ref{propPZ}: one is  an application of the Salem-Zygmund inequality (Theorem~\ref{salem-zy}), and the other   relies on an entropy criterion (see Section \ref{sect6}). \ligne

%%%%%%%%%%%%%%%%%%%%%%%%%%%
%%%%%%%%%%%%%%%%%%%%%%%%%%
From the Salem-Zygmund inequality  we  can  also  get   a sufficient    condition so that 
$u^\omega(x)$  satisfies a   global H\"older  continuity  condition.
Recall  the  definition   of the modulus  of  continuity   of $ u: \R^d \rightarrow \C$, 
$$
m_{u}(h) = \sup_{\substack{\vert x-y\vert\leq h \\ x,y \in\R^d}}\vert u(x) - u(y)\vert,\;\; h>0.
$$
\begin{theo}\ph\label{hold}
Let  $(c_n)_{n\geq 0}$   such that there  exists $C>0$ such that 
\begin{equation}\label{condh}
  \sum_{k=2^j}^{2^{j+1}} \,\max_{n\in I(k)} |c_{n}|^{2}  \leq C2^{(-\gamma(d)-\mu) j}j^{2\nu},\;\; \forall j\geq 0, 
\end{equation} 
with $C>0$, ($\nu \in \R$ and  $0< \mu \leq 1$) or  ($\nu <-1$ and $\mu = 0$).
Assume that $(X_n)_{n\geq 0}$ is an i.i.d. family of r.v. such that~\eqref{defi-subno} holds. 
Then   we have, almost   surely  in $\omega$,
\begin{equation*}
m_{u^\omega}(h) = \mathcal {O}(h^\mu\vert\ln h\vert^\theta)
\end{equation*}
where
 \begin{itemize}
 \item[$\bullet$]
$\theta = \frac{1}{2} +\nu$  if $0<\mu< 1$
\item[$\bullet$] $\theta = 1+\nu$  if  $\mu =0$
\item[$\bullet$]
if $\mu = 1$ then
$$
\left\lbrace
\begin{array}{l}
\theta = 1+\nu,\; {\rm if}\; \nu \geq -\frac{1}{2}\\
\theta = \frac{1}{2}\; \; {\rm if}\;  -1\leq \nu \leq-\frac{1}{2}\\
u^\omega\; {\rm is\;  a.e \;  differentiable \; if}\; \nu < -1.
\end{array}
\right.
$$
\end{itemize}
\end{theo}

In particular, if   $(c_n)_{n\geq 0}$  is a sequence  which satisfies
\eqref{condi3} and  such that there  exists $C>0$ such that 
\begin{equation*} 
 \sum_{n \,:\, 2^j\leq \lambda_{n} < 2^{j+1}} |c_{n}|^{2}  \leq C2^{(\beta(d)-\mu) j}j^{2\nu},\;\; \forall j\geq 0, 
\end{equation*} 
then \eqref{condh} is satisfied.

\begin{rema}
With a slight modification of the proof of Theorem \ref{hold} we can get the following extension of Theorem \ref{propPZ}. If
\begin{equation*} 
 \sum_{j= 1}^{+\infty}j^{\gamma(d)+\mu}(\ln j)^{\a}\max_{n\in I(j)}|c_{n}|^{2}<+\infty,
\end{equation*} 
then  almost   surely  in $\omega$,
\begin{equation*}
m_{u^\omega}(h) = \mathcal {O}(h^\mu\vert\ln h\vert^\theta)
\end{equation*}
where
 \begin{itemize}
 \item[$\bullet$]
$\theta = \frac{1}{2} -\a+\eps$ for all $\eps>0$  if $0<\mu< 1$
\item[$\bullet$] $\theta = 1-\a+\eps$ for all $\eps>0$  if  $\mu =0$.
\end{itemize}

\end{rema}
 \subsection{Notations and plan of the paper} 
 
 \begin{enonce*}{Notations}
 In this paper $c,C>0$ denote constants the value of which may change
from line to line. These constants will always be universal, or uniformly bounded with respect to the other parameters. We write $a\lesssim b$ if $a\leq C b$ and $a\approx b$ if $ca\leq  b\leq C a$, for some $c,C>0$. 
\end{enonce*}

The rest of the paper is organised as follows. In Section \ref{Sect3} we prove the integrability results on the Hermite series. Section \ref{Sect4} is devoted to the proof of the regularity results (Theorems~\ref{salem-zy},~\ref{propPZ} and~\ref{hold}). 
In Section  \ref{annex-proba} we review some results we need about the convergence of random series in Banach spaces.    Finally, in Section \ref{sect6} we give an alternative proof of Theorem \ref{propPZ}.

                   %%%%%%%%%%%%%%%%%
       %%%%%%%%%%%%%
       %%%%%%

\section{Proof of the integrability results}\label{Sect3}

\subsection{Proof of Theorem~\ref{proplp}}

We see Theorem~\ref{proplp} as a consequence of Proposition~\ref{equi-cv}, and it is equivalent to check 
%
%From the definition of $\mathcal{W}^{s+\alpha,p}(\R^d)$ (see~\eqref{wdp}),
%\begin{equation*}
%H^{\frac{s+\alpha}{2}} \sum X_n(\omega) c_n \phi_n = \sum X_n(\omega) \lambda_n^{\frac{s+\alpha}{2}} c_n \phi_n %\end{equation*}
% converges almost surely in  $L^p(\R^d)$. That is equivalent to  
\begin{equation}\label{plp-key}
\sum_{n\geq 0}  |c_n|^2 |\phi_n|^2        \in L^{\frac{p}{2}}(\R^d).
\end{equation}
%To check \eqref{plp-key}, we bound
%\begin{equation*}
%0\leq \sum_{n\geq 0} \lambda_n^{s+\alpha} |c_n|^2 |\phi_n|^2 
%\lesssim \sum_{j\geq 1} j^{s+\alpha} \max\limits_{n\in I(j)} |c_n|^2 \sum_{n\in I(j)} |\phi_n|^2 .
%\end{equation*}
By interpolating the $L^{\frac{p}{2}}$ norm and using that $I(j)\sim C_j j^{d-1}$, we get 
\begin{equation*}
\Big\Vert \sum_{n\in I(j)} |\phi_n|^2  \Big\Vert_{L^{\frac{p}{2} }(\R^d)}
\leq \Big\Vert \sum_{n\in I(j)} |\phi_n|^2  \Big\Vert_{L^{1 }(\R^d)}^{\frac{2}{p} }
 \Big\Vert \sum_{n\in I(j)} |\phi_n|^2 \Big\Vert_{L^{\infty} (\R^d)}^{1-\frac{2}{p}}
\lesssim j^{\frac{2}{p}(d-1)+\left(1-\frac{2}{p} \right) \gamma(d)}=j^{\gamma(d)+2\beta(d)/p},
\end{equation*}
as a consequence
\begin{equation*}
\Big\Vert\sum_{n\geq 0}  |c_n|^2 |\phi_n|^2 \Big\Vert_{L^{\frac{p}{2}}(\R^d)} \leq \sum_{j\geq 1}   \big(\max\limits_{n\in I(j)} |c_n|^2 \big)\Big\Vert \sum_{n\in I(j)} |\phi_n|^2 \Big\Vert_{L^{\frac{p}{2}}(\R^d)} \lesssim \sum_{j\geq 1} j^{\gamma(d)+2\beta(d)/p} \max\limits_{n\in I(j)} |c_n|^2 <+\infty .
\end{equation*}
We get \eqref{plp-key} and hence conclude.

\subsection{Proof of Proposition~\ref{estrad}}

 Let us first recall some results concerning the Laguerre polynomials, see \cite[Chapter 1]{Thanga} or \cite{Szego}. For $\a>-1$, the Laguerre polynomial $L^{(\a)}_n$ of type $\a$ and degree~$n\geq 0$ is defined by 
 \begin{equation}\label{defp}
 \e^{-r}r^{\a}L^{(\a)}_n(r)=\frac1{n !}\frac{d^n}{dr^n}\big( \e^{-r}r^{n+\a}  \big),\quad x\in \R.
 \end{equation}
We need the following identities (see \cite[lines (5.1.1),(5.1.3),(5.1.7) and (5.1.14)]{Szego}): 
 \begin{equation}\label{ortho}
 \int_0^{+\infty}L^{(\a)}_n(r)L^{(\a)}_m(r) \e^{-r}r^{\a}dr=\frac{\Gamma(n+\a+1) }{\Gamma(n+1)} \delta_{nm},
 \end{equation}
 \begin{equation}\label{val}
 L^{(\a)}_n(0)=\frac{\Gamma(n+\a+1) }{\Gamma(n+1)\Gamma(\a+1)}\approx n^{\alpha},
 \end{equation}
 \begin{equation}\label{deri}
 \forall n\geq 1, \qquad \frac{d}{dr}L^{(\a)}_n(r)=-L^{(\a+1)}_{n-1}(r),
 \end{equation}
 \beq\label{eqdiff}
 r \frac{d^2 L_n^{(\alpha)} }{dr^2} + (\alpha+1-r) \frac{d L_n^{(\alpha)}}{dr}  + nL_{n}^{(\alpha)} = 0.
  \eeq
We will need the following lemma 
\begin{lemm}\ph\label{szeg}
For any $\alpha>-1$ there are $c,\epsilon>0$ such that 
\begin{equation*}
\forall n\geq 1, \quad \forall r\in \Big(0,\frac{\epsilon^2}{n}\Big), \quad |L_n^{(\alpha)}(r)|\geq c n^{\alpha}.
\end{equation*}
\end{lemm}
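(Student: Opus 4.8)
The plan is to expand $L_n^{(\alpha)}$ in its explicit power series and show that, for $r$ on the scale $1/n$, the whole series is controlled by its constant term, which by \eqref{val} is of size $n^\alpha$. Concretely, the Rodrigues formula \eqref{defp} yields the standard expansion
\begin{equation*}
L_n^{(\alpha)}(r)=\sum_{k=0}^{n}(-1)^k\,\frac{\Gamma(n+\alpha+1)}{\Gamma(n-k+1)\,\Gamma(k+\alpha+1)}\,\frac{r^k}{k!}.
\end{equation*}
Writing $a_k=a_k(n,r)$ for the modulus of the $k$-th term, so that $L_n^{(\alpha)}(r)=\sum_{k=0}^n(-1)^k a_k$ with $a_k\geq 0$ and $a_0=L_n^{(\alpha)}(0)\approx n^\alpha$ by \eqref{val}, the whole argument reduces to showing that the $a_k$ decay geometrically once $r<\epsilon^2/n$.

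To this end I would compute the ratio of consecutive terms,
\begin{equation*}
\frac{a_{k+1}}{a_k}=\frac{(n-k)\,r}{(k+1)(k+\alpha+1)}.
\end{equation*}
Since $\alpha>-1$, both factors $k+1$ and $k+\alpha+1$ are positive and increasing in $k\geq 0$, so the quantity $(k+1)(k+\alpha+1)$ is minimal at $k=0$, equal to $\alpha+1>0$. Using $n-k\leq n$ and $r<\epsilon^2/n$, this gives the uniform bound $a_{k+1}/a_k\leq \epsilon^2/(\alpha+1)$ for all $k\geq 0$. Choosing $\epsilon$ depending only on $\alpha$ so that $\epsilon^2\leq (\alpha+1)/2$, one obtains $a_{k+1}\leq \tfrac12 a_k$, hence $a_k\leq 2^{-k}a_0$ for every $k$; in particular the sequence $(a_k)_k$ is strictly decreasing.

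Finally, because the signs alternate and $(a_k)$ is decreasing, the standard estimate for finite alternating sums (grouping the terms as $(a_0-a_1)+(a_2-a_3)+\cdots$, each pair being nonnegative and any trailing term being nonnegative) gives
\begin{equation*}
L_n^{(\alpha)}(r)=\sum_{k=0}^n(-1)^k a_k\;\geq\; a_0-a_1\;\geq\;\tfrac12\,a_0\;\geq\; c\,n^\alpha,
\end{equation*}
where the last inequality uses $a_0=L_n^{(\alpha)}(0)\approx n^\alpha$ from \eqref{val}. Since the right-hand side is positive we also have $|L_n^{(\alpha)}(r)|=L_n^{(\alpha)}(r)\geq c\,n^\alpha$, which is the claim. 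This elementary route avoids any asymptotics and uses only the explicit coefficients and the value \eqref{val}. The only point requiring a little care — and the closest thing to an obstacle — is making the geometric-decay threshold uniform in $k$ and in $n$ while keeping $\epsilon$ independent of $n$: this is exactly why one bounds $(k+1)(k+\alpha+1)$ below by its value at $k=0$, and why the constant $\alpha+1$ (small when $\alpha$ is close to $-1$) is harmless once it is absorbed into the choice of $\epsilon=\epsilon(\alpha)$.
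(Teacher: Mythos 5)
Your proof is correct, and it takes a genuinely different route from the paper. The paper argues via the Sonin-type monotone function $r\mapsto n L_{n-1}^{(\alpha+1)}(r)^2+r\big(\tfrac{d}{dr}L_{n-1}^{(\alpha+1)}(r)\big)^2$, whose derivative is computed from the differential equation \eqref{eqdiff}; this yields $|L_{n-1}^{(\alpha+1)}(r)|\leq |L_{n-1}^{(\alpha+1)}(0)|\lesssim n^{\alpha+1}$ on the fixed interval $[0,\alpha+\tfrac32]$, and then the derivative identity \eqref{deri} is integrated to give $|L_n^{(\alpha)}(r)-L_n^{(\alpha)}(0)|\lesssim r\,n^{\alpha+1}$, which is $\lesssim\epsilon^2 n^{\alpha}$ for $r<\epsilon^2/n$; the conclusion then follows from \eqref{val} exactly as in your last step. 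You instead work directly with the explicit hypergeometric expansion, observe that for $r<\epsilon^2/n$ the term ratio $\frac{(n-k)r}{(k+1)(k+\alpha+1)}$ is uniformly at most $\epsilon^2/(\alpha+1)$, and invoke the alternating-sum lower bound $\sum_k(-1)^k a_k\geq a_0-a_1\geq \tfrac12 a_0$. Your argument is more self-contained (it needs neither \eqref{eqdiff} nor \eqref{deri}, only \eqref{val} and the standard coefficient formula), and the uniformity in $n$ and $k$ is handled correctly by minimizing $(k+1)(k+\alpha+1)$ at $k=0$. What the paper's route buys in exchange is the Lipschitz-type estimate $|L_n^{(\alpha)}(r)-L_n^{(\alpha)}(0)|\lesssim r\,n^{\alpha+1}$ valid on the whole fixed interval $[0,\alpha+\tfrac32]$, i.e.\ control of the polynomial near its value at $0$ for $r$ up to a constant rather than only for $r\lesssim 1/n$; for the lemma as stated this extra information is not needed, and both proofs are equally valid.
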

\begin{proof}
As in \cite[p. 176]{Szego}, we introduce the function
\begin{equation*}
r\mapsto n  L_{n-1}^{(\alpha+1)}(r)^2+r \left(\frac{d}{dr} L_{n-1}^{(\alpha+1)}(r) \right)^2
\end{equation*}
whose derivative is $2\big( r-\frac{3}{2}-\alpha  \big)\big(\frac{d}{dr} L_{n-1}^{(\alpha+1) }\big)^2$ thanks to \eqref{eqdiff}. Thus, one has
\begin{equation*}
\forall r\in \Big[0,\alpha+\frac{3}{2} \Big],\quad  |L_{n-1}^{(\alpha+1)}(r)|\leq |L_{n-1}^{(\alpha+1)}(0)| \lesssim n^{\alpha+1}.
\end{equation*}
By using \eqref{deri}, we have 
\begin{equation*}
\forall r\in \Big[0,\alpha+\frac{3}{2} \Big],\quad | L_{n}^{(\alpha)}(r)-L_n^{(\alpha)}(0)| \lesssim r n^{\alpha +1}.
\end{equation*}
We can conclude by using a new time \eqref{val}.
\end{proof}
  
Because of the orthogonality condition \eqref{ortho}, it is usual to introduce the Laguerre functions normalized in $L^2(0,+\infty)$ 
\begin{equation}\label{defi-Ln}
\mathcal{L}_n^{\alpha}(r):=\frac{\sqrt{n!}}{\sqrt{\Gamma(n+\alpha+1)}} L_n^{(\alpha)}(r)\e^{-r/2} r^{\alpha/2} ,\quad    \frac{\sqrt{n!}}{\sqrt{\Gamma(n+\alpha+1)}}\approx n^{-\alpha/2}.    
\end{equation}

Those functions satisfy the following uniform estimates (see \cite{erdelyi,askey1965,mucken}).

\begin{prop}\ph\label{erdelyi}
For any $\alpha>-1$, there are $C=C(\alpha)$ and $\gamma=\gamma(\alpha)>0$ such that, by denoting $\nu=4n+2\alpha+2$, one has 
\begin{equation*}|\mathcal{L}_n^{(\alpha)}(r)| \leq \left\{ \begin{array}{lcl}
C(r\nu)^{\alpha/2} & \mbox{if} & 0\leq r \leq \frac{1}{\nu} \\[2mm]
C(r\nu)^{-1/4} & \mbox{if} & \frac{1}{\nu} \leq r \leq \frac{\nu}{2} \\[2mm]
C\nu^{-1/4}  (\nu^{1/3}+|\nu-r|)^{-1/4} & \mbox{if} & \frac{\nu}{2} \leq r \leq \frac{3\nu}{2} \\[2mm]
Ce^{-\gamma r} & \mbox{if} &  \frac{3\nu}{2}\leq r. 
\end{array} \right. \end{equation*}
\end{prop}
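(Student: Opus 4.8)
The plan is to reduce Proposition~\ref{erdelyi} to the classical uniform asymptotic theory of second-order linear ODEs with a large parameter and a single dominant turning point, following the method of Erdelyi. First I would put the Laguerre equation \eqref{eqdiff} into Liouville normal form. Setting $v(r):=r^{1/2}\,\mathcal{L}_n^{(\alpha)}(r)/c_n$ with $c_n=\sqrt{n!/\Gamma(n+\alpha+1)}$ (so that, by \eqref{defi-Ln}, $v=r^{(\alpha+1)/2}e^{-r/2}L_n^{(\alpha)}$), a direct computation from \eqref{eqdiff} shows that $v$ solves
\[
v''(r)+Q(r)\,v(r)=0,\qquad Q(r)=-\frac14+\frac{\nu}{4r}+\frac{1-\alpha^2}{4r^2},
\]
where $\nu=4n+2\alpha+2$. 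The potential $Q$ has a dominant turning point near $r=\nu$ and an inner singularity at $r=0$ governed by the Bessel term $\tfrac{1-\alpha^2}{4r^2}$. The four regimes of the Proposition are exactly the four zones attached to this picture.

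Next I would rescale $r=\nu x$ to exhibit $\nu^2$ as the large parameter. The equation becomes $v_{xx}=\big(\nu^2 f(x)+g(x)\big)v$ with $f(x)=\tfrac{x-1}{4x}$ vanishing simply at the turning point $x=1$ and $g(x)=-\tfrac{1-\alpha^2}{4x^2}$, which is Olver's canonical form for uniform Airy asymptotics. The four zones are the inner region $x=\mathcal{O}(\nu^{-1})$ (i.e.\ $0\le r\le 1/\nu$), the oscillatory bulk ($1/\nu\le r\le \nu/2$), the turning-point layer around $x=1$ ($\nu/2\le r\le 3\nu/2$), and the exponentially damped region $x>1$ ($r\ge 3\nu/2$), where $f>0$ and the solution recessive at $+\infty$ decays.

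Then, zone by zone, I would invoke the standard uniform error bounds, always converting back through $\mathcal{L}_n^{(\alpha)}=c_n r^{-1/2}v$. In the oscillatory bulk the Liouville--Green approximation gives an envelope $|v|\lesssim c_n^{-1}|f(x)|^{-1/4}$; using $|f(x)|\approx 1/(4x)$ for $x$ bounded away from $1$ reproduces the $r$-dependence $(r\nu)^{-1/4}$, while the prefactor $c_n^{-1}\approx n^{\alpha/2}$ is exactly what is fixed by the $L^2$-normalization built into $c_n$ (equivalently, by matching to the origin, where \eqref{val} forces the inner amplitude to be $\mathcal{O}(1)$ at $r\approx 1/\nu$, consistently with both the bulk and Bessel bounds there). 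Through the turning point the Liouville--Green/Airy comparison (Olver, \emph{Asymptotics and Special Functions}, Ch.~11) replaces $|f|^{-1/4}$ by the Airy envelope, yielding $|\mathcal{L}_n^{(\alpha)}(r)|\lesssim \nu^{-1/4}\big(\nu^{1/3}+|\nu-r|\big)^{-1/4}$. Near $x=0$ the dominant balance is Bessel's equation of order $\alpha$; comparison with $\sqrt{z}\,J_\alpha(z)$ for $z=\sqrt{\nu r}$ together with $J_\alpha(z)\lesssim z^\alpha$ for small $z$ gives $|\mathcal{L}_n^{(\alpha)}(r)|\lesssim (r\nu)^{\alpha/2}$ on $0\le r\le 1/\nu$. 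Finally, for $r\ge 3\nu/2$ one has $-Q(r)\ge c>0$, so a Riccati/subsolution comparison (or transporting the recessiveness of $\mathrm{Ai}$ at $+\infty$ back through the Langer variable) yields the exponential bound $Ce^{-\gamma r}$.

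The main obstacle is obtaining all error terms \emph{uniformly in $n$} (equivalently in $\nu$) across the turning point: the naive Liouville--Green approximation breaks down at $x=1$, so one genuinely needs the Airy-uniform version with Olver's explicit remainder estimates, and one must separately check that the inner Bessel zone glues correctly to the oscillatory bulk with absolute constants as $\nu\to+\infty$ for fixed $\alpha$. Tracking these uniform constants, rather than producing the formal asymptotics, is the delicate point of the argument.
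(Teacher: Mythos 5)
Your proposal cannot be checked against a proof in the paper, because the paper gives none: Proposition~\ref{erdelyi} is quoted as a known result with pointers to the literature (Erd\'elyi, Askey--Wainger, Muckenhoupt), so what you have written is in effect a reconstruction of the argument behind those citations rather than an alternative to anything in the text. As a reconstruction it is sound and follows the classical route: your Liouville normal form is correct (it is the Whittaker equation with $\kappa=n+\tfrac{\alpha+1}{2}$, $\mu=\alpha/2$, which indeed yields $Q(r)=-\tfrac14+\tfrac{\nu}{4r}+\tfrac{1-\alpha^2}{4r^2}$ with $\nu=4n+2\alpha+2$), the rescaling $r=\nu x$ correctly exhibits Olver's canonical form with a simple turning point at $x=1$ and a double pole of $g$ at $x=0$, and the four zones of the Proposition match the Bessel inner region, Liouville--Green bulk, Airy transition layer, and monotonic tail, with the amplitude pinned by the $L^2$-normalization (or by matching at the origin via \eqref{val}). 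What the paper's citation buys is precisely what you correctly identify as the delicate point: uniform-in-$\nu$ remainder bounds across the turning point and the gluing of the Bessel zone to the bulk, which Erd\'elyi's paper and Olver's error theory supply and which a self-contained write-up would have to track explicitly.

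One concrete caution on your fourth zone: the Riccati/subsolution comparison, if seeded at $r=3\nu/2$ with only the zone-three envelope $\nu^{-1/4}(\nu^{1/3}+|\nu-r|)^{-1/4}\approx \nu^{-1/2}$, cannot yield $Ce^{-\gamma r}$ with $\gamma>0$ independent of $n$, since the target bound at $r=3\nu/2$ is $e^{-3\gamma\nu/2}$, exponentially small in $\nu$, while the seed is only polynomially small. You must instead use the genuine exponential decay hidden in the Airy regime --- $\mathrm{Ai}$ evaluated at the Langer variable, of size $\approx \nu^{2/3}(x-1)$, gives a factor $e^{-c\nu(x-1)^{3/2}}$, hence $e^{-c'\nu}$ already at $r=3\nu/2$ --- and only then propagate outward by comparison. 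Your parenthetical alternative (``transporting the recessiveness of $\mathrm{Ai}$ at $+\infty$ back through the Langer variable'') is exactly the correct fix, so this is a repairable soft spot in the ordering of your argument rather than a wrong approach.
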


Now, denote by $\psi_n$ the $n$th $L^2(\R^d)$-normalized radial Hermite function for $d\geq 2$.
One can prove that $H\psi_{n}=(4n+d)\psi_{n}$ holds and that 
$\psi_n$ is proportional to $L^{(d/2-1)}_{n}(\vert x \vert^2)\e^{-\vert x\vert^2/2}$ (see for instance \cite[Corollary 3.4.1]{Thanga}).
By using the orthogonality of Laguerre functions $\mathcal{L}_n^{(\frac{d}{2}-1)}$, one easily gets
\begin{equation}\label{defi-psin}
\psi_n(x):=   c(d)\mathcal{L}_{n}^{\left(\frac{d}{2}-1\right)}(\vert x \vert^2) \vert x \vert^{-(\frac{d}{2}-1)}=c(d)\frac{\sqrt{n!}}{\sqrt{\Gamma(n+\frac{d}{2})}}   L_{n}^{( \frac{d}{2}-1)}(\vert x\vert^2)\e^{-|x|^2/2}   
\end{equation}
with $c(d):=\frac{\sqrt{2}}{\sqrt{\mbox{Vol}(S^{d-1})}}$ (see below \eqref{normLp} for $p=2$).

Let us estimate the $L^p(\R^d)$ norm of $\psi_n$ for $p\geq 2$ by using Proposition~\ref{erdelyi} with $\nu\sim n$ and $\alpha=\frac{d}{2}-1$.

The case $p=\infty$ is the easiest, and we get directly that $|\mathcal{L}_n^{(\alpha)}(r)|r^{-\frac{\alpha}{2}}\leq C \nu^{\frac{\alpha}{2}}$,
 in other terms $\Vert \psi_n \Vert_{L^\infty(\R^d)}\lesssim n^{\frac{d}{4}-\frac{1}{2}}$.
To get the lower bound, it is sufficient to combine Lemma~\ref{szeg} with \eqref{defi-psin} and the equivalent in \eqref{defi-Ln}.
 
We now consider  $p\in [2,+\infty)$. Then  we have 
\begin{equation}\label{normLp}
\begin{array}{rcl}
\Vert \psi_n \Vert_{L^p(\R^d)}^p & =& c(d)^p \mbox{Vol}(S^{d-1})\Int_{0}^{+\infty}  |\mathcal{L}_n^{(d/2-1)}(r^2)|^p r^{-p\left(\frac{d}{2}-1 \right)} r^{d-1} dr  \\[3mm]
& = &  c(d)^{p-2}\Int_{0}^{+\infty}  |\mathcal{L}_n^{(d/2-1)}(r)|^p r^{-\left(\frac{p}{2}-1\right)\left(\frac{d}{2}-1 \right)} dr .
\end{array}
\end{equation}
We begin by the following integrals: 
\begin{equation}\label{prem-int}\begin{array}{rcl}
\Int_{0}^{\frac{1}{\nu}}  |\mathcal{L}_n^{(d/2-1)}(r)|^p r^{-\left(\frac{p}{2}-1\right)\left(\frac{d}{2}-1 \right)} dr & \lesssim&  n^{\frac{p}{2}\left( \frac{d}{2}-1 \right)} \Int_{0}^{\frac{1}{\nu}} r^{\frac{d}{2}-1}dr 
 \lesssim n^{\frac{p}{2}\left( \frac{d}{2}-1 \right)-\frac{d}{2}}=n^{\frac{pd}{2}\left(\frac{1}{2}-\frac{1}{p}\right)-\frac{p}{2}}  \\[10pt]
\Int_{\frac{3\nu}{2}}^{+\infty}  |\mathcal{L}_n^{(d/2-1)}(r)|^p r^{-\left(\frac{p}{2}-1\right)\left(\frac{d}{2}-1 \right)} dr & \lesssim &           \Int_{\frac{3\nu}{2}}^{+\infty} e^{-\gamma r} r^{-\left(\frac{p}{2}-1\right)\left(\frac{d}{2}-1\right)}dr = \mathcal{O}(n^{-\infty}).
\end{array}  \end{equation}
To study the integrals over the others intervals given by Proposition~\ref{erdelyi}, we have to consider several subcases.

$\bullet$ If $p>\frac{2d}{d-1}$ holds, one has obviously  $\frac{1}{2}-\frac{1}{p}>\frac{1}{2d}$ and the comparison of different exponents of $n$ will rely on:
\begin{equation}\label{compa1}\frac{-d}{2}\left(\frac{1}{2}-\frac{1}{p} \right) <  \frac{d}{2}\left(\frac{1}{2}-\frac{1}{p} \right) -\frac{1}{2}  .\end{equation}
Notice that one also has
\begin{equation}\label{azqs}\frac{p}{4} +\left( \frac{p}{2}-1\right)\left( \frac{d}{2}-1\right)>
\frac{d}{2(d-1)}+\left(\frac{d}{d-1}-1 \right)\frac{d-2}{2}=1,\end{equation}
which implies that the following integral is of interest near $r=0$: 
\begin{multline} 
\Int_{\frac{1}{\nu}}^{\frac{\nu}{2}}  |\mathcal{L}_n^{(d/2-1)}(r)|^p r^{-\left(\frac{p}{2}-1\right)\left(\frac{d}{2}-1 \right)} dr  \lesssim  n^{-\frac{p}{4}} \Int_{\frac{1}{\nu}}^{\frac{\nu}{2}} r^{-\frac{p}{4} -\left( \frac{p}{2}-1\right)\left( \frac{d}{2}-1\right)} dr\lessim\\
 \lesssim  n^{-\frac{p}{4}+\frac{p}{4}+\left(\frac{p}{2}-1 \right)\left(\frac{d}{2}-1\right)-1}  \approx  n^{\frac{pd}{2}\left( \frac{1}{2}-\frac{1}{p} \right)-\frac{p}{2}}\label{int3} .
\end{multline}
The integral over $[\frac{\nu}{2},\frac{3\nu}{2}]$ is bounded by 
\begin{equation*}\begin{array}{rcl}
 \Int_{\frac{\nu}{2}}^{\frac{3\nu}{2}}  |\mathcal{L}_n^{(d/2-1)}(r)|^p r^{-\left(\frac{p}{2}-1\right)\left(\frac{d}{2}-1 \right)} dr  & \lesssim & n^{-\frac{p}{4}} \Int_{\frac{\nu}{2}}^{\frac{3\nu}{2}} \frac{dr}{(\sqrt[3]{\nu}+|\nu-r| )^{\frac{p}{4}} r^{\left(\frac{p}{2}-1 \right)\left(\frac{d}{2}-1 \right)} } \\[3mm]
& \lesssim &  n^{-\frac{p}{4}-\left(\frac{p}{2}-1 \right)\left(\frac{d}{2}-1  \right)   }  \Int_{\frac{\nu}{2}}^{\frac{3\nu}{2}} \frac{dr}{(\sqrt[3]{\nu}+|\nu-r| )^{\frac{p}{4}}} \\[3mm]
& \lesssim & n^{\frac{p}{4} +\frac{d}{2}-\frac{pd}{4}-1   }  \Int_{0}^{\frac{\nu}{2}} \frac{dr}{(\sqrt[3]{\nu}+r )^{\frac{p}{4}}} \\[3mm]
& \lesssim & n^{\frac{p}{4} +\frac{d}{2}-\frac{pd}{4}-1   }  \nu^{\frac{1}{3}-\frac{p}{12}}\Int_{0}^{\frac{1}{2} \nu^{\frac{2}{3}}} \frac{dr}{(1+r)^{\frac{p}{4}}} \\[3mm]
&\lesssim & 
n^{\frac{p}{6}+\frac{d}{2}-\frac{pd}{4}-\frac{2}{3}} \Int_{0}^{\frac{1}{2} \nu^{\frac{2}{3}}} \frac{dr}{(1+r)^{\frac{p}{4}}}.
 \end{array}\end{equation*}

We have to use now the following fact if $p>4$ holds (which is necessary for $d=2$): 
\begin{equation*} 
\frac{d}{2}\left(\frac{1}{2}-\frac{1}{p} \right) -\frac{1}{2}   - \left(
 \frac{1}{6}+\frac{d}{2p}-\frac{d}{4}-\frac{2}{3p}\right)= \frac{d}{2}-\frac{2}{3}-\left(d-\frac{2}{3} \right) \frac{1 }{p} > \frac{d}{4}-\frac{1}{2} \geq 0.  
\end{equation*}
That brings us to 
\begin{equation}\label{int2}
 \Int_{\frac{\nu}{2}}^{\frac{3\nu}{2}}  |\mathcal{L}_n^{(d/2-1)}(r)|^p r^{-\left(\frac{p}{2}-1\right)\left(\frac{d}{2}-1 \right)} dr \lesssim \left\{\begin{array}{cc} n^{\frac{p}{6}+\frac{d}{2}-\frac{pd}{4}-\frac{2}{3}}\leq n^{p\left[ \frac{d}{2}\left(\frac{1}{2}-\frac{1}{p} \right) -\frac{1}{2}  \right] } & \mbox{ if } p>4 \\
 n^{-\frac{d}{2}} \ln(n) \lesssim n^{\frac{d}{2}}=n^{-\frac{pd}{2}\left( \frac{1}{2}-\frac{1}{p}\right) } & \mbox{ if } p=4 \\
n^{\frac{p}{6}+\frac{d}{2}-\frac{pd}{4}-\frac{2}{3}+\frac{2}{3}\left(1-\frac{p}{4} \right)}= n^{-\frac{pd}{2}\left( \frac{1}{2}-\frac{1}{p}\right) } & \mbox{ if } p<4 . 
 \end{array}\right.
\end{equation}

Thanks to \eqref{compa1}, the comparison of exponents in \eqref{prem-int}, 
\eqref{int3} and \eqref{int2} gives $\|\psi_n\|_{L^p(\R^d)}\lesssim n^{\frac{d}{2}\left( \frac{1}{2}-\frac{1}{p}\right)-\frac{1}{2}}$.

$\bullet$ If $p<\frac{2d}{d-1}$ holds, one has $\frac{1}{2}-\frac{1}{p}<\frac{1}{2d}$ and the contrary of \eqref{compa1} and \eqref{azqs} hold: 
\begin{equation}\label{compa2}   \frac{d}{2}\left(\frac{1}{2}-\frac{1}{p} \right) -\frac{1}{2}  < \frac{-d}{2}\left(\frac{1}{2}-\frac{1}{p} \right)  \end{equation}
\begin{equation*}\frac{p}{4} +\left( \frac{p}{2}-1\right)\left( \frac{d}{2}-1\right)<1. \end{equation*}
Hence, the integral over $\left[\frac{1}{\nu},\frac{\nu}{2} \right]$ is of interest for  $r\gg 1$: 
\begin{multline}
 \Int_{\frac{1}{\nu}}^{\frac{\nu}{2}}  |\mathcal{L}_n^{(d/2-1)}(r)|^p r^{-\left(\frac{p}{2}-1\right)\left(\frac{d}{2}-1 \right)} dr   \lesssim   n^{-\frac{p}{4}}\Int_{\frac{1}{\nu}}^{\frac{\nu}{2}} r^{-\frac{p}{4}-\left( \frac{p}{2}-1\right)\left(\frac{d}{2}-1\right)} dr  \lesssim \\
  \lesssim  n^{-\frac{p}{4}+1-\frac{p}{4}-\left( \frac{p}{2}-1\right)\left(\frac{d}{2}-1\right)}  
  \lesssim  n^{-\frac{pd}{2}\left( \frac{1}{2}-\frac{1}{p} \right) }  . \label{int2b} \end{multline}

We deal with the integral over $\left[ \frac{\nu}{2},\frac{3\nu}{2} \right]$ by the same way with the help of \eqref{int2} and by noticing that $p<\frac{2d}{d-1}\leq 4$ holds. Hence we get 
\begin{equation}\label{int3b} \int_{\frac{\nu}{2}}^{\frac{3\nu}{2}}  |\mathcal{L}_n^{(d/2-1)}(r)|^p r^{\left(1-\frac{p}{2}\right)\left(\frac{d}{2}-1 \right)} dr  \lesssim 
n^{-\frac{pd}{2}\left( \frac{1}{p}-\frac{1}{2} \right) }. \end{equation}
Once again, we compare the exponents in \eqref{prem-int}, \eqref{int2b} and \eqref{int3b} with the help of \eqref{compa2} and we get 
$\|\psi_n\|_{L^p(\R^d)}\lesssim n^{-\frac{d}{2}\left( \frac{1}{2}-\frac{1}{p}\right)}$.

$\bullet$ If $p=\frac{2d}{d-1}$ holds, we follow the previous analysis and we see that $ \|\psi_n\|_{L^p}^p \lesssim n^{-\frac{p}{4}} \ln(n) $.

We have finished the proof of Proposition~\ref{estrad}.

\subsection{Proof of Theorem~\ref{theo24}}

We will use Proposition~\ref{equi-cv} and Proposition~\ref{estrad}.

We consider $p>\frac{d}{\alpha_\star(c)}$ and we write
\begin{equation*}
\begin{array}{rcl}
\displaystyle\int_{\R^d} \Big(\Sum_{n\geq 0} |c_n|^2 |\psi_n(x)|^2 \Big)^{\frac{p}{2}} dx & \geq & \sup\limits_{N\geq 1} \displaystyle\int_{|x|\leq \frac{\ep}{\sqrt{N}}}  \Big( \sum_{n=1}^N |c_n|^2 |\psi_n(x)|^2  \Big)^{\frac{p}{2}} dx
\\
& \geq &   C \sup\limits_{N\geq 0} \frac{\ep^d}{N^{\frac{d}{2}}}  \Big( \displaystyle \sum_{n=1}^N  |c_n|^2 n^{\frac{d}{2}-1} \Big)^{\frac{p}{2}}    \\
& \geq & +\infty.
\end{array}
\end{equation*}

We consider $p<\frac{d}{\alpha_\star(c)}$ and we write
\begin{eqnarray*}
\Big\Vert \displaystyle \sum_{n\geq 0} |c_n|^2 |\psi_n|^2 \Big\Vert_{L^{p/2}(\R^d)} \leq  \displaystyle   \displaystyle   \sum_{n\geq 0} |c_n|^2 \| \psi_n^2 \|_{L^{p/2}(\R^d)}   = \displaystyle   \sum_{n\geq 0} |c_n|^2 \| \psi_n \|_{L^{p}(\R^d)}^2.
\end{eqnarray*}
If $p$ belongs to $(2,\frac{2d}{d-1}]$, then $\|\psi_n\|_{L^p(\R^d)}$ is 
less than $n^{-\ep}$ for some $\ep>0$ (see Proposition~\ref{estrad}).
By using that $\Sum_{n\geq 0} c_n \psi_n$ belongs to $\bigcap\limits_{\ep>0} \mathcal{H}^{-\ep}(\R^d)$, it is clear that the series $\sum |c_n|^2 \|\psi_n\|_{L^p(\R^d)}^2$ converges.

If $p$ is greater than $\frac{2d}{d-1}$, 
we use first an Abel summation and then two times the inequality $\alpha_\star(c)<\frac{d}{p}$ to 
bound the sum of the series 
$\sum |c_n|^2 \| \psi_n \|_{L^{p}(\R^d)}^2$ by
\begin{multline*}
C|c_0|^2+\displaystyle\sum_{n\geq 1} |c_n|^2 n^{\frac{d}{2}-1}n^{-\frac{d}{p}} \leq \\
\begin{aligned}
 &\leq  C|c_0|^2+\Big(\lim\limits_{N\rightarrow +\infty} N^{-\frac{d}{p}} \displaystyle \sum_{n=1}^N |c_n|^2  n^{\frac{d}{2}-1} \Big) +\displaystyle\sum_{N\geq 1} \Big(\sum_{n=1}^N |c_n|^2 n^{\frac{d}{2}-1} \Big)
|N^{-\frac{d}{p}}-(N+1)^{-\frac{d}{p}}| \\[3mm]
 &  \lesssim C|c_0|^2+0  +\displaystyle\sum_{N\geq 1} \frac{1}{N^{1+\frac{d}{p}}}\Big(\sum_{n=1}^N |c_n|^2 n^{\frac{d}{2}-1} \Big) <+\infty .
\end{aligned}
\end{multline*}

\textbf{Remark.} If we define for any sequence $(c_n)_{n\geq 0}$
\begin{equation*}
\forall p> \frac{2d}{d-1} \qquad \|c\|_{d,p}:=|c_0|+\sup\limits_{N\geq 1} \frac{1}{N^{\frac{d}{2p}}} \left(\sum_{n=1}^N |c_n|^2 n^{\frac{d}{2}-1}\right)^{\frac{1}{2}},
\end{equation*}
then the previous proof shows that one has
\begin{equation*}
\forall \nu>0 \qquad    C \|c\|_{d,p}\leq \Bigg\Vert \sqrt{\sum_{n\geq 0} |c_n|^2 \psi_n^2}\Bigg\Vert_{L^p} \leq C(\nu) \|c\|_{d,p+\nu}.
\end{equation*}
It is not clear if one can find a more precise norm on the sequence $(c_n)_{n\geq 0}$ which is equivalent to $\left\Vert \sqrt{\sum_{n\geq 0} |c_n|^2 \psi_n^2}\right\Vert_{L^p} $.
Indeed, this is essentially equivalent to decide whether or not the almost sure convergence in $L^p(\R^d)$ holds if $p$ is the critical convergence exponent $\frac{d}{\alpha_\star(c)}$.

%\textcolor{red}{Vers les mesures de Gibbs ? Deja, l'inegalite de comparaison ci-dessus montre que pour $p>\frac{2d}{d-1}$, la condition $c\in \ell^2$ est inutile pour obtenir l'exposant critique de convergence. En particulier, on peut traiter $c=\frac{1}{\sqrt{n}}$.
%Qu'en est-il si $2<p<\frac{2d}{d-1}$ ? Et bien c'est pareil grace au point iii) du theorem \ref{theo24}, en effet la serie a majorer est 
%$$  \sum |c_n|^2 \|\psi_n\|_{L^p(\R^d)}^2\leq \sum \frac{1}{n} \times \frac{1}{n^{d\left( \frac{1}{2}-\frac{1}{p} \right)}} < +\infty          $$
%noter le leger gain de sommabilit\'e. L'exposant critique de convergence est donc $\frac{2d}{d-2}$.
%}
    
      \section{Proof of the regularity results}\label{Sect4} 

 \subsection{Proof of Theorem~\ref{salem-zy}} 

Let us begin by introducing the following notation: 
\begin{equation*}    \forall \lambda >0 \quad \mathcal{E}_H(\lambda):=\mbox{Span}\{\phi_j, \lambda_j\leq \lambda   \,    \}, \end{equation*}
and let us recall the following bound on the spectral function of $H$ (see \cite[Lemmas 3.1, 3.2 and 3.5]{PRT1}: there are constants $C,c>0$ such that for any $\lambda \geq 1$ and $x\in \R^d$ one has 
\begin{equation}\label{zop1}  \forall u \in \mathcal{E}_H(\lambda),\qquad  |u(x)|\leq C \lambda^{\frac{d}{4}} \exp \left(-c\frac{|x|^2}{2\lambda} \right) \|u\|_{L^2(\R^d)} ,        \end{equation}
%\begin{equation}\label{zop2}   \sum_{\lambda_n \leq \lambda} |\phi_n(x)|^2 \leq C \lambda^{\frac{d}{2}} \exp\left(-c\frac{|x|^2}{\lambda} \right)   ,\end{equation}

The first tool we need to prove Theorem~\ref{salem-zy} is a Bernstein inequality for the harmonic oscillator.
In the Hilbertian framework, it is easy to check that one has 
\begin{equation*} \forall \lambda\geq 1 \quad \forall u\in \mathcal{E}_H(\lambda)\quad    \|\partial_x u \|_{L^2(\R^d)}\leq C \sqrt{\lambda} \|u\|_{L^2(\R^d)}        .     \end{equation*}
We need a version of the previous inequality by replacing the space $L^2(\R^d)$ with $L^\infty(\R^d)$.

\begin{lemm}\ph\label{bernstein}
For any dimension $d\geq 1$, there are $s(d)\geq 0$ and $C=C(d)>0$ such that the following inequalities hold
\begin{equation}\label{bern} \forall \lambda \geq 1 \quad \forall u\in \mathcal{E}_H(\lambda)\quad   \|\nabla u \|_{L^\infty(\R^d)}\leq C \lambda^{s(d)} \|u\|_{L^\infty(\R^d)} .       \end{equation}
\end{lemm}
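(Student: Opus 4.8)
The plan is to combine three ingredients already at hand: the $L^2$ Bernstein inequality just recorded, the Gaussian pointwise bound \eqref{zop1}, and the ladder-operator structure of $H$, paying a polynomial-in-$\lambda$ price to pass from $L^2$ to $L^\infty$. First I would note that differentiation essentially preserves spectral localization. With the annihilation and creation operators $a_j=\tfrac{1}{\sqrt2}(x_j+\partial_{x_j})$ and $a_j^\ast=\tfrac{1}{\sqrt2}(x_j-\partial_{x_j})$, one has $H=2\sum_j a_j^\ast a_j+d$ and the commutation relations $[H,a_j]=-2a_j$, $[H,a_j^\ast]=2a_j^\ast$. These relations are intrinsic to $H$ and independent of the basis chosen inside each eigenspace; they show that $a_j$ sends the $\mu$-eigenspace into the $(\mu-2)$-eigenspace and $a_j^\ast$ into the $(\mu+2)$-eigenspace. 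Since $\partial_{x_j}=\tfrac{1}{\sqrt2}(a_j-a_j^\ast)$, it follows that $u\in\mathcal{E}_H(\lambda)$ implies $\partial_{x_j}u\in\mathcal{E}_H(\lambda+2)\subset\mathcal{E}_H(2\lambda)$ for $\lambda\geq2$. Applying \eqref{zop1} to $v=\partial_{x_j}u\in\mathcal{E}_H(2\lambda)$ and then the $L^2$ Bernstein inequality gives
\[
\|\partial_{x_j}u\|_{L^\infty(\R^d)}\leq C\lambda^{d/4}\|\partial_{x_j}u\|_{L^2(\R^d)}\leq C\lambda^{d/4+1/2}\|u\|_{L^2(\R^d)}.
\]

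The remaining, and genuinely delicate, point is that this estimate controls $\nabla u$ in $L^\infty$ only by $\|u\|_{L^2}$, whereas \eqref{bern} demands $\|u\|_{L^\infty}$ on the right. On the non-compact space $\R^d$ there is no general bound $\|u\|_{L^2}\lesssim\|u\|_{L^\infty}$, so here I would exploit the Gaussian concentration of elements of $\mathcal{E}_H(\lambda)$ encoded in \eqref{zop1}. Interpolating $\|u\|_{L^2}^2\leq\|u\|_{L^\infty}\|u\|_{L^1}$ and bounding the $L^1$ norm by integrating \eqref{zop1},
\[
\|u\|_{L^1(\R^d)}\leq C\lambda^{d/4}\|u\|_{L^2(\R^d)}\int_{\R^d}e^{-c|x|^2/(2\lambda)}\,dx\leq C\lambda^{3d/4}\|u\|_{L^2(\R^d)},
\]
yields after simplification $\|u\|_{L^2(\R^d)}\leq C\lambda^{3d/4}\|u\|_{L^\infty(\R^d)}$.

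Combining the two displays gives \eqref{bern} with the admissible exponent $s(d)=d+\tfrac12$; the precise value is immaterial for the statement. I expect the main obstacle to sit exactly in the second step, namely the honest reversal of the Bernstein-type estimate into an $L^\infty$-to-$L^\infty$ bound. This works only because the Gaussian factor in \eqref{zop1} confines the mass of $u$ to the ball $\{|x|\lesssim\sqrt\lambda\}$, making the norms $\|u\|_{L^1}$, $\|u\|_{L^2}$ and $\|u\|_{L^\infty}$ comparable up to polynomial losses in $\lambda$. The ladder-operator bookkeeping of the first step is routine by contrast, and the non-compactness of $\R^d$, which would be fatal without concentration, is harmless here.
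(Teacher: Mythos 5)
Your argument is correct, but it follows a genuinely different route from the paper's. The paper proves \eqref{bern} by the Sobolev embedding $\H^{s}(\R^d)\subset L^\infty(\R^d)$ for $s>d/2$: it writes $\|\partial_{x_\ell}u\|_{L^\infty}\lesssim\|u\|_{\H^{s+1}}$, expands $u$ in the Hermite basis, bounds each coefficient by $|\langle u,\phi_j\rangle|\leq\|u\|_{L^\infty}\|\phi_j\|_{L^1}$, and controls $\|\phi_j\|_{L^1}\lesssim\lambda_j^{(d+1)/4}$ by Cauchy--Schwarz against $\langle x\rangle^{-(d+1)/2}$; the crude count $\#\{j:\lambda_j\leq\lambda\}\lesssim\lambda^d$ then closes the estimate. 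You instead use the ladder operators to observe that $\partial_{x_j}$ maps $\mathcal{E}_H(\lambda)$ into $\mathcal{E}_H(\lambda+2)$, apply the spectral-function bound \eqref{zop1} together with the $L^2$ Bernstein inequality, and then reverse $\|u\|_{L^2}\lesssim\lambda^{3d/4}\|u\|_{L^\infty}$ via the interpolation $\|u\|_{L^2}^2\leq\|u\|_{L^1}\|u\|_{L^\infty}$ and the Gaussian confinement in \eqref{zop1}. All steps check out (the commutators $[H,a_j]=-2a_j$, $[H,a_j^\ast]=2a_j^\ast$ are basis-independent, so the spectral localization of $\partial_{x_j}u$ is legitimate; the only cosmetic point is that $\lambda+2\leq 2\lambda$ needs $\lambda\geq 2$, but replacing $2\lambda$ by $3\lambda$ handles $\lambda\in[1,2)$ at no cost). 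Your approach buys two things: an explicit and smaller exponent $s(d)=d+\tfrac12$, and, as a byproduct, exactly the reverse estimate $\|u\|_{L^2}\leq C\lambda^{\nu}\|u\|_{L^\infty}$ that the paper needs separately as \eqref{zop2} in the proof of Corollary~\ref{bern-coro} (where it is only asserted ``by the same argument''). The paper's route is more self-contained in that it never invokes the ladder structure and adapts verbatim to other confining potentials, which matters for the generalizations alluded to in the introduction; yours leans on \eqref{zop1}, which is already recorded immediately before the lemma, so nothing circular occurs.
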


\begin{proof}
For any real number $s>\frac{d}{2}$, the Sobolev embedding $\H^s(\R^d)\subset L^\infty(\R^d)$ allows us to write for each $\ell \in \{1,2,\dots,d\}$:
\begin{displaymath}
\begin{array}{rcl}
\|\partial_{x_\ell} u \|_{L^\infty(\R^d)} & \leq & C \| \partial_{x_\ell} u \|_{\H^s(\R^d)} \\[3mm]
& \leq & C \|u\|_{\H^{s+1}(\R^d)} \\[3mm]
& \leq & C \left( \Sum_{\lambda_j\leq \lambda} \lambda_j^{s+1} \left\vert \int_{\R^d} \phi_j(x) u(x)dx\right\vert^2 \right)^{\frac{1}{2}} \\[3mm]
& \leq & C \lambda^{s+1} \lambda^d  \|u\|_{L^\infty(\R^d)} \sup\limits_{\lambda_j\leq \lambda} \|\phi_j\|_{L^1(\R^d)}.
\end{array}
\end{displaymath}
In view to get a bound of $\|\phi_j\|_{L^1(\R^d)}$ we just use the Cauchy-Schwarz inequality:
\begin{displaymath} \begin{array}{rcl} \Int_{\R^d} |\phi_j(x)| dx &=& \Int_{\R^d} \langle x \rangle^{-\frac{d+1}{2}} \langle x \rangle^{\frac{d+1}{2}} |\phi_j(x)|dx  \\[3mm]
& \leq &  C \| \langle x \rangle^{\frac{d+1}{2}} \phi_j(x)       \|_{L^2(\R^d)}   \\[3mm]
& \leq & C \|\phi_j\|_{\H^{\frac{d+1}{2}}(\R^d)} \\[3mm]
& \leq & C \lambda_j^{\frac{d+1}{4}} .\end{array} \end{displaymath}
Thus \eqref{bern} is proved.
\end{proof}

It is not clear for us if the exponent $s(d)$ can be chosen to be independent of $d$ or if we can find the optimal value of $s(d)$.

\begin{coro}\ph\label{bern-coro}
If $\lambda$ is enough large, there is a constant $c>0$ which is independent of $\lambda$ such that for any $u\in \mathcal{E}_H(\lambda)$ there is $y\in \overline{B}(0,\lambda)$ for which we have 
\begin{enumerate}[(i) ] 
\item $\|u\|_{L^\infty(\overline{B}(0,\lambda))}=\|u\|_{L^\infty(\R^d)}$,
\item $ \forall x\in \overline{B}(y,c\lambda^{-s(d)} )\cap \overline{B}(0,\lambda), \quad |u(x)|\geq \frac{1}{2} \|u\|_{L^\infty(\overline{B}(0,\lambda))}          $,
\item by denoting $\mbox{Vol}$ the volume function, we have  \begin{equation*}\mbox{Vol}\left\{\overline{B}(y,c\lambda^{-s(d)} )\cap \overline{B}(0,\lambda)\right\}\geq \frac{1}{3} \mbox{Vol}\big\{ \overline{B}(y,c\lambda^{-s(d)} )\big\}.\end{equation*}
\end{enumerate}
\end{coro}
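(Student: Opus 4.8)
The plan is to deduce the three assertions of Corollary~\ref{bern-coro} successively, using the Bernstein inequality \eqref{bern} as the essential quantitative input together with the Gaussian localization \eqref{zop1}.

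First I would establish $(i)$, namely that the $L^\infty(\R^d)$ norm of $u\in \mathcal{E}_H(\lambda)$ is attained (up to the convention that we may take it arbitrarily close) inside the ball $\overline{B}(0,\lambda)$. This is where \eqref{zop1} comes in: outside a ball of radius comparable to $\sqrt\lambda$, the factor $\exp(-c|x|^2/(2\lambda))$ forces $|u(x)|$ to be exponentially small compared with $\lambda^{d/4}\|u\|_{L^2}$, and since $\overline{B}(0,\lambda)$ contains $B(0,\sqrt\lambda)$ for $\lambda \geq 1$ with enormous room to spare, the supremum over $\R^d$ cannot be approached outside $\overline{B}(0,\lambda)$ once $\lambda$ is large. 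So I would pick $y\in\overline{B}(0,\lambda)$ realizing (or nearly realizing, then using that $u$ is continuous and decays, so the sup is genuinely attained) the maximum; this gives both $(i)$ and the point $y$ used in $(ii)$ and $(iii)$.

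Next, for $(ii)$ I would use the gradient bound \eqref{bern}: for any $x$ one has $|u(x)-u(y)|\leq \|\nabla u\|_{L^\infty(\R^d)}\,|x-y| \leq C\lambda^{s(d)}\|u\|_{L^\infty(\R^d)}\,|x-y|$. Choosing the constant $c>0$ small enough that $C\lambda^{s(d)}\cdot c\lambda^{-s(d)} \leq \tfrac12$, i.e. $c\leq \tfrac{1}{2C}$, we get that for all $x$ with $|x-y|\leq c\lambda^{-s(d)}$ the value $|u(x)|$ stays within $\tfrac12\|u\|_{L^\infty}$ of the maximum $|u(y)|=\|u\|_{L^\infty(\overline{B}(0,\lambda))}$, hence $|u(x)|\geq \tfrac12\|u\|_{L^\infty(\overline{B}(0,\lambda))}$. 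Intersecting with $\overline{B}(0,\lambda)$ only restricts the set, so the inequality persists on $\overline{B}(y,c\lambda^{-s(d)})\cap \overline{B}(0,\lambda)$.

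Finally, $(iii)$ is the elementary geometric fact that a small ball centered at a point $y$ of the large ball $\overline{B}(0,\lambda)$ keeps at least a fixed fraction of its volume inside $\overline{B}(0,\lambda)$. Since $y\in \overline{B}(0,\lambda)$ and the small radius $c\lambda^{-s(d)}$ is far smaller than $\lambda$ for $\lambda$ large, the boundary of $\overline{B}(0,\lambda)$ looks essentially like a hyperplane through $y$ at this scale, so at worst about half of the small ball lies inside; the constant $\tfrac13$ is a safe lower bound. I expect the only genuinely delicate point to be making $(i)$ rigorous: one must argue carefully that the supremum over $\R^d$ is actually achieved at a point of $\overline{B}(0,\lambda)$ rather than only approached, which follows from the continuity of $u$ and the exponential decay in \eqref{zop1} guaranteeing that $|u|$ attains its maximum on a compact set contained well within $\overline{B}(0,\lambda)$. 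The constants $c$ and the threshold on $\lambda$ are then fixed once and for all to make $(ii)$ and $(iii)$ simultaneously valid.
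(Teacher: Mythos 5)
Your argument follows the paper's proof essentially step for step: point $(i)$ from the Gaussian decay \eqref{zop1}, point $(ii)$ from the Bernstein inequality \eqref{bern} with $c\leq \tfrac{1}{2C}$, and point $(iii)$ from the elementary geometric fact that a small ball centred at a point of $\overline{B}(0,\lambda)$ keeps at least (roughly) half of its volume inside. The one step you leave implicit in $(i)$ is the norm conversion: \eqref{zop1} bounds $|u(x)|$ by $C\lambda^{d/4}\exp(-c|x|^2/(2\lambda))\,\|u\|_{L^2(\R^d)}$, so to conclude $|u(x)|\leq \tfrac12 \|u\|_{L^\infty(\R^d)}$ for $|x|>\lambda$ you also need the reverse inequality $\|u\|_{L^2(\R^d)}\leq C\lambda^{\nu}\|u\|_{L^\infty(\R^d)}$ for $u\in\mathcal{E}_H(\lambda)$ (the paper's \eqref{zop2}, obtained by the same duality argument as in Lemma~\ref{bernstein}, or by integrating \eqref{zop1} itself), after which the factor $\exp(-c\lambda/2)$ absorbs the polynomial loss $\lambda^{d/4+\nu}$. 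With that one line added, your proof is complete and identical in substance to the paper's.
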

\begin{proof}
By the same argument we used in the proof of Lemma~\ref{bernstein}, we claim that there is a constant $\nu>0$, independent of $\lambda$, such that 
\begin{equation}\label{zop2}   \forall u\in \mathcal{E}_H(\lambda)\quad \|u\|_{L^2(\R^{d})}\leq C \lambda^\nu \|u\|_{L^\infty(\R^d)}   .   \end{equation}
By combining \eqref{zop1} and \eqref{zop2}, we understand that if $\lambda$ is enough large and if $|x|>\lambda$ holds then we have 
\begin{equation*} |u(x)| \leq C \lambda^{\frac{d}{4}+\nu} \exp \big(-\frac{c\lambda}{2} \big)  \|u\|_{L^\infty(\R^{d})} \leq \frac{1}{2} \|u\|_{L^\infty(\R^d)}  .\end{equation*}
That proves Point $(i)$.
Let us check Point $(ii)$.
By a compactness argument, there is $y\in \overline{B}(0,\lambda)$ which maximizes $u$ on the whole space $\R^d$.
For any $x\in \overline{B}(0,\lambda)$, Lemma~\ref{bernstein} gives us
\begin{equation*} |u(x)-u(y)| \leq C |x-y| \lambda^{s(d)} \|u\|_{L^\infty(\R^d)}       .\end{equation*}
If $|x-y|< \frac{C}{2} \lambda^{-s(d)}$ holds then $|u(x)|\geq \frac{1}{2} \|u\|_{L^\infty(\R^d)}$.

Point $(iii)$ is a consequence of a geometric fact. Indeed, it is quite clear that we have 
\begin{equation*}  \liminf\limits_{R\rightarrow 0} \inf\limits_{z\in \overline{B}(0,1)} \frac{\mbox{Vol}\big\{ \overline{B}(z,R)\cap \overline{B}(0,1)        \big\}  }{\mbox{Vol}\big\{ \overline{B}(z,R)\big\} }   =\frac{1}{2}       .\end{equation*}
Consequently, if $\lambda$ is enough large then Point $(iii)$ holds.
\end{proof}

We can prove Theorem~\ref{salem-zy} by following \cite[Theorem 1, page 55]{JPK}.
Our preliminaries allow us to deal with the non-compactness of $\R^d$.
We define the random maximum 
\begin{displaymath}
M_\lambda^\omega:=  \|u_\lambda^\omega(x)\|_{L_x^\infty(\R^d)}=\|u_\lambda^\omega(x)\|_{L_x^\infty(B(0,\lambda))}.
\end{displaymath}
We apply Lemma~\ref{bern-coro} to the random function $u_\lambda^\omega \in \mathcal{E}_H(\lambda)$.
If $x$ belongs to the random set $A_\lambda^\omega:=\overline{B}(y^\omega,c\lambda^{-s(d)})\cap \overline{B}(0,\lambda)$ then we have 
\begin{equation*}   u_\lambda^\omega(x) \geq \frac{1}{2} M_\lambda^\omega \quad \mbox{or} \quad -u_\lambda^\omega(x) \geq \frac{1}{2} M_\lambda^\omega                   .\end{equation*}
Thus, it comes for any $r>0$ 
\begin{displaymath}   \E\Big[    \exp\big( \frac{1}{2} r M_\lambda^\omega  \big)     \Big] 
\leq \E\Big[   \frac{1}{\mbox{Vol}(A_\lambda^\omega )}  \int_{A_\lambda^\omega} \exp\left(r u_\lambda^\omega(x) \right)+     \exp\left(-r u_\lambda^\omega(x) \right) dx        \Big]  .
    \end{displaymath}
From Point $(iii)$ of Lemma~\ref{bern-coro}, we get 
\begin{displaymath}   \begin{array}{rcl} \E\Big[    \exp\left( \frac{1}{2} r M_\lambda^\omega  \right)     \Big] 
& \leq & C \lambda^{d s(d)} \E\Big[ \Int_{A_\lambda^\omega} \exp\left(r u_\lambda^\omega(x) \right)+     \exp\left(-r u_\lambda^\omega(x) \right) dx        \Big]  \\[4mm]
& \leq & C \lambda^{d s(d)} \Int_{\overline{B}(0,\lambda)} \E \Big[ \exp\left(r u_\lambda^\omega(x) \right)+     \exp\left(-r u_\lambda^\omega(x) \right)\Big] dx .\end{array}     \end{displaymath}

By coming back to the definition \eqref{defi-rand} of $u_\lambda^\omega$, we can use the independence of the random variables\;$X_n$: 
\begin{displaymath} \E \Big[\exp\big(r u_\lambda^\omega (x) \big)\Big]  =
\prod_{\lambda_n\leq \lambda} \E\Big[ \exp \big(  r c_n X_n(\omega) \phi_n(x)     \big)      \Big]  .\end{displaymath}
Now we use \eqref{defi-subno} and \eqref{kt} to get 
\begin{eqnarray}
\E \Big[\exp\big(r u_\lambda^\omega (x) \big)\Big] & \leq &
\exp \Big(Cr^2\Sum_{\lambda_n\leq \lambda} |c_n|^2 |\phi_n(x)|^2        \Big) \nonumber  \\ 
& \leq & \exp\Big(      Cr^2 \Sum_{j\leq \lambda} \Big[  \max_{n\in I(j)} |c_n|^2   \Sum_{n\in I(j)} |\phi_n(x)|^2 \Big]          \Big)\nonumber \\
& \leq & \exp\left(C{r^2}    \rho_\lambda       \right), \label{inega}
\end{eqnarray}
where we have set 
\begin{equation*}    \rho_\lambda:=    \Sum_{j \leq \lambda}  j^{\gamma(d)} \max_{n\in I(j)} |c_n|^2.
% \quad \beta(d):=d-1-\gamma(d)    ,  
\end{equation*}
%By remembering that $\#I(j)\sim C_dj^{d-1}$ holds and by using \eqref{kt}, we can go on bounding: 
%\begin{eqnarray} 
%\E \left[\exp(r u_\lambda^\omega )\right]  &\leq &
%\exp \Big(Cr^2            \Sum_{j\leq \lambda} \Frac{1}{(1+j)^{d-1-\gamma(d)}}\Sum_{n\in I(j)} |c_n|^2    \Big)     \nonumber  \\ 
%& \leq &  \exp\left(Cr^2 \Sum_{\lambda_n\leq \lambda} \Frac{|c_n|^2}{\lambda_n^{d-1-\gamma(d)}}  \right). \label{inega}
%\end{eqnarray}
Obviously, a similar argument gives the same bound for $\E \Big[\exp\big(-r u_\lambda^\omega(x) \big)\Big] $, and we have  obtained
\begin{displaymath}  \E\Big[    \exp\big( \frac{1}{2} r M_\lambda^\omega  \big)     \Big] \leq C \lambda^{ds(d)+d} \exp\big(C\frac{r^2}{2}    \rho_\lambda       \big),
       \end{displaymath}
which is totally equivalent to 
\begin{displaymath}
\forall L\geq 1 \quad \forall r>0 \quad \E \left[  \exp\left(  \frac{r}{2}  \Big(  M_\lambda^\omega -Cr\rho_\lambda^2 - \frac{2}{r}\ln (C \lambda^{ds(d)+d}L) \Big)   \right)         \right] \leq \frac{1}{L}.
\end{displaymath}
From Markov's inequality, it comes 
\begin{displaymath}
\P\left[ M_\lambda^\omega - Cr\rho_\lambda^2 - \frac{2}{r}\ln (C \lambda^{ds(d)+d}L) \geq 0\right] \leq \frac{1}{L}.
\end{displaymath}
Now we just have to optimize in $r$ by choosing $r^2=\frac{1}{\rho_\lambda^2} \ln (C \lambda^{ds(d)+d} L)$. For another constant $C>0$, we have 
\begin{displaymath}
\P\left[M_\lambda^\omega  \geq C \rho_\lambda \sqrt{\ln(C \lambda^{ds(d)+d}L) }\, \right] \leq \frac{1}{L}.
\end{displaymath}
The conclusion comes with the choice $L=\lambda^N$.

Finally, we have to see that the term $\ln(\lambda)$ is optimal in \eqref{pz} if 
$d\geq 2$ holds, and when the $(X_n)_{n\geq 0}$ are independent Gaussians $\mathcal{N}_{\R}(0,1)$.

Let us suppose the contrary and consider a function $\vartheta(\lambda)=o(\ln(\lambda))$ such that Theorem~\ref{salem-zy} holds true by  replacing $\ln(\lambda)$ with $\vartheta(\lambda)$.

To see that implies a contradiction, let us recall a result proved in \cite[Theorem 1.1]{PRT1}\footnote{\cite[Theorem 1.1]{PRT1}  is stated for complex Gaussians, but the result also holds for real r.v. (see  \cite[Assumption 1]{PRT1}).} (with the sequence $d_j=\lambda_j^{-\frac{d}{4}}c_j$ and assuming 
\eqref{condi4}), there are real numbers $C_0>0$ and $c>0$ such that for any $j \gg 1$ one has

\begin{equation*}
\P \Bigg[  C_0 \ln (j)   \Big\Vert  \sum_{n\in I(j)} \lambda_n^{-\frac{d}{4}} c_n X_n \phi_n \Big\Vert^2_{{L}^{2}(\R^2)} \leq 
  \Big\Vert  \sum_{n\in I(j)} \lambda_n^{-\frac{d}{4}} c_n X_n \phi_n \Big\Vert^2_{\mathcal{W}^{\frac{d}{2},\infty}(\R^d) } \Bigg] 
\geq 1-\frac{1}{(j+2)^{c}}.
\end{equation*}
From the definition \eqref{wdp} and Theorem~\ref{salem-zy} with the function $\vartheta$ and any chosen positive integer $N$, we have with probability greater than $1-\frac{1}{(j+2)^{N}}-\frac{1}{j^N}$
\begin{equation*}
\begin{array}{rcl} \Big\Vert  \Sum_{n\in I(j)}  \lambda_n^{-\frac{d}{4}} c_n X_n \phi_n \Big\Vert_{\mathcal{W}^{\frac{d}{2},\infty}(\R^d) }& =&  \Big\Vert 
  \Sum_{n\in I(j)} c_n X_n \phi_n \Big\Vert_{L^{\infty}(\R^d) } \\[3mm]
& \leq & \Big\Vert 
  \Sum_{\lambda_n <2j} c_n X_n \phi_n \Big\Vert_{L^{\infty}(\R^d) } + \Big\Vert 
  \Sum_{\lambda_n < 2j+2} c_n X_n \phi_n \Big\Vert_{L^{\infty}(\R^d) } \\[3mm]
& \leq & C \sqrt{\vartheta (2j)+\vartheta(2j+2)} \Big(\Sum_{\lambda_n < 2j+2} {\lambda_n^{-\frac{d}{2}}} {|c_n|^2} \Big)^{\frac{1}{2}}             .
  \end{array}
\end{equation*}
We have now to make use of the condition \eqref{condi4}: 
\begin{equation*}
\bigg(\sum_{n\in I(j)} \lambda_n^{-\frac{d}{2}} |c_n|^2 \bigg) \times \frac{1}{\#I(j)} \sum_{n\in I(j)} |X_n|^2 \leq C \sum_{n\in I(j)} |\lambda_n^{-\frac{d}{4}}c_n X_n|^2=C\Big\Vert  \sum_{n\in I(j)} \lambda_n^{-\frac{d}{4}} c_n X_n \phi_n \Big\Vert^2_{{L}^{2}(\R^2)}  .
\end{equation*}
By combining these arguments, we have with probability greater than $1-\frac{1}{(j+2)^c}-\frac{1}{(j+2)^N}-\frac{1}{j^N}$
\begin{equation*}
 \frac{1}{\#I(j)} \sum_{n\in I(j)} |X_n|^2  \leq C \frac{\vartheta(2j)+\vartheta(2j+2)}{\ln(j)} \bigg(\Sum_{\lambda_n < 2j+2}  \lambda_n^{-\frac{d}{2}} |c_n|^2 \bigg) \bigg( \Sum_{n \in I(j) }  \lambda_n^{-\frac{d}{2}} |c_n|^2\bigg)^{-1}.   
\end{equation*}
One can obviously choose the sequence $(c_n)_{n\geq 0}$ such that   \eqref{condi4} and the two following properties hold: 
\begin{equation*}
u\in \mathcal{H}^{-\frac{d}{2}}(\R^d), \quad \sum_{j\geq 1} \sum_{n\in I(j)} \lambda_n^{-\frac{d}{2}} |c_n|^2 <+\infty ,
\end{equation*}
\begin{equation*}
\#\bigg\{j\geq 1, \quad \Sum_{n \in I(j) }\lambda_n^{-\frac{d}{2}} |c_n|^2  \geq \left(\frac{\vartheta(2j)+\vartheta(2j+2)}{\ln(j)} \right)^{\frac{1}{2}} \bigg\}=+\infty.
\end{equation*}
Hence, we get for probability greater than $1-\frac{1}{(j+2)^c}-\frac{1}{(j+2)^N}-\frac{1}{j^N}$
\begin{equation}\label{lln}
 \frac{1}{\#I(j)} \sum_{n\in I(j)} |X_n|^2  \leq \varepsilon(j)
\end{equation}
where $\liminf\limits_{j\rightarrow +\infty} \varepsilon(j)=0$.
Since $\lim\limits_{j\rightarrow +\infty} \# I(j)=+\infty$ holds, the Law of Large Numbers ensures that the left side of \eqref{lln} converges almost surely to $\E\big[\vert X_1\vert^2\big]>0$.
Since the almost sure convergence implies the convergence in probability, we understand that \eqref{lln} cannot hold.
That proves that Theorem~\ref{salem-zy} is optimal for the function $\ln(\lambda)$.

%%%%%%%%%%%%%%%%%%%%%%%%%%%%%%%%%%%%%%%%%%%%%%%%%%%%%%
     %%%%%%%%%%%%%%%%%%%%%%%%%%%%%%%%%%%%%%%%%%%%%%%%%%%%%%

\subsection{Proof of  Theorem~\ref{propPZ}}     

We give here an argument which uses the Salem-Zygmund theorem. In Section~\ref{sect6} we will present an alternative proof  relies on an entropy argument.

\subsubsection{Proof of Theorem~\ref{propPZ} using  the Salem-Zygmund Theorem~\ref{salem-zy}}     
For any positive integer $K$, we introduce $J(K):=\big\{n\in \N,\quad \lambda_n \in [2^{2^K},2^{2^{K+1}}-1]\big\}     $ and
\begin{equation*}
u_K^\omega:=\sum_{n\in J(K)} c_n X_n(\omega) \phi_n .
\end{equation*}
By using Theorem~\ref{salem-zy}, we have 
\begin{equation*}
\P \bigg[  \|u_K^\omega\|_{L^\infty(\R^d)} \geq C 2^{K/2}  \bigg( \sum_{j=2^{2^K-1}}^{2^{2^{K+1}-1}-1}   j^{\gamma(d)}\max_{k\in I(j)}|c_n|^2    \bigg)^{\frac{1}{2}}  \,   \bigg] \leq \frac{1}{2^{2^{K+1}-1}}.
\end{equation*}
The Borel-Cantelli lemma ensures that almost surely there is $C_\omega>0$ such that 
%\begin{equation*}
%\|u_K^\omega\|_{L^\infty(\R^d)} \leq  C_\omega 2^{K/2}  \bigg( \sum_{j=2^{2^K-1}}^{2^{2^{K+1}-1}-1}   j^{\gamma(d)}\max_{k\in I(j)}|c_n|^2    \bigg)^{\frac{1}{2}}.
%\end{equation*}
%Then observe that 
%\begin{equation*}
%\sum_{2^j\leq \lambda_n \leq 2^{j+1}-1} \frac{|c_n|^2}{\lambda_n^{\beta(d)}} \leq \frac{1}{j^{\alpha} \ln^\alpha(2)} \sum_{2^j\leq \lambda_n \leq 2^{j+1}-1} \ln( \lambda_n)^\alpha \lambda_n^{-\beta(d)} |c_n|^2,
%\end{equation*}    
%and therefore almost surely we get 
\begin{eqnarray*}
\|u_K^\omega\|_{L^\infty(\R^d)} & \leq&  C_\omega 2^{K/2}  \bigg( \sum_{j=2^{2^K-1}}^{2^{2^{K+1}-1}-1}   j^{\gamma(d)}\max_{k\in I(j)}|c_n|^2    \bigg)^{\frac{1}{2}}\\
& \leq& C_\omega \frac{1}{2^{K(\alpha-1)/2}} 
\bigg( \sum_{j=2^{2^K-1}}^{2^{2^{K+1}-1}-1} (\ln j)^{\a}  j^{\gamma(d)}\max_{k\in I(j)}|c_n|^2    \bigg)^{\frac{1}{2}}.
\end{eqnarray*}  
Now by \eqref{cond}
% we have
%\begin{equation*} 
 %\sum_{n\geq 0} \ln( \lambda_n)^\alpha \lambda_n^{-\beta(d)} |c_n|^2 <+\infty,
%\end{equation*}
 and by the Cauchy-Schwarz inequality since $\alpha>1$  holds we get 
 \begin{equation*}
\sum_{K\geq 1}  \|u_K^\omega \|_{L^\infty(\R^d)} <+\infty \quad \mbox{a.s.}
 \end{equation*}
As a consequence, we have shown that a sub-sequence of the partial sum converges uniformly, a.s. This implies that $u^{\om}$ is a continuous and bounded function, a.s.

Now if we moreover assume that the $(X_n)$ are symmetric, we can apply \cite[Theorem II.5, p.120]{queff} which yields that 
\begin{equation*}
u_{\lambda}^\omega=\sum_{\lambda_n \leq \lambda} c_n X_n(\omega) \phi_n,
\end{equation*}
also converges in $L^{\infty}(\R^d)$, a.s.  for  $\lambda\rightarrow +\infty$.
%%%%%%%%%%%%%%%%%%%%%%%%%%%%%%%%%%%%%%%%%%%%%%
%%%%%%%%%%%%%%%%%%%%%%%%%%%%%%%%%%%%%%%%%%%%%%
\subsection{Proof  of Theorem~\ref{hold}}
The  proof  will  follow the  proof  of J.-P. Kahane \cite[Theorem 2, p.~66]{JPK}, with  the necessary  modifications in our  context.

Let $\kappa\geq 1$ and let  us introduce  the notations:
$$
\nu_j = \kappa 2^{j-1},\; N_j =2^{\nu_j},
$$
 \begin{eqnarray*}
 u^\omega_0(x) & = & \sum_{\lambda_n < N_1}c_nX_n(\omega)\varphi_n(x),\;\; {\rm for }\;\;j\geq 1, \\
 u^\omega_j(x) &= &\sum_{N_j\leq \lambda_n < N_{j+1}}c_nX_n(\omega)\varphi_n(x).
 \end{eqnarray*}
Using  the  triangle  inequality  and  the fundamental  calculus theorem  we have
\begin{equation*}
m_{u^\omega}(h) \leq h\Vert\nabla_xu^\omega_0\Vert_{L^\infty(\R^d)}
+  2\sum_{1\leq j< +\infty}\Vert u^\omega_j\Vert_{L^\infty(\R^d)}.
\end{equation*}
From  Theorem~\ref{salem-zy}  we have for $j\geq 1$, 
\beq\label{Tj}
\P\left[ \Big \Vert u^\omega_j\Vert_{L^\infty(\R^d)}\geq C(\ln N_{j+1})^{1/2}
\Big(\sum_{N_j/2\leq \ell<N_{j+1}/2}  \ell^{\gamma(d)}\max_{n\in I(\ell)}\vert c_n\vert^2\Big)^{1/2}\right] \leq \frac{1}{N_{j+1}^2}.
\eeq
The $j=0$  term  satisfies  the  following 
\begin{lemm}\ph\label{deriv}
There  exists  $C>0$ large  enough   such  that 
\beq\label{T0}
\P\left[ \big \Vert \nabla_xu^\omega_0\big\Vert_{L^\infty(\R^d)} \geq C(\ln N_{1})^{1/2}
\Big(\sum_{\ell<N_{1}/2}  \ell^{1+\gamma(d)}\max_{n\in I(\ell)}\vert c_n\vert^2\Big)^{1/2}\right] \leq \frac{1}{N_{1}^2}.
\eeq

\end{lemm}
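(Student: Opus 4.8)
The plan is to recognize that Lemma~\ref{deriv} is nothing but the Salem--Zygmund estimate of Theorem~\ref{salem-zy} applied to the gradient $\nabla_x u_0^\omega$ instead of to $u_0^\omega$ itself, the only genuinely new ingredient being a pointwise bound on the \emph{derivative} of the spectral function which gains exactly one power of $j$ relative to \eqref{kt}. Concretely, I would first prove the gradient analogue of \eqref{kt}, namely
\begin{equation*}
\sup_{x\in\R^d}\sum_{n\in I(j)}|\nabla\phi_n(x)|^2\leq C\,j^{\,1+\gamma(d)},\qquad j\geq 1.
\end{equation*}
This extra factor $j$ is precisely what turns the weight $\ell^{\gamma(d)}$ appearing in \eqref{Tj} into the weight $\ell^{1+\gamma(d)}$ appearing in \eqref{T0}, and reflects the Bernstein scaling $\sqrt{\lambda}\sim\sqrt{j}$ already used in Lemma~\ref{bernstein}.

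To establish this estimate I would use the annihilation and creation operators $A_i=\tfrac{1}{\sqrt2}(x_i+\partial_{x_i})$ and $A_i^\ast=\tfrac{1}{\sqrt2}(x_i-\partial_{x_i})$, which satisfy $A_i\,\mathrm{Ran}(\Pi_j)\subset\mathrm{Ran}(\Pi_{j-1})$, $A_i^\ast\,\mathrm{Ran}(\Pi_j)\subset\mathrm{Ran}(\Pi_{j+1})$, and $\sum_i A_i^\ast A_i=(H-d)/2$, so that $\|A_i\psi\|_{L^2}^2+\|A_i^\ast\psi\|_{L^2}^2\leq Cj\,\|\psi\|_{L^2}^2$ for $\psi\in\mathrm{Ran}(\Pi_j)$. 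I would then introduce the reproducing kernels $g^{(j)}_x(y):=\sum_{n\in I(j)}\overline{\phi_n(x)}\phi_n(y)\in\mathrm{Ran}(\Pi_j)$, which satisfy $\langle\psi,g^{(j)}_x\rangle=\psi(x)$ for $\psi\in\mathrm{Ran}(\Pi_j)$ and $\|g^{(j)}_x\|_{L^2}^2=\sum_{n\in I(j)}|\phi_n(x)|^2\leq Cj^{\gamma(d)}$ by \eqref{kt}. For $n\in I(j)$ one has $(A_i\phi_n)(x)=\langle A_i\phi_n,g^{(j-1)}_x\rangle=\langle\phi_n,A_i^\ast g^{(j-1)}_x\rangle$ with $A_i^\ast g^{(j-1)}_x\in\mathrm{Ran}(\Pi_j)$, whence, by Parseval in the orthonormal family $(\phi_n)_{n\in I(j)}$,
\begin{equation*}
\sum_{n\in I(j)}|(A_i\phi_n)(x)|^2=\|A_i^\ast g^{(j-1)}_x\|_{L^2}^2\leq Cj\,\|g^{(j-1)}_x\|_{L^2}^2\leq Cj^{\,1+\gamma(d)},
\end{equation*}
and symmetrically for $A_i^\ast$ with $g^{(j+1)}_x$. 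Writing $\partial_{x_i}=\tfrac{1}{\sqrt2}(A_i-A_i^\ast)$ and using the triangle inequality in $n$ then yields the displayed bound after summing over $i\in\{1,\dots,d\}$.

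With this estimate in hand, the remainder of the argument copies the proof of Theorem~\ref{salem-zy} almost verbatim. Since $u_0^\omega\in\mathcal{E}_H(N_1)$ and each $A_i^\ast$ raises the energy by at most $2$, every partial derivative $\partial_{x_i}u_0^\omega$ lies in $\mathcal{E}_H(2N_1)$, so Corollary~\ref{bern-coro} applies with $\lambda=2N_1$: the $L^\infty$ norm is attained on $\overline{B}(0,2N_1)$ and the near-maximum set has controlled volume. Using the independence of the $(X_n)$ and the subnormality \eqref{defi-subno} exactly as in \eqref{inega}, one gets
\begin{equation*}
\E\big[\exp(r\,\partial_{x_i}u_0^\omega(x))\big]\leq\exp\Big(Cr^2\sum_{\lambda_n<N_1}|c_n|^2|\partial_{x_i}\phi_n(x)|^2\Big)\leq\exp\big(Cr^2\,\widetilde\rho_{N_1}\big),
\end{equation*}
where the derivative spectral estimate gives $\widetilde\rho_{N_1}:=\sum_{j<N_1/2}j^{\,1+\gamma(d)}\max_{n\in I(j)}|c_n|^2$. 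Optimizing in $r$ and applying Markov's inequality as in Theorem~\ref{salem-zy} bounds $\|\partial_{x_i}u_0^\omega\|_{L^\infty(\R^d)}$ by $C(\ln N_1)^{1/2}\widetilde\rho_{N_1}^{1/2}$ off an event of probability $\leq N_1^{-N}$; a union bound over the $d$ coordinate directions together with the choice $N\geq 2$ in the optimization produces the factor $N_1^{-2}$ of \eqref{T0}. The main obstacle is the derivative spectral estimate itself: unlike \eqref{kt} it is not available in the literature, and a naive Cauchy--Schwarz over $I(j)$ would lose a factor $\#I(j)\sim j^{d-1}$, which is far too lossy; it is the reproducing-kernel computation above that allows one to gain exactly the single power of $j$ dictated by the Bernstein scaling.
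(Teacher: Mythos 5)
Your proof is correct and follows essentially the same route as the paper: both arguments exploit the ladder structure of the Hermite functions (the shift $I(j)\to I(j\pm1)$ under creation/annihilation, at the cost of one factor $\sqrt{j}$) to gain exactly one power of $j$ over \eqref{kt} at the level of each spectral cluster, and then rerun the Chernoff/Markov machinery of Theorem~\ref{salem-zy}. The only difference is packaging: the paper works with the explicit tensor basis $\mathfrak h_\alpha$ via the recurrence $\mathfrak h_k'=2^{-1/2}(\sqrt k\,\mathfrak h_{k-1}-\sqrt{k+1}\,\mathfrak h_{k+1})$ and transfers to a general basis through the unitary matrix $\{t_{\alpha,\beta}\}$, whereas your reproducing-kernel computation yields the same bound $\sup_x\sum_{n\in I(j)}|\nabla\phi_n(x)|^2\lesssim j^{1+\gamma(d)}$ in a basis-free way.
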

The lemma  will  be proved  later.

\begin{rema}
More generally, we can get a similar bound for $a(x,D)u^{\om}_{0}$, when $a(x,\xi)$ is a polynomial in $(x,\xi) \in \R^{2d}$. We leave the details to the reader.
\end{rema}
  Using  this  lemma  we can  prove Theorem  \ref{hold}.\\
  Let  us denote $\Omega_j(\kappa)$ the event in (\ref{Tj}), $\Omega_0(\kappa)$
   the event in (\ref{T0})  and $\di{\Omega^{\infty}(\kappa) = \bigcup_{j\geq 0}\Omega_j(\kappa)}$.\\
   Using  the definition  of $N_j$  we have
   $$
   \P\Big[\Omega^{\infty}(\kappa)\Big]  \leq 2^{1-2\kappa}.
   $$
   Hence  using  the Borel-Cantelli lemma  we get  that 
   $$
  \P\Big[ \limsup_{\kappa\rightarrow +\infty}\Omega^{\infty}(\kappa) \Big]= 0.
   $$
   On the  other  side  denote by
\begin{eqnarray*}
  E_0 &=& (\ln N_{1})^{1/2}
\Big(\sum_{\ell<N_{1}/2}  \ell^{1+\gamma(d)}\max_{n\in I(\ell)}\vert c_n\vert^2\Big)^{1/2}\\
 E_j &=& (\ln N_{j+1})^{1/2}
\Big(\sum_{N_j/2\leq \ell<N_{j+1}/2}  \ell^{\gamma(d)}\max_{n\in I(\ell)}\vert c_n\vert^2\Big)^{1/2}.
\end{eqnarray*}
 Using  assumption (\ref{condh})   we  have
 \begin{eqnarray*}
E_0 &\leq &   (\ln N_{1})^{1/2}
\Big(\sum_{\ell=1}^{\kappa-1}\sum_{k=2^{\ell-1}}^{2^{\ell}}  k^{1+\gamma(d)}\max_{n\in I(k)}\vert c_n\vert^2\Big)^{1/2}\\
&\leq &  C \kappa^{1/2}
\Big(\sum_{\ell=1}^{\kappa-1} 2^{\ell(1+\gamma(d))} \sum_{k=2^{\ell-1}}^{2^{\ell}}   \max_{n\in I(k)}\vert c_n\vert^2\Big)^{1/2}\\
&\leq&  C\kappa^{1/2} \bigg( \,\sum_{  \ell=1}^{\kappa-1}2^{\ell(1-\mu)}\ell^{2\nu}\,\bigg)^{1/2}, 
\end{eqnarray*}
and for all $j\geq 1$
\begin{eqnarray*}
E_j &\leq &C (\kappa 2^j)^{1/2}  \bigg(\sum_{\nu_j\leq \ell < \nu_{j+1}}         \sum_{k=2^{\ell-1}}^{2^{\ell}}  k^{\gamma(d)}\max_{n\in I(k)}\vert c_n\vert^2 \bigg)^{1/2} \\
&\leq &C (\kappa 2^j)^{1/2}  \bigg(\sum_{\nu_j\leq \ell < \nu_{j+1}}    2^{\ell \gamma(d)}     \sum_{k=2^{\ell-1}}^{2^{\ell}}   \max_{n\in I(k)}\vert c_n\vert^2 \bigg)^{1/2} \\
&\leq &C (\kappa 2^j)^{1/2}  \bigg(\sum_{\nu_j\leq \ell < \nu_{j+1}}    2^{-\mu \ell}   \ell^{2\nu}  \bigg)^{1/2}.
\end{eqnarray*}
 
$\bullet$ Assume  that $0 < \mu < 1$. 
We easily   compute the following estimates
$$ E_0 \leq C \kappa^{\frac{1}{2} +\nu} 2^{(1-\mu)\kappa/2},\qquad  E_{j}\leq   C(\kappa2^j)^{\frac{1}{2}+\nu}2^{-\mu\kappa 2^{j-2}}
$$
and 
$$  
\sum_{j\geq 1}E_j \leq C\sum_{j\geq 1}(\kappa2^j)^{\frac{1}{2}+\nu}2^{-\mu\kappa 2^{j-2}}
\leq C\kappa^{\frac{1}{2} +\nu}2^{-\kappa\mu/2}.
$$
Now taking  $h=h_\kappa= 2^{-\kappa}$ we  have  proved  that   for  every 
$\di{\omega \notin \limsup_{\kappa\rightarrow +\infty}\Omega^{\infty}(\kappa)}$  and  \\ for
 every   $\kappa$
 large  enough 
$$
m_{u^\omega}(h_\kappa) \leq Ch_\kappa^\mu\vert\ln(h_\kappa)\vert^{\frac{1}{2}+\nu}.
$$
Using that  $m_{u^\omega}(h)$  is  non  increasing  in $h$  we  have proved Theorem~\ref{hold}   for  $0< \mu <1$.  

$\bullet$ Assume  that $\mu=0$ and $\nu<-1$. Then in this case we get 
 $$ E_0 \leq C \kappa^{\frac{1}{2} +\nu} 2^{\kappa/2},\qquad  E_{j}\leq   C\kappa^{\nu+1} 2^{(1+\nu)j}, \quad \sum_{j\geq 1}E_j \leq C\kappa^{\nu+1},
$$
and the end of the proof is similar.

$\bullet$  The  other  cases  are  proved   in the  same  way   (see \cite{JPK}) excepted  the last one ($\mu =1$,
$\nu < -1$)  where  the  result  is obtain by applying  Theorem~\ref{propPZ}
  to  the  partial  derivatives  $\partial_{x_j}u^\omega$, $1\leq j\leq d$.
  \qed
\ligne

Now  we   prove the  Lemma~\ref{deriv}.
\begin{proof}[Proof of Lemma~\ref{deriv}]
It  is more   convenient here  to index  the   Hermite  basis  by $\N^d$. So  we have
$$
u_0^\omega(x) = \sum_{2\vert\alpha\vert +d \leq N_1}c_\alpha X_\alpha(\om)\varphi_\alpha(x)
$$
where  we have denoted $\vert\alpha\vert  = \alpha_1 +\cdots +\alpha_d$.
We  have $H\varphi_\alpha = \lambda_\alpha\varphi_\alpha$,  with
$\lambda_\alpha = 2\vert\alpha\vert  + d$. 
It is  easier  to  consider  first  the  tensor  basis: 
$$
\varphi_\alpha(x) = \mathfrak h_\alpha (x)= \mathfrak h_{\alpha_1}(x_1)\cdots  \mathfrak h_{\alpha_d}(x_d).
$$
Recall that in 1D the Hermite functions satisfy   for all $t\in \R$
$$
 \frac{d}{dt}\mathfrak h_k(t) = 2^{-1/2}\left(\sqrt k\mathfrak h_{k-1}(t) - \sqrt{k+1}\mathfrak h_{k+1}(t)\right). 
 $$
So  we get
$$
\sqrt 2\partial_{x_1}u_0^\omega(x) =\sum_{2\vert\alpha\vert +d \leq N_1}\sqrt{\alpha_1}c_\alpha X_\alpha(\om)\mathfrak h_{\alpha-e_1}(x)
-\sum_{2\vert\alpha\vert +d \leq N_1}\sqrt{\alpha_1+1}c_\alpha X_\alpha(\om)\mathfrak h_{\alpha+e_1}(x)
$$
where $\{e_j\}_{1\leq j\leq d}$ is the  canonical basis  of $\R^d$. Applying   the Theorem~\ref{salem-zy}  to each term  of the sum  we   have  proved the Lemma~\ref{deriv}
for the  tensor basis~$\mathfrak h_\alpha$.

For a  general  orthonormal basis $(\varphi_\alpha)_{\a \in \N^{d}}$  of Hermite  functions,   we   write   
$$
\varphi_\alpha(x) = \sum_{\vert\alpha\vert = \vert\beta\vert}t_{\alpha, \beta}\mathfrak h_\beta(x)
$$
where $\{t_{\alpha, \beta}\}$ is  a unitary  matrix.    So  we have
\begin{multline*}
\sqrt 2\partial_{x_1}u_0^\omega(x) =\\
=\sum_{2\vert\alpha\vert +d \leq N_1}\sqrt{\alpha_1}c_\alpha X_\alpha(\om)\sum_{\vert\beta\vert = \vert\alpha\vert }t_{\alpha, \beta}\mathfrak h_{\beta-e_1}(x)
-\sum_{2\vert\alpha\vert +d \leq N_1}\sqrt{\alpha_1+1}c_\alpha X_\alpha(\om)\sum_{\vert\beta\vert = \vert\alpha\vert}t_{\alpha, \beta}\mathfrak h_{\beta+e_1}(x). 
\end{multline*}
Now we estimate   separately the  two sums by revisiting the  proof  of Theorem~\ref{salem-zy}.\\
It is enough to consider  the   first one  denoted $v_\lambda^\omega(x)$ where $\lambda =N_1$.
We have  to estimate:
\begin{equation*}
\E\Big[\exp(rv_\lambda^\omega)\Big] \leq \exp\Big(Cr^2\sum_{\lambda_\alpha \leq \lambda}\alpha_1\vert c_\alpha\vert^2
\sum_{\vert\beta\vert =\vert\alpha\vert}\left\vert t_{\alpha, \beta}\mathfrak h_{\beta-e_1}\right\vert^2\Big).
\end{equation*}
For $\lambda_\alpha\in I(j)$  we have  
\begin{equation*}
\sum_{\lambda_\alpha \in I(j)}\alpha_1\vert c_\alpha\vert^2
\big\vert \sum_{\vert\beta\vert =\vert\alpha\vert} t_{\alpha, \beta}\mathfrak h_{\beta-e_1}(x)\big\vert^2
 \leq   
 C(1+j)\max_{\lambda_\alpha\in I(j)}|c_\alpha|^{2}\Big\vert
 \sum_{\substack{\vert\beta\vert =\vert\alpha\vert\\ \lambda_\alpha\in I(j)}}t_{\alpha, \beta}\mathfrak h_{\beta-e_1}(x)\Big\vert^2.
\end{equation*}
Using that the matrix $\{t_{\alpha, \beta}\}$  is unitary we  have 
$$
\Big\vert
 \sum_{\substack{\vert\beta\vert =\vert\alpha\vert\\ \lambda_\alpha\in I(j)}}t_{\alpha, \beta}\mathfrak h_{\beta-e_1}(x)\Big\vert^2 \leq \sum_{\lambda_\alpha\in I(j-1)}|\mathfrak h_\alpha(x)|^{2} \leq C  j^{\gamma(d)}
$$
and 
$$
\E\Big[\exp(rv_\lambda^\omega)\Big]  \leq \exp\Big(Cr^2\sum_{\lambda_\alpha \leq \lambda}\lambda^{\gamma(d)+1}_{\alpha}\vert c_{\alpha}\vert^2
\Big).
$$
This inequality has the same form as \eqref{inega}, hence we can conclude here as in the proof of Theorem~\ref{salem-zy}.
\end{proof}

%%%%%%%%%%%%%%%%%%%
%%%%%%%%%%%

 \section{Annex: about random series in Banach spaces}\label{annex-proba}

We present here some elements on the theory of random series in Banach spaces. We refer the reader to the books \cite{HJ}, \cite{DJT} and \cite{queff} for more elements on this subject.

Let $B$ be a Banach space on the field of real or complex numbers.
Let $(\varepsilon_n)_{n\geq 0}$ be a sequence of Rademacher i.i.d. random variables and let us define
\begin{equation}\label{defi-ce}
\Sigma(B):=\left\{(b_n)_{n\geq 0}, \quad \sum \varepsilon_n b_n \quad \mbox{converges a.s.} \right\}
\end{equation}
It is clear that $\Sigma(B)$ is a vector subspace of $B^{\N}$.
The following theorem is well-known in the theory of Banach random series (see for instance \cite[Chapitre 3,   IV.2]{queff}): 
\begin{theo}\ph\label{radem-seri}
Let $B$ a Banach space and consider a sequence $(b_n)_{n\geq 0}$ in $B$. The following facts are equivalent 
\begin{enumerate}[(i) ]
\item the sequence $(b_n)_{n\geq 0}$ belongs to $\Sigma(B)$,
\item the random series $\sum \epsilon_n(\omega) b_n$ converges in probability,
\item the random series $\sum \epsilon_n(\omega) b_n$ converges in law,
\item there is some $p\geq 1$ such that the random series $\sum \epsilon_n (\omega) b_n$ converges in $L^p(\Omega,B)$,
\item for any $p\geq 1$, the random series $\sum \epsilon_n (\omega) b_n$ converges in $L^p(\Omega,B)$.
\end{enumerate}
\end{theo}
For instance, if $B$ is a Hilbert space, the previous theorem can be used to see that $\Sigma(B)$ is nothing else than $\ell^2(B)$ (see also \cite[Chapter 3]{JPK}).

A natural question is to study what happens for the almost sure convergence of $\sum X_n b_n$ if $(X_n)_{n\geq 0}$ is i.i.d. with another reference law.
A part of this question is solved by the following result proved by Hoffman-Jorgensen.

\begin{theo}[Hoffman-Jorgensen]\ph\label{quef}
Let $(X_n)_{n\geq 0}$ be a sequence of real, non-constant and i.i.d. random variables and $(b_n)_{n\geq 0}$ be a sequence which takes values in a general Banach space $B$,
 we assume that the series $\sum X_n(\omega) b_n$ converges almost surely in $B$.
Then the series $\sum \epsilon_n(\omega) b_n$ converges almost surely in $B$, in other words $(b_n)_{n\geq 0}$ belongs to $\Sigma(B)$. 
\end{theo}
We emphasize the fact that no integrability assumption is made on the law of $X_n$.
We do not know any published reference of Theorem~\ref{quef} and we give below a proof we learned from Herv\'e Queff\'elec.
The converse question is not easy and needs assumptions on the geometry of the Banach space $B$.
It is worthwhile now to recall Kahane-Khintchine's inequalities.
For any real numbers $q,p\geq 1$ and any finite sequence $(b_n)_{n\geq 0}$ in $B$ there is a constant $K(p,q)$ which depends only on $p$ and $q$ such that 
\begin{equation}\label{khin}
\E\bigg[ \Big\Vert \sum_{n\geq 0} \epsilon_n b_n \Big\Vert^q \bigg]^{1/q} \leq K(p,q)
\E\bigg[ \Big\Vert \sum_{n\geq 0} \epsilon_n b_n \Big\Vert^p \bigg]^{1/p}.
\end{equation}
For the specific case $B=\R$, those inequalities are called Khinthine's inequalities and we have 
\begin{equation*}
\E\bigg[ \Big\vert \sum_{n\geq 0} \epsilon_n b_n \Big\vert^2 \bigg]^{1/2}=\left(\sum_{n\geq 0} |b_n|^2\right)^{1/2}.
\end{equation*}

We can now define the notion of cotype of a Banach space.

\begin{defi}\ph
A Banach space $B$ has cotype $p\geq 2$ if there are real numbers $q\geq 1$ and $C_q>0$ such that for any finite sequence $(b_n)_{n\geq 0}$ in $B$ one has 
\begin{equation}\label{cotype}    \bigg(\sum_{n\geq 0} \|b_n\|^p \bigg)^{1/p}\leq C_q \E\bigg[ \Big\Vert \sum_{n\geq 0} \epsilon_n b_n \Big\Vert^q \bigg]^{1/q}     . \end{equation}
\end{defi}

Thanks to \eqref{khin}, notice that if \eqref{cotype} holds then it holds for any $q\geq 1$.
For instance, one can prove that for any $p\geq 1$ the Banach space $B:=L^p(\R^d)$ has cotype $\max(2,p)$.
To see this, we can make use of Kahane-Khintchine's inequalities for $q=p$:
\begin{equation*}
\begin{array}{rcl} \E \left[ \Big\Vert \Sum_{n=1}^N \epsilon_n f_n \Big\Vert_{L^p(\R^d)}^p      \right] &= &    \Int_{\R^d} \E \left[\Big\vert \Sum_{n=1}^N \epsilon_n(\omega) f_n(t)\Big\vert^p \right] dt   \\[3mm]
&\sim & C_p  \Int_{\R^d} \Big(\sum_{n=1}^N |f_n(t)|^2 \Big)^{\frac{p}{2}}dt . \end{array}
\end{equation*}
In the case $p\leq 2$, by denoting $\|\cdot\|_{2/p}$ the obvious norm of $\R^{N}$, we can write
\begin{equation*}
\begin{array}{rcl} \Int_{\R^d} \Big(\sum_{n=1}^N  |f_n(t)|^2 \Big)^{\frac{p}{2}}dt  &= & \Int_{\R^d} \Big\Vert \big(|f_1(t)|^p,\dots,|f_N(t)|^p\big) \Big\Vert_{2/p} dt   \\[3mm]
& \geq & \left\Vert \Int_{\R^d} (|f_1(t)|^p,\dots, |f_N(t)|^p) dt \right\Vert_{2/p} \\[3mm]
& \geq & \Big(\Sum_{n=1}^N \|f_n\|_{L^p(\R^d)}^2 \Big)^{\frac{p}{2}} .  \end{array}
\end{equation*}
In the case $p\geq 2$, we write 
\begin{equation*}
\Int_{\R^d} \Big(\sum_{n=1}^N |f_n(t)|^2 \Big)^{\frac{p}{2}}dt   \geq     \Int_{\R^d} \sum_{n=1}^N |f_n(t)|^p dt   = \Sum_{n = 1}^{N} \|f_n\|_{L^p(\R^d)}^p .
 \end{equation*}

As used in \cite{Grivaux} for Gaussian random variables, we have the following  astonishing result of Maurey and Pisier:

\begin{theo}[Maurey-Pisier] \ph\label{MP}
The following assertions are equivalent
\begin{enumerate}[(i) ]
\item the Banach space $B$ has finite cotype (that means that there is $p\geq 2$ such that $B$ has cotype\;$p$),
\item for any sequence  $(b_n)_{n\geq 0}$ of $B$, the almost sure convergence of $\sum \epsilon_n b_n$ implies the almost sure convergence of $\sum G_n b_n$,
where $(G_n)_{n\geq 0}$ is a sequence of i.i.d. $\mathcal{N}_\R(0,1)$ Gaussian random variables,
\item for any sequence $(b_n)_{n\geq 0}$ of $B$ the almost sure convergence of $\sum \ep_n b_n$ implies the almost sure convergence of $\sum X_n b_n$
where $(X_n)_{n\geq 0}$ is any sequence of real, centered and i.i.d random variables with finite moments of any order.
\end{enumerate}
\end{theo}
\begin{proof}
The equivalence $(i)$ $\Leftrightarrow$ $(ii)$ is done in \cite[Corollaire 1.3]{maurey-pisier76}.
Obviously, $(iii)$ $\Rightarrow$ $(ii)$ is true by choosing $X_n=G_n$.
Let us explain arguments which are not explicitly written in \cite[Corollaire 1.3]{maurey-pisier76}.
To see $(i)$ $\Rightarrow$ $(iii)$, we begin by assuming that the random variables $X_n$ are symmetric. The proof of \cite[Corollaire 1.3, a) $\Rightarrow$ b), page 69]{maurey-pisier76} shows that there is a positive constant $C$ which involves a moment $\E[|X_1|^q]$ (for some $q>0$) such that for any sequence $(b_n)_{n\geq 0}$ we have 
\begin{equation*}
\forall k,\ell\geq 1 \quad  \E \left[ \Big\Vert \sum_{n= k}^\ell X_n  b_n  \Big\Vert^2    \right] \leq C \E \left[ \Big\Vert \sum_{n= k}^\ell \ep_n  b_n  \Big\Vert^2    \right].
\end{equation*}
Since the series $\sum \ep_n b_n$ converges almost surely, it converges in $L^2(\Omega,B)$ (see Theorem~\ref{radem-seri}), so does $\sum X_n b_n$.
Now assume that $X_n$ are merely centered.
Clearly, $Z_n(\omega,\omega')=X_n(\omega)-X_n(\omega')$ is symmetric on the probability space $\Omega\times \Omega'$.
Therefore, the previous analysis shows that  $\sum Z_n(\omega,\omega') b_n$ converges in $L^2(\Omega\times \Omega',B)$ and also in $L^1(\Omega\times \Omega',B)$.
Now we use that random variables $X_n$ are centered: 
\begin{equation*}
\forall \ell\geq k \quad \E_\omega \left[ \Big\Vert    \Sum_{n= k}^\ell X_n(\omega) b_n  \Big\Vert   \right] \leq 
\E_{\omega,\omega'} \left[ \Big\Vert    \Sum_{n= k}^\ell X_n(\omega) b_n -X_n(\omega') b_n   \Big\Vert   \right] .
\end{equation*}
That means that $\sum X_n b_n$ converges in $L^1(\Omega,B)$, so converges in probability and almost surely in $B$ (see \cite[Th\'eor\`eme II.3]{queff}).
\end{proof}

\subsection{Proof of Proposition~\ref{equi-cv}}

Equivalence of $(i)$ and $(ii)$ comes from Theorem~\ref{quef}, Theorem~\ref{MP} and the fact that $L^p(\R^d)$ has finite cotype.
In view to check the link with $(iii)$, it is necessary and sufficient to study convergence in $L^p(\Omega,L^p(\R^d))$ (see Theorem~\ref{radem-seri}).
Cauchy criterion leads to handle terms of the following form: 
\begin{equation*}  \int_{\Omega}  \int_{\R^d} \Big\vert \sum_{n= k}^\ell \ep_n(\omega) f_n(x)    \Big\vert^p d\P(\omega) dx =\int_{\R^d} \E_\omega \left[
\Big\vert \sum_{n= k}^\ell \ep_n(\omega) f_n(x)  \Big\vert^p \right] dx. \end{equation*}
By Khintchine's inequalities \eqref{khin}, there exists $C_p\geq 1$ so that 
\begin{equation*}
\frac{1}{C_p} \int_{\R^d} \Big\vert \sum_{n= k}^\ell |f_n(x)|^2 \Big\vert^{p/2} dx\leq  \int_{\Omega}  \int_{\R^d} \Big\vert \sum_{n= k}^\ell \ep_n(\omega) f_n(x)    \Big\vert^p d\P(\omega) dx \leq C_p  \int_{\R^d} \Big\vert \sum_{n= k}^\ell |f_n(x)|^2 \Big\vert^{p/2} dx,
\end{equation*}
and we conclude easily.
 
\subsection{Proof of Theorem~\ref{quef}}

We need the contraction principle (see for instance \cite[Th\'eor\`eme III.1]{queff} or \cite[Chapter 2.6 in the Rademacher framework]{JPK}) and a few lemmas.

\begin{theo}[contraction principle]\ph\label{theocontrac}
Let $(X_n)_{n\geq 0}$ be a sequence of symmetric independent random variables which takes values in a Banach space $B$.
If $\sum X_n$ converges almost surely in $B$ then, for any bounded real sequence $(\lambda_n)_{n\geq 0}$, the series $\sum \lambda_n X_n$ converges almost surely in $B$.
\end{theo}

Let us recall a classical lemma in the probability theory.
\begin{lemm}\ph\label{non-cst}
Let $X$ be a real random variable, the following statements are equivalent: 
\begin{enumerate}[(i) ]
\item $X$ is not almost surely constant,
\item there is $\xi \in \R$ such that $|\E[\exp(i\xi X)]|<1$ holds,
\item the set $\big\{\xi\in \R, |\E[\exp(i\xi X)]|=1\big\}$ is countable.
\end{enumerate}
\end{lemm}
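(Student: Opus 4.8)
The plan is to work with the characteristic function $\varphi(\xi)=\E[\exp(i\xi X)]$ and to prove the three equivalences by running the cycle of contrapositives $\neg(i)\Rightarrow\neg(ii)\Rightarrow\neg(iii)\Rightarrow\neg(i)$. Two of these are immediate. If $X=a$ almost surely, then $\varphi(\xi)=e^{i\xi a}$, so $|\varphi(\xi)|=1$ for every $\xi$, which is precisely $\neg(ii)$; this gives $\neg(i)\Rightarrow\neg(ii)$. Next, writing $S:=\{\xi\in\R:\;|\E[\exp(i\xi X)]|=1\}$, the statement $\neg(ii)$ says exactly that $S=\R$, and since $\R$ is uncountable this is $\neg(iii)$; hence $\neg(ii)\Rightarrow\neg(iii)$.

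The substance is the implication $\neg(iii)\Rightarrow\neg(i)$, and the key device is symmetrization. Introducing an independent copy $X'$ of $X$ and setting $Y=X-X'$, one computes $|\varphi(\xi)|^2=\E[e^{i\xi(X-X')}]=\E[\cos(\xi Y)]$, the imaginary part vanishing because $Y$ is symmetric. As $\cos\leq 1$, the equality $\E[\cos(\xi Y)]=1$ forces $\cos(\xi Y)=1$ almost surely, so that
\[
\xi\in S\quad\Longleftrightarrow\quad \xi Y\in 2\pi\Z\ \text{a.s.}
\]
From this characterization I would extract two structural properties of $S$. First, $S$ is an additive subgroup of $\R$: if $\xi Y,\eta Y\in 2\pi\Z$ a.s., then $(\xi+\eta)Y=\xi Y+\eta Y\in 2\pi\Z$ a.s. Second, $S$ is closed: given $\xi_n\in S$ with $\xi_n\to\xi$, work on the full-measure event on which $\xi_n Y\in 2\pi\Z$ holds simultaneously for all $n$ (a countable intersection of full-measure events); for each such realization the numbers $\xi_n Y(\omega)$ lie in the discrete closed set $2\pi\Z$ and converge to $\xi Y(\omega)$, so $\xi Y(\omega)\in 2\pi\Z$, i.e. $\xi\in S$.

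Now I would invoke the classification of closed subgroups of $\R$: any such subgroup is $\{0\}$, or $c\Z$ for some $c>0$, or all of $\R$; the first two are countable. Therefore $\neg(iii)$, the uncountability of $S$, forces $S=\R$. It remains to conclude that $X$ is almost surely constant, equivalently that $Y=0$ a.s. To see this, choose two reals $t,t'$ with $t/t'$ irrational: on the full-measure intersection of $\{tY\in 2\pi\Z\}$ and $\{t'Y\in 2\pi\Z\}$, any $\omega$ with $Y(\omega)\neq 0$ would yield integers $m,n$ with $tY(\omega)=2\pi m$ and $t'Y(\omega)=2\pi n$, hence $t/t'=m/n\in\Q$, a contradiction. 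Thus $Y=0$ a.s., which means $X$ coincides with its independent copy almost surely; by Fubini this is possible only when the law of $X$ is a Dirac mass, i.e. $\neg(i)$. This closes the cycle and proves the equivalence.

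The only step demanding genuine care is the closedness of $S$, together with the bookkeeping of null sets throughout: each membership relation $\xi\in S$ carries its own exceptional null set, so one must systematically pass to countable unions and intersections in order to have a single full-measure event supporting all the pointwise deductions (both in the closedness argument and in the final incommensurability step). The classification of closed subgroups of $\R$ is entirely standard and can simply be quoted; the remaining computations are elementary.
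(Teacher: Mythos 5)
Your proof is correct, but it takes a genuinely different route from the paper's. The paper proves $(i)\Rightarrow(iii)$ directly and without symmetrization: from $|\E[e^{i\xi_0 X}]|=1$ and the equality case in $|\int f\,d\mu|\le\int|f|\,d\mu$ it extracts that the law $\mu$ of $X$ is carried by the affine lattice $\frac{\alpha_0}{\xi_0}+\frac{2\pi}{\xi_0}\Z$; if $X$ is non-constant the essential support contains two distinct points $x\neq y$, so $x-y\in\frac{2\pi}{\xi_0}\Z\cap\frac{2\pi}{\xi_1}\Z$ for any two nonzero $\xi_0,\xi_1$ in the set $S$, forcing $\xi_0/\xi_1\in\Q$ and hence $S\subset\{0\}\cup\xi_0\Q$, which is countable. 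You instead prove the contrapositive $\neg(iii)\Rightarrow\neg(i)$: after symmetrizing ($Y=X-X'$) you identify $S$ as a closed additive subgroup of $\R$, invoke the classification of closed subgroups to get $S=\R$ from uncountability, and then use an incommensurability argument plus Fubini to force $Y=0$ a.s. Both arguments are sound; the paper's is shorter and needs only the equality case of the triangle inequality, while yours is more structural but imports the classification of closed subgroups and an independent copy. Two small remarks on your version: the closedness of $S$ follows immediately from the continuity of the characteristic function ($S=|\varphi|^{-1}(\{1\})$), which spares you the null-set bookkeeping in that step; and in the final step you should make explicit that $t,t'$ are chosen nonzero so that $Y(\omega)\neq 0$ indeed yields nonzero integers $m,n$ with $t/t'=m/n$.
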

\begin{proof}
The implications $(iii)\Rightarrow (ii)$ and $(ii)\Rightarrow (i)$ are obvious.
Suppose now $(i)$ and let $\xi_0\neq \xi_1\in \R\backslash\{0\}$ be two numbers such that $|\E[\exp(i\xi_0 X)]|=|\E[\exp(i\xi_1 X)]|=1$.
Since $|\exp(i \xi_0 X)|\leq 1$ holds, the equality $|\E[\exp(i\xi_0 X)]|=1$ ensures there is $\alpha_0 \in \R$ such that one has $e^{i\xi_0 x}=e^{i\alpha_0}$ for $\mu$-almost all $x\in \R$ where $\mu$ is the law of $X$.
Hence, $x\in \frac{\alpha_0}{\xi_0}+\frac{2\pi}{\xi_0} \Z$ for $\mu$-almost all $x\in \R$.
The same is true by replacing $\xi_0$ with $\xi_1$ and $\alpha_0$ with $\alpha_1$.
Because $X$ is not constant almost surely, there are at least two numbers $x\neq y$ which both belong to $\big\{\frac{\alpha_0}{\xi_0}+\frac{2\pi}{\xi_0} \Z\big\} \cap \big\{ \frac{\alpha_1}{\xi_1}+\frac{2\pi}{\xi_1} \Z \big\} $.
We notice that $x-y\neq 0$ belongs to $\frac{2\pi}{\xi_0} \Z \cap \frac{2\pi}{\xi_1} \Z$.
Finally $\xi_0/\xi_1$ is rational and $(iii)$ is proved.
\end{proof}

\begin{lemm}\ph\label{lem-anx1}
For any sequence of real, non-constant and i.i.d. random variables $(Y_\ell)_{\ell\geq 1}$ we have 
\begin{equation*}
\lim\limits_{N\rightarrow +\infty} \P\big[ \,|Y_1+\dots+Y_N|\geq 1|\, \big]=1.
\end{equation*}
\end{lemm}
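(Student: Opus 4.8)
The plan is to pass to characteristic functions and to exploit the key consequence of Lemma~\ref{non-cst}, namely that the characteristic function of $Y_1$ has modulus strictly less than $1$ outside a set of Lebesgue measure zero. Write $S_N = Y_1 + \cdots + Y_N$ and let $\chi(\xi) := \E[\exp(i\xi Y_1)]$ be the common characteristic function of the $Y_\ell$. Since the $Y_\ell$ are i.i.d., the characteristic function of $S_N$ factorizes as $\E[\exp(i\xi S_N)] = \chi(\xi)^N$. It suffices to prove that $\P[|S_N| \leq 1] \to 0$, since then $\P[|S_N|\geq 1] \geq 1 - \P[|S_N| \leq 1] \to 1$.

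First I would introduce the Fej\'er-type comparison function
\[
g(x) = \Big(\frac{\sin(x/2)}{x/2}\Big)^2 = \int_{-1}^1 (1-|\xi|)\, \e^{i\xi x}\, d\xi,
\]
the second identity being an elementary computation (it reduces to $\int_0^1(1-\xi)\cos(\xi x)\,d\xi = (1-\cos x)/x^2$). Since $g$ is continuous and strictly positive on the compact interval $[-1,1]$, there is a constant $C>0$ such that $\mathbf 1_{[-1,1]} \leq C g$ on all of $\R$. Applying this pointwise to $S_N$ and then Fubini's theorem gives
\[
\P[|S_N| \leq 1] \leq C\, \E[g(S_N)] = C \int_{-1}^1 (1-|\xi|)\, \chi(\xi)^N\, d\xi.
\]

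The conclusion then comes from dominated convergence. The integrand is bounded in modulus by $(1-|\xi|) \leq 1$, which is integrable on $[-1,1]$. By Lemma~\ref{non-cst} the set $\{\xi\in\R : |\chi(\xi)| = 1\}$ is countable, hence Lebesgue-null, so for almost every $\xi \in [-1,1]$ one has $|\chi(\xi)| < 1$ and therefore $\chi(\xi)^N \to 0$ as $N \to \infty$. Hence the integral tends to $0$, which yields $\P[|S_N|\le 1]\to 0$ and finishes the argument.

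The argument is short, and the only genuine subtlety is the link to the null set: the very reason for having stated Lemma~\ref{non-cst}, and in particular its countability assertion $(iii)$, is precisely to guarantee the pointwise almost-everywhere decay $\chi(\xi)^N \to 0$ that feeds dominated convergence. I expect this to be the one step requiring care, since without the non-degeneracy of $Y_1$ the characteristic function could have modulus $1$ on a set of positive measure (as happens for an almost surely constant variable) and the convergence would fail. The choice of the explicit test function $g$ is merely a matter of convenience — any nonnegative function dominating $\mathbf 1_{[-1,1]}$ and admitting an integrable Fourier representation would serve — rather than a real obstacle.
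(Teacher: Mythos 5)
Your proof is correct and follows essentially the same route as the paper: dominate the indicator of the interval by a nonnegative function whose Fourier transform is integrable (the paper takes a generic $\phi\in L^1(\R)$ with $\widehat\phi\geq 1$ on $(-1,1)$, you make it explicit via the Fej\'er function), apply Fubini to reduce to $\int \phi(\xi)\widehat\mu(\xi)^N d\xi$, and conclude by dominated convergence using the countability assertion $(iii)$ of Lemma~\ref{non-cst}. No gaps.
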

\begin{proof}
Let $\mu$ be the law of $Y_1$ and $\phi\in L^1(\R)$ be a function such that $\widehat{\phi}(x)\geq 1$ holds for any $x\in (-1,+1)$. 
It comes 
\begin{equation*}
\begin{array}{rcl} \P\big[\,|Y_1+\dots+Y_{N}|<1\,\big]&  =&  \Int_{\R} \pun_{(-1,1)}(x) d \overbrace{\mu\star\dots \star \mu}^{N \mbox{ times} }(x) \\[3mm]
& \leq & \Int_{\R} \widehat{\phi}(x) d \mu\star\dots \star \mu (x) =\Int_{\R} \phi(\xi) \widehat{\mu}(\xi)^N d\xi. \end{array}
\end{equation*}
Point $(iii)$ of Lemma~\ref{non-cst} ensures that $|\widehat{\mu}(\xi)|<1$ holds for almost all $\xi$ in the sense of Lebesgue.
We conclude by the dominated convergence theorem if $N$ tends to infinity.
\end{proof}

\begin{lemm}\ph\label{lem-anx2}
Let $G$ be a locally compact Abelian group, consider a subgroup $G_0\subset G$ which has a positive Haar measure and is everywhere dense.
Then $G_0$ is the whole group $G$.
\end{lemm}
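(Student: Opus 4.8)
The plan is to prove that the hypotheses force $G_0$ to be an \emph{open} subgroup; once this is known, openness together with density will immediately yield $G_0=G$. The crucial ingredient will be the Steinhaus--Weil theorem: if $\mu$ denotes a Haar measure on the locally compact abelian group $G$, then any measurable set $A$ with $0<\mu(A)<+\infty$ has the property that $A A^{-1}$ contains an open neighbourhood of the identity $e$. To apply it, I would first reduce to a set of finite positive measure. Since $G_0$ is assumed to have positive Haar measure, inner regularity of $\mu$ (Haar measure being a Radon measure) provides a compact set $A\subseteq G_0$ with $0<\mu(A)<+\infty$.

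Next I would exploit the group structure of $G_0$. Because $A\subseteq G_0$ and $G_0$ is a subgroup, one has the inclusion $A A^{-1}\subseteq G_0 G_0^{-1}=G_0$. By Steinhaus--Weil there is an open neighbourhood $V$ of $e$ with $V\subseteq A A^{-1}\subseteq G_0$. Thus $G_0$ contains a neighbourhood of the identity, and a standard argument then shows $G_0$ is open: for any $g\in G_0$ the translate $gV$ is an open neighbourhood of $g$ contained in $G_0$ (here we use that $G_0$ is a subgroup and $V\subseteq G_0$), so every point of $G_0$ is interior.

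Finally I would use that an open subgroup is automatically closed, since its complement $G\setminus G_0=\bigcup_{g\notin G_0} gG_0$ is a union of (open) cosets, hence open. Therefore $G_0=\overline{G_0}$. As $G_0$ is also everywhere dense, $\overline{G_0}=G$, whence $G_0=G$, which is the desired conclusion. (Equivalently, if $G_0\neq G$ then $G\setminus G_0$ would be a nonempty open set disjoint from the dense set $G_0$, a contradiction.)

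The only genuinely nontrivial step is the appeal to the Steinhaus--Weil theorem, and the small technical point of extracting a compact subset of finite positive measure; everything else is formal manipulation of the subgroup and topological structure. The main obstacle is thus ensuring the measure-theoretic hypotheses are exactly those under which Steinhaus--Weil applies, namely producing an $A\subseteq G_0$ with $0<\mu(A)<+\infty$, which is where inner regularity of the Haar measure is invoked.
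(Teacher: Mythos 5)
Your proof is correct and follows essentially the same route as the paper: Steinhaus's theorem applied to a subset of $G_0$ shows that $G_0G_0^{-1}\subseteq G_0$ contains a neighbourhood of the identity, hence $G_0$ is open by translation, hence closed as the complement of a union of open cosets, and density then forces $G_0=G$. Your extra care in extracting a compact subset of finite positive measure before invoking Steinhaus--Weil is a reasonable refinement of the paper's more terse appeal to the theorem, but it does not change the argument.
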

\begin{proof}
It is sufficient to prove that $G_0$ is closed.
Steinhaus theorem states that $G_0-G_0\subset G_0$ contains an open neighbourhood of the origin.
By using translations of $G_0$, it turns out that $G_0$ is an open subgroup of $G$.
A classical argument from the theory of topological groups asserts that $G_0$ is also closed: we just write $G=\sqcup_{i\in I} (G_0+g_i)$
 where $(g_i)_{i\in I}$ is a family of elements of $G$ and $g_i=0$ for one $i\in I$, it appears that the complementary subset of $G_0$ is open.
\end{proof} 

We can now prove Theorem~\ref{quef}.

\begin{proof}[Proof of Theorem~\ref{quef}]

\textbf{Step 1.} 
It is well known that we can realize any sequence of independent real random variables on the probability space $[0,1]$ endowed with the Lebesgue measure \cite{williams} (p. 34 and p.43).
For any $n\geq 0$, we consider a sequence $(\widehat{Z}_{n,\ell})_{\ell\geq 1}$ of i.i.d. random variables on $[0,1]$ and such that $X_n=\widehat{Z}_{n,0}$ for any $n\geq 0$.
The following random variables 
\begin{equation*}
\begin{array}{rcl}
Z_{n,\ell}: [0,1]^\N & \rightarrow & \R \\
(\omega_0,\omega_1,\dots)& \mapsto & \widehat{Z}_{n,\ell}(\omega_n)
\end{array}
\end{equation*}
are i.i.d with the same law than the random variables $X_n$.
The assumption of Theorem~\ref{quef}
ensures that the series 
$\sum_{n\geq 0} Z_{n,\ell}(\omega) b_n=\sum_{n\geq 0} \widehat{Z}_{n,\ell}(\omega_n) b_n$ converges in $B$ almost surely in $\omega \in [0,1]^\N$.
By combining Lemma~\ref{non-cst} and the equations
\begin{equation*}
\forall \ell\geq 1 \quad \E\big[\exp(i\xi Z_{n,2\ell-1}-i\xi Z_{n,2\ell}  )\big]=\big|E[\exp(i\xi X_1)]\big|^2,\end{equation*}
we see that $Z_{n,2\ell-1}-Z_{n,2\ell}$ is not constant almost surely.
By using Lemma~\ref{lem-anx1} with the sequence $Y_\ell=Z_{n,2\ell-1}-Z_{n,2\ell}$, we see that there is an integer $N\geq 1$ which depends only on the law of $X_1$ such that 
\begin{equation*}
\frac{1}{2}\leq  \P\left[ |Z_{n,1}-Z_{n,2}+\dots +Z_{n,2N-1}-Z_{n,2N} |\geq 1 \right] \quad \mbox{ and is independent of } n.
\end{equation*}
By setting $S_n:=Z_{n,1}-Z_{n,2}+\dots +Z_{n,2N-1}-Z_{n,2N}$, we have the three properties: 
\begin{enumerate}[$(i)$ ]
\item the series $\sum_{n\geq 0} S_n b_n$ converges almost surely in $B$ in $\omega \in [0,1]^\N$,
\item $(S_n)_{n\geq 0}$ is a sequence of real, non-constant, \textbf{symmetric} and i.i.d. random variables,
\item for any $n\geq 0$ one has $\P[ |S_n| \geq  1 ] \geq \frac{1}{2}$.
\end{enumerate}
By construction, $S_n(\omega)=\widehat{S_n}(\omega_n)$ with $\widehat{S_n}:=
\widehat{Z}_{n,1}-\widehat{Z}_{n,2}+\dots +\widehat{Z}_{n,2N-1}-\widehat{Z}_{n,2N}$.

\textbf{Step 2.} 
On the probability space $[0,1]^{\N}\times [0,1]$, one checks that the sequence $( S_n(\omega)\epsilon_n(\omega'))_{n\geq 0}$ is i.i.d. and has the same common law than $S_1$.
From $(i)$ and $(ii)$, the series $\sum S_n(\omega) \epsilon_n(\omega') b_n$ converges
almost surely in $(\omega,\omega')\in [0,1]^{\N}\times [0,1]$.
Fubini's theorem ensures that almost surely in $\omega\in [0,1]^\N$ the sequence $(S_n(\omega) b_n)_{n\geq 0}$ belongs to $\Sigma(B)$ (see definition \eqref{defi-ce}).
Since $S_n(\omega)=\widehat{S}_n(\omega_n)$, we also have $\P(|\widehat{S}_n|\geq 1)=\P(|S_n|\geq 1)\geq \frac{1}{2}$.
Thus, we can consider a Borel subset $A_n\subset [0,1]$ such that 
\begin{equation*}\P(A_n)=\frac{1}{2} \quad \mbox{ and } \quad A_n\subset \big\{\omega_n\in [0,1] ,\quad |\widehat{S_n}(\omega_n)| \geq 1] \big\}.\end{equation*}
Let us define $\rho_n(\omega):=\pun_{A_n}(\omega_n)\leq |S_n(\omega)|$ for each $\omega \in [0,1]^\N$.
It is obvious that $(\rho_n)_{n\geq 0}$ is a sequence of i.i.d. random variables with the $\frac{1}{2}$-Bernoulli law.
From the contraction principle (Theorem~\ref{theocontrac}), we know that almost surely in~$\omega$ the sequence $(\rho_n(\omega) b_n)_{n\geq 0}$ belongs to $\Sigma(B)$.

\textbf{Step 3.} Let us identify $\Z /2\Z$ with $\{0,1\}$ and introduce the compact group $G:=\left(\Z/2\Z \right)^{\N}$ which becomes now our reference probability space.
It is clear that the maps $g\in G\mapsto g_n \in \{0,1\}$ seen as random variables 
are independent and identically distributed with a $\frac{1}{2}$-Bernoulli law.
Let us define $G_0\subset G$ the subset of elements $(g_n)_{n\geq 0}$ such that $(g_n b_n)_{n\geq 0}$ belongs to $\Sigma(B)$.
Since $\Sigma(B)$ is a vector space, $G_0$ is a subgroup of $G$.
We directly get from the previous analysis in Step 2 that $G_0$ has a full Haar measure in $G$.
Furthermore, $G_0$ contains obviously the everywhere dense subgroup of $G$ of elements $(g_n)_{n\geq 0}$ which satisfy $g_n=0$ for $n\gg 1$.
We use Lemma~\ref{lem-anx2} to conclude that $(1,1,\dots)$ belongs to $G_0$, in other words $(b_n)_{n\geq 0}$ belongs to $\Sigma(B)$.
\end{proof}

\section{Annex: An alternative proof of Theorem~\texorpdfstring{\ref{propPZ} }{2.6} inspired by \texorpdfstring{\cite{Tzvetkov4}}{[22]}}\label{sect6}

We give here a different proof of Theorem~\ref{propPZ} we learnt from \cite{Tzvetkov4}, which we decided to detail   for pedagogical reasons.

\begin{lemm}\ph \label{lem37} Let  $(\varphi_n)_{n\geq 0}$ be any  Hilbertian basis   of  eigenfunctions for the harmonic  oscillator~$H$.  Let $\gamma(1)=-1/6$ and $\gamma(d)=d/2-1$ for $d\geq 2$. Then  for all $j\geq 1$ and $x,y\in \R^d$ we have 
\begin{equation*}
\Big(\sum_{n\in I(j)}|\phi_{n}(y)-\phi_{n}(x)|^{2}\Big)^{1/2}\leq C |y-x| j^{\gamma(d)/2+1/2}.
\end{equation*}
\end{lemm}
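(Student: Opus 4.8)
The plan is to reduce the statement to a uniform bound on the gradient spectral function
\[
\Theta_j(z):=\sum_{n\in I(j)}|\nabla\phi_n(z)|^2,
\]
namely to the estimate
\begin{equation*}
\sup_{z\in\R^d}\Theta_j(z)\leq C\,j^{\gamma(d)+1}. \tag{$\star$}
\end{equation*}
Granting $(\star)$, the lemma follows from the fundamental theorem of calculus together with Minkowski's integral inequality: writing $\phi_n(y)-\phi_n(x)=\int_0^1 (y-x)\cdot\nabla\phi_n\big(x+t(y-x)\big)\,dt$, bringing the $\ell^2(I(j))$-norm inside the integral, and using $|(y-x)\cdot\nabla\phi_n|\leq |y-x|\,|\nabla\phi_n|$, one gets
\[
\Big(\sum_{n\in I(j)}|\phi_n(y)-\phi_n(x)|^2\Big)^{1/2}\leq |y-x|\int_0^1 \Theta_j\big(x+t(y-x)\big)^{1/2}\,dt\leq C\,|y-x|\,j^{\gamma(d)/2+1/2},
\]
which is exactly the claimed inequality. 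All sums are finite since $\#I(j)<+\infty$ and the $\phi_n$ are Schwartz, so there is no convergence issue.

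The first point is that $\Theta_j$ does \emph{not} depend on the chosen Hilbertian basis. Indeed, for fixed $z$ and $\ell$ the linear functional $u\mapsto \partial_{x_\ell}(\Pi_j u)(z)=\sum_{n\in I(j)}\langle u,\phi_n\rangle\,\partial_{x_\ell}\phi_n(z)$ on $L^2(\R^d)$ has squared operator norm equal to $\sum_{n\in I(j)}|\partial_{x_\ell}\phi_n(z)|^2$; since $\Pi_j$ is the orthogonal projector onto the eigenspace of $H$ attached to $I(j)$, it is intrinsic, and summing over $\ell$ shows that $\Theta_j(z)$ is basis independent. We may therefore compute $(\star)$ in the tensor Hermite basis $(\mathfrak{h}_\alpha)_{\alpha\in\N^d}$ used in the proof of Lemma~\ref{deriv}, for which $H\mathfrak{h}_\alpha=(2|\alpha|+d)\mathfrak{h}_\alpha$; in this basis $I(j)$ consists of the multi-indices of a single length $|\alpha|=m$ with $m\approx j$.

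Now the one-dimensional identity $\frac{d}{dt}\mathfrak{h}_k=2^{-1/2}\big(\sqrt{k}\,\mathfrak{h}_{k-1}-\sqrt{k+1}\,\mathfrak{h}_{k+1}\big)$ gives $\partial_{x_\ell}\mathfrak{h}_\alpha=2^{-1/2}\big(\sqrt{\alpha_\ell}\,\mathfrak{h}_{\alpha-e_\ell}-\sqrt{\alpha_\ell+1}\,\mathfrak{h}_{\alpha+e_\ell}\big)$, whence, using $(a-b)^2\leq 2a^2+2b^2$ and $\alpha_\ell\leq|\alpha|=m$,
\[
|\partial_{x_\ell}\mathfrak{h}_\alpha(z)|^2\leq (m+1)\big(|\mathfrak{h}_{\alpha-e_\ell}(z)|^2+|\mathfrak{h}_{\alpha+e_\ell}(z)|^2\big).
\]
As $\alpha$ ranges over $\{|\alpha|=m\}$ the shifted indices $\alpha-e_\ell$ and $\alpha+e_\ell$ range injectively over subsets of $\{|\beta|=m-1\}$ and $\{|\beta|=m+1\}$, so after summing over these $\alpha$ and applying \eqref{kt} to the two neighbouring eigenspaces (each of the form $I(j')$ with $j'\approx j$) one obtains $\sum_{|\alpha|=m}|\partial_{x_\ell}\mathfrak{h}_\alpha(z)|^2\leq C(m+1)\,j^{\gamma(d)}\leq C\,j^{\gamma(d)+1}$. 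Summing over $\ell=1,\dots,d$ gives $(\star)$.

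The passage from $(\star)$ to the lemma is routine; the content lies in $(\star)$. The one genuinely delicate point is the basis independence of $\Theta_j$, which is precisely what allows replacing the arbitrary basis by the tensor basis where the ladder relations are explicit; alternatively one could keep the arbitrary basis and carry out the unitary-matrix bookkeeping exactly as in the proof of Lemma~\ref{deriv}, but that is heavier. One should also dispatch the trivial boundary cases — $I(j)=\emptyset$ or $m-1<0$, where the relevant sums vanish — and note that $m=j-\lfloor d/2\rfloor\approx j$ in every dimension, so that the exponent $\gamma(d)$ (in particular $\gamma(1)=-1/6$) enters correctly.
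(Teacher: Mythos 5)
Your proof is correct, and it takes a genuinely different route from the paper's. The paper also starts from the fundamental theorem of calculus, but then applies Cauchy--Schwarz in the $t$-integral and invokes the bound $\int_0^1|\nabla\phi_n(x+t(y-x))|^2\,dt\le \lambda_n\int_0^1|\phi_n(x+t(y-x))|^2\,dt$ for each single $n$, after which it sums over $n\in I(j)$ and applies \eqref{kt}; no gradient spectral function and no change of basis appear. You instead run Minkowski's integral inequality first and reduce everything to the intrinsic estimate $\sup_z\sum_{n\in I(j)}|\nabla\phi_n(z)|^2\le Cj^{\gamma(d)+1}$, which you prove via basis independence of $\Pi_j$ and the ladder identities in the tensor basis $(\mathfrak h_\alpha)$ --- essentially the mechanism the paper deploys later in the proof of Lemma~\ref{deriv}. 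Your route is longer, but every step is airtight, and it buys something real: the paper's key inequality is an integral over a one-dimensional segment, which does not follow from the operator inequality $-\Delta\le H$ (that only gives $\|\nabla u\|_{L^2(\R^d)}\le\|H^{1/2}u\|_{L^2(\R^d)}$, an integral over all of $\R^d$, and the pointwise version $|\nabla\phi_n(z)|^2\le\lambda_n|\phi_n(z)|^2$ fails at zeros of $\phi_n$), whereas your $(\star)$ is exactly the gradient analogue of \eqref{kt} that the argument actually needs, and you get it as a reusable byproduct. The small bookkeeping points you flag (basis independence, empty $I(j)$, the shift $m\approx j$) are handled adequately.
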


\begin{proof}
By the Taylor formula and Cauchy-Schwarz we get, for $n\in I(j)$
\begin{eqnarray}
|\phi_{n}(y)-\phi_{n}(x)|^{2} &\leq & |y-x|^{2}\Big( \int_{0}^{1}\big|\nabla \phi_{n}\big(x+(y-x)t\big)\big|dt\Big)^{2}\nonumber\\
&\leq & |y-x|^{2} \int_{0}^{1}\big|\nabla \phi_{n}\big(x+(y-x)t\big)\big|^{2}dt\nonumber \\
&\leq &Cj |y-x|^{2} \int_{0}^{1}\big| \phi_{n}\big(x+(y-x)t\big)\big|^{2}dt,\label{ineqn}
\end{eqnarray}
where in the last line we used 
\begin{equation*}
\int_{0}^{1}\big|\nabla \phi_{n}\big(x+(y-x)t\big)\big|^{2}dt\leq \int_{0}^{1}\big| H^{1/2}\phi_{n}\big(x+(y-x)t\big)\big|^{2}dt=\lambda_{n}\int_{0}^{1}\big| \phi_{n}\big(x+(y-x)t\big)\big|^{2}dt.
\end{equation*}
Now we sum up the inequalities \eqref{ineqn} and get with \eqref{kt}
\begin{equation*}
 \sum_{n\in I(j)}|\phi_{n}(y)-\phi_{n}(x)|^{2} \leq Cj  |y-x|^{2}  \sup_{z\in \R} \sum_{n\in I(j)} | \phi_{n}\big(z\big)\big|^{2}\leq Cj^{\gamma(d)+1}  |y-x|^{2},
\end{equation*}
which was the claim
\end{proof}

We follow the main lines of the proof of N. Tzvetkov \cite[Theorem 5]{Tzvetkov4}. We define the pseudo-distance\;$\delta$ by 
\begin{equation*}
\delta(x,y)=\Big(\sum_{n\geq 0}|c_{n}|^{2}|\phi_{n}(y)-\phi_{n}(x)|^{2}\Big)^{1/2}.
\end{equation*}
 For $\a>1$, we define the function $\Phi_{\a}: (0,+\infty)\longrightarrow  (0,+\infty)$
\begin{equation*} 
\Phi_{\a}(t)=\left\{\begin{array}{ll} 
(-\ln t)^{\a/2} \quad &\text{if} \quad 0<t<1/a, \\[6pt]  
\Phi_{\a}(1/a)   &\text{if} \quad
t\geq 1/a,
\end{array} \right.
\end{equation*}
where $a>1$ is chosen in such a way that the function $t\mapsto t\Phi_{\a}(t)$ is increasing on $(0,+\infty)$. Observe also that  $t\mapsto \Phi_{\a}(t)$ is non-increasing on $(0,+\infty)$.  Then we have a result similar to \cite[Theorem 5]{Tzvetkov4}.

\begin{lemm} \ph \label{lemm38} Assume that the coefficients $(c_{n})$ satisfy \eqref{cond}, then 
\begin{equation*}
\delta(x,y)\leq \frac{C}{\Phi_{\a}(|y-x|)}.
\end{equation*}
\end{lemm}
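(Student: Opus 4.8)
The plan is to exploit the block structure of the spectrum of $H$. Write $h=|y-x|$ and assume first that $h$ is small; grouping the defining sum of $\delta$ over the spectral packets $I(j)$ gives
\[
\delta(x,y)^2=\sum_{j\geq 1}\sum_{n\in I(j)}|c_n|^2|\phi_n(y)-\phi_n(x)|^2\leq \sum_{j\geq 1}\Big(\max_{n\in I(j)}|c_n|^2\Big)\,b_j,\qquad b_j:=\sum_{n\in I(j)}|\phi_n(y)-\phi_n(x)|^2.
\]
The heart of the argument is that $b_j$ enjoys two complementary estimates. Lemma~\ref{lem37} provides the \emph{smooth} bound $b_j\leq C h^2 j^{\gamma(d)+1}$, efficient for small $j$, while the triangle inequality together with the spectral estimate \eqref{kt} gives the \emph{trivial} bound $b_j\leq C\sum_{n\in I(j)}\big(|\phi_n(x)|^2+|\phi_n(y)|^2\big)\leq C j^{\gamma(d)}$, efficient for large $j$. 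Combining them yields $b_j\leq C j^{\gamma(d)}\min(1,h^2 j)$.

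Next I would split the sum at the natural threshold $N:=\lfloor h^{-2}\rfloor$, so that $\min(1,h^2 j)=h^2 j$ for $j\leq N$ and $\min(1,h^2 j)=1$ for $j>N$. On the high–frequency part one has $\ln j>\ln N\geq c(-\ln h)$ for $h$ small, so inserting and removing the factor $(\ln j)^{\a}$ gives
\[
\sum_{j> N}\Big(\max_{n\in I(j)}|c_n|^2\Big)j^{\gamma(d)}\leq (\ln N)^{-\a}\sum_{j> N}\Big(\max_{n\in I(j)}|c_n|^2\Big)j^{\gamma(d)}(\ln j)^{\a}\leq C(-\ln h)^{-\a},
\]
the last series being finite by hypothesis \eqref{cond}.

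The low–frequency part is where the main difficulty lies, because the extra factor $j$ makes the weight grow. Here I would factor $j^{\gamma(d)+1}=\big[j^{\gamma(d)}(\ln j)^{\a}\big]\,w_j$ with $w_j:=j(\ln j)^{-\a}$, and use that $w_j$ is non-decreasing once $j>e^{\a}$ (the finitely many terms $j\leq e^{\a}$ contribute only $O(h^2)$). Bounding $w_j$ by its value at the endpoint, $w_N=N(\ln N)^{-\a}\leq C h^{-2}(-\ln h)^{-\a}$, gives
\[
h^2\sum_{2\leq j\leq N}\Big(\max_{n\in I(j)}|c_n|^2\Big)j^{\gamma(d)+1}\leq h^2 w_N\sum_{j\geq 1}\Big(\max_{n\in I(j)}|c_n|^2\Big)j^{\gamma(d)}(\ln j)^{\a}+O(h^2)\leq C(-\ln h)^{-\a},
\]
again by \eqref{cond}. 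Adding the two pieces and the single term $j=1$ (which is $O(h^2)$, since $(\ln 1)^{\a}=0$ excludes it from \eqref{cond}) shows $\delta(x,y)^2\leq C(-\ln h)^{-\a}$, that is $\delta(x,y)\leq C(-\ln|y-x|)^{-\a/2}=C/\Phi_{\a}(|y-x|)$.

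Finally, for $|y-x|\geq 1/a$ the function $\Phi_{\a}$ equals the positive constant $\Phi_{\a}(1/a)$, while the trivial bound alone gives $\delta(x,y)^2\leq C\sum_{j\geq 1}\big(\max_{n\in I(j)}|c_n|^2\big)j^{\gamma(d)}<+\infty$ by \eqref{cond}; hence $\delta$ is globally bounded and the desired inequality holds on this range as well. The only genuinely delicate point is therefore the low–frequency estimate, where the logarithmic gain is recovered via the monotonicity of $w_j$ evaluated at the endpoint $N\sim h^{-2}$, whose logarithm is comparable to $-\ln h$.
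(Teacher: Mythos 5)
Your proof is correct and follows essentially the same route as the paper: the same packet decomposition over $I(j)$, the same two complementary bounds on $b_j$ (Lemma~\ref{lem37} for low frequencies, the spectral bound \eqref{kt} for high ones), and the same split at $j\sim|y-x|^{-2}$. The only cosmetic difference is that you recover the logarithmic gain on the low-frequency part via the monotonicity of $w_j=j(\ln j)^{-\a}$ evaluated at the endpoint, whereas the paper phrases the identical estimate through the monotonicity of $t\mapsto t\Phi_{\a}(t)$ applied termwise.
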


\begin{proof}
We  clearly have 
\begin{equation*}
(\delta(x,y))^2\leq C \sum_{j=1}^{+\infty} \big(\max_{k\in I(j)}\vert  c_k\vert^2\big) \sum_{n\in I(j)}|\phi_{n}(y)-\phi_{n}(x)|^{2}.
\end{equation*}
We split the previous sum in two parts. Then, by Lemma~\ref{lem37}  
\begin{eqnarray}
I_1(x,y)&:=&\sum_{j:\, aj^{1/2}\leq \vert y-x\vert^{-1}}  \big(\max_{k\in I(j)}\vert  c_k\vert^2\big)\sum_{n\in I(j)}|\phi_{n}(y)-\phi_{n}(x)|^{2}\nonumber \\
&\leq& C \sum_{j:\, aj^{1/2}\leq \vert y-x\vert^{-1}} j^{\gamma(d)+1} \max_{k\in I(j)}\vert  c_k\vert^2|y-x|^{2}\nonumber \\
&=& \frac{C}{\Phi^2_{\a}(y-x)} \sum_{j:\, aj^{1/2}\leq \vert y-x\vert^{-1}}j^{\gamma(d)+1} \max_{k\in I(j)}\vert  c_k\vert^2\Big(|y-x| \Phi_{\a}(y-x)\Big)^{2}.\label{en3}
\end{eqnarray}
Now we use that the function $t\longmapsto t \Phi_{\a}(t)$ is increasing, thus for $aj^{1/2}\leq \vert y-x\vert^{-1}$ we have 
\begin{equation*}
\Big(|y-x| \Phi_{\a}(y-x)\Big)^{2} \leq j^{-1}(\ln j)^{\a},
\end{equation*}
therefore from \eqref{en3}  and the assumption \eqref{cond}  on the $c_n$, we get 
\begin{equation*}
I_1(x,y) \leq C \Phi^{-2}_{\a}(y-x)  \sum_{j=1}^{+\infty} j^{\gamma(d)} (\ln j)^{\a} \max_{k\in I(j)}\vert  c_k\vert^2\leq C \Phi^{-2}_{\a}(y-x). 
\end{equation*}
Next, by \eqref{kt}
\begin{eqnarray}
I_2(x,y)&:=&\sum_{j:\, aj^{1/2}> \vert y-x\vert^{-1}} \big(\max_{k\in I(j)}\vert  c_k\vert^2\big)\sum_{n\in I(j)}|\phi_{n}(y)-\phi_{n}(x)|^{2}\nonumber \\
&\leq & C \sum_{j:\, aj^{1/2}> \vert y-x\vert^{-1}} j^{\gamma(d)} \max_{k\in I(j)}\vert  c_k\vert^2.\label{en4}
\end{eqnarray}
Now we use that $\Phi_{\a}$ is non-increasing and for $aj^{1/2}> \vert y-x\vert^{-1}$ we get 
\begin{equation*}
\Phi_{\a}(y-x)\leq \Phi_{\a}(a^{-1}j^{-1/2})\leq C (\ln j)^{\a/2}.
\end{equation*}
As a consequence, from \eqref{en4} and the assumption \eqref{cond} on the $c_n$, we deduce that 
\begin{equation*}
I_2(x,y) \leq C \Phi^{-2}_{\a}(y-x)  \sum_{j=1}^{+\infty} j^{\gamma(d)} (\ln j)^{\a}  \max_{k\in I(j)}\vert  c_k\vert^2\leq C \Phi^{-2}_{\a}(y-x),
\end{equation*}
which completes the proof.
\end{proof}

\begin{proof}[Proof of Theorem~\ref{propPZ}]
It is enough to prove  that on every compact set $K\subset \R^d$,   a.e. in $\omega$, $u^\omega$ is continuous on~$K$.
Hence   we can follow  the proof given in  \cite[Theorem 5]{Tzvetkov4}  using an entropy argument (Dudley-Fernique criterion), together with the result of Lemma~\ref{lemm38}.
\end{proof}

\begin{rema} Let's compare the two different proofs. This proof relies on both a decomposition in space and in frequencies, while in the other proof one only needs a   decomposition  in frequencies. Observe also that in the first proof one moreover  gets that for almost all $\om \in \Omega$,  $u^{\om}$ is bounded.
\end{rema}
%%%%%%%%%%%%%%%%%
%%%%%%%%%%%%%%%

\textsc{Acknowledgments.} The  authors would like to thank warmly Herv\'e Queff\'elec for very interesting discussions about random Banach series, and in particular for the proof of Theorem~\ref{quef}.

\end{document}